\newfont{\wncyr}{wncyr10 scaled \magstep0}
\numberwithin{equation}{section}
\theoremstyle{definition}
\newtheorem{defn}{Definition} [section]
\newtheorem{rem}[defn]{Remark}
\newtheorem{ex}[defn]{Example}
\theoremstyle{plain}
\newtheorem{thm}[defn]{Theorem}
\newtheorem{prop}[defn]{Proposition}
\newtheorem{cor}[defn]{Corollary}
\newtheorem{lem}[defn]{Lemma}
\newcommand{\Ker}{\operatorname{Ker}}
\newcommand{\OOO}{\mathscr{O}^{\raise0.4ex\hbox{${\scriptstyle  (0,n,0)}$}}_{\smash{\widehat{X}^2}}}
\newcommand{\ms}[1]{\raise0.3ex\hbox{${\scriptstyle  #1}$}}
\newcommand{\Oo}{\mathscr{O}^{\ms{(0,n)}}_{\smash{X^2}}}
\newcommand{\WX}{\smash{\widehat{X}}}
\newcommand{\WXX}{\smash{\widehat{X}^2}}
\newcommand{\Hom}{\mathscr{H} \hspace{-0.41em}\text{\large \calligra om}\,}
\newcommand{\RHom}{\boldsymbol{R}\Hom}
\newcommand{\tens}{\mathop{\otimes}\limits}
\newcommand{\Sum}{\mathop{\textstyle\sum}\limits}
\newcommand{\Bcup}{\mathop{\textstyle\bigcup}\limits}
\newcommand{\Bcap}{\mathop{\textstyle\bigcap}\limits}
\newcommand{\iim}{\mathop{\scriptscriptstyle \sqrt{-1\,}}}
\renewcommand{\atop}[2]{\genfrac{}{}{0pt}{}{#1}{#2}}
\newcommand{\cset}{\,\smashoperator{\mathop{\Subset}_{\rm conic}}\,}
\newcommand{\Cl}{\operatorname{Cl}}
\newcommand{\wick}[1]{\textrm{\textup{{\Large :}}}#1 \textrm{\textup{\Large :}}}
\newcommand{\dsim}{\text{\smash{\lower0.9ex
\hbox{\normalsize $\sim$}}}}
\newcommand{\abs}[1]{\lvert#1 \rvert}
\newcommand{\Abs}[1]{\lVert#1 \rVert}
\renewcommand{\Re}{\operatorname{Re}}
\renewcommand{\Im}{\operatorname{Im}}
\newcommand{\im}{\mathop{\text{\footnotesize  $\sqrt{-1\,}$}}}
\newcommand{\earrow}
{\mathrel{\text{\smash{\lower.477ex\hbox{$\overset{\text{\normalsize \smash{\lower.55ex\hbox{$\sim\,$}}}}{\to}$}}}}}
\newcommand{\eleftarrow}
{\mathrel{\text{\smash{\lower.477ex\hbox{$\overset{\text{\normalsize \smash{\lower.55ex\hbox{$\,\sim$}}}}{\leftarrow}$}}}}}
\def\@settitle{\begin{center}%
  \LARGE
    \bfseries
  \@title
  \end{center}}
\def\@setauthors{%
  \begingroup
  \trivlist
  \centering \fontsize{11}{13\p@}\selectfont\@topsep13\p@\relax \large
  \item\relax
  \andify\authors
  \def\\{\protect\linebreak}%
\authors
  \endtrivlist
  \endgroup}
\begin{document}
\title[Analytic Pseudodifferential Operators. I]{Foundation of Symbol Theory\\ for  Analytic Pseudodifferential Operators. I}
\subjclass[2010]{Primary 32W25; % Pseudodifferential operators in several complex variables
Secondary 14F05, %Sheaves, derived categories of sheaves and related constructions
35A27, %Microlocal methods; methods of sheaf theory and homological algebra in PDE
35S05.}
\author[T. Aoki]{Takashi \textsc{Aoki}}
\address[T. Aoki]{Department of Mathematics, Kinki University,
Higashi-Osaka 577--8502, Japan.}
\author[N. Honda]{Naofumi \textsc{Honda}}
\address[N. Honda]{Department of Mathematics, 
Faculty of Science, 
Hokkaido University, Sapporo 060--0810, Japan.}

\author[S. Yamazaki]{Susumu \textsc{Yamazaki}} 
\address[S. Yamazaki]{Department of General Education, College of Science and Technology, 
Nihon  University, Chiba 274--8501, Japan.}

\begin{abstract}
A new symbol theory for pseudodifferential operators in the complex analytic category is given.
This theory provides a cohomological foundation of symbolic calculus.
\end{abstract}
\thanks{This work was supported by JSPS KAKENHI Grant Numbers 22540210, 23540178 and 20540191.}
\maketitle
\baselineskip=1\baselineskip plus 0.5pt minus 0.5pt

\section*{Introduction}
The aim of this series of papers is to establish a complete symbol theory for the sheaf ${\mathscr E}^\mathbb R_X$
of pseudodifferential operators in the complex analytic category. 
Here we distinguish a little difference
between the usage of hyphenation in the words ``pseudodifferential''  and ``pseudo-differential.''   The latter 
might be
more familiar than the former for
most of readers. To clarify this distinction,
we have to mention some of history. 

The notion of the pseudo-differential operators in the analytic category was introduced by 
Boutet de Monvel and Kre\'e \cite{bk} and by Boutet de Monvel \cite{b} 
for the real domain and by Sato, Kawai and Kashiwara \cite{SKK} for the complex domain about forty
years ago. Note that \cite{bk} introduced the notion in the category of ultradifferentiable functions
of Gevrey class which contains 
the analytic category for a special case and treated 
operators of finite order. On the other hand, \cite{b} and \cite{SKK} considered operators of infinite order
and these operators play an essential role in \cite{SKK} and in Kashiwara and Kawai \cite{K-K1}.
The notion is effectively used not only in the analysis of differential equations in analytic
category but in various fields of mathematics and now it becomes one of the most basic tools in analysis
as well as pseudo-differential operators in $C^\infty$ category. There are a number of references
which use pseudo-differential operators and we can not cite all of them. We only cite Bj\"ork \cite{bj},
H\"ormander \cite{h}, Kashiwara and Schapira \cite{K-S2},
Kumano-go \cite{ku}, Liess \cite{Li}, Schapira \cite{sch} 
and Tr\`eves \cite{TR}, and refer the readers to the references cited in those books or manuscripts.

The definition of the pseudo-differential operators given in \cite{b} 
used oscillatory integrals and analytic symbols, 
while \cite{SKK} employed the cohomology theory. One of the advantages of the latter theory is invariance which
comes from the cohomology theory. But most of analysts are familiar with the former theory because
it is comprehensible as an analogy of pseudo-differential operators in $C^\infty$ category
through symbol theory and it does not require heavy algebraic tools such as derived categories and
exact sequences. Symbol theory for pseudo-differential operators was also developed in  \cite{SKK},
where the sheaf of them was denoted by $\mathscr{P}_X$. This sheaf
is recently denoted by $\mathscr E^\infty_X$ after the work  of Kashiwara and Schapira \cite{ks} and  called the sheaf of 
microdifferential operators (of finite or infinite order). Note that it is a subsheaf of 
 ${\mathscr E}^\mathbb R_X$. The notion of symbols defined in  \cite{SKK} is different from
that of  \cite{b}. A symbol of a microdifferential operator (or a pseudo-differential operator 
in the sense of \cite{SKK}) is a sequence with an index in $\mathbb Z$ of holomorphic
functions defined on the cotangent bundle $T^*X $ with some homogeneity and growth
conditions. On the other hand, a pseudo-differential operator in the sense of  \cite{b} is defined
by a total symbol $p(x,\xi)$ which is real analytic in $(x,\xi)$ satisfying a growth condition in $\xi$
variables. The relation between those two theories was clarified by Kataoka \cite{kat}. 
He defined symbols of operators in $\mathscr E^{\mathbb R}_X$ 
by using the Radon transform and through his theory, we knew that
pseudo-differential operators of \cite{b} is obtained by restriction of $\mathscr E^{\mathbb R}_X$ to
the real domain.  

 The essential idea of the definition of ${\mathscr E}^\mathbb R_X$ was given
 in \cite{SKK} but the definition itself was not given there explicitly. The definition first appeared in 
 the work of Kashiwara and Kawai \cite{K-K},
 where the notation ${\mathscr P}^{\mathbb R}_X$ was used, although the name of the sections of the
 sheaf was not given. As well as the case of $\mathscr E^\infty_X$, we use the notation 
${\mathscr E}^{\mathbb R}_X$  instead of ${\mathscr P}^{\mathbb R}_X$ after \cite{ks} and 
we call the sections of ${\mathscr E}^{\mathbb R}_X$ pseudodifferential operators
after \cite{a2}.

Since the symbol theory developed in \cite{kat} was not published, some parts of it were supplied by
the first author \cite{a2} and the theory played a role in the analysis of operators of infinite order 
(cf.\ Aoki \cite{a1}, \cite{a3}, Aoki, Kawai, Koike and Takei \cite{akkt}, Aoki, Kawai and Takei \cite{akt},
Kajitani and Wakabayashi \cite{k-w}, Kataoka \cite{kat2}, Uchikoshi \cite{u1}). 
Some systematic description of the theory has been
included in the book of Aoki, Kataoka and Yamazaki \cite{aky}. 
The foundation of the symbol theory of ${\mathscr E}^{\mathbb R}_X$ 
at the present stage is, however, quite unsatisfactory.
There are two issues: first one is that, as Kamimoto and Kataoka have pointed out in 
their work \cite[Example 1.1]{kaka1}, the space of
the kernel functions which comes from standard \v{C}ech representation of cohomology groups is
not closed under composition of kernel functions defined by naive integration employed in \cite{a2}, \cite{aky}. 
Regarding this issue, \cite{kaka1} gives a possible solution
by introducing the notion of formal kernels. Second issue is that
the relation between the action of operators by integration of kernel functions and
canonical action through cohomological definition was not clarified. 
We note that the notion of formal kernels given in \cite{kaka1} has not yet given a solution to this issue
directly.
Thus we think we have to provide a complete symbol theory of ${\mathscr E}^{\mathbb{R}}_X$
which solves these issues.

We mention that some modifications of the symbol theory are given
by Uchikoshi \cite{u2} and by Ishimura \cite{I} for microlocal operators and non-local
operators in the analytic category, respectively. But there are analogous issues in these theories.

In this series of papers, we establish a new symbol theory of ${\mathscr E}^{\mathbb R}_X$
which completely fits in the cohomological definition of the sheaf. In the first part, we present
a foundation of symbol theory for ${\mathscr E}^{\mathbb R}_X$.
Our main idea is to introduce
a redundant parameter, which we call an apparent parameter, 
in the definitions of (real) holomorphic microfunctions and symbols.
By introducing this parameter, cohomological definition of operation such as
composition, formal adjoint, coordinate transformation, etc.\  are directly related to
those of symbols (see Kashiwara-Schapira \cite{K-S1}, \cite{K-Sa}). To clarify the relation between \v{C}ech cohomology classes and symbols,
we fully use the theory of the action of microdifferential operators on microfunctions established
by Kashiwara and Kawai (\cite{K-K}, \cite{K-K1}).
We also develop a theory of formal symbols which was firstly introduced for operators of infinite order by \cite{b}
and generalized by \cite{a1}, \cite{a2} and by Laurent \cite{L}. 
The formal symbol theory established in this article is exactly based on the cohomological
definition of ${\mathscr E}^{\mathbb R}_X$. To develop this theory, we employ
an idea introduced by \cite{L}.
Our forthcoming second paper will be devoted to the symbol theory
for operators with Gevrey growth and the cohomology theory for Whitney holomorphic
functions. It will be useful for applications.

The plan of this paper, the first part,  is as follows. In Section \ref{sec:coh}, we prepare a proposition of the
local cohomology group theory on a vector space which we shall use in this article.
Section \ref{sec:rep-micro} gives a new formulation of the sheaf of (real) holomorphic microfunctions
utilizing an apparent parameter.
Applying this formulation, we give a cohomological representation of
pseudodifferential operators in Section \ref{sec:actions_ER}. In Section \ref{sec:symbol-with-ap}, we define symbol spaces
with an apparent parameter. The relation between symbols and cohomological
representation of pseudodifferential operators is clarified in Section \ref{sec:kernel-symbol}.
Sections \ref{sec:cl-formal-symbol} and \ref{sec:formal-symbol} are devoted to establish a theory 
of formal symbols with an apparent parameter for
pseudodifferential operators. 
Logically we can skip these two sections since our theory developed up to Section 
\ref{sec:kernel-symbol} provides the equivalence of our present theory and the
symbol theory given in \cite{a2}, \cite{aky} without cohomology theoretical foundation.
But we think we need these two sections since the relation between formal symbols 
and kernel functions can be understood  directly from the viewpoint of these sections.
Of course, they provides a unified
treatment of symbols of pseudodifferential operators with previous sections.
Basic algebraic operations in terms of symbols are given
with cohomology theoretical foundation in these sections.

In Appendix \ref{ap:actions}, we confirm the compatibility
of actions of pseudodifferential operators on the sheaf of holomorphic microfunctions.
Appendix \ref{ap:general_C} gives a general construction of the sheaf of microfunctions 
which can manage the symbol mapping on the space of kernel functions with respect to arbitrary coverings.

This work, especially, the idea of introducing redundant parameter, 
is inspired by \cite{kaka1}. The authors would like to express their sincere thanks to
Professor K.\ Kataoka and Dr.\ S.\ Kamimoto. They also thank Professor T.\  Kawai and
Professor Y.\ Okada for encouragement to them.

\section{Local Cohomology Groups on a Vector Space}\label{sec:coh}

We denote by $\mathbb{Z}$, $\mathbb{R}$ and $\mathbb{C}$ the sets of  integers, of real numbers  and of complex numbers  respectively. 
Further, set 
 $\mathbb{N}:=\{m\in \mathbb{Z};\,m >  0\}$, $\mathbb{N}_0^{}:=\mathbb{N}\cup\{0\}$ and 
$\mathbb{C}^{\times}:=\{c\in \mathbb{C};\,c\ne 0\}$.  

Let $X$ be a finite dimensional $\mathbb{R}$-vector space,  and define  an open proper sector $ S \subset \mathbb{C}$   by
\[
S:=\{\eta \in \mathbb{C};\, a < \arg \eta < b,\, 0 < |\eta| < r\} 
\]
for some $0 < b-a < \pi$ and $r > 0$.
We set $\widehat{X} := X \times \mathbb{C}_\eta$
with  coordinates $(x,\eta)$, and let   $\pi_\eta \colon  \widehat{X} 
\ni (x,\eta)\mapsto x\in  X$ be  the canonical projection.
Let $G \subset X$ be a closed subset (not necessarily convex) and
 $U \subset X$ an open neighborhood of the origin.
In this section we give
another representation of 
local cohomology groups $H^k_{G\cap U}(U; \mathcal{F})$ for a sheaf $\mathcal{F}$
on $X$. For this purpose, we need some preparations. 
Let $Z$ be a closed subset in $X$ and
$\varphi \colon  X \times [0,1] \to X$  a continuous deformation mapping  
which satisfies the following conditions:
\begin{enumerate}[(i)]
	\item $\varphi(x,1) = x$ for any $x \in X$
and $\varphi(z,s) = z$ for any $z \in Z$.
\item $\varphi(\varphi(x,s),0) = \varphi(x,0)$ for any $s \in [0,1]$ and $x \in X$.
\item We set 
\[
\rho^{}_\varphi(x,s) := | \varphi(x,s) - \varphi(x,0) |.
\]
Then  $\rho^{}_\varphi(x,s)$ is a strictly increasing function of $s$
outside $Z$, i.e.\ if $s^{}_1 < s^{}_2$, we have   $\rho^{}_\varphi(x,s^{}_1) <  \rho^{}_\varphi(x,s^{}_2)$ for any $x \in X \smallsetminus Z$.
\end{enumerate}
We set, for short
\[
\rho^{}_\varphi(x) := \rho^{}_\varphi(x,1) = | \varphi(x,1) - \varphi(x,0) |
= | x - \varphi(x,0) |.
\]
Here we remark
\[
\rho^{}_\varphi(\varphi(x,s)) = | \varphi(x,s) - \varphi(\varphi(x,s),0) |=| \varphi(x,s) - \varphi(x,0) | =\rho^{}_\varphi(x,s).
\]
Let us see a typical example of such a deformation mapping.
\begin{ex}{\label{es:typical-deformation}}
	Let $\zeta$ be a unit vector in $X:= \mathbb{C}^n$ and $Z = \{x \in X;\, \langle x,\zeta\rangle = 0\}$
with $\langle x,\zeta\rangle := \smashoperator{\Sum_{i=1}^n} x^{}_i \zeta^{}_i$.
Define the deformation mapping
$\varphi \colon X \times [0,1] \to X$ by
\[
\varphi(x,s) := x + (s-1)\langle x,\zeta\rangle \overline{\zeta}.
\]
Here $\overline{\zeta}$ denotes the complex conjugate of $\zeta$.
Note that $\varphi(x,1) = x$ and 
$\varphi(x,0)$ gives the orthogonal projection to the
complex hyperplane $Z$ with respect to
the standard Hermitian metric  $|x|= \langle x,\overline{x}\rangle^{1/2}$.  
\end{ex}

Let $\varrho > 0$ a positive constant.
We define the subsets in $\widehat{X}$ by
\begin{equation}\label{eq:def-hat-UV-general}
\begin{aligned}
\widehat{G} &:= \{(\varphi(x,s),\eta) \in \widehat{X};\, \rho^{}_\varphi(x) 
\leqslant \varrho|\eta|,\, 0 \leqslant s \leqslant 1,\,
x \in G\}, \\
\widehat{U} &:= 
\{(x,\eta) \in  U \times  S;\, \rho^{}_\varphi(x) <\varrho |\eta|\}.
\end{aligned}
\end{equation}
Note that $\widehat{G} \cap \widehat{U}$ is a closed subset in $\widehat{U}$.
Then we have the following proposition.
\begin{prop}\label{prop:iso-cohomology-groups-for-general}
Let $\mathcal{F}$ be a complex of Abelian sheaves  on $X$.  
Assume that $U$ satisfies 
\begin{equation}\label{conrho}
\smashoperator{\sup_{x \in U}} \rho^{}_\varphi(x) <  \varrho r.
\end{equation}
Then there exists  the following isomorphism\textup{:}
\[
	\boldsymbol{R}\varGamma^{}_{G \cap U}(U; \mathcal{F})
\earrow 
\boldsymbol{R}\varGamma_{\widehat{G} \cap \widehat{U}}(\widehat{U};\pi_\eta^{-1}\!\mathcal{F}).
\]
\end{prop}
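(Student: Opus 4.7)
The plan is to apply the five lemma to the morphism of distinguished triangles associated to the closed inclusions $G\cap U\subset U$ and $\widehat G\cap \widehat U\subset \widehat U$; it then suffices to verify that the two canonical pullback maps
\[
\boldsymbol{R}\varGamma(U;\mathcal{F})\to\boldsymbol{R}\varGamma(\widehat U;\pi_\eta^{-1}\mathcal{F}),\qquad \boldsymbol{R}\varGamma(U\smallsetminus G;\mathcal{F})\to\boldsymbol{R}\varGamma(\widehat U\smallsetminus\widehat G;\pi_\eta^{-1}\mathcal{F})
\]
are isomorphisms. Both reductions rely on the fact that the projection $\pi_\eta$ restricted to $\widehat U$, and also to $\widehat U\smallsetminus\widehat G$, has contractible fibers: the former are the truncated open sectors $\{\eta\in S: |\eta|>\rho^{}_\varphi(x)/\varrho\}$, and the latter (over $x\in U\smallsetminus G$) are annular truncated sectors controlled by the auxiliary function
\[
\rho^*(x):=\inf\{\rho^{}_\varphi(x'): x'\in G,\ \varphi(x',s)=x\text{ for some }s\in[0,1]\},
\]
for which the strict monotonicity in condition (iii) forces $\rho^*(x)>\rho^{}_\varphi(x)$ whenever $x\notin G$.

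For the first map, condition \eqref{conrho} allows me to fix $\eta_0\in S$ with $\varrho|\eta_0|>\sup_{x\in U}\rho^{}_\varphi(x)$, so that $s(x):=(x,\eta_0)$ is a continuous section of $\pi_\eta|_{\widehat U}$. I would construct a deformation retraction $H\colon\widehat U\times[0,1]\to\widehat U$ onto $s(U)$ of the form $H_t(x,\eta)=(x,\eta_t)$, where $\eta_t\in S$ is a two-stage path from $\eta$ to $\eta_0$: first rotate $\arg\eta$ to $\arg\eta_0$ at constant modulus, then interpolate the modulus to $|\eta_0|$. Convexity of $S$ and the bound $|\eta_t|\geq\min(|\eta|,|\eta_0|)>\rho^{}_\varphi(x)/\varrho$ keep $H_t$ inside $\widehat U$, and since $\pi_\eta\circ H_t=\pi_\eta$ one has $H^{-1}\pi_\eta^{-1}\mathcal{F}=(\pi_\eta\circ p)^{-1}\mathcal{F}$ where $p\colon\widehat U\times[0,1]\to\widehat U$ denotes the projection. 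The standard homotopy invariance of sheaf cohomology along a compact interval then yields the first isomorphism.

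For the second map I would run the same argument with an $x$-dependent section $s'\colon U\smallsetminus G\to \widehat U\smallsetminus\widehat G$ landing in each annular sector, built via a partition-of-unity construction using the lower semi-continuity of $\rho^*$, together with a fiber-wise retraction of the form above. The main obstacle I anticipate is exactly this step: near the boundary of $G$ the gap $\rho^*(x)-\rho^{}_\varphi(x)$ collapses to zero, so a naive continuous choice of $\eta(x)$ fails; careful patching by partition of unity, or alternatively bypassing the explicit section by computing $\boldsymbol{R}(\pi_\eta|_{\widehat U\smallsetminus\widehat G})_*$ stalkwise directly from the fiber contractibility, constitutes the technical heart of the argument.
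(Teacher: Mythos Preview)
Your strategy differs from the paper's and, while viable in principle, leaves the acknowledged gap unresolved. The paper does \emph{not} pass through the open--closed triangle and $\boldsymbol{R}\pi_{\eta*}$. Instead it rewrites both sides via the adjunction $(\boldsymbol{R}\pi_{\eta!},\pi_\eta^{!})$ and the identification $\pi_\eta^{!}=\pi_\eta^{-1}[2]$, reducing the statement to showing that the natural map
\[
\boldsymbol{R}\pi_{\eta!}\,\mathbb{Z}_{\widehat G\cap\widehat U}\longrightarrow \mathbb{Z}_{G\cap U}[-2]
\]
is an isomorphism. This is then checked on stalks, which is legitimate because proper base change for $\boldsymbol{R}\pi_{\eta!}$ identifies $(\boldsymbol{R}\pi_{\eta!}\mathbb{Z}_{\widehat G\cap\widehat U})_{x^*}$ with $\boldsymbol{R}\varGamma_{\mathrm c}(\pi_\eta^{-1}(x^*)\cap\widehat G\cap\widehat U;\mathbb{Z})$. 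Over $x^*\in G\cap U$ the fiber is a truncated open sector, whose $H^*_{\mathrm c}$ is $\mathbb{Z}[-2]$; over $x^*\in U\smallsetminus G$ the fiber is a half-open annular sector (closed at the inner radius $a(x^*)/\varrho$), whose $H^*_{\mathrm c}$ vanishes. That is the whole proof.

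Your alternative of ``computing $\boldsymbol{R}(\pi_\eta|_{\widehat U\smallsetminus\widehat G})_*$ stalkwise from fiber contractibility'' is precisely where the difference bites: there is no base change theorem for $\boldsymbol{R}\pi_{\eta*}$ here, so contractibility of the individual fibers does not by itself compute the stalks of the pushforward. The partition-of-unity route you sketch can be completed, but it requires two ingredients you have not supplied: first, that your $\rho^*$ (the paper's $a(x^*)$) is lower semicontinuous on $U\smallsetminus G$---this follows from the closedness of $G$ together with the boundedness argument implicit in the paper's property~(2)---and second, an insertion theorem (Kat\v{e}tov--Tong type) to produce a continuous $|\eta(x)|$ strictly between the continuous $\rho_\varphi(x)/\varrho$ and the merely lower-semicontinuous $\min(\rho^*(x)/\varrho,r)$. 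Only then can you build the fiber-preserving deformation retraction. The paper's $\boldsymbol{R}\pi_{\eta!}$ argument sidesteps all of this: the same fiber description you found is used, but with compactly supported cohomology, where the stalkwise reduction is automatic.
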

\begin{proof}
Since  $  \pi_{\eta}^{-1}(G)\cap\widehat{U}$ is  closed in $\widehat{G} \cap\widehat{U} $ and  $\widehat{U}$ is   open in $\pi_\eta^{-1}(U)$, 
we obtain 
\[
\mathbb{Z}_{\widehat{G}\cap\widehat{U}  }
\to
\mathbb{Z}_{\pi^{-1}_{\eta}(G)\cap\widehat{U}  }
\to
\mathbb{Z}_{\pi^{-1}_{\eta}(G)\cap \pi^{-1}_{\eta}(U) }
=
\pi_\eta^{-1}\mathbb{Z}^{}_{G\cap U  } 
=
\pi_\eta^{!}\mathbb{Z}^{}_{G\cap U} [-2],
\]
and this induces the canonical morphism
\begin{equation}\label{eq.1.3}
\boldsymbol{R}\pi^{}_{\eta !} \mathbb{Z}_{\widehat{G}\cap\widehat{U}  }
\to \mathbb{Z}^{}_{G\cap U}[-2].
\end{equation}
Hence, we have
\allowdisplaybreaks
\begin{align*}
\boldsymbol{R}\varGamma_{G \cap U}(U; \mathcal{F})&
\simeq 
\boldsymbol{R}\textrm{Hom}_{\mathbb{Z}^{}_X}\!(\mathbb{Z}^{}_{G\cap U},\mathcal{F})
\to \boldsymbol{R}\textrm{Hom}_{\mathbb{Z}_X}\!
(\boldsymbol{R}\pi^{}_{\eta !}\mathbb{Z}_{\widehat{G}\cap\widehat{U}}, \mathcal{F})[-2]
\\
&
\simeq
\boldsymbol{R}\textrm{Hom}_{\mathbb{Z}_{\WX}}
(\mathbb{Z}_{\widehat{G}\cap\widehat{U}},\pi_\eta^!\mathcal{F})[-2] 
\simeq
\boldsymbol{R}\varGamma_{\widehat{G} \cap \widehat{U}}(\widehat{U};\pi_\eta^{-1}\! \mathcal{F}),
\end{align*}
and to show Proposition \ref{prop:iso-cohomology-groups-for-general},  it suffices to prove that \eqref{eq.1.3}
is an isomorphism. 
We first give some properties of $\varphi$ and $\rho^{}_\varphi$.

\begin{enumerate}[(1)]
\item If $\varphi(x,s) = \varphi(x',s')$ holds, we have
$ \varphi(x,0)=\varphi(\varphi(x,s),0) =\varphi(\varphi(x',s'),0)  = \varphi(x',0)$. 
In particular, $\rho^{}_\varphi(x,s) = \rho^{}_\varphi(x',s')$.
\item For any  $x^* \in X$ we set
\[
G(x^*) := \{(g,t) \in G \times [0,1];\,\varphi(g, t) = x^*\}.
\]
If $G(x^*)\ne \emptyset$, there exists $(x,s) \in G(x^*)$ such that  $\rho^{}_\varphi(x)$ attains the value
\[
a(x^*) := \inf\{ \rho_\varphi(g);\,(g,t) \in G(x^*)\}.
\]
\end{enumerate}
Let us compute $\boldsymbol{R}\pi_{\eta !}\mathbb{Z}_{\widehat{G}\cap\widehat{U}}$\,. 
If $x^* \notin U$, clearly we have
$(\boldsymbol{R}\pi_{\eta !}\mathbb{Z}_{\widehat{G}\cap\widehat{U}})_{x^*} = 0$. 
Hence in what follows, we assume $x^* \in U$,  in particular,
$\pi_\eta^{-1}(x^*) \cap \widehat{U} \ne \emptyset$ holds thanks to the assumption. Then
it follows from the definition and the properties above that if $\pi_\eta^{-1}(x^*) \cap \widehat{G}\ne \emptyset$,
\[
\pi_\eta^{-1}(x^*) \cap \widehat{G}
\simeq
\{\eta \in \mathbb{C};\, a(x^*) \leqslant \varrho|\eta| \}. 
\]
We also have
\[
\pi_\eta^{-1}(x^*) \cap \widehat{U} 
\simeq
\{\eta \in  S;\, \rho^{}_\varphi(x^*) <  \varrho|\eta|\}.
\]
By taking these observations into account, we can calculate
$(\boldsymbol{R}\pi_{\eta !}\mathbb{Z}_{\widehat{G}\cap\widehat{U}})_{x^*}$ for $x^* \in U$
as follows. If $x^* \in G$, we see that the subset
$\pi_\eta^{-1}(x^*) \cap \widehat{G}\ne \emptyset $ and we get
\[
\pi_\eta^{-1}(x^*) \cap \widehat{G}
\simeq
\{\eta \in \mathbb{C};\,\rho^{}_\varphi(x^*) \leqslant \varrho|\eta|\} 
\]
because of $(x^*,1) \in G(x^*)$ and 
\[
\rho^{}_\varphi(x^*) = \rho^{}_\varphi(x^*,1) =
\rho^{}_\varphi(x,s) \leqslant \rho^{}_\varphi(x, 1) = \rho^{}_\varphi(x)
\]
for any $(x,s) \in G(x^*)$.
Hence we have
\[
\pi_\eta^{-1}(x^*) \cap \widehat{G}\cap\widehat{U}
\simeq
\{\eta \in  S;\, \rho^{}_\varphi(x^*) < \varrho|\eta| \},
\]
which implies
\[
(\boldsymbol{R}\pi^{}_{\eta !}\mathbb{Z}_{\widehat{G}\cap\widehat{U}})_{x^*} =
\boldsymbol{R}\varGamma_{\rm c}(\pi_\eta^{-1}(x^*) \cap \widehat{G}\cap\widehat{U}; \mathbb{Z}_{\WX}) = 
\mathbb{Z}[-2].
\]
On the other hand, if $x^* \notin G$, we obtain
\[
(\boldsymbol{R}\pi^{}_{\eta !}\mathbb{Z}_{\widehat{G}\cap\widehat{U}})_{x^*} =
\boldsymbol{R}\varGamma_{\rm c}(\pi_\eta^{-1}(x^*) \cap \widehat{G}\cap\widehat{U}; \mathbb{Z}^{}_{\WX}) =  0.
\]
As a matter of fact, if $\pi_\eta^{-1}(x^*) \cap \widehat{G}= \emptyset $, 
the claim clearly holds. Otherwise, we have
$
\rho^{}_\varphi(x^*) < a(x^*)
$
which can be shown by the following argument.
Let $(x,s)$ be a point in $G(x^*)$ with $\rho_\varphi(x) = a(x^*)$.
Since $x^* \notin G$, $x \in G$ 
and $x^* = \varphi(x,\,s)$, we have $x \notin Z$ and $s < 1$. 
From these facts, 
\[
\rho^{}_\varphi(x^*) = \rho^{}_\varphi(x^*,1) = \rho^{}_\varphi(x,s) < 
\rho^{}_\varphi(x,1) = \rho^{}_\varphi(x) = a(x^*)
\]
follows. Hence we have
\[
\pi_\eta^{-1}(x^*) \cap \widehat{G}\cap\widehat{U}
\simeq
\{\eta \in  S;\, a(x^*) \leqslant  \varrho|\eta| \},
\]
which implies the claim.

Summing up, we have obtained
\[
(\boldsymbol{R}\pi^{}_{\eta !}\mathbb{Z}_{\widehat{G}\cap\widehat{U}})_{x^*} =
\left\{
\begin{aligned}
	&\mathbb{Z}[-2]& (x^* \in G\cap U),\\
	&0 & \text{(otherwise)},
\end{aligned}
\right.
\]
%\end{equation}
hence \eqref{eq.1.3} 
is an isomorphism. This completes the proof.
\end{proof}

\begin{rem}\label{rem1.3}
Without  \eqref{conrho}, we have the following claim by
the same argument as that in the proof above: Set $U':=\{x \in U;\,\rho_\varphi(x) <  \varrho r\}$. Then there exists the canonical
isomorphism
\[
\boldsymbol{R}\varGamma_{G \cap U'}
(U'; \mathcal{F})
\earrow \boldsymbol{R}\varGamma_{\widehat{G} \cap \widehat{U}}
(\widehat{U};\pi_\eta^{-1}\mathcal{F}).
\]
\end{rem}

%%%%%%%%%%%%%%%%%%%%%%%%%%%%%%%%%%%%%%%%%%%%%%%%
%
%%%%%%%%%%%%%%%%%%%%%%%%%%%%%%%%%%%%%%%%%%%%%%%%

\section{Holomorphic Microfunctions with an Apparent  Parameter}\label{sec:rep-micro}

Let $X$ be an $n$-dimensional $\mathbb{C}$-vector space 
with the coordinates $z=(z_1,\dots,z_n)$, and
$Y$  the closed complex submanifold of $X$ defined by $\{z'=0\}$ where $z = (z',z'')$ with $z':=(z_1,\dots,z_d)$ for 
some $1 \leqslant d \leqslant n$. Set $\widehat{ X}:= X\times \mathbb{C}$, and
let $\pi_\eta \colon \widehat{ X} \ni (z, \eta) \mapsto z\in  X$ be  the canonical projection as in Section \ref{sec:coh}. 
In what follows, we denote an object defined on the space $\widehat{X}$ by
a symbol with $\,\widehat{\cdot}\,$ like $\widehat{U}_{\boldsymbol{\kappa}}$ etc. 
For any $z\in \mathbb{C}^n$, we set  $\Abs{z}:= \smashoperator{\max_{1 \leqslant i \leqslant n}}\{\abs{z^{}_i}\}$. 
Let $\mathscr{O}^{}_X$ be the sheaf of \textit{holomorphic functions} on $X$,  and 
 $\mathscr{C}^{\mathbb{R}}_{Y|X}$  the sheaf of \textit{real  holomorphic microfunctions} along $Y$ 
on the conormal bundle $T^*_YX$ to $Y$. 
Let $z_0 = (0, z''_0) \in Y$ 
and $z^*_0=(z''_0;\zeta'_0) \in T^*_YX$ with $|\zeta'_0| = 1$. 
%Hereafter we assume $z''_0 = 0$ for simplicity.  
Set
\[
f^{}_1(z) := \langle z',\zeta'_0\rangle, \quad
f'(z) := z' - \langle z',\zeta'_0\rangle \overline{\zeta}{}'_0\,.
\]
\begin{rem}
The subsequent arguments can be applied to a general family of  a function $f_1(z)$ and a mapping $f'(z)$, that
enables us to develop the theory not only on a vector space but also on a complex manifold.
It is, however, rather technical. Hence we put such a generalization in Appendix \ref{ap:general_C}.
\end{rem}
By the definition of $\mathscr{C}^{\mathbb{R}}_{Y|X}$,
we have
\[
\mathscr{C}^{\mathbb{R}}_{Y|X,z^*_0}
=
\smashoperator[r]{\varinjlim_{\varrho,L,U}}H^d_{G_{\varrho,L}\cap U}(U;\mathscr{O}_{X}).
\]
Here $U \subset X$ ranges  through open neighborhoods of $z_0$,  
and  $G_{\varrho,L}$ denotes the closed set
\[
G_{\varrho,L}:=\{z \in X;\,\varrho^2|f'(z)| \leqslant |f_1(z)|,\,
f^{}_1(z) \in L\},
\]
where 
$L \subset \mathbb{C}$ 
ranges  through closed convex cones 
with $L \subset \{\tau \in \mathbb{C};\, \Re \tau > 0\} \cup \{0\}$. 
Now we apply the result in the previous section to the case above.
We take the open sector $ S_{r,\theta}$  defined by
\[
 S_{r,\theta}:=\{\eta \in \mathbb{C};\,\lvert\arg\eta| < \theta,\, 0< |\eta| < r\}
\]
for $0 < \theta < \pi/2$ and $r > 0$
as an $ S$ in the previous section. The vector $\zeta$ 
is taken to be the image of $\zeta'_0$ by the canonical mapping  
$(T^*_YX)_{z_0} \to (T^*X)_{z_0} = \mathbb{C}^n$.
We adopt the deformation mapping given in Example \ref{es:typical-deformation} and
assume that $U$ is sufficiently small so that the assumption of 
Proportion \ref{prop:iso-cohomology-groups-for-general} is satisfied.
Therefore there exists the canonical isomorphism
\[
\boldsymbol{R}\varGamma^{}_{G_{\varrho,L}\cap U}(U;\mathscr{O}^{}_{X})
\earrow
\boldsymbol{R}\varGamma_{\widehat{G}_{\varrho,L}\cap\widehat{U}_{\varrho,r,\theta}}\!
(\widehat{U}_{\varrho,r,\theta}; \pi^{-1}_\eta\! \mathscr{O}^{}_{X}),
\]
where $\widehat{G}_{\varrho,L}$ and $\widehat{U}_{\varrho,r,\theta}$ are defined by
 \eqref{eq:def-hat-UV-general} with respect to $G=G_{\varrho,L}$ and $U$.
By easy computations, these sets are given by
\begin{equation}{\label{eq:def-hat-G-U}}
\begin{aligned}\widehat{G}_{\varrho,L} &= \{(z,\eta) \in \widehat{X};\,
\varrho|f'(z)| \leqslant |\eta|,\,
f_1(z) \in L\},
\\
\widehat{U}_{\varrho,r,\theta} &= 
\{(z,\eta) \in  U \times  S_{r,\theta};\,|f_1(z)| < \varrho |\eta|\}, 
\end{aligned}
\end{equation}
respectively. Thus, from the  exact sequence
\[
0\to \pi_{\eta}^{-1}\!\mathscr{O}_{X} \to \mathscr{O}_{\WX} \xrightarrow{\partial_\eta} 
\mathscr{O}_{\WX} \to 0,
\]
we obtain the following distinguished triangle:
\[
\boldsymbol{R}\varGamma_{G_{\varrho,L}\cap U}(U;\mathscr{O}_{X})\to  
\boldsymbol{R}\varGamma_{\widehat{G}_{\varrho,L}\cap \widehat{U}_{\varrho,r,\theta}}\!
(\widehat{U}_{\varrho,r,\theta};\mathscr{O}_{\WX}) 
\xrightarrow{\partial_\eta} 
\boldsymbol{R}\varGamma_{\widehat{G}_{\varrho,L}\cap \widehat{U}_{\varrho,r,\theta}}\!
( \widehat{U}_{\varrho,r,\theta};\mathscr{O}_{\WX}) \xrightarrow{+1}.
\]
We will see later the fact
\begin{equation}\label{eq:vanishing-cohomology-edge}
\smashoperator{\varinjlim_{\varrho,r,\theta,L,U}}
H^k_{\widehat{G}^{}_{\varrho,L}\cap \widehat{U}_{\varrho,r,\theta}}\!( \widehat{U}_{\varrho,r,\theta};\mathscr{O}_{\WX}) = 0
\quad (k \ne d).
\end{equation}
Hence we have reached the following definition and theorem.
\begin{defn}
We define
\[
\widehat{C}^{\,\mathbb{R}}_{{Y|X},z^*_0} :=
\smashoperator{\varinjlim_{\varrho,r,\theta,L,U}}
H^d_{\widehat{G}^{}_{\varrho,L} \cap\widehat{U}_{\varrho,r,\theta}}\!( \widehat{U}_{\varrho,r,\theta};\mathscr{O}_{\WX}),
\]
where $U \subset X$ and $L \subset \mathbb{C}$ 
range through open neighborhoods of $z_0$ and
closed convex cones in $\mathbb{C}$
with $L \subset \{\tau \in \mathbb{C};\, \Re \tau > 0\} \cup \{0\}$ 
respectively, and the subsets $\widehat{U}^{}_{\varrho,r,\theta}$ and $\widehat{G}^{}_{\varrho,L}$
are given in \eqref{eq:def-hat-G-U}.
Further we define
\[
C^{\mathbb{R}}_{Y|X,z^*_0} := \Ker(\partial^{}_\eta\colon 
\widehat{C}^{\,\mathbb{R}}_{Y|X,z^*_0} \to \widehat{C}^{\,\mathbb{R}}_{Y|X,z^*_0}).
\]
\end{defn}
Therefore,  we obtain:
\begin{thm}
There exists the following canonical isomorphism
\[
\mathscr{C}^{\mathbb{R}}_{Y|X,z^*_0}  \earrow
C^{\mathbb{R}}_{Y|X,z^*_0}.
\]
\end{thm}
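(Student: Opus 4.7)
The plan is to combine three ingredients already in place: Proposition~\ref{prop:iso-cohomology-groups-for-general}, the long exact sequence coming from the short exact sequence $0 \to \pi_\eta^{-1}\mathscr{O}_X \to \mathscr{O}_{\widehat{X}} \xrightarrow{\partial_\eta} \mathscr{O}_{\widehat{X}} \to 0$, and the vanishing \eqref{eq:vanishing-cohomology-edge}.

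First, I would apply Proposition~\ref{prop:iso-cohomology-groups-for-general} with $\mathcal{F}=\mathscr{O}_X$ and the deformation mapping $\varphi$ from Example~\ref{es:typical-deformation}, taking $\zeta$ to be the image of $\zeta'_0$ in $\mathbb{C}^n$. A direct check gives $\rho_\varphi(z)=|f_1(z)|$, and $f'(z)$ is preserved along each trajectory $s\mapsto\varphi(z,s)$; this is exactly what turns \eqref{eq:def-hat-UV-general} (with $G=G_{\varrho,L}$) into \eqref{eq:def-hat-G-U}. Shrinking $U$ so that \eqref{conrho} holds, Proposition~\ref{prop:iso-cohomology-groups-for-general} yields
\[
\boldsymbol{R}\varGamma_{G_{\varrho,L}\cap U}(U;\mathscr{O}_X) \earrow
\boldsymbol{R}\varGamma_{\widehat{G}_{\varrho,L}\cap\widehat{U}_{\varrho,r,\theta}}(\widehat{U}_{\varrho,r,\theta};\pi_\eta^{-1}\mathscr{O}_X).
\]
Taking $H^d$ and the filtered direct limit over the admissible $(\varrho,r,\theta,L,U)$ identifies $\mathscr{C}^{\mathbb{R}}_{Y|X,z^*_0}$ with $\varinjlim H^d_{\widehat{G}_{\varrho,L}\cap\widehat{U}_{\varrho,r,\theta}}(\widehat{U}_{\varrho,r,\theta};\pi_\eta^{-1}\mathscr{O}_X)$.

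Next, applying $\boldsymbol{R}\varGamma_{\widehat{G}_{\varrho,L}\cap\widehat{U}_{\varrho,r,\theta}}(\widehat{U}_{\varrho,r,\theta};\,\cdot\,)$ to the short exact sequence above produces the long exact sequence
\[
\cdots \to H^{d-1}(\mathscr{O}_{\widehat{X}}) \xrightarrow{\partial_\eta} H^{d-1}(\mathscr{O}_{\widehat{X}}) \to H^d(\pi_\eta^{-1}\mathscr{O}_X) \to H^d(\mathscr{O}_{\widehat{X}}) \xrightarrow{\partial_\eta} H^d(\mathscr{O}_{\widehat{X}}) \to \cdots
\]
(supports and ambient spaces suppressed). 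Since filtered direct limits are exact, taking the limit and applying \eqref{eq:vanishing-cohomology-edge} in degree $d-1$ leaves the short exact sequence
\[
0 \to \mathscr{C}^{\mathbb{R}}_{Y|X,z^*_0} \to \widehat{C}^{\,\mathbb{R}}_{Y|X,z^*_0} \xrightarrow{\partial_\eta} \widehat{C}^{\,\mathbb{R}}_{Y|X,z^*_0},
\]
so the image of $\mathscr{C}^{\mathbb{R}}_{Y|X,z^*_0}$ is precisely $\Ker\partial_\eta = C^{\mathbb{R}}_{Y|X,z^*_0}$, giving the claimed canonical isomorphism.

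The sole nontrivial input is the vanishing \eqref{eq:vanishing-cohomology-edge}, whose proof the excerpt defers, and this is clearly the hard part. I would expect the argument to produce a Stein (or conic pseudoconvex) covering of $\widehat{U}_{\varrho,r,\theta}\setminus\widehat{G}_{\varrho,L}$ compatible with the sectorial structure in $\eta$, reducing via a \v{C}ech computation to the classical fact that $\mathscr{C}_{Y|X}$ is concentrated in degree $d$; heuristically the extra $\eta$-direction contributes only trivially because $\widehat{U}\setminus\widehat{G}$ fibers over its $z$-slices in a cohomologically trivial way. Modulo \eqref{eq:vanishing-cohomology-edge}, the theorem is a formal consequence of the long exact sequence above.
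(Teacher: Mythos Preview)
Your proposal is correct and follows essentially the same route as the paper: apply Proposition~\ref{prop:iso-cohomology-groups-for-general} to identify $\mathscr{C}^{\mathbb{R}}_{Y|X,z^*_0}$ with the limit of $H^d$ with coefficients in $\pi_\eta^{-1}\mathscr{O}_X$, feed the short exact sequence $0\to\pi_\eta^{-1}\mathscr{O}_X\to\mathscr{O}_{\widehat X}\xrightarrow{\partial_\eta}\mathscr{O}_{\widehat X}\to 0$ into the local cohomology functor to obtain the distinguished triangle, and use the deferred vanishing \eqref{eq:vanishing-cohomology-edge} in degree $d-1$ to conclude. The paper presents exactly these three steps in the discussion immediately preceding the theorem, and your remark that the vanishing is the only substantive input (proved later via a Stein covering argument, Proposition~\ref{prop:vanishing-hol-micro-param}) is accurate.
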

Let us show \eqref{eq:vanishing-cohomology-edge}.
We may assume 
$z^*_0 = (z''_0;\zeta'_0) = (0;1,0,\dots,0)$.  
Let $\boldsymbol{\kappa}:=(r,r',\varrho, \theta) \in \mathbb{R}^{4}$ be a $4$-tuple of positive constants with
\begin{equation}{\label{eq:kappa-conditions}}
0<\theta < \frac{\,\pi\,}{2},\quad 0<\varrho < 1,\quad 0< r < \varrho r'.
\end{equation}
Then we set 
\begin{equation}\label{eq:def-Gamma}
S_{\boldsymbol{\kappa}}:= S^{}_{r,\theta/4}=\{\eta \in \mathbb{C};\,0< |\eta| < r,\,\lvert\arg\eta| < \dfrac{\,\theta\,}{4}\}
\end{equation} 
and  define the open subset
\[
\widehat{U}_{\boldsymbol{\kappa}} := \smashoperator{\Bcap_{i=2}^{n}}\{(z,\eta) \in X \times S_{\boldsymbol{\kappa}};\,
 |z_1| < \varrho|\eta|,\, |z_i| < r'\}.
\]
We also define the closed cone
\[
\widehat{G}_{\boldsymbol{\kappa}}
:= \smashoperator{\Bcap_{i=2}^{d}}\{(z,\eta) \in \widehat{ X};\, \lvert\arg z_1| \leqslant \frac{\,\pi\,}{2} - \theta,\,
\varrho|z_i| \leqslant |\eta| \}.
\]
By using these subsets, we introduce objects corresponding to
$\widehat{C}^{\,\mathbb{R}}_{Y|X,z^*_0}$ and $C^{\mathbb{R}}_{Y|X,z^*_0}$
at $z^*_0$, which are easily manipulated by
\v{C}ech cohomology groups. 
\begin{defn}\label{def2.3}
We define
\begin{align*}
\widehat{C}^{\,\mathbb{R}}_{Y|X}(\boldsymbol{\kappa}) & := 
H^{d}_{\widehat{G}_{\boldsymbol{\kappa}}\cap \widehat{U}_{\boldsymbol{\kappa}}}\!
(\widehat{U}_{\boldsymbol{\kappa}};\mathscr{O}_{\WX}),
\\
C^{\mathbb{R}}_{Y|X}(\boldsymbol{\kappa}) &:= 
\Ker(\partial^{}_\eta\colon 
\widehat{C}^{\,\mathbb{R}}_{Y|X}(\boldsymbol{\kappa}) \to \widehat{C}^{\,\mathbb{R}}_{Y|X}(\boldsymbol{\kappa})).
\end{align*}
\end{defn}
Clearly we have
\[
\widehat{C}^{\,\mathbb{R}}_{Y|X,z^*_0} = \smashoperator{\varinjlim_{\boldsymbol{\kappa}}} \widehat{C}^{\,\mathbb{R}}_{Y|X}(\boldsymbol{\kappa})
\,\,\text{ and }\,\,
C^{\mathbb{R}}_{Y|X,z^*_0} = \smashoperator{\varinjlim_{\boldsymbol{\kappa}}} C^{\mathbb{R}}_{Y|X}(\boldsymbol{\kappa}),
\]
since families of closed cones and open subsets appearing in 
inductive limits of the both sides are equivalent with respect to inclusion of sets.
\begin{prop}\label{prop:vanishing-hol-micro-param}
If $k\ne d$, then
\[
H^{k}_{\widehat{G}_{\boldsymbol{\kappa}}\cap \widehat{U}_{\boldsymbol{\kappa}}}\!(\widehat{U}_{\boldsymbol{\kappa}};\mathscr{O}_{\WX}) = 0.
\]
In particular,  \eqref{eq:vanishing-cohomology-edge} holds.
\end{prop}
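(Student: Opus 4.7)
The plan is to reduce the local cohomology to the \v{C}ech cohomology of a natural $d$-fold open covering of the complement $\widehat{U}_{\boldsymbol{\kappa}}\setminus(\widehat{G}_{\boldsymbol{\kappa}}\cap\widehat{U}_{\boldsymbol{\kappa}})$. Writing $\widehat{G}_{\boldsymbol{\kappa}}\cap\widehat{U}_{\boldsymbol{\kappa}}=F_1\cap F_2\cap\cdots\cap F_d$ with
\[
F_1:=\widehat{U}_{\boldsymbol{\kappa}}\cap\{|\arg z_1|\leqslant \pi/2-\theta\},\quad F_i:=\widehat{U}_{\boldsymbol{\kappa}}\cap\{\varrho|z_i|\leqslant|\eta|\}\ (2\leqslant i\leqslant d),
\]
and setting $W_i:=\widehat{U}_{\boldsymbol{\kappa}}\setminus F_i$, the family $\{W_1,\dots,W_d\}$ is an open cover of that complement.

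The first preparatory step is Steinness. The holomorphic map $(z,\eta)\mapsto(z_1/\eta,z_2,\dots,z_n,\eta)$ is a biholomorphism from $\widehat{U}_{\boldsymbol{\kappa}}$ onto the Stein product $\{|w_1|<\varrho\}\times\{|z_j|<r';\,j=2,\dots,n\}\times S_{\boldsymbol{\kappa}}$, so Cartan B gives $H^k(\widehat{U}_{\boldsymbol{\kappa}};\mathscr{O}_{\WX})=0$ for $k\geqslant 1$. Each $W_i$ adjoins a pseudoconvex condition on top of $\widehat{U}_{\boldsymbol{\kappa}}$: $\{|\arg z_1|>\pi/2-\theta\}$ is a simply-connected domain in $\mathbb{C}_{z_1}$, while $\{\varrho|z_i|>|\eta|\}$ is biholomorphic (via $u:=\eta/(\varrho z_i)$) to $\mathbb{C}^{\times}\times\{|u|<1\}$. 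Since pseudoconvexity is preserved under finite intersection on a Stein ambient, every non-empty intersection $W_I:=\bigcap_{i\in I}W_i$ for $I\subset\{1,\dots,d\}$ is Stein.

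Combining the distinguished triangle
\[
\boldsymbol{R}\varGamma_{\widehat{G}_{\boldsymbol{\kappa}}\cap\widehat{U}_{\boldsymbol{\kappa}}}(\widehat{U}_{\boldsymbol{\kappa}};\mathscr{O}_{\WX})\to \boldsymbol{R}\varGamma(\widehat{U}_{\boldsymbol{\kappa}};\mathscr{O}_{\WX})\to \boldsymbol{R}\varGamma(\textstyle\bigcup_i W_i;\mathscr{O}_{\WX})\xrightarrow{+1}
\]
with Leray applied to the Stein cover $\{W_i\}$ reduces the claim to showing that $\check{H}^\bullet(\{W_i\};\mathscr{O}_{\WX})$ is concentrated in top degree $d-1$. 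The upper vanishing $\check{H}^k=0$ for $k\geqslant d$ is automatic because the cover has only $d$ members, and $H^0_{\widehat{G}_{\boldsymbol{\kappa}}\cap\widehat{U}_{\boldsymbol{\kappa}}}=0$ follows from analytic continuation on the connected Stein manifold $\widehat{U}_{\boldsymbol{\kappa}}$.

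The main obstacle is the vanishing in intermediate degrees $1\leqslant k\leqslant d-2$. I expect to treat this by a K\"unneth-type reduction: in the trivialising coordinates $(z_1/\eta,\varrho z_2/\eta,\dots,\varrho z_d/\eta,z_{d+1},\dots,z_n,\eta)$ the cover $\{W_i\}$ essentially factors as a product of $d$ one-dimensional coverings of a disc (the $i$-th being the pair "disc minus closed sector" together with an "inner disc"), each of whose \v{C}ech complex is acyclic in positive degrees; a spectral-sequence argument then concentrates the total cohomology in degree $d-1$. The delicate point is the residual coupling between $z_1$ and $\eta$ in $F_1$: the inequality $|\arg z_1|\leqslant \pi/2-\theta$ does not split cleanly through $w_1=z_1/\eta$, and the strict margin $|\arg\eta|<\theta/4<\theta$ built into $S_{\boldsymbol{\kappa}}$ is precisely what is needed to absorb this coupling by a Hartogs-type extension in the $\eta$-direction. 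Finally, \eqref{eq:vanishing-cohomology-edge} follows by passing to the inductive limit, since the cofinal systems of $(\varrho,r,\theta,L,U)$ and of $\boldsymbol{\kappa}$ yield the same direct limits.
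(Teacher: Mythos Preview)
Your setup through the case $k>d$ is correct and matches the paper: the same open covering $\{W_i\}_{i=1}^d$ of $\widehat{U}_{\boldsymbol{\kappa}}\smallsetminus\widehat{G}_{\boldsymbol{\kappa}}$ by pseudoconvex sets, the Steinness of $\widehat{U}_{\boldsymbol{\kappa}}$, and the resulting vanishing above degree $d$. The gap is in the range $k<d$, where your ``K\"unneth-type reduction'' is only a heuristic. In your coordinates $(z_1/\eta,\varrho z_2/\eta,\dots,\varrho z_d/\eta,z'',\eta)$ the cover does \emph{not} factor as a product: the bound $|z_i|<r'$ from $\widehat{U}_{\boldsymbol{\kappa}}$ becomes $|w_i|<\varrho r'/|\eta|$, which still depends on $\eta$, and the condition $|\arg z_1|\leqslant\pi/2-\theta$ becomes $|\arg(w_1\eta)|\leqslant\pi/2-\theta$, which is the coupling you yourself flagged. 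You never resolve either issue; the appeal to a ``Hartogs-type extension in the $\eta$-direction'' is not a proof.

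The paper's argument for $k<d$ avoids both obstacles by two choices you did not make. First, it uses the conditions $\varrho<1$ and $r<\varrho r'$ to replace $\widehat{U}_{\boldsymbol{\kappa}}$ by the larger set $\widehat{D}$ obtained by \emph{dropping} the constraints $|z_i|<r'$ for $2\leqslant i\leqslant d$ (this is legitimate because $\widehat{G}_{\boldsymbol{\kappa}}\cap\widehat{U}_{\boldsymbol{\kappa}}=\widehat{G}_{\boldsymbol{\kappa}}\cap\widehat{D}$). Second, it applies the biholomorphism $\varphi(z,\eta)=(z_1,\eta z_2,\dots,\eta z_d,z'',\eta)$, leaving $z_1$ \emph{unchanged}. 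Under $\varphi$ the closed set becomes $\widehat{K}_1\cap\widehat{K}_2$ with $\widehat{K}_1=\{|\arg z_1|\leqslant\pi/2-\theta\}$ and $\widehat{K}_2=\bigcap_{i=2}^d\{\varrho|z_i|\leqslant 1\}$, now completely decoupled from $\eta$. The proof then finishes not by a K\"unneth computation on the \v{C}ech complex but by the distinguished triangle
\[
\boldsymbol{R}\varGamma_{\widehat{K}_1\cap\widehat{K}_2\cap\widehat{D}}(\widehat{D};\mathscr{O}_{\WX})\to
\boldsymbol{R}\varGamma_{\widehat{K}_2\cap\widehat{D}}(\widehat{D};\mathscr{O}_{\WX})\to
\boldsymbol{R}\varGamma_{(\widehat{K}_2\smallsetminus\widehat{K}_1)\cap\widehat{D}}(\widehat{D}\smallsetminus\widehat{K}_1;\mathscr{O}_{\WX})\xrightarrow{+1},
\]
together with the standard lemma that $H^\nu_{\mathbb{D}^{k}\times U}(\mathbb{C}^k\times U;\mathscr{O})=0$ for $\nu\ne k$ and $U$ pseudoconvex, applied to the two right-hand terms. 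This is the missing idea in your proposal: rather than forcing a product structure on the \v{C}ech complex, one peels off the $z_1$-factor via a triangle and reduces to the polydisc case.
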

\begin{proof}
We set 
\begin{equation}
\label{cech1} 
\begin{aligned}
\widehat{V}^{(1)}_{\boldsymbol{\kappa}} &:= 
\{(z,\eta) \in \widehat{U}_{\boldsymbol{\kappa}};\,
\dfrac{\,\pi\,}{2} - \theta <  \arg z_1 < \dfrac{\,3\pi\,}{2} + \theta\}, \\
\widehat{V}^{(i)}_{\boldsymbol{\kappa}}&:= 
\{(z,\eta) \in \widehat{U}_{\boldsymbol{\kappa}};\,
\varrho|z_i| > |\eta|\} \qquad (2 \leqslant i \leqslant d).
\end{aligned}
\end{equation}
Since each  $\widehat{V}^{(i)}_{\boldsymbol{\kappa}}$ is  pseudoconvex  and 
 $\widehat{U}_{\boldsymbol{\kappa}}\smallsetminus \widehat{G}_{\boldsymbol{\kappa}}
=\smashoperator{\Bcup_{i=1}^d}\widehat{V}^{(i)}_{\boldsymbol{\kappa}}$, we have
$
H^{k}_{\widehat{G}_{\boldsymbol{\kappa}}\cap \widehat{U}_{\boldsymbol{\kappa}}}\!(\widehat{U}_{\boldsymbol{\kappa}};\mathscr{O}_{\WX})  = 0$ for $k > d$.
Let us show the assertion for $k < d$. As $\varrho < 1$ and $r < \varrho r'$ hold, we have
\[
\boldsymbol{R}\varGamma_{\widehat{G}_{\boldsymbol{\kappa}}\cap \widehat{U}_{\boldsymbol{\kappa}}}\!(\widehat{U}_{\boldsymbol{\kappa}};\mathscr{O}_{\WX})
\simeq
\boldsymbol{R}\varGamma_{\widehat{G}_{\boldsymbol{\kappa}}\cap \widehat{D}}(\widehat{D};\mathscr{O}_{\WX}),
\]
where 
\[
\widehat{D} :=\smashoperator{\Bcap_{i=d+1}^n} \{(z,\eta)\in X \times   S_{\boldsymbol{\kappa}};\,
|z_1| < \varrho|\eta|,\, |z_i| < r'\}.
\]
Let us consider the holomorphic mapping on $\widehat{ X}$ defined by
\[
\varphi(z, \eta) := (z_1,\eta z_2,\dots, \eta z_d,z'',\eta).
\]
Since $\varphi$ is bi-holomorphic on $X \times \mathbb{C}^{\times}$,
we have
\[
\boldsymbol{R}\varGamma_{\widehat{G}_{\boldsymbol{\kappa}}\cap \widehat{D}}(\widehat{D};\mathscr{O}_{\WX})
\simeq
\boldsymbol{R}\varGamma_{\widehat{K}\cap \widehat{D}}(\widehat{D};\mathscr{O}_{\WX}).
\]
Here we set $\widehat{K}:=\widehat{K}_1\cap\widehat{K}_2$ with
\[
\widehat{K}_1 := \{(z,\eta)\in \widehat{ X};\, \lvert \arg z_1| \leqslant \frac{\,\pi\,}{2} - \theta\} , 
\qquad
\widehat{K}_2 := 
\smashoperator{\Bcap_{i=2}^{d}}\{(z,\eta) \in \widehat{ X};\,
\varrho|z_i| \leqslant 1 \}.
\] 
Then we have the distinguished triangle
\[
\boldsymbol{R}\varGamma_{\widehat{K}\cap \widehat{D}}
( \widehat{D};\mathscr{O}_{\WX})
\to
\boldsymbol{R}\varGamma_{\widehat{K}_2\cap \widehat{D}}
( \widehat{D};\mathscr{O}_{\WX})
 \to
\boldsymbol{R}\varGamma_{(\widehat{K}_2\smallsetminus \widehat{K}_1)\cap \widehat{D}}( \widehat{D} \smallsetminus \widehat{K}_1;\mathscr{O}_{\WX})
\xrightarrow{+1}.
\]
Hence the claim of the proposition 
follows from the following well-known lemma.
\end{proof}
\begin{lem}
Let $\mathbb{D}$ be a closed disk with positive radius in $\mathbb{C}$ and
$U$ a pseudoconvex open subset in $\mathbb{C}^m$. Then 
\[
H^\nu_{\mathbb{D}^{k} \times U}(\mathbb{C}^{k} \times U; 
\mathscr{O}_{\mathbb{C}^{k+m}}) = 0\qquad (\nu \ne k).
\]
Furthermore, for any pseudoconvex open subsets $U_1 \subset U_2$
in $\mathbb{C}^{m}$ which are non-empty and connected, the following canonical morphism is injective\textup{:}
\[
H^{k}_{\mathbb{D}^{k} \times  U_2}\!(\mathbb{C}^{k} \times U_2;
\mathscr{O}_{\mathbb{C}^{k+m}}) \to
H^{k}_{\mathbb{D}^{k} \times  U_1}\!(\mathbb{C}^{k} \times U_1;
\mathscr{O}_{\mathbb{C}^{k+m}}).
\]
\end{lem}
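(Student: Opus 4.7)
The plan is to compute $H^\nu_F(Y; \mathscr{O})$ with $Y := \mathbb{C}^k \times U$ and $F := \mathbb{D}^k \times U$ by reducing to the cohomology of the complement $Y \smallsetminus F$, and then analyzing it via \v{C}ech theory with Laurent expansions. Since $Y$ is a product of pseudoconvex sets, Cartan's Theorem B gives $H^\nu(Y;\mathscr{O}) = 0$ for $\nu \ge 1$. The long exact sequence for local cohomology then reduces the problem to showing that $H^*(Y \smallsetminus F; \mathscr{O})$ is concentrated in degrees $0$ and $k-1$, with the degree-$0$ part equal to $\mathscr{O}(Y)$ via restriction (forcing $H^0_F = H^1_F = 0$) and the degree-$(k-1)$ part yielding the single non-zero $H^k_F$.

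For the cohomology of $Y \smallsetminus F$, I would cover it by the $k$ open sets $W_i := \{(z,u)\in Y \,;\, z_i\notin \mathbb{D}\}$. Each finite intersection $W_I := \bigcap_{i\in I} W_i$ is a product of pseudoconvex open sets, hence pseudoconvex and $\mathscr{O}$-acyclic; by Leray, the \v{C}ech complex of this covering computes $H^*(Y\smallsetminus F;\mathscr{O})$, and vanishing in degrees $\ge k$ is automatic. For the main calculation, set $T := \mathbb{C}\smallsetminus \mathbb{D}$; the Laurent expansion at $\infty$ gives a direct sum decomposition $\mathscr{O}(T) = \mathscr{O}(\mathbb{C}) \oplus \mathcal{P}$, where $\mathcal{P}$ consists of functions holomorphic at $\infty$ with value zero there. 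Iterating in each of the $k$ variables while carrying the parameter $u \in U$ along yields
\[
\mathscr{O}(W_I) = \bigoplus_{L \supset I^c} \mathcal{A}_L(U),
\]
where $\mathcal{A}_L(U)$ denotes holomorphic functions entire in $z_i$ for $i \in L$, principal parts in $z_i$ for $i \notin L$, and holomorphic in $u$.

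Under this splitting the \v{C}ech complex decomposes as $\bigoplus_L$ sub-complexes; for each $L$, the sub-complex agrees (up to a degree shift) with the augmented simplicial cochain complex of the full simplex on vertex set $L$, hence is acyclic except in at most one degree. An index count shows $\check{H}^\nu(Y\smallsetminus F;\mathscr{O}) = 0$ for $1 \le \nu \le k - 2$, with $\check{H}^0 = \mathscr{O}(Y)$ (the identification with $H^0(Y)$ being Hartogs extension when $k \ge 2$) and with $\mathcal{A}_\emptyset(U)$ appearing as $\check{H}^{k-1}$. Substituting into the long exact sequence yields $H^\nu_F(Y;\mathscr{O}) = 0$ for $\nu \ne k$ and $H^k_F(Y;\mathscr{O}) \simeq \mathcal{A}_\emptyset(U)$.

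For the second assertion, the isomorphism $H^k_F \simeq \mathcal{A}_\emptyset(U)$ is functorial in $U$, so the canonical morphism in the statement becomes the restriction $\mathcal{A}_\emptyset(U_2) \to \mathcal{A}_\emptyset(U_1)$, which is injective by analytic continuation of holomorphic functions in $u$ on the connected open set $U_2$. The main technical obstacle is to justify that the Laurent decomposition really gives a direct sum of convergent holomorphic functions with holomorphic dependence on $u$ — in particular that the positive-power parts converge as entire functions in the relevant variables. This follows from the standard estimate $|c_\alpha(u)| \le M(R_1,\ldots,R_k;u)\prod_i R_i^{-\alpha_i}$ on Laurent coefficients by letting $R_i \to \infty$ in positive-$\alpha_i$ directions, but the estimates must be executed locally uniformly in $u \in U$.
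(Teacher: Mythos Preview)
The paper does not prove this lemma; it is stated as ``well-known'' and invoked without argument. Your proof is correct and follows the standard route: reduce to the cohomology of the complement via the long exact sequence (using that $\mathbb{C}^k\times U$ is Stein), compute $H^*(Y\smallsetminus F;\mathscr{O})$ by Leray's theorem with the pseudoconvex covering $\{W_i\}$, and split the \v{C}ech complex by multivariable Laurent decomposition into sub-complexes indexed by $L\subset\{1,\dots,k\}$, each of which is (a shift of) an augmented simplicial complex and hence acyclic except for the extremes $L=\{1,\dots,k\}$ and $L=\emptyset$. The functoriality in $U$ of the identification $H^k_F\simeq \mathcal{A}_\emptyset(U)$ then reduces injectivity to the identity theorem, exactly as you say.

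One small remark: your statement ``$\check H^0=\mathscr{O}(Y)$'' is literally true only for $k\ge 2$; when $k=1$ the two contributions ($L=\{1\}$ and $L=\emptyset$) both sit in degree~$0$, so $\check H^0=\mathscr{O}(Y)\oplus\mathcal{A}_\emptyset(U)$. This does not affect the conclusion, since the exact sequence $0\to H^0_F\to\mathscr{O}(Y)\to\mathscr{O}(Y)\oplus\mathcal{A}_\emptyset(U)\to H^1_F\to 0$ still gives $H^0_F=0$ and $H^1_F\simeq\mathcal{A}_\emptyset(U)$; you clearly have this in mind, but it is worth stating the $k=1$ case separately.
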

Next,  we set
\begin{align*}
U_{\boldsymbol{\kappa}}& :=\smashoperator{\Bcap_{i=2}^{n}}
\{z \in X;\, |z_1| < \varrho r,\, |z_i| < r' \},
\\
G_{\boldsymbol{\kappa}} & :=\smashoperator{\Bcap_{i=2}^{d}}\{
z \in X;\, \lvert\arg z_1| \leqslant \frac{\,\pi\,}{2} - \theta,\, \varrho^2|z_i| \leqslant |z_1|\}.
\end{align*}
\begin{cor}\label{cor2.6}
If $k\ne d$, then
\[
H^{k}_{G_{\boldsymbol{\kappa}}\cap U_{\boldsymbol{\kappa}}}\!(U_{\boldsymbol{\kappa}};\mathscr{O}_{X })  = 0,
\]
and there exists the following exact sequence\textup{:}
\begin{equation}\label{eq:1.22}
0\to 
H^d_{G_{\boldsymbol{\kappa}}\cap U_{\boldsymbol{\kappa}}}\!(U_{\boldsymbol{\kappa}}; \mathscr{O}_{X}) \to
\widehat{C}^{\,\mathbb{R}}_{Y|X}(\boldsymbol{\kappa})
\xrightarrow{\partial_\eta}
\widehat{C}^{\,\mathbb{R}}_{Y|X}(\boldsymbol{\kappa})
\to 0.
\end{equation}
\end{cor}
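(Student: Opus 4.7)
The plan is to deduce Corollary \ref{cor2.6} from Proposition \ref{prop:vanishing-hol-micro-param} by combining Remark \ref{rem1.3} (to transport the local cohomology on $U_{\boldsymbol{\kappa}}$ to its hatted counterpart) with the short exact sequence $0\to\pi_\eta^{-1}\!\mathscr{O}_X\to\mathscr{O}_{\WX}\xrightarrow{\partial_\eta}\mathscr{O}_{\WX}\to 0$ already exploited in Section \ref{sec:rep-micro}. The vanishings for $k\notin\{d,d+1\}$ and the injection $H^d\hookrightarrow\widehat{C}^{\,\mathbb{R}}_{Y|X}(\boldsymbol{\kappa})$ will fall out of the associated long exact sequence; the surjectivity of $\partial_\eta$, i.e.\ the single vanishing $H^{d+1}_{G_{\boldsymbol{\kappa}}\cap U_{\boldsymbol{\kappa}}}(U_{\boldsymbol{\kappa}};\mathscr{O}_X)=0$, will require a separate \v{C}ech argument.

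First I would apply Remark \ref{rem1.3} with $\mathcal F=\mathscr{O}_X$, $U=U_{\boldsymbol{\kappa}}$, $G=G_{\boldsymbol{\kappa}}$, $S=S_{\boldsymbol{\kappa}}$, and the deformation $\varphi(z,s):=(sz_1,z_2,\dots,z_n)$ of Example \ref{es:typical-deformation} (so $\rho_\varphi(z)=|z_1|$ and $Z=\{z_1=0\}$). A direct comparison against \eqref{eq:def-hat-UV-general} and the definitions of $\widehat{U}_{\boldsymbol{\kappa}},\widehat{G}_{\boldsymbol{\kappa}}$ shows that the $\widehat G,\widehat U$ produced by \eqref{eq:def-hat-UV-general} coincide with $\widehat{G}_{\boldsymbol{\kappa}},\widehat{U}_{\boldsymbol{\kappa}}$ on the relevant loci: the combination $|\eta|<r$ and $|z_1|<\varrho|\eta|$ absorbs the bound $|z_1|<\varrho r$, and $U'$ in Remark \ref{rem1.3} equals $U_{\boldsymbol{\kappa}}$. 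Remark \ref{rem1.3} thus gives the isomorphism
\[
\boldsymbol R\varGamma_{G_{\boldsymbol{\kappa}}\cap U_{\boldsymbol{\kappa}}}(U_{\boldsymbol{\kappa}};\mathscr{O}_X)\simeq\boldsymbol R\varGamma_{\widehat{G}_{\boldsymbol{\kappa}}\cap\widehat{U}_{\boldsymbol{\kappa}}}(\widehat{U}_{\boldsymbol{\kappa}};\pi_\eta^{-1}\!\mathscr{O}_X).
\]
Feeding the differentiation sequence into $\boldsymbol R\varGamma_{\widehat{G}_{\boldsymbol{\kappa}}\cap\widehat{U}_{\boldsymbol{\kappa}}}(\widehat{U}_{\boldsymbol{\kappa}};{-})$ and invoking Proposition \ref{prop:vanishing-hol-micro-param} to concentrate the middle and right complexes in degree $d$ with value $\widehat{C}^{\,\mathbb{R}}_{Y|X}(\boldsymbol{\kappa})$, the long exact sequence of the resulting triangle immediately yields $H^k_{G_{\boldsymbol{\kappa}}\cap U_{\boldsymbol{\kappa}}}(U_{\boldsymbol{\kappa}};\mathscr{O}_X)=0$ for $k\notin\{d,d+1\}$, together with the four-term exact sequence
\[
0\to H^d_{G_{\boldsymbol{\kappa}}\cap U_{\boldsymbol{\kappa}}}(U_{\boldsymbol{\kappa}};\mathscr{O}_X)\to\widehat{C}^{\,\mathbb{R}}_{Y|X}(\boldsymbol{\kappa})\xrightarrow{\partial_\eta}\widehat{C}^{\,\mathbb{R}}_{Y|X}(\boldsymbol{\kappa})\to H^{d+1}_{G_{\boldsymbol{\kappa}}\cap U_{\boldsymbol{\kappa}}}(U_{\boldsymbol{\kappa}};\mathscr{O}_X)\to 0.
\]

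The remaining and main step is to show the last term vanishes. I would imitate the \v{C}ech argument of Proposition \ref{prop:vanishing-hol-micro-param} directly on $U_{\boldsymbol{\kappa}}$, using the cover $\{V^{(i)}\}_{i=1}^d$ of $U_{\boldsymbol{\kappa}}\smallsetminus G_{\boldsymbol{\kappa}}$ obtained from \eqref{cech1} by dropping $\eta$:
\[
V^{(1)}:=\{z\in U_{\boldsymbol{\kappa}};\,\tfrac{\pi}{2}-\theta<\arg z_1<\tfrac{3\pi}{2}+\theta\},\qquad V^{(i)}:=\{z\in U_{\boldsymbol{\kappa}};\,\varrho^2|z_i|>|z_1|\}\ \ (2\leqslant i\leqslant d).
\]
Each $V^{(i)}$ is pseudoconvex: $V^{(1)}$ is a wedge in the $z_1$-plane times a polydisc, while for $i\geqslant 2$ one has $V^{(i)}\subset\{z_i\ne 0\}$ and the holomorphic change of variables $(z_1,\dots,z_n)\mapsto(z_1/z_i,z_2,\dots,z_n)$ identifies $V^{(i)}$ with the intersection of $\{|w_1|<\varrho^2\}$, the polydisc $\{|w_j|<r'\}$ punctured along $w_i=0$, and the sublevel set $\{|w_1 w_i|<\varrho r\}$ of the plurisubharmonic function $\log|w_1 w_i|$; each of these is pseudoconvex, so their intersection is. Since finite intersections of pseudoconvex open subsets of $\mathbb{C}^n$ remain pseudoconvex, the \v{C}ech complex of $\{V^{(i)}\}$ computes $H^\ast(U_{\boldsymbol{\kappa}}\smallsetminus G_{\boldsymbol{\kappa}};\mathscr{O}_X)$ by Leray; being of length $d$, it forces $H^k(U_{\boldsymbol{\kappa}}\smallsetminus G_{\boldsymbol{\kappa}};\mathscr{O}_X)=0$ for $k\geqslant d$. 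Combined with the Stein-ness of $U_{\boldsymbol{\kappa}}$ in the long exact sequence of the pair $(U_{\boldsymbol{\kappa}},U_{\boldsymbol{\kappa}}\smallsetminus G_{\boldsymbol{\kappa}})$, this gives $H^{d+1}_{G_{\boldsymbol{\kappa}}\cap U_{\boldsymbol{\kappa}}}(U_{\boldsymbol{\kappa}};\mathscr{O}_X)\simeq H^d(U_{\boldsymbol{\kappa}}\smallsetminus G_{\boldsymbol{\kappa}};\mathscr{O}_X)=0$. Substituting into the four-term sequence produces \eqref{eq:1.22}. The main delicate point is the pseudoconvexity verification for $V^{(i)}$ with $i\geqslant 2$, where the defining inequality is anti-plurisubharmonic in $z_i$ and the reciprocal change of coordinates above is essential.
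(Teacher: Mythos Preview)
Your proof is correct and follows essentially the same strategy as the paper's: the same \v{C}ech covering $\{V^{(i)}_{\boldsymbol{\kappa}}\}$ (which the paper records as \eqref{cech2}), the same invocation of Remark \ref{rem1.3} to pass to $\widehat{U}_{\boldsymbol{\kappa}}$, and the same use of Proposition \ref{prop:vanishing-hol-micro-param} in the long exact sequence of the $\partial_\eta$-triangle. The only difference is the order of presentation---the paper runs the \v{C}ech argument first to kill $k>d$ and then uses the triangle for $k<d$ and \eqref{eq:1.22}, whereas you run the triangle first and return to \v{C}ech only for $k=d+1$---and your explicit pseudoconvexity check for $V^{(i)}$, $i\geqslant 2$, supplies a detail the paper simply asserts.
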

\begin{proof}
We set
\allowdisplaybreaks
\begin{equation}
\label{cech2} 
\begin{aligned}
V^{(1)}_{\boldsymbol{\kappa}}&:= 
\{z \in U_{\boldsymbol{\kappa}};\,
\dfrac{\,\pi\,}{2} - \theta <  \arg z_1 < \dfrac{\,3\pi\,}{2} + \theta\}, \\
V^{(i)}_{\boldsymbol{\kappa}} &:= 
\{z \in U_{\boldsymbol{\kappa}};\,\varrho^{2}|z_i| > |z_1| \} \qquad (2 \leqslant i \leqslant d).
\end{aligned}
\end{equation}
Since each  $V^{(i)}_{\boldsymbol{\kappa}}$ is  pseudoconvex  and 
 $U_{\boldsymbol{\kappa}}\smallsetminus G_{\boldsymbol{\kappa}}=\smashoperator{\Bcup_{i=1}^d}V^{(i)}_{\boldsymbol{\kappa}}$, 
we have
$
H^{k}_{G_{\boldsymbol{\kappa}}\cap U_{\boldsymbol{\kappa}}}\!(U_{\boldsymbol{\kappa}};\mathscr{O}_{X })  = 0$ for $k > d$.
By Proposition \ref{prop:iso-cohomology-groups-for-general}  and Remark \ref{rem1.3}, we have
the following distinguished triangle
\[
\boldsymbol{R}\varGamma_{G_{\boldsymbol{\kappa}}\cap U_{\boldsymbol{\kappa}}}\!(U_{\boldsymbol{\kappa}}; \mathscr{O}_{X}) \to
\boldsymbol{R}\varGamma_{\widehat{G}_{\boldsymbol{\kappa}}\cap \widehat{U}_{\boldsymbol{\kappa}}}\!
(\widehat{U}_{\boldsymbol{\kappa}};\mathscr{O}_{\WX})
\xrightarrow{\partial_\eta}
\boldsymbol{R}\varGamma_{\widehat{G}_{\boldsymbol{\kappa}}\cap \widehat{U}_{\boldsymbol{\kappa}}}\!
(\widehat{U}_{\boldsymbol{\kappa}};\mathscr{O}_{\WX}) 
\xrightarrow{+1}.
\]
By Definition \ref{def2.3} and Proposition \ref{prop:vanishing-hol-micro-param}, we have \eqref{eq:1.22} and
$H^{k}_{G_{\boldsymbol{\kappa}}\cap U_{\boldsymbol{\kappa}}}\!(U_{\boldsymbol{\kappa}};\mathscr{O}_{X })  = 0$ for $k < d$.
\end{proof}
Note that, since
\[
\widehat{U}_{\boldsymbol{\kappa}} \subset 
\pi_{\eta}^{-1}(U_{\boldsymbol{\kappa}}), \quad 
\pi_{\eta}^{-1}(G_{\boldsymbol{\kappa}}) \cap\widehat{U}_{\boldsymbol{\kappa}} \subset
\widehat{G}_{\boldsymbol{\kappa}} \cap\widehat{U}_{\boldsymbol{\kappa}}, 
\]
the  morphism $H^d_{G_{\boldsymbol{\kappa}}\cap U_{\boldsymbol{\kappa}}}\!(U_{\boldsymbol{\kappa}}; \mathscr{O}_{X}) \to
\widehat{C}^{\,\mathbb{R}}_{Y|X}(\boldsymbol{\kappa})$ is defined 
by a natural way associated with inclusion of sets.
By Proposition \ref{prop:vanishing-hol-micro-param} and \eqref{eq:1.22}, we obtain the following corollary.
\begin{cor}\label{cor:iso-hol-micro-at-1}
Let $z^*_0 = (0;1,0,\dots,0)$. Then there exist  isomorphisms
\begin{equation}\label{eq:canonical-morphisms-for-G}
\xymatrix @C=.5em @R=2ex{
H^d_{G_{\boldsymbol{\kappa}}\cap U_{\boldsymbol{\kappa}}}\!
(U_{\boldsymbol{\kappa}};\mathscr{O}_{X}) \ar@<-.4ex>[r]^-{\dsim} \ar[d]
&
C^{\mathbb{R}}_{Y|X}(\boldsymbol{\kappa}) \ar[d]
\\
\mathscr{C}^{\mathbb{R}}_{Y|X,z^*_0}
\ar@<-.4ex>[r]^-{\dsim} &
\smashoperator{\varinjlim\limits_{\boldsymbol{\kappa}}}
C^{\mathbb{R}}_{Y|X}(\boldsymbol{\kappa}).
}\end{equation}
\end{cor}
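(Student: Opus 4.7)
The plan is to assemble the corollary from ingredients already in place: Corollary \ref{cor2.6} yields the top horizontal isomorphism through the exact sequence \eqref{eq:1.22}, the Theorem immediately preceding Proposition \ref{prop:vanishing-hol-micro-param} together with the cofinality remark after Definition \ref{def2.3} yields the bottom horizontal isomorphism, and commutativity of the square reduces to naturality of the canonical morphism built in Proposition \ref{prop:iso-cohomology-groups-for-general}. No new geometric input is required beyond what has been proved.

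For the top map, I would simply read off from the short exact sequence
\[
0 \to H^d_{G_{\boldsymbol{\kappa}}\cap U_{\boldsymbol{\kappa}}}\!(U_{\boldsymbol{\kappa}};\mathscr{O}_X) \to \widehat{C}^{\,\mathbb{R}}_{Y|X}(\boldsymbol{\kappa}) \xrightarrow{\partial_\eta} \widehat{C}^{\,\mathbb{R}}_{Y|X}(\boldsymbol{\kappa}) \to 0
\]
that the leftmost term is canonically isomorphic to $\Ker(\partial_\eta)$, which by Definition \ref{def2.3} is precisely $C^{\mathbb{R}}_{Y|X}(\boldsymbol{\kappa})$. For the bottom map, the Theorem gives the isomorphism $\mathscr{C}^{\mathbb{R}}_{Y|X, z^*_0} \cong C^{\mathbb{R}}_{Y|X, z^*_0}$, and the remark following Definition \ref{def2.3} expresses the right-hand side as $\varinjlim_{\boldsymbol{\kappa}} C^{\mathbb{R}}_{Y|X}(\boldsymbol{\kappa})$; composing the two gives the desired arrow.

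For commutativity, I would note that both horizontal isomorphisms are ultimately induced by the same canonical morphism \eqref{eq.1.3} from Proposition \ref{prop:iso-cohomology-groups-for-general}, just computed for different choices of data, and that the vertical maps are the tautological maps into the corresponding inductive limits. Strictly speaking one must verify that the family $\{(G_{\boldsymbol{\kappa}}, U_{\boldsymbol{\kappa}})\}_{\boldsymbol{\kappa}}$ is cofinal in the family $\{(G_{\varrho, L}, U)\}_{\varrho, L, U}$ defining $\mathscr{C}^{\mathbb{R}}_{Y|X, z^*_0}$: taking $\zeta'_0=(1,0,\dots,0)$ so that $f_1(z)=z_1$ and $f'(z)=(0,z_2,\dots,z_d,0,\dots,0)$, choosing $L = \{\tau \in \mathbb{C};\, |\arg \tau| \leqslant \pi/2 - \theta\}$, and absorbing the exponent in the inequality $\varrho^2 |z_i| \leqslant |z_1|$ by a reparametrization of $\varrho$ exhibits the required cofinality.

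The hard part, to the extent there is one, is this final cofinality bookkeeping — i.e., matching the four-parameter family $\boldsymbol{\kappa} = (r, r', \varrho, \theta)$ against the three-parameter family $(\varrho, L, U)$ — but it presents no genuine obstacle, so the corollary falls out by combining the preceding results.
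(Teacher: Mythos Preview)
Your proposal is correct and follows essentially the same approach as the paper, which simply states that the corollary follows from Proposition~\ref{prop:vanishing-hol-micro-param} and the exact sequence~\eqref{eq:1.22}. You have spelled out in more detail the cofinality check that the paper dismisses with ``since families of closed cones and open subsets appearing in inductive limits of the both sides are equivalent with respect to inclusion of sets'' (the remark just after Definition~\ref{def2.3}), but the underlying argument is the same.
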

We now  consider 
a \v{C}ech representation
of $C^{\mathbb{R}}_{Y|X}(\boldsymbol{\kappa})$. Recall $\widehat{V}^{(i)}_{\boldsymbol{\kappa}}\subset \widehat{X}$ of \eqref{cech1} 
and $V^{(i)}_{\boldsymbol{\kappa}}\subset X$  of \eqref{cech2} for  $1\leqslant i \leqslant d$.  
Let $\mathcal{P}_d$ be the set of all the subsets of $\{1,\dots,d\}$ and
$\mathcal{P}^\vee_d\subset \mathcal{P}_d$ consisting of $\alpha \in \mathcal{P}_d$
with $\#\alpha = d-1$ ($\#\alpha$ denotes the number of elements in $\alpha$).
For $\alpha \in \mathcal{P}_d$, we define
\begin{equation}{\label{eq:def-alpha-V}}
\widehat{V}^{(\alpha)}_{\boldsymbol{\kappa}}
:= \smashoperator{\Bcap_{i \in \alpha}} \widehat{V}^{(i)}_{\boldsymbol{\kappa}},
\qquad V^{(\alpha)}_{\boldsymbol{\kappa}} 
:= \smashoperator{\Bcap_{i \in \alpha}} V^{(i)}_{\boldsymbol{\kappa}}.
\end{equation}
In what follows, the symbol $*$ denotes the set $\{1,\dots,d\}$ 
by convention, for example,
\[
\widehat{V}^{(*)}_{\boldsymbol{\kappa}} := \widehat{V}^{(\{1,\dots,d\})}_{\boldsymbol{\kappa}}
= \smashoperator{\Bcap_{i=1}^d} \widehat{V}^{(i)}_{\boldsymbol{\kappa}}. 
\]
As each
$\widehat{V}^{(\alpha)}_{\boldsymbol{\kappa}}$ 
(resp.\ $V^{(\alpha)}_{\boldsymbol{\kappa}}$) is pseudoconvex, we have
\begin{align*}
\widehat{C}^{\,\mathbb{R}}_{Y|X}(\boldsymbol{\kappa}) & =
\varGamma(\widehat{V}^{(*)}_{\boldsymbol{\kappa}};\mathscr{O}^{}_{\WX})\big/
\smashoperator{\Sum_{\alpha \in\mathcal{P}^\vee_d}}
\varGamma(\widehat{V}^{(\alpha)}_{\boldsymbol{\kappa}};\mathscr{O}^{}_{\WX}), 
\\
{C}^{\mathbb{R}}_{Y|X}(\boldsymbol{\kappa}) & =\{u \in 
\widehat{C}^{\,\mathbb{R}}_{Y|X}(\boldsymbol{\kappa});\,\partial_\eta u  = 0 \},\\
H^d_{G_{\boldsymbol{\kappa}}\cap U_{\boldsymbol{\kappa}}}\! (U_{\boldsymbol{\kappa}};\mathscr{O}_{X}) &
=
\varGamma(V^{(*)}_{\boldsymbol{\kappa}};\mathscr{O}_{X})\big/
\smashoperator{\Sum_{\alpha \in \mathcal{P}^\vee_d}}
\varGamma(V^{(\alpha)}_{\boldsymbol{\kappa}};\mathscr{O}_{X}).
\end{align*}
Since $\widehat{V}^{(\alpha)}_{\boldsymbol{\kappa}} 
\subset \pi_\eta^{-1}(V^{(\alpha)}_{\boldsymbol{\kappa}})$ holds,
we can regard a holomorphic function $\varphi$ on $V^{(\alpha)}_{\boldsymbol{\kappa}}$
as that on $\widehat{V}^{(\alpha)}_{\boldsymbol{\kappa}}$, and thus, we have the natural morphism
 $\varGamma(V^{(\alpha)}_{\boldsymbol{\kappa}};\mathscr{O}_{X})\to 
\varGamma(\widehat{V}^{(\alpha)}_{\boldsymbol{\kappa}};\mathscr{O}_{\WX})$.
This induces the canonical morphism between the \v{C}ech cohomology groups
\[
H^d_{G_{\boldsymbol{\kappa}}\cap U_{\boldsymbol{\kappa}}}\! (U_{\boldsymbol{\kappa}};\mathscr{O}_{X})
=
\dfrac{\varGamma(V^{(*)}_{\boldsymbol{\kappa}};\mathscr{O}_{X})}
{\smashoperator{\Sum_{\alpha \in \mathcal{P}^\vee_d}}
\varGamma(V^{(\alpha)}_{\boldsymbol{\kappa}};\mathscr{O}_{X})} 
\to
\{u \in \dfrac{\varGamma(\widehat{V}^{(*)}_{\boldsymbol{\kappa}};\mathscr{O}_{\WX})}
{\smashoperator{\Sum_{\alpha \in \mathcal{P}^\vee_d}}
\varGamma(\widehat{V}^{(\alpha)}_{\boldsymbol{\kappa}};\mathscr{O}_{\WX})};\,
\partial_\eta u = 0\}
= C^{\mathbb{R}}_{Y|X}(\boldsymbol{\kappa}).
\]
Clearly this morphism coincides with 
\eqref{eq:canonical-morphisms-for-G}, hence  it gives
an isomorphism by Corollary \ref{cor:iso-hol-micro-at-1}. 

%%%%%%%%%%%%%%%%%%%%%%%%%%%%%%%%%%%%%%%%%%%%%%%%
%
%%%%%%%%%%%%%%%%%%%%%%%%%%%%%%%%%%%%%%%%%%%%%%%%

\section{Cohomological Representation of $\mathscr{E}^{\mathbb{R}}_X$ with an Apparent  Parameter}\label{sec:actions_ER}
We inherit  the same notation from the previous section. Set $X^2:= X \times X$ with the coordinates $(z,w)$,  and let 
$(z,w,\eta)$ be coordinates of $\widehat{ X}^2:= X^2 \times \mathbb{C}$.  
Let
$\varDelta\subset X^2$  be the diagonal set. We identify  $X$  with $\varDelta $, and
\[
T^*X=\{(z;\zeta)\} \simeq\{(z,z;\zeta,-\zeta)\}= T_ \varDelta ^*X^2.
\]
Let $\mathscr{E}^\mathbb{R}_X$ denote the sheaf of pseudodifferential operators on
the cotangent bundle $T^*X$ of $X$, 
and $z^*_0 = (z^{}_0;\zeta^{}_0) \in T^*X$ with $|\zeta^{}_0| = 1$. 
%For simplicity, we assume $z^{}_0 = 0$ in what follows.
Set
\[
 f_{\varDelta,1}(z,w) := \langle z-w,\zeta^{}_0 \rangle, \qquad
 f'_{\varDelta}(z,w) := z -w- \langle z-w,\zeta^{}_0\rangle \overline{\zeta}{}_0.
\]
See also Appendix \ref{ap:general_C} for a generalization of the  mappings above and the following arguments on
a complex manifold. For a closed convex cone $L \subset \mathbb{C}$, set
\[
G^{}_{\varDelta,\varrho,L} := \{(z,w) \in X^2;\,
\varrho^2|f'_{\varDelta}(z,w)| \leqslant |f_{\varDelta,1}(z,w)|,\,
f_{\varDelta,1}(z,w) \in L\}.
\]
Then it follows from the definition of $\mathscr{E}^{\mathbb{R}}_X$ that
we have
\[
\mathscr{E}^{\mathbb{R}}_{X,z^*_0}
= \smashoperator{\varinjlim_{\varrho,L,U}}
H^n_{G_{\varDelta,\varrho,L} \cap U}(U;\Oo).
\]
Here $\Oo$ is the sheaf of holomorphic $n$-forms 
with respect to $dw_1, \dots, dw_n$,
$U \subset X^2$ and $L \subset \mathbb{C}$ 
range through open neighborhoods of $(z^{}_0, z^{}_0)$ and
closed convex cones in $\mathbb{C}$
with $L \subset \{\tau \in \mathbb{C};\, \Re \tau > 0\} \cup \{0\}$ 
respectively.

Now we introduce the corresponding cohomology group with an apparent  parameter.
Set, for an open subset $U \subset X^2$ and a closed convex cone $L \subset \mathbb{C}$,
\begin{align*}
\widehat{U}_{\varDelta,\varrho,r,\theta} &:= \{(z,w,\eta) \in U \times S_{r,\theta};\,
|f_{\varDelta,1}(z,w)| < \varrho |\eta|\}, \\
\widehat{G}_{\varDelta,\varrho,L} &:= \{(z,w,\eta) \in\widehat{ X}^2;\,
\varrho|f'_{\varDelta}(z,w)| \leqslant |\eta|,\,
f_{\varDelta,1}(z,w) \in L
 \}.
\end{align*}
\begin{defn}
We set
\[
\widehat{E}^{\mathbb{R}}_{X,z^*_0} :=
\smashoperator{\varinjlim_{\varrho,r,\theta,L,U}}
H^n_{\widehat{G}_{\varDelta,\varrho,L}\cap \widehat{U}_{\varDelta,\varrho,r,\theta}}\!
(\widehat{U}_{\varDelta,\varrho,r,\theta}; \OOO).
\]
Here $\OOO$ is the sheaf of holomorphic $n$-forms 
with respect to $dw_1, \dots, dw_n$,
$U \subset X^2$ and $L \subset \mathbb{C}$ 
range through open neighborhoods of $(z_0,z^{}_0)$ and
closed convex cones in $\mathbb{C}$
with $L \subset \{\tau \in \mathbb{C};\, \Re \tau > 0\} \cup \{0\}$ 
respectively.
Further we define
\[
E^{\mathbb{R}}_{X,z^*_0} :=
\Ker(\partial^{}_\eta\colon \widehat{E}^{\mathbb{R}}_{X,z^*_0} \to \widehat{E}^{\mathbb{R}}_{X,z^*_0}).
\]
\end{defn}
From the consequence of the previous section, 
the following theorem immediately follows.
\begin{thm}
There exists  the canonical isomorphism
\[
\mathscr{E}^{\mathbb{R}}_{X,z^*_0} \earrow
E^{\mathbb{R}}_{X,z^*_0}.
\]
\end{thm}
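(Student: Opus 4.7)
The strategy is to transport the entire argument of Section \ref{sec:rep-micro} from the pair $(X, Y)$ to the pair $(X^2, \varDelta)$. Introduce the coordinate change $(z,w) \mapsto (z,u)$ on $X^2$ with $u := z - w$; then $\varDelta$ becomes the linear submanifold $\{u = 0\}$ of complex codimension $n$, and $f_{\varDelta,1}, f'_\varDelta$ become exactly $\langle u,\zeta_0\rangle$ and $u - \langle u,\zeta_0\rangle\overline{\zeta}_0$. In this way the present setup is identical to that of Section \ref{sec:rep-micro} with $n$ playing the role of $d$ and the $u$-coordinates playing the role of $z'$.

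First I would apply Proposition \ref{prop:iso-cohomology-groups-for-general} to the base manifold $X^2$, the closed set $G = G_{\varDelta,\varrho,L}$, and the deformation mapping
\[
\varphi_\varDelta((z,w),s) := (z,\, w + (s-1)\langle z-w,\zeta_0\rangle\overline{\zeta}_0),
\]
which is the obvious counterpart of Example \ref{es:typical-deformation} in these coordinates. One checks immediately that $\rho_{\varphi_\varDelta}(z,w) = |f'_\varDelta(z,w)|$, so the sets $\widehat{G}, \widehat{U}$ of \eqref{eq:def-hat-UV-general} coincide with $\widehat{G}_{\varDelta,\varrho,L}, \widehat{U}_{\varDelta,\varrho,r,\theta}$. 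With $\mathcal{F} = \Oo$, Proposition \ref{prop:iso-cohomology-groups-for-general} then gives an isomorphism between $\boldsymbol{R}\varGamma_{G_{\varDelta,\varrho,L}\cap U}(U;\Oo)$ and $\boldsymbol{R}\varGamma_{\widehat{G}_{\varDelta,\varrho,L}\cap\widehat{U}_{\varDelta,\varrho,r,\theta}}(\widehat{U}_{\varDelta,\varrho,r,\theta}; \pi_\eta^{-1}\Oo)$. Combining with the exact sequence $0 \to \pi_\eta^{-1}\Oo \to \OOO \xrightarrow{\partial_\eta} \OOO \to 0$ yields a distinguished triangle analogous to the one appearing before Proposition \ref{prop:vanishing-hol-micro-param}.

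The substantive step is then the vanishing
\[
\smashoperator{\varinjlim_{\varrho,r,\theta,L,U}} H^k_{\widehat{G}_{\varDelta,\varrho,L}\cap\widehat{U}_{\varDelta,\varrho,r,\theta}}(\widehat{U}_{\varDelta,\varrho,r,\theta}; \OOO) = 0 \qquad (k \ne n),
\]
which is exactly Proposition \ref{prop:vanishing-hol-micro-param} transported to $(X^2,\varDelta)$ via the $u$-coordinates. One covers $\widehat{U}_{\varDelta,\boldsymbol{\kappa}} \setminus \widehat{G}_{\varDelta,\boldsymbol{\kappa}}$ by $n$ pseudoconvex open sets, one for each direction normal to $\varDelta$, which settles $k > n$ at once. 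For $k < n$ one applies a biholomorphism of the form $(z,u,\eta) \mapsto (z,\, u_1,\, \eta u_2,\, \dots,\, \eta u_n,\, \eta)$ to reduce to the polydisc/pseudoconvex lemma stated after Proposition \ref{prop:vanishing-hol-micro-param}. The twist by $dw_1\wedge\cdots\wedge dw_n$ in $\OOO$ is an invertible local coefficient and does not interfere with the vanishing.

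Finally, the distinguished triangle combined with the vanishing above yields, after taking inductive limits over $(\varrho, r, \theta, L, U)$, the short exact sequence
\[
0 \to \mathscr{E}^{\mathbb{R}}_{X,z^*_0} \to \widehat{E}^{\mathbb{R}}_{X,z^*_0} \xrightarrow{\partial_\eta} \widehat{E}^{\mathbb{R}}_{X,z^*_0} \to 0,
\]
and $\ker\partial_\eta$ in the middle term is by definition $E^{\mathbb{R}}_{X,z^*_0}$, which gives the desired canonical isomorphism. The only real obstacle is in step three: choosing the \v{C}ech covering and the biholomorphism so that, after the change of variables, the transformed closed set becomes a product of closed disks and the ambient open set becomes pseudoconvex enough for the polydisc lemma to apply. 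In the $u$-coordinates this is verbatim the argument of Proposition \ref{prop:vanishing-hol-micro-param}, so no genuinely new idea is required.
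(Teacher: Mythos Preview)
Your proposal is correct and is exactly the paper's approach: the paper states that the theorem follows immediately from the consequences of Section~\ref{sec:rep-micro}, and later (when setting up the \v{C}ech representations) explicitly invokes the coordinate change $(z,w)\mapsto(z,z-w)$ to reduce to Proposition~\ref{prop:vanishing-hol-micro-param} and Corollaries~\ref{cor2.6}, \ref{cor:iso-hol-micro-at-1}. One small slip: with your deformation $\varphi_\varDelta$ one has $\rho_{\varphi_\varDelta}(z,w)=|\langle z-w,\zeta_0\rangle\,\overline{\zeta}_0|=|f_{\varDelta,1}(z,w)|$, not $|f'_\varDelta(z,w)|$; with this correction the sets $\widehat G,\widehat U$ from \eqref{eq:def-hat-UV-general} do coincide with $\widehat G_{\varDelta,\varrho,L},\widehat U_{\varDelta,\varrho,r,\theta}$ as you claim.
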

We assume $z^*_0=(z^{}_0;\zeta^{}_0)=(0;1,0,\dots,0)$ in what follows and
consider a \v{C}ech representation of $E^{\mathbb{R}}_{X,z^*_0}$.
Let $\boldsymbol{\kappa} = (r, r',\varrho, \theta) \in 
\mathbb{R}^{4}$ be parameters satisfying the conditions \eqref{eq:kappa-conditions}.
Then we define
\begin{align*}
\widehat{U}_{\varDelta,\boldsymbol{\kappa}} & := \smashoperator{\Bcap_{i=2}^n}
\{(z,w,\eta) \in \widehat{X}^2;\, \|z\| < r',\,\eta \in S_{\boldsymbol{\kappa}},\,|z^{}_1-w^{}_1| < \varrho |\eta|,\,|z_i-w^{}_i| < r'\},
\\
\widehat{G}_{\varDelta,\boldsymbol{\kappa}} &:=
\smashoperator{\Bcap_{i=2}^n}\{(z, w, \eta) \in \widehat{X}^2;\, 
\lvert\arg (z^{}_1 - w^{}_1)| \leqslant \frac{\,\pi\,}{2} - \theta,\,
\varrho|z^{}_i - w^{}_i| \leqslant |\eta| \}.
\end{align*}
We also set
\begin{align*}
U_{\varDelta,\boldsymbol{\kappa}} &:= \smashoperator{\Bcap_{i=2}^n}
\{(z,w) \in X^2;\, \|z\| < r',\, |z^{}_1 -w^{}_1| < \varrho r,\,
 |z^{}_i -w^{}_i| <r'\},
\\ 
G_{\varDelta,\boldsymbol{\kappa}} & :=\smashoperator{\Bcap_{i=2}^n}
\{(z, w) \in X^2;\, \lvert\arg (z^{}_1 - w^{}_1)| \leqslant \frac{\,\pi\,}{2} - \theta,\,
\varrho^2|z^{}_i - w^{}_i| \leqslant |z^{}_1-w^{}_1| \}.
\end{align*}
%Let $\Omega_{\HXCX}^{(n)}$ denote the sheaf on $\HXCX$
%of holomorphic $n$-forms with respect
%to the variables $w$.
\begin{defn}
We define
\begin{align*}
\widehat{E}^{\mathbb{R}}_{X}(\boldsymbol{\kappa}) &:= 
H^{n}_{\widehat{G}_{\varDelta,\boldsymbol{\kappa}} \cap \widehat{U}_{\varDelta,\boldsymbol{\kappa}}}\!
(\widehat{U}_{\varDelta,\boldsymbol{\kappa}}; \OOO), 
\\
E^{\mathbb{R}}_{X}(\boldsymbol{\kappa}) &:= 
\Ker(\partial_\eta\colon\widehat{E}^{\mathbb{R}}_{X}(\boldsymbol{\kappa})\to \widehat{E}^{\mathbb{R}}_{X}(\boldsymbol{\kappa})).
\end{align*}
\end{defn}
Note that 
\[
\widehat{E}^{\mathbb{R}}_{X,z^*_0} = \smashoperator{\varinjlim_{\boldsymbol{\kappa}}} \widehat{E}^{\mathbb{R}}_{X}(\boldsymbol{\kappa})
\,\,\text{ and }\,\,
E^{\mathbb{R}}_{X,z^*_0} = \smashoperator{\varinjlim_{\boldsymbol{\kappa}}} E^{\mathbb{R}}_{X}(\boldsymbol{\kappa})
\]
hold.  Then by employing the coordinates transformation
$(z,w)\mapsto (z,z- w)$, it follows from
Proposition \ref{prop:vanishing-hol-micro-param},  
Corollaries \ref{cor2.6} and  \ref{cor:iso-hol-micro-at-1} that the both complexes
\begin{align*}
&\boldsymbol{R}\varGamma_{\widehat{G}_{\varDelta,\boldsymbol{\kappa}}\cap \widehat{U}_{\varDelta,\boldsymbol{\kappa}}}\!
(\widehat{U}_{\varDelta,\boldsymbol{\kappa}}; \OOO),
\\
&
\boldsymbol{R}\varGamma_{G_{\varDelta,\boldsymbol{\kappa}}\cap U_{\varDelta,\boldsymbol{\kappa}}}\!
(U_{\varDelta,\boldsymbol{\kappa}}; \Oo)\simeq
\boldsymbol{R}\varGamma_{\widehat{G}_{\varDelta,\boldsymbol{\kappa}}\cap \widehat{U}_{\varDelta,\boldsymbol{\kappa}}}\!
(\widehat{U}_{\varDelta,\boldsymbol{\kappa}};
\RHom_{\mathscr{D}_{\WXX}}\!(\mathscr{D}^{}_{\WXX}
/\mathscr{D}^{}_{\WXX}\partial^{}_\eta,\OOO))
\end{align*}
are concentrated in degree $n$, and we have the canonical isomorphism
\[
H^{n}_{G_{\varDelta,\boldsymbol{\kappa}}\cap U_{\varDelta,\boldsymbol{\kappa}}}\!
(U_{\varDelta,\boldsymbol{\kappa}}; \Oo)  \earrow  E^{\mathbb{R}}_X(\boldsymbol{\kappa}).
\]
Furthermore we have
\[
\mathscr{E}^\mathbb{R}_{X,z^*_0} = 
\smashoperator{\varinjlim_{\boldsymbol{\kappa}}} H^{n}_{G_{\varDelta,\boldsymbol{\kappa}}\cap U_{\varDelta,\boldsymbol{\kappa}}}\!
(U_{\varDelta,\boldsymbol{\kappa}}; \Oo),
\]
By these facts,  we get
\[
\mathscr{E}^\mathbb{R}_{X,z^*_0} = 
\smashoperator{\varinjlim_{\boldsymbol{\kappa}}} E^{\mathbb{R}}_X(\boldsymbol{\kappa}).
\]
Now we give the \v{C}ech representations of these cohomology groups.
Recall that the open subset $\widehat{U}_{\varDelta,\boldsymbol{\kappa}}\subset \widehat{X}^2$ is defined by
\[
\Bcap_{i=2}^n \!\{(z,w,\eta) \in X^2\times  S_{\boldsymbol{\kappa}};\,
	\|z\| < r',\,  |z_1 - w_1 | < \varrho |\eta|,\, |z_i - w_i| < r'\}.
\]
Here the open sector $ S_{\boldsymbol{\kappa}}$ was given by \eqref{eq:def-Gamma}.
Set
\begin{align*}
%\widehat{V}^{(0)}_{z,w}(\boldsymbol{\kappa}) &:= \DiagHW(\boldsymbol{\kappa}),\\
\widehat{V}_{\varDelta,\boldsymbol{\kappa}}^{(1)} &:= 
\{(z,w,\eta) \in\widehat{U}_{\varDelta,\boldsymbol{\kappa}};\,
\dfrac{\,\pi\,}{2} - \theta <  \arg(z_1 - w_1) < \dfrac{\,3\pi\,}{2} + \theta\}, \\
\widehat{V}^{(i)}_{\varDelta,\boldsymbol{\kappa}} &:= 
\{(z,w,\eta) \in \widehat{U}_{\varDelta,\boldsymbol{\kappa}};\,
\varrho|z_i-w_i| > |\eta|\}\quad (2 \leqslant i \leqslant n).
\end{align*}
We also set
\begin{align*}
V_{\varDelta,\boldsymbol{\kappa}}^{(1)} &:= 
\{(z,w) \in U_{\varDelta,\boldsymbol{\kappa}};\,
\dfrac{\,\pi\,}{2} - \theta <  \arg(z_1 - w_1) < \dfrac{\,3\pi\,}{2} + \theta\},\\
V_{\varDelta,\boldsymbol{\kappa}}^{(i)} &:= 
\{(z,\,w) \in U_{\varDelta,\boldsymbol{\kappa}};\,
\varrho^{2}|z_i - w_i| > |z_1 - w_1|\} \quad  (2 \leqslant i\leqslant n).
\end{align*}
For any $\alpha \in \mathcal{P}_n$, the subset 
$\widehat{V}^{(\alpha)}_{\varDelta,\boldsymbol{\kappa}}$,
$V^{(\alpha)}_{\varDelta,\boldsymbol{\kappa}}$ etc.\  are defined in the same way as those in 
\eqref{eq:def-alpha-V}.  Then, using these coverings, we have
\begin{align*}
\widehat{E}^{\mathbb{R}}_{X}(\boldsymbol{\kappa})& =\varGamma( \widehat{V}_{\varDelta,\boldsymbol{\kappa}}^{(*)}; 
\OOO)
\big/\smashoperator{\Sum_{\alpha \in \mathcal{P}^\vee_n}}
\varGamma( \widehat{V}^{(\alpha)}_{\varDelta,\boldsymbol{\kappa}}; \OOO), 
\\
{E}^{\mathbb{R}}_{X}(\boldsymbol{\kappa}) &=
\{K \in 
\widehat{E}^{\mathbb{R}}_{X}(\boldsymbol{\kappa});\,
\partial_\eta K  = 0 \},
\\
H^n_{G_{\varDelta,\boldsymbol{\kappa}}\cap U_{\varDelta,\boldsymbol{\kappa}}}\! (U_{\varDelta,\boldsymbol{\kappa}};\Oo)
&=
\varGamma(V^{(*)}_{\varDelta,\boldsymbol{\kappa}}; \Oo)
\big/\smashoperator{\Sum_{\alpha \in \mathcal{P}^\vee_n}}
\varGamma( V_{\varDelta,\boldsymbol{\kappa}}^{(\alpha)}; \Oo).
\end{align*}
Let us take any  $K(z,w)\,dw =[\psi(z,w,\eta)\,dw] \in E_X^{\mathbb{R}}(\boldsymbol{\kappa})$
and  $f(z)= [u(z,\eta)] \in C_{Y|X}^{\mathbb{R}}(\boldsymbol{\kappa})$  with  representatives
$\psi(z,w,\eta)\,dw \in \varGamma( \widehat{V}_{\varDelta,\boldsymbol{\kappa}}^{(*)}; 
\OOO)$  
and $u(z,\eta) \in \varGamma( \widehat{V}^{(*)}_{\boldsymbol{\kappa}}; \mathscr{O}^{}_{\smash{\widehat{X}}})$ respectively,  
which were introduced in the previous 
section.
We will define the action $\mu_K$ on $C_{Y|X}^{\mathbb{R}}(\boldsymbol{\kappa})$ 
associated with the kernel
$K(z,w)\,dw$. For that purpose, we first introduce the paths of the integration related
to $\mu_K$. Let $(z,\eta) \in  \widehat{X}$. Set 
$\beta^{}_0:=\dfrac{\,\varrho\,}{2} \,e^{-\iim (\pi + \theta)/2}$ and 
 $\beta^{}_1:= \dfrac{\,\varrho\,}{2} \,e^{\iim (\pi + \theta)/2} $, 
and we define,
for a sufficiently small $\varepsilon > 0$,
the path 
$\gamma_1(z, \eta;\varrho, \theta)$ in $\mathbb{C}_{w_1}$ by
\begin{align*}
\{w_1 = z_1 + t\beta^{}_0\eta;\,
 1 \geqslant t \geqslant \varepsilon\} 
&\vee
\{w_1 = z_1 +   \dfrac{\,\varepsilon\varrho \eta\,}{2} \,e^{-\iim (\pi + \theta)t/2};\,
- 1\leqslant t \leqslant 1\} 
\\
&\vee
\{w_1 = z_1 + t\beta^{}_1\eta;\, 
 \varepsilon \leqslant t \leqslant 1\}.
\end{align*}
Note that $\gamma_1(z,\eta;\varrho, \theta)$ joins the two points
$
z_1 + \beta^{}_0\eta
$
and
$z_1 + \beta^{}_1\eta$,
which depend on the variables $z_1$ and $\eta$ holomorphically.
We introduce another path
$\overline{\gamma}_1(z,\eta;\varrho,\theta)$ in $\mathbb{C}_{w_1}$ by
the straight segment from
$z_1 + \beta^{}_0\eta$ to
$z_1 + \beta^{}_1\eta$.
%Let $\Gamma_1(z,\,\eta;\,\boldsymbol{\kappa})$ be the closed bounded subset in $\mathbb{C}_{w_1}$
%surrounded by the paths 
%$\gamma_1(z,\,\eta;\,\boldsymbol{\kappa})$ and
%$\overline{\gamma}_1(z,\,\eta;\,\boldsymbol{\kappa})$.
We also define the path $\gamma_i(z,\eta;\varrho)$ in $\mathbb{C}_{w_i}$ 
($i=2,\dots,n$) by the circle with center at $z_i$ and radius
$\dfrac{\,|\eta|\,}{\varrho} + \varepsilon$, i.e.
\[
\gamma_i(z,\eta;\varrho) :=
\{w_i = z_i + \Bigl(\dfrac{|\eta|}{\varrho} + \varepsilon\Bigr) e^{2\pi \iim t};\,
0 \leqslant t \leqslant 1\}.
\]
Define the real $n$-dimensional chain in $X$ made from these paths by
\begin{align*}
\gamma(z,\eta;\varrho,\theta) &:=
\gamma_1(z, \eta;\varrho, \theta) \times \gamma^{}_2(z, \eta;\varrho)\times \dots \times
\gamma_n(z, \eta;\varrho) \subset X, \\
\overline{\gamma}(z,\eta;\varrho,\theta) &:=
\overline{\gamma}_1(z, \eta;\varrho, \theta) \times 
\gamma_2(z, \eta;\varrho) \times 
\dots \times
\gamma_n (z, \eta;\varrho)\subset X. 
\end{align*}
%Here we denote $\gamma_i(z,\eta; \sigma)$ by $\gamma_i$ forshort.
%Now we are ready to define the action $\mu_K$.
%\begin{df}
%We define the action $\mu_K$ by
%\[
%\mu_K(u) := \int_{\gamma(z,\,\eta;\,\boldsymbol{\kappa})} k(z,\,w,\,\eta)u(w,\,\eta) dw_1\dots dw_n
%\]
%for $u(w,\eta) \in \mathscr{O}\left(\widehat{V}^{(*)}_{w,\eta}(\boldsymbol{\kappa})\right)$
%\end{df}
Let $\widehat{\pi}^{}_2 \colon \widehat{X}^2 \ni (z,w,\eta) \mapsto (w,\eta)\in \widehat{X}$ be the canonical projection.
For $\alpha \in \mathcal{P}_n$ and $\beta \in \mathcal{P}_d$, we set
\begin{align*}
\widehat{W}^{(\alpha,\,\beta)}_{\boldsymbol{\kappa}}&:= 
\widehat{V}^{(\alpha)}_{\varDelta,\boldsymbol{\kappa}} \cap
\widehat{\pi}^{\,-1}_2(\widehat{V}^{(\beta)}_{\boldsymbol{\kappa}}),\\
\widehat{W}^{(*,*)}_{\boldsymbol{\kappa}} &:= 
\widehat{V}^{(*)}_{\varDelta,\boldsymbol{\kappa}} \cap
\widehat{\pi}_2^{\,-1}(\widehat{V}^{(*)}_{\boldsymbol{\kappa}}).
\end{align*}
We also set
$\widehat{W}^{(\alpha,*)}_{\boldsymbol{\kappa}}:= \widehat{W}^{(\alpha, \{1,\dots,d\})}_{\boldsymbol{\kappa}}$
and
$\widehat{W}^{(*,\,\beta)}_{\boldsymbol{\kappa}} := \widehat{W}^{(\{1,\dots,n\},\,\beta)}_{\boldsymbol{\kappa}}$.
Then the following lemma is easily obtained by elementary computations.
\begin{lem}{\label{lem:path-fundamental}}
Let $\widetilde{\boldsymbol{\kappa}} = (\tilde{r}, \tilde{r}', \tilde{\varrho},\tilde{\theta}) \in \mathbb{R}^{4}$ satisfying
\[
0<\tilde{r} < r,\quad
0<\tilde{r}' < \dfrac{r'}{2},\quad 0<\tilde{\theta} < \dfrac{\,\theta\,}{4},\quad
0<\tilde{\varrho} < \dfrac{\,\varrho\,}{2} \sin\frac{\,\theta\,}{4},
\]
and the corresponding conditions to \eqref{eq:kappa-conditions}. Then 
the following hold for sufficiently small $\varepsilon > 0$\textup{:}
\begin{enumerate}[\rm (1)]
\item For any $(z,\eta) \in \widehat{V}^{(*)}_{\widetilde{\boldsymbol{\kappa}}}$, 
in $\widehat{X}^2$
\[
\{z\} \times \gamma(z,\eta;\varrho,\theta) \times \{\eta\}
\subset \widehat{W}^{(*,*)}_{\boldsymbol{\kappa}}.
\]
Here $\{z\} \times \gamma(z,\eta;\varrho,\theta) \times \{\eta\}$ 
denotes the product of these three subsets 
in $\widehat{X}^2= X \times X\times \mathbb{C}$.
\item 
For any $(z,\eta) \in \widehat{V}^{(\beta)}_{\widetilde{\boldsymbol{\kappa}}}$
with $\beta \in \mathcal{P}^\vee_d$, 
\[
\{z\} \times \gamma(z,\eta;\varrho,\theta) \times \{\eta\}
\subset \widehat{W}^{(*,\,\beta)}_{\boldsymbol{\kappa}}.
\]
\item 
For any $(z,\eta) \in \widehat{V}^{(\{2,\dots,d\})}_{\widetilde{\boldsymbol{\kappa}}}$, 
\[
\{z\} \times \overline{\gamma}(z,\eta;\varrho,\theta) \times \{\eta\}
\subset \widehat{W}^{(\{2,\dots,n\}, *)}_{\boldsymbol{\kappa}}.
\]
Furthermore 
\[
\{z\} \times \partial \gamma(z,\eta;\varrho,\theta) \times \{\eta\}
\subset \widehat{W}^{(*, *)}_{\boldsymbol{\kappa}},
\]
where $\partial \gamma(z, \eta;\varrho, \theta)$ denotes the boundary of $\gamma(z, \eta;\varrho, \theta)$ as a real $n$-dimensional 
chain.
\end{enumerate}
\end{lem}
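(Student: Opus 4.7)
\medskip
The plan is a direct, case-by-case verification: for each of the three assertions, take an arbitrary point $(z,\eta)$ in the specified set and an arbitrary point $w$ on the specified chain, and check every defining inequality of the target set $\widehat{W}^{(\alpha,\beta)}_{\boldsymbol{\kappa}}$. Each such set is cut out by (i) diagonal conditions on $(z-w,\eta)$ -- the bounds $|z_1-w_1|<\varrho|\eta|$, $|z_i-w_i|<r'$, the angular condition $\arg(z_1-w_1)\in(\pi/2-\theta,3\pi/2+\theta)$ when $1\in\alpha$, and the lower bounds $\varrho|z_i-w_i|>|\eta|$ when $i\in\alpha$; and (ii) target conditions on $(w,\eta)$ -- bounds $|w_1|<\varrho|\eta|$, $|w_i|<r'$ together with the corresponding angular or lower-bound conditions in the factor $\widehat{V}^{(\beta)}_{\boldsymbol{\kappa}}$. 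The inequalities in the hypothesis $\tilde r<r$, $\tilde r'<r'/2$, $\tilde\theta<\theta/4$, $\tilde\varrho<(\varrho/2)\sin(\theta/4)$ are tuned precisely to provide the slack needed to upgrade each ``$\leqslant$'' coming from the triangle inequality or from the path construction to the ``$<$'' required in the target open set.

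For Part (1), on the two straight portions of $\gamma_1$ one computes $\arg(z_1-w_1)=(\pi\mp\theta)/2+\arg\eta$, which by $|\arg\eta|<\tilde\theta/4<\theta/16$ lies well inside $(\pi/2-\theta,3\pi/2+\theta)$; on the small arc $\arg(z_1-w_1)=\pi-(\pi+\theta)t/2+\arg\eta$ sweeps continuously between these two windows while remaining in the same open interval. The size bound $|z_1-w_1|\leqslant(\varrho/2)|\eta|<\varrho|\eta|$ is immediate. On the circles, $\varrho|z_i-w_i|=|\eta|+\varrho\varepsilon>|\eta|$ and $|z_i-w_i|<r'$ holds for $\varepsilon$ small using $\tilde r/\varrho<r'$. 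For the target conditions, $|w_1|\leqslant|z_1|+(\varrho/2)|\eta|<(\tilde\varrho+\varrho/2)|\eta|<\varrho|\eta|$ uses $\tilde\varrho<\varrho/2$, while for $2\leqslant i\leqslant d$ the key estimate is $|w_i|\geqslant|z_i|-|w_i-z_i|>|\eta|/\tilde\varrho-|\eta|/\varrho-\varepsilon>|\eta|/\varrho$, which relies on the strict inequality $\tilde\varrho<\varrho/2$ and on $\varepsilon$ taken small (depending only on $\boldsymbol{\kappa}$ and $\widetilde{\boldsymbol{\kappa}}$).

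Part (2) is obtained from Part (1) by dropping a single condition: since $(z,\eta)\in\widehat V^{(\beta)}_{\widetilde{\boldsymbol{\kappa}}}$ no longer gives control of $z_j$ for the missing index $j\notin\beta$, we only have to check the surviving $\widehat V^{(i)}_{\boldsymbol{\kappa}}$ conditions with $i\in\beta$, and these reuse verbatim the estimates of Part (1). Part (3) is the analogous check for the chord $\overline\gamma_1$ in place of the arc: because the chord crosses the positive real axis in the variable $z_1-w_1$ (for $\theta>0$ the intersection of the line through $-\beta_0\eta,-\beta_1\eta$ with the real axis is non-empty), the angular condition of $\widehat V^{(1)}_{\varDelta,\boldsymbol{\kappa}}$ fails on the interior, which is exactly why the target in Part (3) omits it; all remaining conditions involve only $|z_i-w_i|$ for $i\geqslant 2$ and the values of $w$, so they are insensitive to the replacement of arc by chord and go through as in Part (1). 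For the boundary statement, $\partial\gamma$ consists of the two endpoints of $\gamma_1$ times the closed circles $\gamma_i$, and at those endpoints $\arg(z_1-w_1)=(\pi\mp\theta)/2+\arg\eta$ lies strictly inside the angular window, so the full $\widehat W^{(*,*)}_{\boldsymbol{\kappa}}$ condition holds.

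No conceptual obstacle is present; the only real work is the bookkeeping that the given slack in $\widetilde{\boldsymbol{\kappa}}$ together with sufficiently small $\varepsilon$ converts each non-strict bound into a strict one uniformly in $(z,\eta)$.
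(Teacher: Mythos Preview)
The paper gives no detailed proof of this lemma, only the remark that it ``is easily obtained by elementary computations,'' and your case-by-case verification is exactly the intended approach. There is, however, one defining condition you omit: the target $\widehat{W}^{(*,*)}_{\boldsymbol{\kappa}}$ contains the factor $\widehat{\pi}_2^{-1}(\widehat{V}^{(1)}_{\boldsymbol{\kappa}})$, which imposes the angular requirement $\frac{\pi}{2}-\theta<\arg w_1<\frac{3\pi}{2}+\theta$ on $w_1$ itself, not only on $z_1-w_1$. You check the diagonal angular condition but never this one, and it is in fact where the hypothesis $\tilde\varrho<(\varrho/2)\sin(\theta/4)$ is genuinely used: at the endpoints $w_1=z_1+\beta_j\eta$ one has $|z_1|/|\beta_j\eta|<2\tilde\varrho/\varrho<\sin(\theta/4)$, whence $|\arg w_1-\arg(\beta_j\eta)|<\theta/4$, which places $\arg w_1$ safely near $\pm(\pi+\theta)/2$ and hence outside the forbidden sector $|\arg|\leqslant\pi/2-\theta$. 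This endpoint estimate is precisely what drives the $\partial\gamma$ claim in Part~(3), where $(z,\eta)\in\widehat V^{(\{2,\dots,d\})}_{\widetilde{\boldsymbol{\kappa}}}$ gives no control on $\arg z_1$. On the straight segments in Part~(1) the same bound (together with the constraint on $\arg z_1$) handles all $t$, while on the small arc one needs $\varepsilon$ small relative to $|z_1|/|\eta|$ so that $w_1$ stays close to $z_1$.

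This last remark leads to a second point: your parenthetical claim that $\varepsilon$ may be taken ``depending only on $\boldsymbol{\kappa}$ and $\widetilde{\boldsymbol{\kappa}}$'' is not correct. Your own inequality $|\eta|/\tilde\varrho-|\eta|/\varrho-\varepsilon>|\eta|/\varrho$ already forces $\varepsilon<(1/\tilde\varrho-2/\varrho)\,|\eta|$, and the $\arg w_1$ condition on the arc requires $\varepsilon$ small compared with $|z_1|/|\eta|$; neither bound is uniform as $|\eta|\to 0$ or $|z_1|\to 0$. The lemma should be read pointwise in $(z,\eta)$, which is harmless for the applications since the integrals built from $\gamma$ are independent of $\varepsilon$ by Cauchy's theorem.
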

Now we are ready to define the action $\mu_K$ of $K(z,w)\, dw \in E^{\mathbb{R}}_X(\boldsymbol{\kappa})$ 
on $C^{\mathbb{R}}_{Y|X}(\boldsymbol{\kappa})$.
\begin{thm}{\label{th:action-morphism}}
The bi-linear morphism
\[
\mu \colon E^{\mathbb{R}}_X(\boldsymbol{\kappa}) \tens_{\mathbb{C}} C^{\mathbb{R}}_{Y|X}(\boldsymbol{\kappa})
\to C^{\mathbb{R}}_{Y|X}(\widetilde{\boldsymbol{\kappa}})
\]
defined by
\begin{multline*}
K(z,w)\,dw \tens f(z) =
[\psi(z,w,\eta)\,dw] \otimes [u(z,\eta)]
\\ \mapsto 
\mu(K dw\tens f):=
[\smashoperator{\int\limits_{\hspace{6ex}\gamma(z,\eta;\varrho,\theta)} }
\psi(z,w,\eta)\,u(w,\eta)\, dw]
\end{multline*}
is well defined. 
Here $\widetilde{\boldsymbol{\kappa}}$ is  a $4$-tuple of  positive constants satisfying the conditions
given in Lemma \ref{lem:path-fundamental}. In particular, there exists the following linear morphism\textup{:}
\[
\mu^{}_K \colon  C^{\mathbb{R}}_{Y|X}(\boldsymbol{\kappa}) \ni f(z) \mapsto \mu(Kdw \tens f) \in   C^{\mathbb{R}}_{Y|X}(\widetilde{\boldsymbol{\kappa}}).
\]
\end{thm}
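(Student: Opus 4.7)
The plan is to verify three things: (i) the integral converges and defines a holomorphic section of $\mathscr{O}^{}_{\WX}$ on $\widehat{V}^{(*)}_{\widetilde{\boldsymbol{\kappa}}}$; (ii) its class in $\widehat{C}^{\,\mathbb{R}}_{Y|X}(\widetilde{\boldsymbol{\kappa}})$ depends only on the \v{C}ech classes $[\psi]\in\widehat{E}^{\mathbb{R}}_X(\boldsymbol{\kappa})$ and $[u]\in\widehat{C}^{\,\mathbb{R}}_{Y|X}(\boldsymbol{\kappa})$; and (iii) this class is annihilated by $\partial_\eta$, so that it lies in $C^{\mathbb{R}}_{Y|X}(\widetilde{\boldsymbol{\kappa}})$. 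Every step rests on Lemma \ref{lem:path-fundamental} combined with Cauchy's theorem applied to the one-dimensional factors of $\gamma(z,\eta;\varrho,\theta)$. For (i), Lemma \ref{lem:path-fundamental}\,(1) guarantees $\{z\}\times\gamma(z,\eta;\varrho,\theta)\times\{\eta\}\subset\widehat{W}^{(*,*)}_{\boldsymbol{\kappa}}$ whenever $(z,\eta)\in\widehat{V}^{(*)}_{\widetilde{\boldsymbol{\kappa}}}$, and on that set both $\psi(z,w,\eta)$ and the pullback $u(w,\eta)$ along $\widehat{\pi}^{}_2$ are holomorphic; holomorphy of the result in $\eta$, despite $|\eta|$ (not $\eta$) entering the radii of the closed loops $\gamma^{}_i$ ($i\ge 2$), follows because those radii can be freely varied by Cauchy's theorem within the annulus of holomorphy.

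For (ii) we check coboundary invariance in each tensor slot. Replacing $u$ by $u+u'$ with $u'\in\varGamma(\widehat{V}^{(\beta)}_{\boldsymbol{\kappa}};\mathscr{O}^{}_{\WX})$ for some $\beta\in\mathcal{P}^\vee_d$: by Lemma \ref{lem:path-fundamental}\,(2) the chain lies in $\widehat{W}^{(*,\beta)}_{\boldsymbol{\kappa}}$ as soon as $(z,\eta)\in\widehat{V}^{(\beta)}_{\widetilde{\boldsymbol{\kappa}}}$, so $\int_\gamma\psi\,u'\,dw$ extends holomorphically to $\widehat{V}^{(\beta)}_{\widetilde{\boldsymbol{\kappa}}}$ and represents $0$. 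Replacing $\psi$ by $\psi+\psi'$ with $\psi'\in\varGamma(\widehat{V}^{(\alpha)}_{\varDelta,\boldsymbol{\kappa}};\OOO)$ and $\alpha=\{1,\dots,n\}\smallsetminus\{j\}$: if $j\ge 2$, then $\psi'$ is holomorphic in a full disc around $w_j=z_j$ (no $w_j$-condition appears in $\widehat{V}^{(\alpha)}_{\varDelta,\boldsymbol{\kappa}}$); since $\tilde\varrho|z_j|>|\eta|$ on $\widehat{V}^{(*)}_{\widetilde{\boldsymbol{\kappa}}}$ for $2\le j\le d$ (and $u$ has no $w_j$-condition at all for $j>d$), $u$ is also holomorphic in $w_j$ in a neighbourhood of $w_j=z_j$, so Cauchy's theorem contracts $\gamma_j$ to a point and $\int_\gamma\psi'\,u\,dw\equiv 0$. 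If $j=1$, then $\psi'$ is holomorphic throughout $\mathbb{C}_{w_1}$ in the relevant domain, so Cauchy's theorem applied to the closed loop $\gamma_1-\overline{\gamma}_1$ yields $\int_{\gamma_1}\psi'\,u\,dw_1=\int_{\overline{\gamma}_1}\psi'\,u\,dw_1$; by Lemma \ref{lem:path-fundamental}\,(3) the replaced chain $\{z\}\times\overline{\gamma}\times\{\eta\}$ lies in $\widehat{W}^{(\{2,\dots,n\},*)}_{\boldsymbol{\kappa}}$ for $(z,\eta)\in\widehat{V}^{(\{2,\dots,d\})}_{\widetilde{\boldsymbol{\kappa}}}$, hence the resulting integral extends holomorphically to $\widehat{V}^{(\{2,\dots,d\})}_{\widetilde{\boldsymbol{\kappa}}}$, again a coboundary.

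For (iii), by Corollary \ref{cor:iso-hol-micro-at-1} and its analogue for $E^{\mathbb{R}}_X(\boldsymbol{\kappa})$, the classes $K$ and $f$ may be represented by $\eta$-independent \v{C}ech cochains $\psi(z,w)$ and $u(w)$; with such representatives the integrand is $\eta$-independent, and the contour $\gamma(z,\eta;\varrho,\theta)$ may be deformed, locally in $\eta$, to the $\eta$-independent contour $\gamma(z,\eta_0;\varrho,\theta)$ attached to a fixed reference $\eta_0$, since all nearby contours are homotopic within $\widehat{W}^{(*,*)}_{\boldsymbol{\kappa}}$. Hence the integral is locally $\eta$-independent, so $\partial_\eta\mu(K\,dw\tens f)=0$ as a section, a fortiori in $\widehat{C}^{\,\mathbb{R}}_{Y|X}(\widetilde{\boldsymbol{\kappa}})$. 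The main obstacle is step (ii) in the case $j=1$, where one must marry the Cauchy deformation $\gamma_1\leadsto\overline{\gamma}_1$ to the pseudoconvex geometry of the \v{C}ech cover of $\widehat{C}^{\,\mathbb{R}}_{Y|X}(\widetilde{\boldsymbol{\kappa}})$; Lemma \ref{lem:path-fundamental}\,(3) is crafted precisely for this, and the parameter shift $\boldsymbol{\kappa}\leadsto\widetilde{\boldsymbol{\kappa}}$ in the target is forced by the requirement that the contour have room to deform while remaining in the holomorphy domain of $\psi$ and $u$.
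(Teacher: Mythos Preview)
Your steps (i) and (ii) are correct and essentially reproduce the paper's argument (packaged there as an auxiliary lemma): an integrand coming from $\widehat{W}^{(\alpha,\beta)}_{\boldsymbol{\kappa}}$ with $(\alpha,\beta)\in\varLambda$ is sent by $\mu$ either to the zero function (Cauchy in the closed factor $\gamma_j$, $j\ge 2$) or to a section of some $\widehat{V}^{(\beta')}_{\widetilde{\boldsymbol{\kappa}}}$ with $\beta'\in\mathcal{P}^\vee_d$ (the $j=1$ case via $\gamma_1\leadsto\overline{\gamma}_1$ and Lemma \ref{lem:path-fundamental}\,(3), and the $u$-side via Lemma \ref{lem:path-fundamental}\,(2)).

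Step (iii), however, contains a genuine gap. Passing to $\eta$-independent representatives $\psi(z,w)$, $u(w)$ is a legitimate simplification (Corollary \ref{cor:iso-hol-micro-at-1}), but the conclusion that the integral is then $\eta$-independent \emph{as a section} is false. The factor $\gamma_1(z,\eta;\varrho,\theta)$ is an \emph{open} arc with $\eta$-dependent endpoints $z_1+\beta_0\eta$ and $z_1+\beta_1\eta$; hence $\gamma(z,\eta)$ and $\gamma(z,\eta_0)$ are not homologous (their boundaries differ), and Cauchy's theorem does not identify the two integrals. Differentiating under the integral sign, with your $\eta$-free integrand, leaves precisely the nonzero boundary contribution
\[
\partial_\eta\!\int_{\gamma}\psi\,u\,dw
=\int_{\gamma_2\times\cdots\times\gamma_n}
\bigl[\tau\,\psi(z,z_1+\tau\eta,w')\,u(z_1+\tau\eta,w')\bigr]_{\tau=\beta_0}^{\tau=\beta_1}\,dw_2\cdots dw_n\,.
\]
What you must still prove is that this term is a coboundary: by Lemma \ref{lem:path-fundamental}\,(3) the set $\{z\}\times\partial\gamma(z,\eta;\varrho,\theta)\times\{\eta\}$ sits in $\widehat{W}^{(*,*)}_{\boldsymbol{\kappa}}$ already for $(z,\eta)\in\widehat{V}^{(\{2,\dots,d\})}_{\widetilde{\boldsymbol{\kappa}}}$, so the displayed function extends holomorphically to $\widehat{V}^{(\{2,\dots,d\})}_{\widetilde{\boldsymbol{\kappa}}}$, i.e.\ lies in $\smash{\sum_{\beta\in\mathcal{P}^\vee_d}\varGamma(\widehat{V}^{(\beta)}_{\widetilde{\boldsymbol{\kappa}}};\mathscr{O}_{\WX})}$. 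The paper does this computation with general representatives and obtains two further terms $\int(\partial_\eta\psi)u$ and $\int\psi(\partial_\eta u)$, handled by your own step (ii) since $\partial_\eta\psi$ and $\partial_\eta u$ are coboundaries; your $\eta$-free choice makes those terms vanish identically, which is a real simplification, but it does not eliminate the endpoint term.
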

\begin{rem}
The same result holds for $\psi(z,w,\tau,\eta)dw$ and
$u(w,\tau,\eta)$ with additional holomorphic parameters $\tau$.
\end{rem}
\begin{proof}[Proof of Theorem \ref{th:action-morphism}]
For any $\varphi(z,w,\eta) \in  \varGamma(\widehat{W}^{(*,*)}_{\boldsymbol{\kappa}};\mathscr{O}_{\WXX})$, set
\[
\mu(\varphi)(z,\eta) :=
\smashoperator{\int\limits_{\hspace{6ex}\gamma(z,\eta;\varrho,\theta)}} \varphi(z,w,\eta) \,dw.
\]
Note that, by Lemma \ref{lem:path-fundamental} (1) we have
$\mu(\varphi)(z,\eta) \in 
\varGamma(\widehat{V}^{(*)}_{\widetilde{\boldsymbol{\kappa}}};\mathscr{O}_{\WX})$.

\begin{lem}\label{lem.1.16}
Assume that $\varphi(z,w,\eta) \in  \varGamma(\widehat{W}^{(\alpha,\,\beta)}_{\boldsymbol{\kappa}};\mathscr{O}_{\WXX})$ with
$\alpha \in \mathcal{P}^\vee_n$ and $\beta=*$ or with
$\alpha = *$ and $\beta \in \mathcal{P}^\vee_d$.
Then  $\mu(\varphi)(z,\eta) \in \varGamma(\widehat{V}^{(\beta)}_{\widetilde{\boldsymbol{\kappa}}};\mathscr{O}_{\WX})$
for some $\beta \in \mathcal{P}^\vee_d$.
\end{lem}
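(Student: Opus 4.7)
The plan is to split according to the two disjunctive cases of the hypothesis. The case $\alpha=*$ is essentially direct from Lemma \ref{lem:path-fundamental}(2), while the case $\beta=*$ splits further into a contour-deformation argument, when the missing index of $\alpha$ equals $1$, and a Cauchy-vanishing argument, when it is at least $2$.

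For the case $\alpha=*$ and $\beta\in\mathcal{P}^\vee_d$: by Lemma \ref{lem:path-fundamental}(2), for every $(z,\eta)\in\widehat{V}^{(\beta)}_{\widetilde{\boldsymbol{\kappa}}}$ the chain $\{z\}\times\gamma(z,\eta;\varrho,\theta)\times\{\eta\}$ lies in $\widehat{W}^{(*,\beta)}_{\boldsymbol{\kappa}}$, which is the holomorphy domain of $\varphi$. Hence $\mu(\varphi)(z,\eta)=\int_{\gamma}\varphi\,dw$ depends holomorphically on $(z,\eta)\in\widehat{V}^{(\beta)}_{\widetilde{\boldsymbol{\kappa}}}$ by standard parameter dependence, and the conclusion holds with the same $\beta$.

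For the case $\alpha\in\mathcal{P}^\vee_n$ and $\beta=*$, write $\{i_0\}=\{1,\dots,n\}\smallsetminus\alpha$. When $i_0=1$, so that $\alpha=\{2,\dots,n\}$, I would replace $\gamma$ by $\overline{\gamma}$. Lemma \ref{lem:path-fundamental}(3) ensures $\{z\}\times\overline{\gamma}\times\{\eta\}\subset\widehat{W}^{(\{2,\dots,n\},*)}_{\boldsymbol{\kappa}}=\widehat{W}^{(\alpha,*)}_{\boldsymbol{\kappa}}$ for all $(z,\eta)\in\widehat{V}^{(\{2,\dots,d\})}_{\widetilde{\boldsymbol{\kappa}}}$, so $\int_{\overline{\gamma}}\varphi\,dw$ defines a holomorphic section on this larger set. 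On the overlap $\widehat{V}^{(*)}_{\widetilde{\boldsymbol{\kappa}}}$ both integrals exist, and Stokes' theorem applied to the $(n+1)$-chain $C\times\gamma_2\times\dots\times\gamma_n$, where $C$ is a $2$-chain in $\mathbb{C}_{w_1}$ with $\partial C=\gamma_1-\overline{\gamma}_1$ lying inside the sector-restricted disk $\{|w_1-z_1|<\varrho|\eta|\}\cap\{\arg w_1\in(\pi/2-\theta,3\pi/2+\theta)\}$, yields $\int_{\gamma}\varphi\,dw=\int_{\overline{\gamma}}\varphi\,dw$, because $\varphi\,dw$ is closed in the $w$-variables. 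The absence of any restriction on $\arg(z_1-w_1)$ when $1\notin\alpha$ is what allows $C$ to fit inside the domain of $\varphi$. We take the conclusion $\beta=\{2,\dots,d\}\in\mathcal{P}^\vee_d$.

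When $i_0\geqslant 2$, I would show that $\mu(\varphi)\equiv 0$ on $\widehat{V}^{(*)}_{\widetilde{\boldsymbol{\kappa}}}$ via Cauchy in the $w_{i_0}$ variable. Since $i_0\notin\alpha$, the constraint $\varrho|z_{i_0}-w_{i_0}|>|\eta|$ is dropped from $\widehat{V}^{(\alpha)}_{\Delta,\boldsymbol{\kappa}}$, so $\varphi$ is holomorphic in $w_{i_0}$ on $\{|w_{i_0}-z_{i_0}|<r'\}$, intersected (when $i_0\leqslant d$) with $\{\varrho|w_{i_0}|>|\eta|\}$. For $(z,\eta)\in\widehat{V}^{(*)}_{\widetilde{\boldsymbol{\kappa}}}$ the inequality $\widetilde{\varrho}<\varrho/2$ forces $|z_{i_0}|>|\eta|/\widetilde{\varrho}>2|\eta|/\varrho$, so that the closed disk $\{|w_{i_0}-z_{i_0}|\leqslant|\eta|/\varrho+\varepsilon\}$ bounded by $\gamma_{i_0}$ sits entirely inside the holomorphy domain of $\varphi(z,\cdot,\eta)$. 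Cauchy's theorem then gives $\int_{\gamma_{i_0}}\varphi\,dw_{i_0}=0$ for each fixed value of the remaining $w_j$ on $\gamma_j$, and Fubini yields $\mu(\varphi)\equiv 0$; the zero section belongs trivially to $\varGamma(\widehat{V}^{(\beta)}_{\widetilde{\boldsymbol{\kappa}}};\mathscr{O}_{\widehat{X}})$ for any $\beta\in\mathcal{P}^\vee_d$.

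The principal technical difficulty is the $i_0=1$ subcase: one must exhibit a $2$-chain $C\subset\mathbb{C}_{w_1}$ with $\partial C=\gamma_1-\overline{\gamma}_1$ that sits inside $\{|w_1-z_1|<\varrho|\eta|\}\cap\{\arg w_1\in(\pi/2-\theta,3\pi/2+\theta)\}$, so that Stokes' theorem applies. The slack $\widetilde{\theta}<\theta/4$ built into the definition of $\widetilde{\boldsymbol{\kappa}}$ provides just enough room for the region swept between the indented path $\gamma_1$ and the straight segment $\overline{\gamma}_1$ to fit inside this $\theta$-sector, but this is the step where the careful geometric bookkeeping of the preceding section must be deployed.
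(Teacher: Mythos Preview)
Your proposal is correct and follows essentially the same approach as the paper. Both split into the same three subcases (the case $\alpha=*$, handled via Lemma~\ref{lem:path-fundamental}(2); the case $\alpha=\{2,\dots,n\}$, handled by deforming $\gamma$ to $\overline\gamma$ via Lemma~\ref{lem:path-fundamental}(3); and the remaining $\alpha\in\mathcal{P}^\vee_n$, where $\mu(\varphi)=0$ by Cauchy in the $w_{i_0}$ variable), and you supply the explicit justifications that the paper leaves implicit. One small remark: in the $i_0\geqslant 2$ argument you invoke only $\widetilde\varrho<\varrho/2$, which yields $|w_{i_0}|>|\eta|/\varrho-\varepsilon$ rather than the needed strict bound $|w_{i_0}|>|\eta|/\varrho$; the full hypothesis $\widetilde\varrho<\frac{\varrho}{2}\sin\frac{\theta}{4}$ (or simply allowing $\varepsilon$ to depend on $(z,\eta)$, which is harmless since the integral is independent of $\varepsilon$) closes this.
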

\begin{proof}
If
$
\varphi(z,w,\eta) \in
\varGamma(\widehat{W}^{(\alpha,*)}_{\boldsymbol{\kappa}};\mathscr{O}_{\WXX})$
for some $\alpha \in \mathcal{P}^\vee_n$,  we have
\[
\left\{
\begin{array}{ll}
	\mu(\varphi)(z,\eta) \in \varGamma(\widehat{V}^{(\{2,\dots,d\})}_{\widetilde{\boldsymbol{\kappa}}};\mathscr{O}_{\WX})
\qquad & (\alpha = \{2,\dots,n\}),\\
\mu(\varphi) (z,\eta)= 0 \qquad &(\text{otherwise}).
\end{array}
\right.
\]
Here we remark that the first fact comes from  Lemma \ref{lem:path-fundamental} (3) by
deforming the path of integration to $\overline{\gamma}(z,\eta;\varrho,\theta)$. In the same way, 
by Lemma \ref{lem:path-fundamental} (2), 
it follows that 
if $
\varphi(z,w,\eta) \in
\varGamma(\widehat{W}^{(*,\,\beta)}_{\boldsymbol{\kappa}};\mathscr{O}_{\WXX})$
for some $\beta \in \mathcal{P}^\vee_d$, 
\[
\mu(\varphi)(z,\eta) \in
\varGamma(\widehat{V}^{(\beta)}_{\widetilde{\boldsymbol{\kappa}}};\mathscr{O}_{\WX}).
\qedhere\]
\end{proof}
It follows from Lemma \ref{lem.1.16} that $\mu$ induces the canonical morphism
\[
\mu \colon \dfrac{\varGamma(\widehat{W}^{(*,*)}_{\boldsymbol{\kappa}};\OOO)}
{\smashoperator[r]{\Sum_{(\alpha,\,\beta) \in \varLambda}}
 \varGamma(\widehat{W}^{(\alpha,\,\beta)}_{\boldsymbol{\kappa}};\OOO)}
\to
\dfrac{
\varGamma(\widehat{V}^{(*)}_{\widetilde{\boldsymbol{\kappa}}};\mathscr{O}^{}_{\WX})}
{\smashoperator[r]{\Sum_{\beta \in  \mathcal{P}^\vee_d}}
\varGamma(\widehat{V}^{(\beta)}_{\widetilde{\boldsymbol{\kappa}}};\mathscr{O}^{}_{\WX})}
= \widehat{C}^{\,\mathbb{R}}_{Y|X}(\widetilde{\boldsymbol{\kappa}})
\]
where $\varLambda:=\{(\alpha,*);\,\alpha \in \mathcal{P}^\vee_n\}\sqcup 
\{(*,\,\beta);\,\beta \in \mathcal{P}^\vee_d\}$.
Furthermore, we have the canonical morphism
\[
E^{\mathbb{R}}_X(\boldsymbol{\kappa}) \underset{\mathbb{C}}{\otimes} C^{\mathbb{R}}_{Y|X}(\boldsymbol{\kappa})
\to
\dfrac{\varGamma(\widehat{W}^{(*,*)}_{\boldsymbol{\kappa}};\OOO)}
{\smashoperator[r]{\Sum_{(\alpha,\,\beta) \in \varLambda}}
 \varGamma(\widehat{W}^{(\alpha,\,\beta)}_{\boldsymbol{\kappa}};\OOO)}
\]
by $[\psi(z,w,\eta)\,dw] \otimes [u(z,\eta)] \mapsto [\psi(z,w,\eta)\,u(w,\eta)\,dw]$.
Hence we have obtained the morphism
\[
\mu \colon  E^{\mathbb{R}}_X(\boldsymbol{\kappa}) \underset{\mathbb{C}}{\otimes} C^{\mathbb{R}}_{Y|X}(\boldsymbol{\kappa})
\ni [\psi(z,w,\eta)\,dw] \otimes [u(z,\eta)]  \mapsto 
[\smashoperator{\int\limits_{\hspace{6ex}\gamma(z,\eta;\varrho,\theta)}}\psi(z,w,\eta)\,u(w,\eta)\,dw]
\in
\widehat{C}^{\,\mathbb{R}}_{Y|X}(\widetilde{\boldsymbol{\kappa}}).
\]
Thus to complete the proof, it suffices to show the image of $\mu$ is
contained in $C^{\mathbb{R}}_{Y|X}(\widetilde{\boldsymbol{\kappa}})$.
We have
\begin{align*}
\partial_\eta \smashoperator{\int\limits_{\hspace{6ex}\gamma(z,\eta;\varrho,\theta)}} 
\psi(z,w,\eta)\,u(w,\eta)\,dw
={}& \smashoperator{\int\limits_{\hspace{14ex}\gamma_2(z, \eta;\varrho) \times \dots \times \gamma_n(z, \eta;\varrho)}} 
\big[\tau \psi(z,z_1 + \tau\eta, w',\eta)\,
u(z_1 + \tau\eta, w',\eta)
\big]^{\beta^{}_1}_{\tau = \beta^{}_0}\,
dw_2 \cdots dw_n \\
& + \smashoperator{\int\limits_{\hspace{6ex}\gamma(z,\eta;\varrho,\theta)}}
\partial_\eta \psi(z,w,\eta)\, u(w,\eta)
\,dw
+ \smashoperator{\int\limits_{\hspace{6ex}\gamma(z,\eta;\varrho,\theta)} }
\psi (z,w,\eta)\,\partial_\eta u(w,\eta)
\,dw.
\end{align*}
By Lemma \ref{lem:path-fundamental} (3), the first term belongs to
$\varGamma(\widehat{V}^{(2,\dots,d)}_{\widetilde{\boldsymbol{\kappa}}};\mathscr{O}_{\WX})$.
For the second and third terms, as each integrand belongs to
$\smashoperator[r]{\Sum_{(\alpha,\,\beta) \in \varLambda}}
 \varGamma(\widehat{W}^{(\alpha,\,\beta)}_{\boldsymbol{\kappa}};\mathscr{O}^{}_{\WXX})$,
the corresponding integral also belongs to
$\smashoperator[r]{\Sum_{\beta \in  \mathcal{P}^\vee_d}}
\varGamma(\widehat{V}^{(\beta)}_{\widetilde{\boldsymbol{\kappa}}};\mathscr{O}^{}_{\WX})$.
Hence we have obtained
$\dfrac{\partial}{\partial \eta} \,\mu([\psi dw] \otimes [u]) = 0\in \widehat{C}^{\,\mathbb{R}}_{Y|X}(\widetilde{\boldsymbol{\kappa}})$, which implies
$\mu([\psi dw] \otimes [u])  \in C^{\mathbb{R}}_{Y|X}(\widetilde{\boldsymbol{\kappa}})$.
The proof is complete.
\end{proof}
As a corollary of the theorem, we have the result on
the composition on $E^{\mathbb{R}}_X(\boldsymbol{\kappa})$.
\begin{cor}
Let $\widetilde{\boldsymbol{\kappa}} = (\tilde{r},\tilde{r}',\tilde{\varrho}, \tilde{\theta}) \in \mathbb{R}^{4}$ satisfying
\[
0<\tilde{r} < r,\quad
0<\tilde{r}' < \dfrac{r'}{8},\quad 0<\tilde{\theta} < \dfrac{\,\theta\,}{4},\quad
0<\tilde{\varrho} < \dfrac{\,\varrho\,}{2} \sin\frac{\,\theta\,}{4},
\]
and the corresponding conditions to \eqref{eq:kappa-conditions}. Then 
there exists the bi-linear morphism
\[
\mu\colon  E^{\mathbb{R}}_X(\boldsymbol{\kappa}) \tens_{\mathbb{C}}E^{\mathbb{R}}_{X}(\boldsymbol{\kappa})
\to E^{\mathbb{R}}_{X}(\widetilde{\boldsymbol{\kappa}})
\]
defined by
\[
[\psi_1(z,w,\eta)\,dw] \otimes [\psi_2(z,w,\eta)\,dw] \mapsto \Bigl[\Bigl(
\smashoperator{\int\limits_{\hspace{6ex}\gamma(z,\eta;\varrho,\theta)}} 
\psi_1(z,\widetilde{w},\eta)\,\psi_2(\widetilde{w},w,\eta)\,d\widetilde{w}\Bigr)dw\Bigr].
\]
\end{cor}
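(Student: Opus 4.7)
The plan is to deduce this corollary by reducing the composition of kernels to an application of Theorem \ref{th:action-morphism}, viewing the second kernel $\psi_2(\widetilde{w}, w, \eta)$ as a holomorphic microfunction in the variable $\widetilde{w}$ carrying the extra holomorphic parameter $w$. The Remark after Theorem \ref{th:action-morphism} allows exactly this, provided we center the ``microfunction in $\widetilde{w}$'' picture at the parameter point $w$ rather than at the origin and treat the ``$d=n$ case,'' i.e.\ the diagonal situation in which the covering $\{\widehat{V}^{(i)}_{\boldsymbol{\kappa}}\}_{i=1}^d$ for $\widehat{C}^{\,\mathbb{R}}_{Y|X}(\boldsymbol{\kappa})$ is replaced by the covering $\{\widehat{V}^{(i)}_{\varDelta,\boldsymbol{\kappa}}\}_{i=1}^n$ defining $\widehat{E}^{\mathbb{R}}_X(\boldsymbol{\kappa})$.

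Concretely, first I would unwind both inputs in terms of their \v{C}ech representatives
$\psi_j \in \varGamma(\widehat{V}^{(*)}_{\varDelta,\boldsymbol{\kappa}}; \OOO)$, and introduce double-indexed open sets $\widehat{W}^{(\alpha,\beta)}_{\boldsymbol{\kappa}}$ analogous to the ones used in the proof of Theorem \ref{th:action-morphism}, but now with both $\alpha$ and $\beta$ ranging over $\mathcal{P}_n$ (rather than $\mathcal{P}_n$ and $\mathcal{P}_d$). The product $\psi_1(z,\widetilde{w},\eta)\,\psi_2(\widetilde{w},w,\eta)$ then represents a class in a \v{C}ech quotient indexed by $\mathcal{P}^\vee_n \sqcup \mathcal{P}^\vee_n$. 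Next, for each fixed $(w,\eta)$ with $|w|$ small and $\eta\in S_{\widetilde{\boldsymbol{\kappa}}}$, Lemma \ref{lem:path-fundamental} (applied with base point $w$ in place of the origin) guarantees that the path $\gamma(z,\eta;\varrho,\theta)$ in $\widetilde{w}$ is contained in the appropriate $\widehat{W}^{(*,*)}$-set, and that its boundary lies in the subsets coming from $\widehat{V}^{(\alpha)}_{\varDelta,\boldsymbol{\kappa}}$ with $\alpha\in\mathcal{P}^\vee_n$. Therefore the same integration construction produces a well-defined element of the quotient $\widehat{E}^{\mathbb{R}}_X(\widetilde{\boldsymbol{\kappa}})$, holomorphic in $(z,w,\eta)$.

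The $\partial_\eta$-closedness of the output is then checked exactly as in the proof of Theorem \ref{th:action-morphism}: differentiating under the integral produces one boundary term, supported in $\widehat{V}^{(\{2,\dots,n\})}_{\varDelta,\widetilde{\boldsymbol{\kappa}}}$ thanks to the deformation of $\gamma$ to $\overline{\gamma}$ provided by Lemma \ref{lem:path-fundamental} (3), together with two interior terms whose integrands are already \v{C}ech coboundaries. All three terms therefore vanish in $\widehat{E}^{\mathbb{R}}_X(\widetilde{\boldsymbol{\kappa}})$, so the class lies in the kernel $E^{\mathbb{R}}_X(\widetilde{\boldsymbol{\kappa}})$.

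The main obstacle is the combinatorial bookkeeping of three simultaneous covering systems: the first kernel is indexed by $\mathcal{P}_n$ in the variables $(z,\widetilde{w})$, the second by $\mathcal{P}_n$ in $(\widetilde{w},w)$, and the output must be indexed by $\mathcal{P}_n$ in $(z,w)$. One must verify that when $(z,w)$ lies in $\widehat{V}^{(i)}_{\varDelta,\widetilde{\boldsymbol{\kappa}}}$, there is some choice of path $\gamma$ in $\widetilde{w}$ that simultaneously fits inside both $\psi_1$'s covering and $\psi_2$'s covering after translation; this is precisely what forces the sharper constraints $\tilde r' < r'/8$ and $\tilde\varrho < (\varrho/2)\sin(\theta/4)$, the extra factor of $1/8$ (versus $1/2$ in Theorem \ref{th:action-morphism}) reflecting the need to accommodate both an inner and outer shift by $w$. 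Once these quantitative inclusions are established, the rest of the proof is a verbatim adaptation of the argument for Theorem \ref{th:action-morphism}.
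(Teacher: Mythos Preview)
Your proposal is correct and follows essentially the same route as the paper: after the coordinate change $z = z'+w$, $\widetilde{w} = \widetilde{w}'+w$ (your ``centering at the parameter point $w$''), the paper too regards $\psi_2$ as a holomorphic microfunction along $\{\widetilde{w}'=0\}$ with holomorphic parameter $w$ and invokes Theorem~\ref{th:action-morphism} together with its Remark, the tightened bound on $\tilde{r}'$ coming from the triangle inequality $|\widetilde{w}_i| \leqslant |\widetilde{w}'_i| + |w_i|$. Your more detailed bookkeeping of the double covering and the $\partial_\eta$-check simply spells out what the paper compresses into ``we can easily obtain the result by the theorem.''
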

\begin{proof}
By employing the coordinates transformation $z = z' + w$, $\widetilde{w} = \widetilde{w}'  + w$ and $w = w$, 
the  integration above becomes
\[
%\int_{\gamma(z,\eta;\varrho,\theta)} 
%\hspace{-6ex}
%\psi_1(z,\widetilde{w},\eta)\,\psi_2(\widetilde{w},w,\eta)\,d\widetilde{w}
%=
\smashoperator{\int\limits_{\hspace{6ex}\gamma(z',\eta;\varrho,\theta)} }
\psi_1(z'+w ,\widetilde{w}'+w,\eta)\,\psi_2(\widetilde{w}'+w, w,\eta)\,d\widetilde{w}'.
\]
Then, under the new coordinates $(z', \widetilde{w}', w)$,  
the $\psi_2$ (resp.\ the result of the integration) can be regarded a holomorphic microfunction along 
$\{\widetilde{w}' = 0\}$ (resp. $\{z' = 0\}$). Hence, by noticing the simple fact that
$|w^{}_i| < \dfrac{\,r'\,}{2}$ and $|\widetilde{w}'_i| < \dfrac{\,r'\,}{2}$ imply
$|\widetilde{w}_i| < r'$, we can easily obtain the result by the theorem.
\end{proof}

The following theorem can be shown by the same arguments as 
in  Kashiwara-Kawai \cite{K-K1}. We give 
the detailed proof in Appendix \ref{ap:actions} 
for the reader's convenience.
See also Theorem \ref{thm:generic-point-action} for the 
corresponding claim at an arbitrary point in $T^*X$.
\begin{thm}{\label{th:action-commutative}}
The action
\[
\mathscr{E}^{\mathbb{R}}_{X,z^*_0} \tens_{\mathbb{C}}
\mathscr{C}^{\mathbb{R}}_{Y|X,z^*_0}
=
\smashoperator{\varinjlim_{\boldsymbol{\kappa} }} (
E^{\mathbb{R}}_X(\boldsymbol{\kappa}) \tens_{\mathbb{C}} C^{\mathbb{R}}_{Y|X}(\boldsymbol{\kappa}))
\xrightarrow{\mu}
\smashoperator{\varinjlim_{\boldsymbol{\kappa}}} C^{\mathbb{R}}_{Y|X}(\boldsymbol{\kappa})
= \mathscr{C}^{\mathbb{R}}_{Y|X,z^*_0}
\]
coincides with the cohomological action 
of $\mathscr{E}^{\mathbb{R}}_{X,z^*_0}$ on $\mathscr{C}^{\mathbb{R}}_{Y|X,z^*_0}$.
\end{thm}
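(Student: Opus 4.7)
The plan is to verify the equality of the two actions at the level of \v{C}ech representatives, using the explicit coverings introduced in Sections \ref{sec:rep-micro} and \ref{sec:actions_ER}. Both actions take representatives $[\psi(z,w,\eta)\,dw] \in E^{\mathbb{R}}_{X}(\boldsymbol{\kappa})$ and $[u(z,\eta)] \in C^{\mathbb{R}}_{Y|X}(\boldsymbol{\kappa})$ and produce a class in $C^{\mathbb{R}}_{Y|X}(\widetilde{\boldsymbol{\kappa}})$; the explicit action $\mu$ is given by integration over the real chain $\gamma(z,\eta;\varrho,\theta)$, whereas the cohomological action is obtained as the composition of the cup product, the restriction to the diagonal, and the proper direct image along the second projection $\pi_2\colon \widehat{X}^2\to\widehat{X}$, all carried out in the appropriate derived categories following Kashiwara--Schapira \cite{K-S1}.

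My first step is to recast the cohomological action at the \v{C}ech level. Using the canonical isomorphisms $\mathscr{E}^{\mathbb{R}}_{X,z^*_0} \cong \varinjlim_{\boldsymbol{\kappa}}E^{\mathbb{R}}_X(\boldsymbol{\kappa})$ and $\mathscr{C}^{\mathbb{R}}_{Y|X,z^*_0} \cong \varinjlim_{\boldsymbol{\kappa}}C^{\mathbb{R}}_{Y|X}(\boldsymbol{\kappa})$, I form the Leray cover of $\widehat{U}_{\varDelta,\boldsymbol{\kappa}}$ consisting of the open sets $\widehat{W}^{(\alpha,\beta)}_{\boldsymbol{\kappa}}$. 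The cup product with the pull-back of $[u]$ is then realised by the representative $\psi(z,w,\eta)\,u(w,\eta)\,dw \in \varGamma(\widehat{W}^{(*,*)}_{\boldsymbol{\kappa}};\OOO)$ modulo $\sum_{(\alpha,\beta)\in\varLambda}\varGamma(\widehat{W}^{(\alpha,\beta)}_{\boldsymbol{\kappa}};\OOO)$, exactly as in the construction of $\mu$.

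The second step is to identify the cohomological pushforward $\boldsymbol{R}\pi_{2!}$ along the second projection with integration over the chain $\gamma(z,\eta;\varrho,\theta)$. By Grothendieck's residue formalism, the pushforward along a projection from a product is computed by pairing the \v{C}ech cocycle against the fundamental class of the relative homology dual to the given cover. Inspecting the definitions, the circle components $\gamma_i(z,\eta;\varrho)$ for $i\geqslant 2$ are precisely the chains dual to the cohomology classes generated by $\widehat{V}^{(i)}_{\varDelta,\boldsymbol{\kappa}}$, while the composite path $\gamma_1(z,\eta;\varrho,\theta)$ is dual to the class generated by $\widehat{V}^{(1)}_{\varDelta,\boldsymbol{\kappa}}$; Lemma~\ref{lem:path-fundamental} and Lemma~\ref{lem.1.16} already guarantee that the boundary behaviour of $\gamma$ is consistent with the quotient structure of $\widehat{C}^{\,\mathbb{R}}_{Y|X}(\widetilde{\boldsymbol{\kappa}})$, which shows that the candidate formula is independent of the representatives chosen.

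The main obstacle will be the careful verification of orientations, signs and the normalisation factors $(2\pi\sqrt{-1}\,)^{n}$ arising in the trace map of Serre duality, so as to confirm that integration over $\gamma(z,\eta;\varrho,\theta)$ coincides exactly, and not merely up to a scalar, with the proper direct image implementing the cohomological action. This is precisely the type of direct \v{C}ech-theoretic computation carried out by Kashiwara and Kawai in \cite{K-K1}, and as the statement indicates the detailed execution of these orientation and sign bookkeeping computations is deferred to Appendix~\ref{ap:actions}.
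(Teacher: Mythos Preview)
Your outline correctly identifies the two building blocks of the cohomological action --- cup product followed by a residue/pushforward --- and correctly notes that the cup product is realised by the product $\psi(z,w,\eta)\,u(w,\eta)\,dw$ on the covering $\{\widehat{W}^{(\alpha,\beta)}_{\boldsymbol{\kappa}}\}$. (A minor slip: the pushforward is along $\widehat{\pi}_1$, not $\widehat{\pi}_2$, since one integrates out $w$ and keeps $(z,\eta)$.) However, there is a genuine gap in your second step.

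The difficulty is that $\gamma_1(z,\eta;\varrho,\theta)$ is an \emph{open arc}, not a closed cycle: it has two endpoints $z_1+\beta_0\eta$ and $z_1+\beta_1\eta$. Consequently the chain $\gamma(z,\eta;\varrho,\theta)$ is not a cycle, and the ``standard Grothendieck residue formalism'' or Serre trace map, which pairs a \v{C}ech cocycle against a \emph{closed} homology class, does not apply directly. Reducing the problem to sign and orientation bookkeeping overlooks this; Lemmas~\ref{lem:path-fundamental} and~\ref{lem.1.16} only show that $\mu$ is well defined on the quotient, not that it agrees with the cohomological residue. The paper's proof in Appendix~\ref{ap:actions} handles this by a chain of five intermediate steps: one first enlarges and then shrinks the support ($\widehat{G}\to\widehat{G}_1\to\cdots\to\widehat{G}_4$), so that at Step~2 the integration over $\gamma$ can be replaced by integration over the \emph{closed} chain $\widetilde{\gamma}=\gamma\vee(-\overline{\gamma})$ (the segment $\overline{\gamma}_1$ lies in $\widehat{W}^{(\{2,\dots,n\},*)}_{\boldsymbol{\kappa}}$, so its contribution is a coboundary). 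Step~3 then requires a separate cocycle computation (Lemma~\ref{lem:compat_act_cocycle}) to swap the roles of the two index groups $(\alpha,\beta)$. Only after these manoeuvres does one reach a situation to which the residue lemma (Lemma~\ref{lem:residue-simple}, proved via the $\bar\partial$-complex with distribution coefficients and Stokes' formula) applies. Your proposal does not indicate awareness of this non-closedness issue or of the intermediate support-changing steps needed to resolve it.
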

As an immediate corollary, we have:
\begin{cor}
The multiplication of the ring $\mathscr{E}^{\mathbb{R}}_{X,z^*_0}$ coincides with
the composition  defined by
\[
\mathscr{E}^{\mathbb{R}}_{X,z^*_0} \tens_{\mathbb{C}}
\mathscr{E}^{\mathbb{R}}_{X,z^*_0}
=
\smashoperator{\varinjlim_{\boldsymbol{\kappa}}}( E^{\mathbb{R}}_X(\boldsymbol{\kappa})
 \tens_{\mathbb{C}} E^{\mathbb{R}}_{X}(\boldsymbol{\kappa}))
\xrightarrow{\mu}
\smashoperator{\varinjlim_{\boldsymbol{\kappa}}} E^{\mathbb{R}}_{X}(\boldsymbol{\kappa})
= \mathscr{E}^{\mathbb{R}}_{X,z^*_0}.
\]
\end{cor}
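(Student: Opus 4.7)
The plan is to deduce this corollary from Theorem \ref{th:action-commutative} by testing both candidate products against germs of holomorphic microfunctions. Let $P_1,P_2\in\mathscr{E}^{\mathbb{R}}_{X,z^*_0}$ be represented by kernel classes $[\psi_1(z,w,\eta)\,dw]$ and $[\psi_2(z,w,\eta)\,dw]$. Write $P_1\cdot P_2$ for their product in the ring $\mathscr{E}^{\mathbb{R}}_{X,z^*_0}$ and $\mu([\psi_1\,dw]\otimes[\psi_2\,dw])$ for the integrally defined composition of the preceding corollary. My goal is to show these two kernel classes induce identical actions on $\mathscr{C}^{\mathbb{R}}_{Y|X,z^*_0}$ for every admissible $Y$, and then invoke faithfulness of this action to conclude that they agree as elements of $\mathscr{E}^{\mathbb{R}}_{X,z^*_0}$.

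Fix $f\in\mathscr{C}^{\mathbb{R}}_{Y|X,z^*_0}$. Because the cohomological pairing of $\mathscr{E}^{\mathbb{R}}$ with $\mathscr{C}^{\mathbb{R}}_{Y|X}$ is a module action, the ring product $P_1\cdot P_2$ acts on $f$ as the iterated operator $P_1(P_2(f))$. Applying Theorem \ref{th:action-commutative} three times---once to the action of $P_1\cdot P_2$ on $f$, once to the action of $P_2$ on $f$, and once to the action of $P_1$ on $P_2(f)$---converts each cohomological action into its integral $\mu$-counterpart, giving
\[
\mu((P_1\cdot P_2)\otimes f)\;=\;\mu(P_1\otimes\mu(P_2\otimes f)).
\]

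Next I interchange the order of the two iterated contour integrals on the right. With nested parameter tuples chosen to satisfy the hypotheses of Lemma \ref{lem:path-fundamental} and of the preceding corollary (the sharper bound $\tilde{r}'<r'/8$ there is tailored for exactly this step), the product chain $\gamma(z,\eta;\varrho,\theta)\times\gamma(\widetilde{w},\eta;\varrho,\theta)$ in the $(\widetilde{w},w)$-variables sits inside a polydomain on which $\psi_1(z,\widetilde{w},\eta)\,\psi_2(\widetilde{w},w,\eta)\,u(w,\eta)$ is jointly holomorphic, so a Fubini/Cauchy argument permits reversing the order of integration to obtain
\[
\mu(P_1\otimes\mu(P_2\otimes f))\;=\;\mu\bigl(\mu([\psi_1\,dw]\otimes[\psi_2\,dw])\otimes f\bigr).
\]
Combining the two displays, both $P_1\cdot P_2$ and $\mu([\psi_1\,dw]\otimes[\psi_2\,dw])$ produce the same integral---hence, by Theorem \ref{th:action-commutative} once more, the same cohomological---action on every $f\in\mathscr{C}^{\mathbb{R}}_{Y|X,z^*_0}$.

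It remains to invoke faithfulness of the action of $\mathscr{E}^{\mathbb{R}}_{X,z^*_0}$ on a suitable $\mathscr{C}^{\mathbb{R}}_{Y|X,z^*_0}$, which is standard (taking for instance $Y=\{z_0\}$, a nonzero operator cannot annihilate the delta-type microfunction there). This identifies the two classes in $\mathscr{E}^{\mathbb{R}}_{X,z^*_0}$ and proves the corollary. The main obstacle is precisely the Fubini step: one must keep careful track of the nested \v{C}ech covers $\widehat{W}^{(\alpha,\beta)}_{\boldsymbol{\kappa}}$ and of the contours supplied by Lemma \ref{lem:path-fundamental} to guarantee that every intermediate representative stays in its proper pseudoconvex open, so that the iterated product of paths can be deformed to a single parameterized chain representing the composed kernel without picking up spurious boundary contributions. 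Once this containment is confirmed, everything else is a formal consequence of Theorem \ref{th:action-commutative}.
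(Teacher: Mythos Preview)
Your argument is correct in outline, but it takes a more circuitous route than the paper intends. In the paper the corollary is stated as ``an immediate corollary'' with no proof, because the ring product on $\mathscr{E}^{\mathbb{R}}_{X,z^*_0}$ is itself a special case of the cohomological action covered by Theorem~\ref{th:action-commutative}: one identifies $\mathscr{E}^{\mathbb{R}}_{X}$ with $\mathscr{C}^{\mathbb{R}}_{\varDelta|X^2}$ (twisted by $dw$), so that the ring product is precisely the cohomological action of $\mathscr{E}^{\mathbb{R}}_{X}$ (pulled back along the first projection) on $\mathscr{C}^{\mathbb{R}}_{\varDelta|X^2}$, while the coordinate change in the preceding corollary already exhibits the integral $\mu$ on $E^{\mathbb{R}}_X(\boldsymbol{\kappa})\otimes E^{\mathbb{R}}_X(\boldsymbol{\kappa})$ as the integral action on $C^{\mathbb{R}}_{\varDelta|X^2}$ with $w$ carried as a holomorphic parameter (cf.\ the remark following Theorem~\ref{th:action-morphism}). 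Theorem~\ref{th:action-commutative} then applies verbatim.

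Your approach instead tests both products against $\mathscr{C}^{\mathbb{R}}_{Y|X,z^*_0}$, invokes Theorem~\ref{th:action-commutative} three times, performs a Fubini interchange, and appeals to faithfulness of the module action. This works, but it trades the single structural identification for two extra ingredients (the Fubini step at the level of \v{C}ech cohomology classes, and faithfulness) that the paper's viewpoint avoids entirely. The payoff of your route is that it never leaves the original $X$; the cost is the bookkeeping you yourself flag in the last paragraph.
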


\section{Symbols with an Apparent Parameter}\label{sec:symbol-with-ap}
Let $X:= \mathbb{C}^n$ and consider 
$ T^*X \simeq X \times \mathbb{C}^n =\{(z;\zeta)\}$.  
Let $\pi\colon T^*X\to X$ be  the canonical projection. 
If $V \subset T^*X$ is a conic set and   $d>0$,  we set 
\[
V[d]  :=\{(z;\zeta)\in V;\,\|\zeta\| \geqslant d \}.
\]
For any open conic subset 
 $\varOmega \subset T^*X$ and $\rho \geqslant 0$, we set
\[%\begin{equation}\label{eq.a.7.1}
   \varOmega^{}_ \rho
:= \Cl\Bigl[\smashoperator[r]{\Bcup_{(z,\zeta)\in\varOmega}}\{(z+z';\zeta+\zeta')\in \mathbb{C}^{2n};\,
                                  \|z'\|\leqslant \rho,\,\|\zeta'\|\leqslant \rho\|\zeta\|\}\Bigr].
\]%\end{equation}
Here $\Cl$ means the closure. 
In particular, $\varOmega^{}_0= \Cl \varOmega$. 
For any $d>0$ and  $\rho \in \bigl[0,1\bigr[$,  we set for short:
\[
%\begin{equation}\label{eq.a.7.2}
d^{}_{\rho}:=d(1-\rho).
\]
%\end{equation}
Let  $U$, $V$ be  conic subsets of $T^*X$.  
Then we write   $V \cset U$  if $V$ is generated  by  a compact subset of $U$. 
We recall the definition of symbols  of $\mathscr{E}^{\mathbb{R}}_X$ : 
\begin{defn}[see \cite{a2}, \cite{aky}]
Let $\varOmega\cset T^*X$ be an open conic  subset. 
\par
(1) We call  $P(z,\zeta)$ a \emph{symbol} on  $\varOmega$ if 
there exist   $d>0$ and  $\rho\in \bigl]0,1\bigr[$   such that  $
P(z,\zeta) \in \varGamma(\varOmega^{}_ \rho[d^{}_ \rho];\mathscr{O}^{}_{T^*X})$, 
and for any  $h >0$ there exists  $C_{h}>0$ such that 
\[
|P(z,\zeta)| \leqslant C_{h}e^{h\|\zeta\|} \quad ((z;\zeta) \in \varOmega^{}_ \rho [d^{}_ \rho]).
\]
We denote by  $\mathscr{S}(\varOmega)$ the set of symbols on $\varOmega$.
\par 
(2)  We call  $P(z,\zeta)$ a \emph{null-symbol} on  $\varOmega$ if 
there exist $d>0$ and  $\rho\in \big]0,1\bigr[$ such that  $
P(z,\zeta) \in \varGamma(\varOmega^{}_ \rho [d^{}_ \rho];\mathscr{O}^{}_{T^*X})$, and
there exist $C$, $\delta>0$ such that 
\[
|P(z,\zeta)| \leqslant Ce^{-\delta\|\zeta\|} \quad ((z;\zeta) \in \varOmega^{}_ \rho[d^{}_ \rho]).
\]
We denote by  $\mathscr{N}(\varOmega)$ the set of null-symbols on $\varOmega$.
\par
(3) For any $z^*_0\in T^*X$, we set
\[
\mathscr{S}^{}_{z^*_0}  :=  \smashoperator{ \varinjlim_{\varOmega\owns z^*_0}}
\mathscr{S}(\varOmega) \supset  
\mathscr{N}^{}_{z^*_0}  :=  \smashoperator{\varinjlim_{\varOmega\owns z^*_0}}\mathscr{N}(\varOmega)
\]
where  $\varOmega \cset T^*X$ ranges through open conic neighborhoods of $z^*_0$. 
\end{defn}
Next, set for short
\begin{equation}\label{S.eq1.1}
 S:= S_{\boldsymbol{\kappa}}
\end{equation}
for some  $r$, $\theta\in \bigl]0,\dfrac{1}{\,2\,}\bigr[$ (recall \eqref{eq:def-Gamma}). 
In particular we always assume that  $|\eta|< \dfrac{1}{\,2\,}$ for any $\eta \in  S$.  
For $Z \Subset S$, we set $m^{}_Z:= \min\limits_{\eta\in Z}|\eta|>0$.  
\begin{defn}
We define  a set $\mathfrak{N} (\varOmega; S)$ as follows:   $P(z, \zeta, \eta) \in \mathfrak{N} (\varOmega; S)$ if 
\begin{enumerate}[(i)]
\item $P(z, \zeta, \eta) \in \varGamma(\varOmega^{}_ \rho [d^{}_ \rho]\times S;
\mathscr{O}^{}_{T^*X \times \mathbb{C}})$ for some $d>0$ and  $\rho\in \bigl]0,1\bigr[$,
\item
there exists $\delta>0$ so that 
for any  $Z\Subset  S$, 
 there exists a constant $C_Z >0$
 satisfying
\begin{equation}\label{S.eq1.2a}
|P(z , \zeta, \eta)| \leqslant     C_Z e^{-\delta  \|\eta\zeta\|} \quad 
((z;\zeta,\eta) \in 
\varOmega^{}_ \rho[d^{}_ {\rho}] \times Z).
\end{equation}
\end{enumerate}
\end{defn}
\begin{lem}\label{S.lem1.3}
If  $P(z,\zeta,\eta)\in \mathfrak{N} (\varOmega; S)$, it follows that 
 $\partial_\eta  P(z,\zeta, \eta)\in \mathfrak{N} (\varOmega; S)$.
\end{lem}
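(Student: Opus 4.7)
The plan is to prove this by Cauchy's integral formula applied in the $\eta$-variable, transferring the decay estimate of $P$ to $\partial_\eta P$ at the cost of shrinking the exponent $\delta$ by a fixed factor.

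First, since $P$ is holomorphic on $\varOmega^{}_\rho[d^{}_\rho] \times S$, so is $\partial_\eta P$, with the same $d$ and $\rho$. Only condition (ii) requires verification. Let $\delta > 0$ be the constant associated to $P$.

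Next, given an arbitrary compact $Z \Subset S$, I would pick a slightly larger compact $Z' \Subset S$ so that $Z$ lies in the interior of $Z'$, and choose $r^{}_0 > 0$ with $r^{}_0 \leqslant m^{}_Z/2$ small enough that for every $\eta \in Z$ the closed disk $\{\eta' \in \mathbb{C};\,|\eta' - \eta| \leqslant r^{}_0\}$ is contained in $Z'$. For any $(z;\zeta) \in \varOmega^{}_\rho[d^{}_\rho]$ and $\eta \in Z$, Cauchy's formula gives
\[
\partial^{}_\eta P(z,\zeta,\eta) = \frac{1}{2\pi \iim} \oint_{|\eta' - \eta| = r^{}_0} \frac{P(z,\zeta,\eta')}{(\eta' - \eta)^2}\, d\eta'.
\]
On this circle, $|\eta'| \geqslant |\eta| - r^{}_0 \geqslant |\eta|/2$ by the choice of $r^{}_0$, so by the assumption (ii) applied to $Z'$,
\[
|P(z,\zeta,\eta')| \leqslant C^{}_{Z'}\, e^{-\delta |\eta'|\|\zeta\|} \leqslant C^{}_{Z'}\, e^{-(\delta/2)|\eta|\|\zeta\|}.
\]
Standard estimation of the contour integral then yields
\[
|\partial^{}_\eta P(z,\zeta,\eta)| \leqslant \frac{C^{}_{Z'}}{r^{}_0}\, e^{-(\delta/2)\|\eta\zeta\|},
\]
so setting $\delta' := \delta/2$ (independent of $Z$) and $C'_Z := C^{}_{Z'}/r^{}_0$ proves $\partial^{}_\eta P \in \mathfrak{N}(\varOmega; S)$.

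The only subtle point — and what I expect to be the main obstacle — is producing a decay exponent that is positive and independent of $Z$. Naively using $|\eta'| \geqslant |\eta| - r^{}_0$ would introduce an unwanted factor $e^{\delta r^{}_0 \|\zeta\|}$ that destroys the exponential decay, since $\|\zeta\|$ is unbounded on $\varOmega^{}_\rho[d^{}_\rho]$ and $r^{}_0$ cannot be taken to depend on $\zeta$. Using instead $|\eta'| \geqslant |\eta|/2$ (which costs us a fixed factor $1/2$ in the exponent but nothing $\zeta$-dependent) resolves this and preserves the uniform decay required by the definition.
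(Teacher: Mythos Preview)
Your proof is correct and follows essentially the same approach as the paper: both apply the Cauchy estimate in the $\eta$-variable over a small circle contained in a slightly larger compact $Z' \Subset S$, and both arrange that $|\eta'| \geqslant |\eta|/2$ on the circle so that the decay exponent survives as $\delta/2$. The only cosmetic difference is that the paper takes the Cauchy radius proportional to $|\eta|$ (namely $\delta'|\eta|$ with $\delta'$ small), whereas you use a fixed radius $r_0 \leqslant m_Z/2$; both choices accomplish the same thing.
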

\begin{proof}
We assume  that $P(z, \zeta, \eta) \in \varGamma(\varOmega^{}_ \rho [d^{}_ \rho]\times S;
\mathscr{O}^{}_{T^*X \times \mathbb{C}})$.
For any $Z \Subset  S$,  we take  $\delta'\in \big]0, \dfrac{\delta}{\,2\,}\big[$ as 
\begin{equation}\label{S.eq.1.3a}
Z':=\smashoperator{\Bcup_{\eta\in Z}}\{\eta'\in \mathbb{C};\, |\eta-\eta'|\leqslant \delta'|\eta|\}  \Subset  S.
\end{equation}
Then by the Cauchy inequality, there exists a  constant  $C_{Z'}>0$  such that 
\[
|\partial_\eta P(z,\zeta, \eta)| \leqslant 
\dfrac{1}{\delta'|\eta|}\smashoperator[r]{\sup_{|\eta-\eta'|= \delta'|\eta|}}
|P(z,\zeta, \eta')| \leqslant \dfrac{C_{Z'} e^{-\delta \|\eta\zeta\|/2}}{\delta'm^{}_Z}\quad 
((z;\zeta,\eta) \in \varOmega^{}_{\rho}[d^{}_ {\rho}] \times Z).
\qedhere
\]
\end{proof}
\begin{prop}\label{S.prop1.4}
Let $P(z, \zeta, \eta) \in \varGamma(\varOmega^{}_ \rho [d^{}_ \rho]\times S;
\mathscr{O}^{}_{T^*X \times \mathbb{C}})$. Assume 
that $\partial_\eta  P(z, \zeta, \eta)\in \mathfrak{N} (\varOmega; S)$.

$(1)$ The following conditions are equivalent\textup{:}
\begin{enumerate}[\rm (i)]
\item  there exists a constant  $\upsilon>0$ satisfying  the following\textup{:} 
for any    $Z\Subset  S$  there exists a constant  $C^{}_Z>0$  such that 
\begin{equation}\label{S.eq4.4a}
|P(z , \zeta, \eta)| \leqslant     C^{}_Z e^{\upsilon |\eta| \|\zeta\|}\quad ((z;\zeta,\eta) \in \varOmega^{}_ \rho[d^{}_{\rho}] \times Z).
\end{equation}
\item
for any  $h>0$ and  $Z\Subset  S$  there exists constant  $C_{h,Z}>0$  such that 
\[
|P(z , \zeta, \eta)| \leqslant     C_{h,Z} e^{h \|\zeta\|} \quad ((z;\zeta,\eta) \in \varOmega^{}_ \rho[d^{}_{\rho}] \times Z).
\]
\end{enumerate}

$(2)$ Assume that
 $P(z, \zeta, \eta)$ satisfies the equivalent conditions of \textup{(1)} \textup{(}resp.\ 
$P(z,\zeta,\eta)\in \mathfrak{N} (\varOmega; S)$\textup{)}. Then    for any  $\eta^{}_0 \in  S$, it follows that
$P(z,\zeta,\eta^{}_0)\in \mathscr{S} (\varOmega)$ \textup{(}resp.\ $P(z,\zeta,\eta^{}_0)\in \mathscr{N} (\varOmega)$\textup{)} 
and further
$P(z,\zeta,\eta)- P(z,\zeta,\eta^{}_0) \in \mathfrak{N} (\varOmega; S) $. 
\end{prop}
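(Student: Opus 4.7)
The plan is to derive both parts from the integral representation
\[
P(z,\zeta,\eta) - P(z,\zeta,\eta_0) = \int_{\eta_0}^{\eta} \partial_\eta P(z,\zeta,\eta')\,d\eta',
\]
combined with the fact that $\partial_\eta P \in \mathfrak{N}(\varOmega;S)$ (given directly in part (1); obtained from Lemma~\ref{S.lem1.3} in the $\mathfrak{N}$-case of part (2)). The decisive issue will be choosing the path of integration so that the pointwise decay $e^{-\delta|\eta'|\|\zeta\|}$ of $|\partial_\eta P|$ produces estimates with the correct exponent.

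For part~(1), the implication (ii)$\Rightarrow$(i) is essentially a change of constants: fix any $\upsilon > 0$, and for each $Z \Subset S$ apply (ii) with $h := \upsilon m_Z$; then $h\|\zeta\| = \upsilon m_Z\|\zeta\| \leqslant \upsilon|\eta|\|\zeta\|$ on $Z$ yields (i). For the nontrivial direction (i)$\Rightarrow$(ii), I fix $h > 0$ and choose $\eta_0 \in S$ with $|\eta_0| = h/\upsilon$. The straight segment from $\eta_0$ to any $\eta \in Z$ lies in $S$ (a convex sector avoiding the origin) and in the compact set $Z' \Subset S$ given by the convex hull of $\{\eta_0\} \cup Z$. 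By (i), $|P(z,\zeta,\eta_0)| \leqslant C\,e^{\upsilon|\eta_0|\|\zeta\|} = C\,e^{h\|\zeta\|}$, while $|\partial_\eta P| \leqslant C_{Z'}$ on $Z'$ makes the integral bounded by a constant; using $\|\zeta\| \geqslant d_\rho > 0$ on the domain, this additive constant is absorbed into $C_{h,Z}\,e^{h\|\zeta\|}$.

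For part~(2), $P(z,\zeta,\eta_0) \in \mathscr{S}(\varOmega)$ is immediate from (ii) with $Z := \{\eta_0\}$; and $P \in \mathfrak{N}(\varOmega;S)$ gives $|P(z,\zeta,\eta_0)| \leqslant C\,e^{-\delta|\eta_0|\|\zeta\|}$ with fixed positive exponent $\delta|\eta_0|$, hence $P(z,\zeta,\eta_0) \in \mathscr{N}(\varOmega)$. For the assertion $P(z,\zeta,\eta) - P(z,\zeta,\eta_0) \in \mathfrak{N}(\varOmega;S)$, for each $\eta$ I connect $\eta_0$ to $\eta$ by the shorter arc of the circle $\{|\eta'| = |\eta_0|\}$ from $\eta_0$ to $(|\eta_0|/|\eta|)\,\eta$, followed by the radial segment to $\eta$. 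This path lies in $S$ and satisfies $|\eta'| \geqslant |\eta_0|$ throughout, so $|\partial_\eta P(z,\zeta,\eta')| \leqslant C_{Z'}\,e^{-\delta|\eta_0|\|\zeta\|}$ along it. A direct estimate (absorbing a $1/\|\zeta\|$ factor from the radial integral via $\|\zeta\| \geqslant d_\rho$) yields $|P - P(\cdot,\eta_0)| \leqslant C'_Z\,e^{-\delta|\eta_0|\|\zeta\|}$. Finally, since $|\eta| < r$ on $S$, this is bounded by $C'_Z\,e^{-\delta''|\eta|\|\zeta\|}$ with the \emph{uniform} exponent $\delta'' := \delta|\eta_0|/r$, giving the required $\mathfrak{N}$-estimate.

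The main obstacle throughout is the uniformity of the exponent in the definition of $\mathfrak{N}(\varOmega;S)$: the path of integration must maintain a lower bound on $|\eta'|$ that translates into a $Z$-independent exponent. The arc-then-ray construction is exactly the device that ensures $|\eta'| \geqslant |\eta_0|$ (a fixed constant depending only on the choice of base point, not on $Z$) along the entire integration, converting the $\mathfrak{N}$-decay of $\partial_\eta P$ into the $\mathfrak{N}$-decay of the difference.
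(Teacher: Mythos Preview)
Your overall strategy matches the paper's: both use the integral representation $P(\eta)-P(\eta_0)=\int\partial_\eta P$ together with the $\mathfrak{N}$-decay of $\partial_\eta P$, and both invoke a compact $Z'\Subset S$ containing $\{\eta_0\}\cup Z$ to supply uniform constants. However, there is a genuine (though easily repaired) gap in your argument for part~(2).

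You assert that along the arc-then-ray path ``$|\eta'|\geqslant|\eta_0|$ throughout,'' but this fails when $|\eta|<|\eta_0|$: the radial segment from the point of modulus $|\eta_0|$ to $\eta$ then runs \emph{inward}, and $|\eta'|$ descends to $|\eta|$. In that case the pointwise bound is only $C_{Z'}\,e^{-\delta|\eta|\|\zeta\|}$, not $C_{Z'}\,e^{-\delta|\eta_0|\|\zeta\|}$. The conclusion still holds---$e^{-\delta|\eta|\|\zeta\|}$ is already of the required form---but a case distinction $|\eta|\gtrless|\eta_0|$ is unavoidable, exactly as in the paper's proof (which simply uses the straight segment and splits into these two cases). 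Your arc-then-ray construction therefore does not buy the uniformity you emphasize in the final paragraph; it is a different contour that leads to the same estimate with the same case split.

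A minor point in (i)$\Rightarrow$(ii): choosing $|\eta_0|=h/\upsilon$ is impossible when $h\geqslant\upsilon r$. The paper takes any $\eta_0$ with $\upsilon|\eta_0|<h$, which always exists; alternatively, for $h\geqslant\upsilon r$ condition~(i) already gives $|P|\leqslant C_Z e^{\upsilon|\eta|\|\zeta\|}\leqslant C_Z e^{h\|\zeta\|}$ directly.
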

\begin{proof}
(1) (i) $\Longrightarrow$ (ii). 
For any $h>0$, we choose $\eta^{}_0\in  S \cap \mathbb{R}$ as $\upsilon \eta^{}_0< h$. Then  
there exists a  constant  $C_{\eta^{}_0}>0$  such that 
\[
|P(z ,\zeta,\eta^{}_0)| \leqslant     C_{\eta^{}_0} e^{ \upsilon \eta^{}_0  \|\zeta\|} 
\leqslant C_{\eta^{}_0}  e^{h\|\zeta\|} \quad ((z;\zeta) \in \varOmega^{}_ \rho[d^{}_{\rho}]).
 \]  
For any $Z \Subset  S$, let   $Z' \Subset  S$ be  the convex hull of 
 $Z \cup\{\eta^{}_0\}$. 
 Since
$\partial_\eta  P(z,\zeta,\eta) \in \mathfrak{N}(\varOmega; S)$, 
we can find   $\delta>0$ and a constant  $C_{Z'}>0$  such that 
for any $(z;\zeta,\eta)  \in \varOmega^{}_ \rho[d^{}_{\rho}] 
\times Z \subset \varOmega^{}_ \rho[d^{}_{\rho}] \times Z'$ the following holds:  
\begin{align*}
|P(z,\zeta,\eta)|& =|P(z, \zeta, \eta^{}_0) + \int_{\eta^{}_0}^{\eta}
\partial_\eta  P(z,\zeta,\tau)\,d\tau| 
\leqslant C^{}_{\eta^{}_0}e^{h\|\zeta\|}+ |\eta-\eta^{}_0|  C^{}_{Z'}e^{-\delta m^{}_{Z'}\|\zeta\|}
\\
&\leqslant (C^{}_{\eta^{}_0}+ r  C^{}_{Z'})e^{h \|\zeta\|}.
\end{align*}

(ii) $\Longrightarrow$ (i). 
For any $Z\Subset  S$, we take  $0<h \leqslant m^{}_ Z$. Then 
there exists   $C_{h,Z}>0$   such that 
\[
|P(z ,\zeta,\eta)| \leqslant C_{h,Z} e^{h  \|\zeta\|} \leqslant C_{h,Z}e^{|\eta| \|\zeta\|} 
\quad ((z;\zeta,\eta) \in \varOmega^{}_ \rho[d^{}_{\rho}] \times Z).
\]

(2)
Taking $Z=\{\eta_0\}$, we see that 
$P(z,\zeta,\eta^{}_0)\in \mathscr{S} (\varOmega)$ by (1). 
Set $\delta^{}_0:= \delta|\eta^{}_0|$. 
As in the proof of (i) $\Longrightarrow$ (ii) in (1), we see that  for any  $(z;\zeta,\eta) 
\in \varOmega ^{}_ \rho[d ^{}_ {\rho}]\times Z 
\subset \varOmega ^{}_ \rho[d ^{}_ {\rho}]\times Z' $ the following holds: if
 $|\eta| \geqslant |\eta^{}_0|$
\[
  |P(z,\zeta,\eta)-P(z,\zeta,\eta^{}_0)|
 = \Bigl| \int_{\eta^{}_0}^{\eta}
\partial_\eta  P _\nu(z,\zeta,\tau)\,d\tau\Bigr|
 \leqslant |\eta-\eta^{}_0|  
C_{h,Z'}e^{-\delta\|\eta^{}_0\zeta\|}
\leqslant  r C_{h,Z'}e^{-\delta^{}_0\|\eta\zeta\|},
\]
and if $|\eta|\leqslant  |\eta^{}_0|$
\[
  |P(z,\zeta,\eta)-P(z,\zeta,\eta^{}_0)|
\leqslant  r C_{h,Z'}e^{-\delta\|\eta\zeta\|}
\leqslant r C_{h,Z'}e^{-\delta^{}_0 \|\eta\zeta\|}.
\]
Hence $P(z,\zeta,\eta)- P(z,\zeta,\eta^{}_0) \in \mathfrak{N} (\varOmega; S) $. 
If $P(z,\zeta,\eta)\in \mathfrak{N} (\varOmega; S)$, the proof is same.
\end{proof}

\begin{defn}
(1) We define  a set $\mathfrak{S} (\varOmega; S)$ as follows: 
  $P(z, \zeta, \eta) \in  \mathfrak{S}(\varOmega; S)$  if   
\begin{enumerate}[(i)] 
\item $ P(z, \zeta, \eta) \in \varGamma(\varOmega^{}_ \rho [d^{}_ \rho]\times S;
\mathscr{O}^{}_{T^*X \times \mathbb{C}})$ for some $d>0$ and $\rho\in \bigl]0,1\bigr[$,
\item  $\partial_\eta  P(z, \zeta, \eta)\in \mathfrak{N} (\varOmega; S)$,
\item $P(z, \zeta, \eta)$ satisfies the equivalent conditions of Proposition \ref{S.prop1.4}.
\end{enumerate}
Note that $\mathfrak{N} (\varOmega; S)  \subset \mathfrak{S} (\varOmega; S) $ 
holds by Lemma \ref{S.lem1.3}.

(2) For $z^*_0\in \dot{T}^*X$, we set 
\[
\mathfrak{S}^{}_{z^*_0}
:= \smashoperator{\varinjlim_ {\varOmega,   S}}
\mathfrak{S} (\varOmega; S) \supset
\mathfrak{N}^{}_{z^*_0}  := \smashoperator{\varinjlim_ {\varOmega,   S}}
\mathfrak{N} (\varOmega; S). 
\]
Here  $\varOmega \cset T^*X$ ranges through open conic neighborhoods of $z^*_0$, 
and the inductive limits with respect to $ S$ are taken by $r$, $\theta\to 0$ 
in \eqref{S.eq1.1}.
\end{defn}
 We  call each element of $ \mathfrak{S} (\varOmega; S) $ (resp.\ 
$\mathfrak{N} (\varOmega; S)$) a \textit{symbol}  (resp.\ \textit{null-symbol}) 
\textit{on $\varOmega$ with an apparent parameter in $ S$}.
It is easy to see that $\mathfrak{S} (\varOmega; S)$ is  a $\mathbb{C}$-algebra under 
the ordinary operations of functions, and $\mathfrak{N} (\varOmega; S)$ is a subalgebra. 
\begin{comment}
Moreover 
\begin{prop}\label{2.4.3}
  $\mathfrak{N} (\varOmega; S)$ is an ideal of  $\mathfrak{S}(\varOmega; S)$.
\end{prop}
\begin{proof}
Let $P (z, \zeta, \eta)\in \mathfrak{S} (\varOmega; S)$ and
 $Q(z, \zeta, \eta)\in \mathfrak{N} (\varOmega; S)$. 
Then we may assume that 
$P(z, \zeta, \eta)$, $Q(z, \zeta, \eta) \in \varGamma(\varOmega^{}_ \rho[d^{}_ \rho]\times   S ;
\mathscr{O}^{}_{T^*X \times \mathbb{C}})$. For any  $Z\Subset  S$, there exist $\delta^{}_Z$, $C^{}_Z>0$ such that 
for  any $(z;\zeta,\eta) \in \varOmega^{}_ \rho[d^{}_ \rho] \times Z$ the following holds: 
\[
| Q(z, \zeta, \eta) | \leqslant C^{}_Z e^{-\delta^{}_Z\|\zeta\|}.
\]
 Then there exists   $C'>0$ such that 
for any $(z;\zeta,\eta) \in \varOmega^{}_ \rho[d^{}_ \rho] \times Z$ the following holds: 
\[
|P (z, \zeta, \eta) |\leqslant C' e^{\delta_Z\|\zeta\|/2}.
\]
Hence on $\varOmega^{}_ \rho[d^{}_ \rho] \times Z$ the following holds: 
\[
|P (z, \zeta, \eta)\,Q (z, \zeta, \eta) | \leqslant C^{}_Z\,C' e^{-\delta_Z\|\zeta\|/2}.
\]
For the same reasoning, we see that $\partial^{}_\eta (P (z, \zeta, \eta)\,Q (z, \zeta, \eta) ) 
\in \mathfrak{N} (\varOmega; S) $. 
\end{proof}
\end{comment}
By definition,  we can regard that   
\begin{align*} 
\mathscr{S}(\varOmega) & =\{ P(z,\zeta,\eta)\in 
\mathfrak{S}(\varOmega;  S);\, \partial_\eta  P(z,\zeta,\eta)
=0\} \subset \mathfrak{S}(\varOmega;  S),
\\  
\mathscr{N}(\varOmega) & =
\mathscr{S}(\varOmega) \cap  \mathfrak{N}(\varOmega;  S)
\subset \mathfrak{N}(\varOmega;  S). 
\end{align*}
 Hence we have an  injective 
mapping $\mathscr{S} (\varOmega)/\mathscr{N} (\varOmega) \hookrightarrow 
 \mathfrak{S} (\varOmega;  S) /\mathfrak{N} (\varOmega;  S)$.
Moreover

\begin{prop}
There exists  the following isomorphism\textup{:}
\[
\mathscr{S} (\varOmega)/\mathscr{N} (\varOmega)
\earrow \mathfrak{S} (\varOmega;  S) /\mathfrak{N} (\varOmega;  S).
\]\end{prop}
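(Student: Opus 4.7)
The plan is to verify surjectivity of the canonical map, since injectivity has already been observed just before the statement (an element of $\mathscr{S}(\varOmega)$ lying in $\mathfrak{N}(\varOmega;S)$ has no $\eta$-dependence and satisfies the null-symbol estimate uniformly, hence lies in $\mathscr{N}(\varOmega)$). So the task reduces to showing that every class in $\mathfrak{S}(\varOmega;S)/\mathfrak{N}(\varOmega;S)$ admits a representative with $\partial_\eta P = 0$.

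The construction is direct: given $P(z,\zeta,\eta) \in \mathfrak{S}(\varOmega;S)$, I would fix any $\eta_0 \in S$ (for concreteness one may take $\eta_0$ on the positive real axis) and set
\[
Q(z,\zeta) := P(z,\zeta,\eta_0).
\]
By Proposition \ref{S.prop1.4}(2), applied under the standing hypothesis $\partial_\eta P \in \mathfrak{N}(\varOmega;S)$ provided by (ii) of the definition of $\mathfrak{S}(\varOmega;S)$, one obtains simultaneously that $Q(z,\zeta) \in \mathscr{S}(\varOmega)$ and that
\[
P(z,\zeta,\eta) - Q(z,\zeta) \in \mathfrak{N}(\varOmega;S).
\]
Therefore the class $[P]$ in $\mathfrak{S}(\varOmega;S)/\mathfrak{N}(\varOmega;S)$ equals $[Q]$, which lies in the image of $\mathscr{S}(\varOmega)/\mathscr{N}(\varOmega)$. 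This shows surjectivity.

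For well-definedness, if $\eta_0, \eta_1 \in S$ are two choices, Proposition \ref{S.prop1.4}(2) gives $P(z,\zeta,\eta) - P(z,\zeta,\eta_j) \in \mathfrak{N}(\varOmega;S)$ for $j=0,1$, so the difference $P(z,\zeta,\eta_0) - P(z,\zeta,\eta_1)$ is a constant-in-$\eta$ element of $\mathfrak{N}(\varOmega;S)$, hence lies in $\mathscr{N}(\varOmega)$; and if $P$ is replaced by $P + N$ with $N \in \mathfrak{N}(\varOmega;S)$ then $N(z,\zeta,\eta_0) \in \mathscr{N}(\varOmega)$ by the same restriction argument. There is essentially no obstacle here beyond invoking Proposition \ref{S.prop1.4}(2); the real work was done in proving that proposition, where the key maneuver—bounding $P(z,\zeta,\eta) - P(z,\zeta,\eta_0)$ via integration of $\partial_\eta P$ along a segment inside $S$—already produced both the holomorphic symbol estimate at $\eta_0$ and the exponential decay of the difference.
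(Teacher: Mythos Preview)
Your proof is correct and follows exactly the same approach as the paper: fix $\eta_0 \in S$, set $Q(z,\zeta) = P(z,\zeta,\eta_0)$, and invoke Proposition~\ref{S.prop1.4}(2) to get both $Q \in \mathscr{S}(\varOmega)$ and $P - Q \in \mathfrak{N}(\varOmega;S)$. Your additional remarks on well-definedness (independence of $\eta_0$ and of the representative) are more detailed than what the paper writes, but the core argument is identical.
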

\begin{proof}

Let $P(z,\zeta,\eta)\in \mathfrak{S} (\varOmega; S)$. We   fix $\eta^{}_0 \in S$. 
Then by Proposition \ref{S.prop1.4}, we have  
$P(z,\zeta,\eta^{}_0) \in \mathscr{S}(\varOmega)$ and
 $[P(z,\zeta,\eta)] =[P(z,\zeta,\eta^{}_0)] \in \mathfrak{S} (\varOmega; S)  /\mathfrak{N} (\varOmega; S)$.
\end{proof}
\begin{defn}
We set  
\[
\wick{P(z,\zeta,\eta)}: = P(z,\zeta,\eta) \bmod 
\mathfrak{N}(\varOmega; S) \in \mathfrak{S}(\varOmega; S)
\big/\mathfrak{N}(\varOmega; S)
\]
which is called   the \textit{normal product} or the  \textit{Wick product} of $P(z,\zeta,\eta)$. 
\end{defn}

\section{Kernel Functions and Symbols}\label{sec:kernel-symbol}
In this section, we shall establish the correspondence of kernel functions and symbols.  
For this purpose, first we define two mappings that give the correspondence above.   
Set  
$z^*_0 = (z_0;\zeta_0) := (0;1,0,\dots,0)$. 
 Take any element $  K(z,w)\,dw = [ \psi(z,w,\eta)\,dw]
 \in \smash{\varinjlim\limits_{\boldsymbol{\kappa}} E^{\mathbb{R}}_X(\boldsymbol{\kappa})}
$.
Then a representative $\psi(z,z+w,\eta)$ of  $K(z,z+w) $  is holomorphic on
\[
\smashoperator[r]{\Bcap_{i=2}^n}\{(z,w,\eta) \in \mathbb{C}^{2n}\times  S;\,
\|z\| < r',\,  \dfrac{1}{\,\varrho\,}|\eta| <| w^{}_i| < r',\, |w_1 | < \varrho |\eta|,\,  \lvert\arg  w_1| < \dfrac{\,\pi\,}{2} + \theta\}.
\]
\begin{defn}\label{defsigma}
We set 
\[
\sigma(\psi)(z,\zeta,\eta):=\smashoperator{\int\limits_{\hspace{6ex}\gamma(0,\eta;\varrho,\theta)}}
 \psi(z,z+w,\eta) \,e^{\langle w, \zeta\rangle} dw.
\]
In Proposition  \ref{S.rem5.2} below, we show that $\sigma$ induces a mapping 
$\mathscr{E}^{\smash{\mathbb{R}}}_{X, z^*_0} \to \mathfrak{S} ^{}_{z^*_0}/\mathfrak{N}^{}_{z^*_0}$\,. 
\end{defn}
%Note that this mapping is compatible with the isomorphism $\mathscr{E}^{\smash{\mathbb{R}}}_{X, z^* _0}\simeq  
%\mathscr{S}_{z^* _0} /\mathscr{N}_{z^* _0}$;  that is, 
%the following diagram is commutative:
%\[
%\xymatrix @C=1em @R=2ex{
%\mathscr{E}^{\smash{\mathbb{R}}}_{X, z^*_0} \ar[r]^-{\dsim}\ar[d]^-{\displaystyle \!\!\wr}&
%\mathscr{S}^{}_{z^* _0}/\mathscr{N}^{}_{z^* _0}\ar[d]^-{\displaystyle \!\!\wr}
%\\
%\varinjlim\limits_{\boldsymbol{\kappa}} E^{\mathbb{R}}_X(\boldsymbol{\kappa}) \ar[r]^{\dsim}&
% \mathfrak{S} ^{}_{z^* _0} /\mathfrak{N} ^{}_{z^* _0}
%}\]
%Here we give the direct proof of 
In order to construct the inverse of $\sigma$, we make full use of the following family of functions (see Laurent \cite[p.39]{L}): 
\begin{defn}\label{S.defn2.1}
We set
\[
\varGamma^{}_\nu(\tau,\eta) := 
\left\{\begin{aligned}
&1
& &(\nu=0),
\\
&\frac{1}{\,(\nu-1)!\,}
 \int_{0}^{\eta} \hspace{-1ex} e^{-s\tau} s^{\nu-1}ds& & (\nu\in \mathbb{N}).
\end{aligned}\right.
\]
\end{defn}

Let $z^*_0=(0;1,0,\dots,0)\in \dot{T}^*X$, 
and $P(z,\zeta,\eta)\in\mathfrak{S} ^{}_{z^* _0}$\,. By Proposition \ref{S.prop1.4}, for any sufficiently small $\eta^{}_0>0$ we have 
$P(z,\zeta,\eta)-P(z,\zeta,\eta^{}_0)\in \mathfrak{N} ^{}_{z^* _0}$\,. 
We may assume that %$z^*_0=(0;1,0,\dots,0)$ and  
$\|\zeta\|=|\zeta^{}_1|$ on a neighborhood  
of $z^*_0$. 
We develop  $P(z,\zeta,\eta^{}_0)$ into  the Taylor series with respect to
$\zeta'/\zeta^{}_1=(\zeta^{}_2/\zeta^{}_1,\dots, \zeta ^{}_n/\zeta ^{}_1)$:
\begin{equation}
\label{S.eq2.6q}
P(z,\zeta,\eta^{}_0)=\smashoperator[r]{\Sum_{\alpha\in{\mathbb{N}_0^{n-1}}}
}P^{}_{\alpha}(z,\zeta^{}_1,\eta^{}_0)
\biggl(\frac{\zeta'}{\zeta^{}_1}\biggr)^{\!\!\alpha}.
\end{equation}
Then we set 
$P^{\mathcal{B}}_{\alpha}(z,\zeta^{}_1,\eta):=P^{}_{\alpha}(z,\zeta^{}_1,\eta^{}_0)\,
\zeta^{|\alpha|}_1\varGamma^{}_{|\alpha|}(\zeta^{}_1,\eta)$ and 
\begin{equation}\label{S.eq2.8a}
P^{\mathcal{B}}(z,\zeta,\eta): =
\smashoperator{\Sum_{\alpha\in{\mathbb{N}_0^{n-1}}}}P^{\mathcal{B}}_{\alpha}(z,\zeta^{}_1,\eta)
\biggl(\frac{\zeta'}{\zeta^{}_1}\biggr)^{\!\!\alpha}.
\end{equation}
\begin{defn}\label{defpi}
Under the preceding notation, we set  
\begin{equation}
\label{S.eq2.12}
\begin{aligned}
\varpi^{}_\alpha(P)(z,w^{}_1,\eta)
:= {} &
\int_{d}^\infty \hspace{-1.5ex} P^{\mathcal{B}}_\alpha(z,\zeta^{}_1,\eta)
\,\dfrac{e^{- w^{}_1 \zeta^{}_1}}{\zeta^{|\alpha|}_1}\,d\zeta^{}_1 
\\
={}&\int_{d}^\infty \hspace{-1.5ex}P^{}_{\alpha}(z,\zeta^{}_1,\eta^{}_0)\,
\varGamma^{}_{|\alpha|}(\zeta^{}_1,\eta)\,e^{- w^{}_1 \zeta^{}_1}\,d\zeta^{}_1,
\end{aligned}
\end{equation}
and further  define 
\begin{equation}
\label{S.eq2.13}
\varpi(P)(z,z+w,\eta)
:= \Sum_{\alpha\in \mathbb N_0^{n-1}}
\frac{\,\alpha!\, \varpi^{}_\alpha(P)(z,w^{}_1,\eta)} 
{\,(2\pi\im\,)^{n}\,
(w')^{\alpha+ \mathbf{1}_{n-1}}\,}\,.
\end{equation}
Here we set  $w':=(w^{}_2,\dots,w^{}_n)$ and $\mathbf{1}^{}_{n-1}:=(1,\dots,1) $. 
In  Proposition \ref{welldefomga} below, we show that $\varpi$ induces a mapping 
$\mathfrak{S} ^{}_{z^*_0}/\mathfrak{N}^{}_{z^*_0} \to \mathscr{E}^{\smash{\mathbb{R}}}_{X, z^*_0}$\,. 
 \end{defn}

\begin{prop}\label{S.rem5.2}
The $\sigma$ in Definition \ref{defsigma} induces the linear  mapping
\[
\xymatrix @C=1em @R=.01ex{
\sigma\colon \mathscr{E}^{\smash{\mathbb{R}}}_{X, z^*_0} \ar@{}[d]|{\rotatebox{90}{$\in$}}\ar@{=}[r]& 
\varinjlim\limits_{\boldsymbol{\kappa}} E^{\mathbb{R}}_X(\boldsymbol{\kappa}) \ar@{}[d]|{\rotatebox{90}{$\in$}} \ar[r] &
\mathfrak{S} ^{}_{z^*_0}/\mathfrak{N}^{}_{z^*_0} \ar@{}[d]|{\rotatebox{90}{$\in$}}
\\   K(z,w)\,dw \ar@{=}[r]& [ \psi(z,w,\eta)\,dw]
\ar@{|->}[r] & \sigma(K)(z,\zeta)=
 [\sigma(\psi)(z,\zeta,\eta)].
}\]
The $\sigma$ does not depend on the choice of the path of the integration.
\end{prop}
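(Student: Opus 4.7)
The plan is to establish, in order: (a) the integral $\sigma(\psi)(z,\zeta,\eta)$ defines an element of $\mathfrak{S}(\varOmega; S)$ for a suitable conic neighborhood $\varOmega$ of $z^*_0$ and sector $S$; (b) $\sigma(\psi)\in \mathfrak{N}(\varOmega; S)$ whenever $\psi$ represents $0$ in $E^{\mathbb{R}}_X(\boldsymbol{\kappa})$; and (c) the resulting class in $\mathfrak{S}/\mathfrak{N}$ does not depend on the admissible path. After shrinking $\varOmega$ around $z^*_0=(0;1,0,\dots,0)$, I may assume $|\zeta_i|\leqslant \varrho^2\|\zeta\|$ for $i\geqslant 2$ on all of $\varOmega_\rho[d_\rho]$.

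For (a), Lemma \ref{lem:path-fundamental} (1), applied after the translation $w'=z+w$, places the product chain $\{z\}\times\gamma(z,\eta;\varrho,\theta)\times\{\eta\}$ inside $\widehat{W}^{(*,*)}_{\boldsymbol{\kappa}}$, so $\psi(z,z+w,\eta)$ is holomorphic near the contour and $\sigma(\psi)$ is holomorphic on $\varOmega_\rho[d_\rho]\times S$. For the growth estimate, on $\gamma_1$ one has $|w_1|\leqslant \varrho|\eta|$, and on each $\gamma_i$, $|w_i|\leqslant |\eta|/\varrho+\varepsilon$; combined with the bound on $|\zeta_i|/\|\zeta\|$, this yields $|\langle w,\zeta\rangle|\leqslant \upsilon|\eta|\|\zeta\|+\varrho^2\varepsilon\|\zeta\|$ for some $\upsilon>0$. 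Since $|\eta|\geqslant m_Z>0$ on any $Z\Subset S$, the second term is absorbed into $(\upsilon+\varrho^2\varepsilon/m_Z)|\eta|\|\zeta\|$, producing condition (i) of Proposition \ref{S.prop1.4} and hence the required growth.

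Next I would prove (b) via a key vanishing lemma: $\sigma(\psi_\alpha)\in \mathfrak{N}$ for every $\psi_\alpha\in \varGamma(\widehat{V}^{(\alpha)}_{\varDelta,\boldsymbol{\kappa}};\OOO)$ with $\alpha\in\mathcal{P}^\vee_n$. If $\alpha=\{2,\dots,n\}$, I deform $\gamma_1$ to the straight segment $\overline{\gamma}_1$ using Lemma \ref{lem:path-fundamental} (3); on $\overline{\gamma}_1$ one computes $\Re(w_1\zeta_1)\leqslant -(\varrho/2)\sin(\theta/2)\,\Re(\eta\zeta_1)\leqslant-\delta|\eta|\|\zeta\|$ for some $\delta>0$, giving the exponential decay of a null-symbol. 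If instead $1\in\alpha$ but some $i\geqslant 2$ is missing from $\alpha$, then $\psi_\alpha$ extends holomorphically across the disk in the $w_i$-direction bounded by $\gamma_i$ (since the restriction $\varrho|w_i|>|\eta|$ is absent for $i\notin\alpha$), and Cauchy's theorem applied to the innermost $w_i$-integration yields $\sigma(\psi_\alpha)=0$. Linearity then forces $\sigma$ to descend to a map $E^{\mathbb{R}}_X(\boldsymbol{\kappa})\to \mathfrak{S}/\mathfrak{N}$. To complete the verification that $\sigma(\psi)\in\mathfrak{S}$, I differentiate under the integral sign: the $\eta$-derivative of the moving endpoints of $\gamma_1$ produces a boundary term $\int_{\gamma_2\times\cdots\times\gamma_n}\bigl[\tau\psi(z,z_1+\tau\eta,z''+w'',\eta)e^{\tau\eta\zeta_1+\langle w'',\zeta''\rangle}\bigr]_{\tau=\beta_0}^{\beta_1}dw''$ which is exponentially small because $\Re(\beta_i\eta\zeta_1)\leqslant -(\varrho/2)\sin(\theta/2)|\eta||\zeta_1|$; the remaining interior term equals $\sigma(\partial_\eta\psi)$, and since $[\psi\,dw]\in E^{\mathbb{R}}_X(\boldsymbol{\kappa})$ enforces $\partial_\eta\psi\in\Sum_{\alpha\in\mathcal{P}^\vee_n} \varGamma(\widehat{V}^{(\alpha)}_{\varDelta,\boldsymbol{\kappa}};\OOO)$, the vanishing lemma gives $\sigma(\partial_\eta\psi)\in\mathfrak{N}$. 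Hence $\partial_\eta\sigma(\psi)\in\mathfrak{N}$, and $\sigma(\psi)\in\mathfrak{S}$.

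Path independence (c) will follow by the same deformation machinery: any two admissible paths are homologous in $\widehat{W}^{(*,*)}_{\boldsymbol{\kappa}}$ through a family constructed as in Lemma \ref{lem:path-fundamental}, and the difference of the integrals is expressible as integrals of $\psi\,e^{\langle w,\zeta\rangle}$ over chains that lie in the coboundary pieces $\widehat{W}^{(\alpha,*)}_{\boldsymbol{\kappa}}$, which become null-symbols by the vanishing lemma of (b). Compatibility with the inductive system over $\boldsymbol{\kappa}$ (with appropriate shrinking of $\varOmega$ and $S$) then produces the map $\mathscr{E}^{\mathbb{R}}_{X,z^*_0}\to \mathfrak{S}_{z^*_0}/\mathfrak{N}_{z^*_0}$. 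The main technical obstacle lies in the case analysis of (b): one must verify uniformly in $(z,\zeta,\eta)\in\varOmega_\rho[d_\rho]\times Z$ that each deformation of $\gamma_1$ to $\overline{\gamma}_1$ or Cauchy shrinking of $\gamma_i$ remains within the prescribed holomorphy domain, and that the various exponential decay constants $\delta>0$ can be chosen independently of the auxiliary parameter $\varepsilon$ appearing in the contours.
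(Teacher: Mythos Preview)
Your proof is correct and follows essentially the same approach as the paper: both arguments establish the growth bound by estimating $|e^{\langle w,\zeta\rangle}|$ on the contour, handle coboundaries via Cauchy's theorem in the $w_i$-direction ($i\geqslant 2$) together with the deformation $\gamma_1\to\overline\gamma_1$ for the piece with $\alpha=\{2,\dots,n\}$, and verify $\partial_\eta\sigma(\psi)\in\mathfrak{N}$ by differentiating under the integral sign and using that $\partial_\eta\psi$ is a coboundary. The only cosmetic difference is that the paper first normalizes the representative via a Laurent expansion in $w'=(w_2,\dots,w_n)$ (reducing to the form~\eqref{S.eq2.1}), which absorbs your ``$1\in\alpha$, $i\notin\alpha$'' case into the choice of representative, whereas you treat all coboundary types directly; the underlying mechanism is identical.
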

We call $\sigma $  the \textit{symbol mapping}, and  
 $\sigma(K)$  the \textit{symbol} of $ K(z,w)\,dw\in \mathscr{E}^{\smash{\mathbb{R}}}_{X, z^*_0} $.  
\begin{proof}
We expand 
\[
\psi(z,z+w,\eta)= \Sum_{\alpha\in\mathbb{Z}^{n-1}}
  \frac{\,\psi^{}_{\alpha}(z,w^{}_1,\eta)\,}{(2\pi \im\,)^{n-1}\,(w')^{\alpha+ \mathbf{1}^{}_{n-1}}}
\]
If  $\alpha^{}_i+1\leqslant 0$  for some  $2 \leqslant i \leqslant n$, this term  is zero in 
$\smashoperator{\varinjlim\limits_{\boldsymbol{\kappa}}} E^{\mathbb{R}}_X(\boldsymbol{\kappa}) $, hence we may assume from the beginning that 
\begin{equation}\label{S.eq2.1}
\psi(z,z+w,\eta)= \Sum_{\alpha\in\mathbb{N}_0^{n-1}}
  \frac{\,\psi^{}_{\alpha}(z,w^{}_1,\eta)\,}{ (2\pi \im\,)^{n-1}\,(w')^{\alpha + \mathbf{1}^{}_{n-1}}}\,.
\end{equation}
Here 
\begin{equation}\label{S.eq2.2}
\psi^{}_{\alpha}(z,w^{}_1,\eta) :=\smashoperator{\oint\limits_{\hspace{14ex}| \widetilde{w}^{}_2|=c|\eta|,\dots,| \widetilde{w}^{}_n|=c|\eta|} }
\hspace{.5ex}
\psi(z,z^{}_1+w^{}_1,z'+\widetilde{w}',\eta)(\widetilde{w}')^{\alpha}\,d \widetilde{w}'
\end{equation}
for  $c>\dfrac{1}{\,\varrho\,}$. 
Hence  we may assume that $\psi(z,z+w,\eta) $ is holomorphic on
\[
\smashoperator[r]{\Bcap_{i=2}^n}\{(z,w,\eta) \in \mathbb{C}^{2n}\times  S;\,
\|z\| < r',\, |w_1 | <\varrho |\eta|,\, \vert\arg  w_1|< \dfrac{\,\pi\,}{2} + \theta,\,\dfrac{|\eta| }{\, | w_i|\,} <\varrho\}, 
\]
Take  $c'>0$, an open conic neighborhood 
  $\varOmega \cset T^*X$ and $0<\rho\ll 1 $ as
\[
\varOmega^{}_{\rho}\; \cset\;
\{(z,\zeta) \in \mathbb{C}^{2n};\, 
\|z\| \leqslant r',\, \|\zeta'\| \leqslant c'|\zeta^{}_1|,\, \lvert\arg\zeta_1| \leqslant r''\}.
\]
Taking $c'$ small enough, we can assume that $\|\zeta\| = |\zeta_1|$ on $\varOmega^{} _{\rho} $. 
We have
\begin{align*}
\sigma(\psi)(z,\zeta,\eta)={}& \int\limits_{\gamma^{}_1(0,\eta;\varrho,\theta)}\hspace{-3ex} dw^{}_1\oint\limits_{\gamma^{}_2(0,\eta;\varrho)}
\hspace{-1ex}\dotsi\hspace{-1ex}\oint\limits_{\gamma^{}_n(0,\eta;\varrho)}
    \Sum_{\alpha\in\mathbb{N}_0^{n-1}}
  \frac{\,\psi^{}_{\alpha}(z,w^{}_1,\eta)\,}{ (2\pi \im\,)^{n-1}\,(w')^{\alpha +\mathbf{1}^{}_{n-1}}} \,
              e^{\langle w,\zeta\rangle}dw'
\\
={}  & \smashoperator[l]{\Sum_{\alpha\in \mathbb {N}_0^{n-1}}}
            \smashoperator{\int\limits_{\hspace{6ex}\gamma^{}_1(0,\eta;\varrho,\theta)}}
                     \psi^{}_{\alpha}(z,w^{}_1,\eta) \,\frac{\, e^{w^{}_1\zeta^{}_1}\,}{\,\alpha!}\,
              \partial_{w'}^{\,\,\alpha}  e^{\langle w',\zeta'\rangle}\big|{}^{}_{w'=0}\, dw^{}_1
\\
=   {}      &  \smashoperator[l]{\Sum_{\alpha\in \mathbb {N}_0^{n-1}}}
             \frac{(\zeta')^{\alpha}}{\alpha!}
            \smashoperator{\int\limits_{\hspace{6ex}\gamma_1(0,\eta;\varrho,\theta)}}
             \psi^{}_{\alpha}(z,w^{}_1,\eta) \,    e^{w^{}_1 \zeta^{}_1}dw^{}_1  \,.
\end{align*}
We can change 
 $\gamma_i(0, \eta;\varrho)= \{w_i = |\eta|s' e^{2\pi \iim t};\, 0 \leqslant t \leqslant 1\}$ with 
$0<\varrho^{-1}<s'$. 
Deforming $\gamma_1(0, \eta;\varrho,\theta)$, we can see  that for any $h>0$ we have $e^{\Re\langle w^{}_1, \zeta^{}_1\rangle} \leqslant 
e^{h|\eta\zeta^{}_1|}$ holds if $\lvert \arg\zeta_1| <r''$ and $w^{}_1 \in \gamma^{}_1(0,\eta;\varrho,\theta) $. 
Thus 
we have
\begin{align*}
\bigl|e^{\langle w,\zeta\rangle}\bigr| = e^{\Re \langle w,\zeta\rangle} &\leqslant  \exp \bigl(\Re \langle w^{}_1,\zeta^{}_1\rangle + \smashoperator{ \Sum_{i=2}^n} |w^{}_i\,\zeta^{}_i|\bigr)
\\& 
\leqslant  e^{h|\eta\zeta^{}_1| + (n-1) c's'|\eta\zeta^{}_1| } 
=  e^{(h+(n-1)c's')\|\eta\zeta\|}.
\end{align*}
Fix $d>0$. 
Take $Z\Subset  S$. 
Then there exists a constant $C>0$ such that
\[
\Big| \smashoperator{\int\limits_{\hspace{6ex}\gamma(0,\eta;\varrho,\theta)}}\psi(z,z+w,\eta) \,e^{\langle w, \zeta\rangle} dw
\Big|\leqslant  Ce^{(h+(n-1)c's')\|\eta\zeta\|}\quad ((z;\zeta,\eta) \in 
\varOmega^{}_{\rho}[d^{}_\rho] \times Z),
\]
that is,   we can see that $\sigma(\psi)(z,\zeta,\eta) \in \varGamma(\varOmega^{}_ \rho [d^{}_ \rho]\times S;
\mathscr{O}^{}_{T^*X \times \mathbb{C}})$ and satisfies  \eqref{S.eq4.4a}.
If $[\psi(z,w,\eta)\,dw]= 0\in \smashoperator{\varinjlim\limits_{\boldsymbol{\kappa}}} E^{\mathbb{R}}_X(\boldsymbol{\kappa}) $, 
we may assume that there is $\delta'>0$ such that $|e^{\langle w^{}_1, \zeta^{}_1\rangle}| \leqslant 
e^{-\delta'|\eta\zeta^{}_1|}$ holds if $\lvert \arg\zeta_1| <r''$ 
and $w^{}_1 \in \overline{\gamma}^{}_1(0,\eta;\varrho,\theta) $. We choose $c'$ so small that  
 $\delta :=\delta'-(n-1)c's'   >0$.
Then there exists a constant $C>0$ such that
\[
\Big|\smashoperator{\int\limits_{\hspace{6ex}\gamma(0,\eta;\varrho,\theta)}}\psi(z,z+w,\eta) \,e^{\langle w, \zeta\rangle} dw
\Big|\leqslant C e^{((n-1)c's'-\delta')|\eta\zeta^{}_1|} 
\leqslant C e^{-\delta\|\eta\zeta\|}\quad 
((z;\zeta,\eta) \in 
\varOmega^{}_{\rho}[d^{}_ \rho]  \times Z),
\]
that is,   $\sigma(\psi)(z,\zeta,\eta) \in \mathfrak{N}^{}_{z^*_0}$. 
Further 
 we can prove that 
\[ \smashoperator{\int\limits_{\hspace{6ex}\gamma(0,\eta;\varrho,\theta)}}\psi(z,z+w,\eta) \,e^{\langle w, \zeta\rangle} dw
- \smashoperator{\int\limits_{\hspace{7ex}\gamma(0,\eta;\varrho ^{}_1,\theta^{}_1)}}\psi(z,z+w,\eta) \,e^{\langle w, \zeta\rangle} dw \in 
\mathfrak{N}^{}_{z^*_0}. 
\] 
Note that
\begin{align*}
\partial_\eta \sigma(\psi) (z,\zeta,\eta)
= {} &\smashoperator{\int\limits_{\hspace{15ex}\gamma_2 (0, \eta;\varrho)\times \dots \times \gamma_n(0, \eta;\varrho)}}
\hspace{1ex}
\big[\tau \psi(z, z_1+\tau\eta, z'+w',\eta)\, e^{\tau\eta  \zeta^{}_1+ \langle w', \zeta'\rangle} 
\big]^{ \beta_1}_{\tau = \beta_0}\,
dw_2 \cdots dw_n
\\
&  + \smashoperator{\int\limits_{\hspace{6ex}\gamma(0, \eta;\varrho,\theta)}}
\partial_\eta  \psi(z,z+w,\eta)\, e^{\langle w, \zeta\rangle} 
\,dw.
\end{align*}
Since $\partial_\eta \psi(z,w,\eta)$ is a zero class,  $\partial_\eta \sigma(\psi)(z,\zeta,\eta) \in 
\mathfrak{N}^{}_{z^*_0}$.  
Thus we see that $\sigma(\psi)(z,\zeta,w)\in \mathfrak{S}^{}_{z^*_0}$  and   $\sigma$  is well defined.
\end{proof}

\begin{prop} \label{welldefomga}
The   $ \varpi$ in Definition \ref{defpi} induces the linear mapping 
\begin{equation}\label{S.eq2.14}
\varpi\colon\mathfrak{S}^{}_{z^* _0} /\mathfrak{N}^{}_{z^* _0} 
\ni \wick{P(z,\zeta,\eta)}\mapsto \varpi(\wick{P}):= [\varpi(P)(z,w,\eta)\,dw] \in \mathscr{E}^{\smash{\mathbb{R}}}_{X, z^*_0}\,.
\end{equation}
This mapping is independent of the  choice of either $\eta_0$ or the path of the integration. 
\end{prop}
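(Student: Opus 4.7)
The plan is to establish in sequence: (i) each individual integral $\varpi^{}_\alpha(P)$ is holomorphic and satisfies good estimates, (ii) the series \eqref{S.eq2.13} converges absolutely on $\widehat{V}^{(*)}_{\varDelta,\widetilde{\boldsymbol{\kappa}}}$ for a suitably refined $\widetilde{\boldsymbol{\kappa}}$, yielding a section of $\OOO$ and hence a class in $\widehat{E}^{\mathbb{R}}_X(\widetilde{\boldsymbol{\kappa}})$, (iii) the $\partial^{}_\eta$-image vanishes as a \v{C}ech class, so the class lies in $E^{\mathbb{R}}_X(\widetilde{\boldsymbol{\kappa}})$, and (iv) both $\mathfrak{N}^{}_{z^*_0}$ and the choices of $\eta^{}_0$ and contour drop out.

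For (i) and (ii), fix $\eta^{}_0\in S$. By Proposition \ref{S.prop1.4}(2), $P(z,\zeta,\eta^{}_0)\in\mathscr{S}(\varOmega)$, so Cauchy's inequalities applied to \eqref{S.eq2.6q} on a polydisc of radius $c'|\zeta^{}_1|$ in $\zeta'$ give
\[
|P^{}_\alpha(z,\zeta^{}_1,\eta^{}_0)|\leqslant C^{}_h(c')^{-|\alpha|}e^{h|\zeta^{}_1|}
\]
for any $h>0$. Standard bounds on the incomplete gamma function show $|\zeta^{|\alpha|}_1\varGamma^{}_{|\alpha|}(\zeta^{}_1,\eta)|$ is uniformly bounded for $\zeta^{}_1$ in a right half-plane and $\eta\in S$, so the Laplace integral $\varpi^{}_\alpha(P)(z,w^{}_1,\eta)$ converges absolutely for $\Re w^{}_1>h$ and satisfies a bound $|\varpi^{}_\alpha(P)|\leqslant C'_h(c')^{-|\alpha|}$. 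On $\widehat{V}^{(*)}_{\varDelta,\widetilde{\boldsymbol{\kappa}}}$ the displacement coordinates satisfy $|w^{}_i|>|\eta|/\widetilde{\varrho}$ for $i\geqslant 2$, so the general term of \eqref{S.eq2.13} is bounded by $C'_h\alpha!\cdot(c'\widetilde{\varrho}/|\eta|)^{|\alpha|}$; choosing $c'$ small enough (allowed in the inductive limit defining $\mathfrak{S}^{}_{z^*_0}$) makes the series converge absolutely and uniformly on $\widehat{V}^{(*)}_{\varDelta,\widetilde{\boldsymbol{\kappa}}}$.

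For (iii), since $\partial^{}_\eta\varGamma^{}_\nu(\zeta^{}_1,\eta)=e^{-\eta\zeta^{}_1}\eta^{\nu-1}/(\nu-1)!$ for $\nu\geqslant 1$ and is zero for $\nu=0$, differentiating under the integral yields, for $|\alpha|\geqslant 1$,
\[
\partial^{}_\eta\varpi^{}_\alpha(P)(z,w^{}_1,\eta)=\int_d^\infty\! P^{}_\alpha(z,\zeta^{}_1,\eta^{}_0)\,\frac{\eta^{|\alpha|-1}}{(|\alpha|-1)!}\,e^{-(w^{}_1+\eta)\zeta^{}_1}\,d\zeta^{}_1.
\]
The shift $w^{}_1\mapsto w^{}_1+\eta$ in the exponent enlarges the domain of absolute convergence; in the displacement coordinate $w^{}_1=\tilde{w}^{}_1-z^{}_1$, this lets the resulting function be holomorphically continued across the edge separating $\widehat{V}^{(1)}_{\varDelta,\widetilde{\boldsymbol{\kappa}}}$ from its complement, so the sum contributes to $\varGamma(\widehat{V}^{(\{2,\dots,n\})}_{\varDelta,\widetilde{\boldsymbol{\kappa}}};\OOO)$, which sits inside the coboundary space $\sum_{\alpha\in\mathcal{P}^\vee_n}\varGamma(\widehat{V}^{(\alpha)}_{\varDelta,\widetilde{\boldsymbol{\kappa}}};\OOO)$. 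Hence $\partial^{}_\eta(\varpi(P)\,dw)=0$ in $\widehat{E}^{\mathbb{R}}_X(\widetilde{\boldsymbol{\kappa}})$, and $\varpi(P)\,dw\in E^{\mathbb{R}}_X(\widetilde{\boldsymbol{\kappa}})$.

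For (iv), if $P\in\mathfrak{N}(\varOmega; S)$ then Cauchy's inequalities combined with \eqref{S.eq1.2a} give $|P^{}_\alpha(z,\zeta^{}_1,\eta^{}_0)|\leqslant C(c')^{-|\alpha|}e^{-\delta|\eta^{}_0\zeta^{}_1|}$, so each $\varpi^{}_\alpha(P)$ extends holomorphically for $\Re w^{}_1>-\delta|\eta^{}_0|/2$; the whole kernel then lies in $\varGamma(\widehat{V}^{(\{2,\dots,n\})}_{\varDelta,\widetilde{\boldsymbol{\kappa}}};\OOO)$ and represents the zero class. Independence of $\eta^{}_0$ follows from Proposition \ref{S.prop1.4}(2): replacing $\eta^{}_0$ by $\eta^{}_1$ changes $P(z,\zeta,\eta^{}_0)$ by an element of $\mathscr{N}(\varOmega)\subset\mathfrak{N}(\varOmega; S)$, whose $\varpi$-image vanishes by what was just shown. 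Contour independence is immediate since the integrand is absolutely integrable on any alternative path in the right half-plane and no singularities intervene. The main obstacle will be step (iii): one must verify uniformly in $\alpha$ and in $\eta\in S$ that the shift by $\eta$ moves the integrand into a strictly larger holomorphy domain in $w^{}_1$ compatible with the chosen $\widetilde{\boldsymbol{\kappa}}$, which requires carefully tracking the geometry of the sector $S$ against the sector defining $\widehat{V}^{(1)}_{\varDelta,\widetilde{\boldsymbol{\kappa}}}$ and choosing the shrinkings of $h$, $c'$, $\varOmega$ and $\widetilde{\boldsymbol{\kappa}}$ in a mutually compatible way within the inductive limit.
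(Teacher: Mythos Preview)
Your argument has a genuine gap in step (ii) that prevents the series \eqref{S.eq2.13} from converging at all. You bound $|\varpi^{}_\alpha(P)|\leqslant C'_h(c')^{-|\alpha|}$, but the series carries an explicit factor $\alpha!$ in the numerator, so your general-term bound is of the form $\alpha!\,x^{|\alpha|}$ with $x>0$, which diverges for every $x$. Shrinking $c'$ (or $\widetilde{\varrho}$) cannot repair this: no geometric decay beats a factorial. The statement that $|\zeta_1^{|\alpha|}\varGamma^{}_{|\alpha|}(\zeta^{}_1,\eta)|$ is ``uniformly bounded'' is true but is far too coarse; it discards the very mechanism that makes the construction work.

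The missing idea is the sharp estimate $|\varGamma^{}_{\nu}(\zeta^{}_1,\eta)|\leqslant |\eta|^{\nu}/\nu!$, which combines with the Cauchy bound $|P^{}_\alpha(z,\zeta^{}_1,\eta^{}_0)|\leqslant C^{}_hK^{|\alpha|}e^{h|\zeta^{}_1|}$ to give
\[
\frac{|P^{\mathcal B}_\alpha(z,\zeta^{}_1,\eta)|}{|\zeta^{}_1|^{|\alpha|}}
= |P^{}_\alpha(z,\zeta^{}_1,\eta^{}_0)\,\varGamma^{}_{|\alpha|}(\zeta^{}_1,\eta)|
\leqslant \frac{C^{}_h(K|\eta|)^{|\alpha|}e^{h|\zeta^{}_1|}}{|\alpha|!},
\]
and hence, after the Laplace integral, $|\varpi^{}_\alpha(P)|\leqslant C(K|\eta|)^{|\alpha|}/|\alpha|!$. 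This factorial in the denominator is exactly what cancels the $\alpha!$ in \eqref{S.eq2.13}, leaving a geometric series that converges on $\{K|\eta|<|w^{}_i|\,(i\geqslant 2)\}$. This is the whole point of inserting the functions $\varGamma^{}_\nu$ (Laurent's device): they are not merely a bounded cutoff but a smoothing that buys you one factorial per order. The same factorial gain is what makes steps (iii) and (iv) go through uniformly in $\alpha$; without it, those steps inherit the same divergence. Once you use the correct bound on $\varGamma^{}_\nu$, your outline matches the paper's proof essentially verbatim.
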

We call $\varpi(\wick{P}) $  the \textit{kernel} of  $\wick{P} \in \mathfrak{S}^{}_{z^* _0} /\mathfrak{N}^{}_{z^* _0} $.  
\begin{proof}
We need the following estimate to prove that $\varpi$  is  well defined:
\begin{lem}
Assume that $\Re (\eta \tau) \geqslant 2\delta^{}_0 |\eta\tau|>0$ for some $\delta^{}_0 \in \bigl]0, \dfrac{1}{\;2\;}\bigr[$\,. Then 
for any $\nu\in \mathbb{N}$,
\begin{align}\label{S.eq2.6}
|\varGamma^{}_\nu(\tau,\eta)| & \leqslant \frac{\,|\eta|^{\nu}\,}{\,\nu!\,},
\\
\label{S.eq2.5}
|1- \tau^\nu\varGamma^{}_\nu(\tau,\eta)| &
      \leqslant \dfrac{\,e^{-\delta^{}_0|\eta\tau|}\,}{\delta_0^{\nu-1}}.
\end{align}
\end{lem}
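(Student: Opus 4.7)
The plan is to prove the two bounds separately, using the integral representation of $\varGamma_\nu$ for the first and a closed-form for the partial exponential sum for the second.

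For \eqref{S.eq2.6}, I would parameterize the path by $s = t\eta$ with $t \in [0,1]$ (any path works since $s^{\nu-1}$ is entire), obtaining
\[
\varGamma_\nu(\tau,\eta)=\frac{\eta^\nu}{(\nu-1)!}\int_0^1 e^{-t\eta\tau}\,t^{\nu-1}\,dt.
\]
Since $\Re(\eta\tau)\geqslant 2\delta_0|\eta\tau|>0$, we have $|e^{-t\eta\tau}|=e^{-t\Re(\eta\tau)}\leqslant 1$ on $[0,1]$, and the bound follows from $\int_0^1 t^{\nu-1}\,dt = 1/\nu$.

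For \eqref{S.eq2.5}, the central step is the closed-form identity
\[
\tau^\nu\varGamma_\nu(\tau,\eta)=1-e^{-\eta\tau}\Sum_{k=0}^{\nu-1}\frac{(\eta\tau)^k}{k!}.
\]
This I would derive by observing that $f(z):=e^{-z}\sum_{k=0}^{\nu-1}z^k/k!$ has a telescoping derivative $f'(z)=-e^{-z}z^{\nu-1}/(\nu-1)!$ and satisfies $f(0)=1$; hence $f(z)=1-\frac{1}{(\nu-1)!}\int_0^z e^{-u}u^{\nu-1}\,du$, and the substitution $u=s\tau$ in the defining integral of $\varGamma_\nu$ gives the displayed identity with $z=\eta\tau$.

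Setting $x:=|\eta\tau|$ and exploiting $\Re(\eta\tau)\geqslant 2\delta_0 x$ together with $\delta_0<1$, the rest is a ``Poisson trick'':
\[
\begin{aligned}
\bigl|1-\tau^\nu\varGamma_\nu(\tau,\eta)\bigr|
&\leqslant e^{-2\delta_0 x}\Sum_{k=0}^{\nu-1}\frac{x^k}{k!}
= e^{-\delta_0 x}\Sum_{k=0}^{\nu-1}\frac{(\delta_0 x)^k\,e^{-\delta_0 x}}{\delta_0^k\,k!}\\
&\leqslant \frac{e^{-\delta_0 x}}{\delta_0^{\nu-1}}\Sum_{k=0}^{\nu-1}\frac{(\delta_0 x)^k\,e^{-\delta_0 x}}{k!}
\leqslant \frac{e^{-\delta_0 x}}{\delta_0^{\nu-1}},
\end{aligned}
\]
where the last step uses that the partial Poisson sum is bounded by the full series $\sum_{k=0}^{\infty}(\delta_0 x)^k e^{-\delta_0 x}/k!=1$. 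The main subtlety will be the placement of the factor $\delta_0^{-(\nu-1)}$: it must be extracted from $\delta_0^{-k}$ using $\delta_0<1$ (so that $\delta_0^{-k}\leqslant \delta_0^{-(\nu-1)}$ for $0\leqslant k\leqslant \nu-1$), rather than from the Poisson sum itself; this is the only nontrivial point, while everything else is routine manipulation of the explicit formula.
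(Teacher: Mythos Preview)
Your proof is correct and follows essentially the same approach as the paper: the same parameterization for \eqref{S.eq2.6}, the same closed-form identity $1-\tau^\nu\varGamma_\nu(\tau,\eta)=e^{-\eta\tau}\sum_{k=0}^{\nu-1}(\eta\tau)^k/k!$ for \eqref{S.eq2.5} (the paper obtains it via the $\varGamma$-function and induction rather than your telescoping derivative), and the same estimate using $\delta_0^{-k}\leqslant\delta_0^{-(\nu-1)}$ together with $\sum_{k}(\delta_0 x)^k/k!\leqslant e^{\delta_0 x}$. Your ``Poisson trick'' and the paper's bound are the same inequality written two ways.
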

\begin{proof}
We have \eqref{S.eq2.6} as follows:
\[
|\varGamma^{}_\nu(\tau,\eta)| \leqslant \dfrac{1}{(\nu-1)!}\int_{0}^{|\eta|}\hspace{-1ex}
|  e^{-s \tau}s^{\nu-1}|\,ds \leqslant  \dfrac{1}{(\nu-1)!}\int_{0}^{|\eta|}\hspace{-1ex}
s^{\nu-1}\,ds  = \frac{\,|\eta|^{\nu}\,}{\,\nu!\,}.
\]
By   the definition of $\varGamma$-function and   induction on $\nu$, we have
\[
1- \tau^\nu\varGamma^{}_\nu(\tau,\eta)   =   
        \frac{\tau^{\nu}}
       {\,(\nu-1)! \,} \int_{\eta}^{\infty}\hspace{-2ex}e^{- s \tau}s^{\nu-1}\,ds=
\smashoperator{\Sum_{k=0}^{\nu-1}}
\dfrac{\,(\eta\tau)^k\,}{k!}\,e^{-\eta\tau}.
\]
Therefore,  we have
\begin{align*}
|1- \tau^\nu\varGamma^{}_\nu(\tau,\eta)| 
& = \bigl|\smashoperator[r]{\Sum_{k=0}^{\nu-1}}
\dfrac{\,(\delta_0\eta\tau)^k\,}{k!\delta_0^{\;k} }\,e^{-\eta\tau}\bigr|
\leqslant  
\smashoperator{\Sum_{k=0}^{\nu-1}}
\dfrac{\,(\delta_0|\eta\tau|)^k\,}{k!}\,  \dfrac{e^{-2\delta^{}_0|\eta\tau|} }{\,\delta_0^{\nu-1}}
\\&     
 \leqslant e^{\delta^{}_0|\eta\tau|} \dfrac{e^{-2\delta^{}_0|\eta\tau|} }{\,\delta_0^{\nu-1}}
   = \dfrac{\,e^{-\delta^{}_0|\eta\tau|}\,}{\delta_0^{\nu-1}}.
\qedhere
\end{align*}
\end{proof}
Recall \eqref{S.eq2.6q} and \eqref{S.eq2.8a}. 
There exist sufficiently small $r^{}_0$, $\theta'>0$ and sufficiently large $d>0$ such that $P^{}_{\alpha}(z,\zeta^{}_1,\eta^{}_0)$ is
holomorphic on a common neighborhood of $D$ for each $\alpha \in \mathbb{N}_0^{n-1}$, where 
\[
D:=\{(z,\zeta^{}_1)\in  \mathbb{C}^{n+1};\,\|z\|\leqslant r^{}_0,\, \lvert\arg\zeta^{}_1| \leqslant \theta',\,
 |\zeta^{}_1|\geqslant d \}.
\]
It follows from the Cauchy inequality
that we can take  a constant  $K>0$ 
so that for each  $h>0$  there exists $C^{}_{h}>0$ such that 
for every $\alpha \in \mathbb{N}_0^{n-1}$,
\begin{equation}
\label{S.eq2.8}
|P^{}_{\alpha}(z,\zeta^{}_1,\eta^{}_0)|
\leqslant
C^{}_{h} K^{|\alpha|}e^{h|\zeta^{}_1|} \quad ((z,\zeta^{}_1) \in D).
\end{equation} 
We take  $\delta^{}_0 \in \bigl]0, \dfrac{1}{\;2\;}\bigr[$ as 
$\Re (\eta \zeta^{}_1) \geqslant 2\delta^{}_0 |\eta\zeta^{}_1|>0$ if $\eta\in S$ and $\lvert\arg\zeta^{}_1| \leqslant \theta'$. 
Take $\varepsilon>0$ as $0<\dfrac{K\varepsilon}{\delta^{}_0} < \dfrac{1}{\,2\,}$. For any  $Z \Subset S$,  
 we chose  $h=\dfrac{\delta^{}_0m^{}_Z}{2}$. Then  by
\eqref{S.eq2.5},  for $(z,\zeta^{}_1) \in D \times Z$ and $|\zeta^{}_i| \leqslant \varepsilon |\zeta^{}_1|$ $(2 \leqslant i \leqslant n)$ 
we have
\begin{equation}
\label{S.eq2.11}
\begin{split}
|P(z,\zeta,\eta^{}_0)- P^{\mathcal{B}}(z,\zeta,\eta)| &= 
\Big|\smashoperator{\Sum_{|\alpha|=1}^\infty}P^{}_{\alpha}(z,\zeta^{}_1,\eta^{}_0)
\,(1- \zeta_1^{|\alpha|}\varGamma^{}_{|\alpha|}(\zeta^{}_1,\eta))
\biggl(\frac{\zeta'}{\zeta^{}_1}\biggr)^{\!\!\alpha}
\Bigr|
\\
&
 \leqslant \delta^{}_0C^{}_{h} e^{-\delta^{}_0|\eta\zeta^{}_1|/2}\smashoperator{\Sum_{|\alpha|=1}^\infty} \,
\Bigl(\dfrac{K\varepsilon}{\delta^{}_0}\Bigr)^{\!|\alpha|}\leqslant 2^{n-1}\delta^{}_0C^{}_{h}e^{-\delta^{}_0|\eta\zeta^{}_1|/2},
\end{split}
\end{equation}
where we remark that 
$\#\{\alpha\in \mathbb{N}_0^{n-1};\,|\alpha|=i\}=\dbinom{n+i-2}{i} \leqslant  2^{n+i-2}$. 
Therefore we see that $P(z,\zeta,\eta)-P^{\mathcal{B}}(z,\zeta,\eta)=
P(z,\zeta,\eta)-P(z,\zeta,\eta^{}_0)+P(z,\zeta,\eta^{}_0)-P^{\mathcal{B}}(z,\zeta,\eta)\in \mathfrak{N} ^{}_{z^* _0}$\,. 
Further by \eqref{S.eq2.6} and \eqref{S.eq2.8}, 
 there exists a constant  $K>0$ 
so that for each  $h>0$ there exists $C^{}_{h}>0$ such that 
for every $\alpha \in \mathbb{N}_0^{n-1}$ and   $(z,\zeta^{}_1,\eta) \in D \times  S$, we have
\begin{equation}
\label{S.eq2.10}
\dfrac{|P^{\mathcal{B}}_\alpha(z,\zeta^{}_1,\eta)|}{|\zeta^{}_1|^{|\alpha|}}
\leqslant
\dfrac{C^{}_{h} (K|\eta|)^{|\alpha|}e^{h|\zeta^{}_1|} }{|\alpha|!}.
\end{equation}

We can take a sufficiently small $\delta^{}_1$, $\delta'>0$ such that   
\[
\{w^{}_1\in \mathbb{C};\,\lvert \arg w^{}_1|<\delta'+\dfrac{\,\pi\,}{2}\} 
\subset  \;\smashoperator{\Bcup_{ \lvert\arg\zeta^{}_1| \leqslant\theta'}}\;\{w^{}_1\in \mathbb{C};\,
\Re(w^{}_1 \zeta^{}_1)\geqslant \delta^{}_1|w^{}_1 \zeta^{}_1|\},
\]
and we set
\begin{equation}\label{S.eq2.16}
L:=\{(z,w^{}_1)\in \mathbb{C}^{n+1};\, \|z\|< r^{}_0,\,\lvert \arg w^{}_1|<\delta'+\dfrac{\,\pi\,}{2} \}.
\end{equation}
By \eqref{S.eq2.10},  for any $k\in \mathbb{N}$  there exists $C^{}_{k}>0$ such that 
for every $\alpha \in \mathbb{N}_0^{n-1}$,
\begin{equation}
\label{S.eq2.17}
\dfrac{|P^{\mathcal{B}}_\alpha(z,\zeta^{}_1,\eta)|}{|\zeta^{}_1|^{|\alpha|}}
\leqslant
\dfrac{C^{}_{k} (K|\eta|)^{|\alpha|}e^{\delta^{}_1|\zeta^{}_1|/k} }{|\alpha|!}
\quad ((z,\zeta^{}_1,\eta) \in D \times  S).
\end{equation}
By changing the direction of the integration in the complex $\tau$-plane,
$\varpi^{}_\alpha(P)(z,w^{}_1,\eta)$ extends analytically  to the domain $L \times  S$. Set 
\begin{equation}\label{S.eq2.18}
L^{}_{k}:=\{(z,w^{}_1)\in \mathbb{C}^{n+1};\, \|z\|< r^{}_0,\,\lvert \arg w^{}_1| < \delta'+\dfrac{\,\pi\,}{2},\, 
\dfrac{2}{\,k\,} <|w^{}_1|  \}.
\end{equation}
Then by \eqref{S.eq2.17} and \eqref{S.eq2.6}  for any $\eta\in S$ we have 
\begin{equation}
\label{S.eq2.19}
\sup\{|\varpi^{}_\alpha(P)(z,w^{}_1,\eta)|;\,(z,w^{}_1)\in L^{}_{k}\}
\leqslant
\dfrac{2kC^{}_{k}}{\, \delta^{}_1 |\alpha|!\,}\, (K|\eta|)^{|\alpha|}.
\end{equation}
Therefore the right-hand side of \eqref{S.eq2.13} converges locally uniformly in
\begin{equation}
\label{S.eq2.20}
V^{}_{k}
:= \smashoperator[r]{\Bcap_{2\leqslant i \leqslant n}}\{(z,w,\eta)\in\mathbb
C^{2n}\times  S;\, (z,w^{}_1)\in L^{}_k,\, K |\eta|<|w^{}_i|\,
 \}.
\end{equation}
 Hence $\varpi(P)(z,z+w,\eta)$ is a holomorphic function
 defined on the set
\begin{equation}
\label{S.eq2.21}
V:= \smashoperator{\Bcup _{k=1}^{\infty}}
V^{}_{k}
= \smashoperator[r]{\Bcap_{2\leqslant i\leqslant n}}\{(z,w,\eta)\in \mathbb{C}^{2n} \times  S;\, 
\|z\|<r^{}_0,\,\lvert \arg w^{}_1| < \delta'+\dfrac{\,\pi\,}{2},  \,  K|\eta| < |w^{}_i|\}. 
\end{equation}
Next,   we have
\[
\partial^{}_{\eta}\varpi^{}_\alpha(P)(z,w^{}_1,\eta)
=\frac{1}{\,(|\alpha|-1)!\,}\int_{d}^\infty\hspace{-2ex} P^{}_\alpha(z,\zeta^{}_1,\eta^{}_0)\,e^{-(\eta+w^{}_1)\zeta^{}_1} \eta^{|\alpha|-1}d\zeta^{}_1\,.
\]
Let $Z\Subset S$. 
Then choosing $h = \delta^{}_0m^{}_Z$ in \eqref{S.eq2.8},  for 
$\|z\|<r^{}_0$,  $\eta\in Z$ and $|w^{}_1|<\dfrac{\delta^{}_0 |\eta|}{2}$, we have
\allowdisplaybreaks
\begin{align*}
|\partial^{}_{\eta}\varpi^{}_\alpha(P)(z,w^{}_1,\eta)|& \leqslant 
\frac{C^{}_{\delta^{}_0m^{}_Z} (K|\eta|)^{|\alpha|}}{\,(|\alpha|-1)!\,}\int_{d}^\infty\hspace{-1ex} 
e^{ \delta^{}_0m^{}_Z|\zeta^{}_1|-\Re(\eta\zeta^{}_1)+|w^{}_1\zeta^{}_1|}d|\zeta^{}_1|
\\
& \leqslant 
\frac{C^{}_{\delta^{}_0m^{}_Z} (K|\eta|)^{|\alpha|}}{\,(|\alpha|-1)!\,}\int_{d}^\infty\hspace{-1ex} 
e^{-(\delta^{}_0|\eta|-|w^{}_1|)|\zeta^{}_1|}d|\zeta^{}_1|
\leqslant\frac{2C^{}_{\delta^{}_0m^{}_Z} (K|\eta|)^{|\alpha|}}{\delta^{}_0 |\eta|\,(|\alpha|-1)!\,}.
\end{align*}
Hence 
$\partial^{}_{\eta}\varpi(P)(z,z+w,\eta)$  
 is  holomorphic on 
\begin{multline*}
\Bcup _{Z\Subset S}\smashoperator[r]{\Bcap_{2\leqslant i\leqslant n}}\{(z,w,\eta)\in \mathbb{C}^{2n} \times  Z;\, 
\|z\|<r^{}_0,  \,|w^{}_1|<\dfrac{\delta^{}_0 |\eta|}{2},\,  K|\eta| < |w^{}_i|\}
\\
=\smashoperator[r]{\Bcap_{2\leqslant i\leqslant n}}\{(z,w,\eta)\in \mathbb{C}^{2n} \times  S;\, 
\|z\|<r^{}_0,  \,|w^{}_1|<\dfrac{\delta^{}_0 |\eta|}{2},\,  K|\eta| < |w^{}_i|\}.
\end{multline*}
This entails that $[\partial^{}_{\eta}\varpi(P)(z,w,\eta)\,dw]=0 \in \mathscr{E}^{\mathbb{R}}_{X,z^*_0}$\,. 
If  $P(z,\zeta,\eta)\in \mathfrak{N}^{}_{z^*_0}$, 
there exists  a constant  $\delta$, $C$, $K>0$ 
so that 
for every $\alpha \in \mathbb{N}_0^{n-1}$,
\[
|P^{}_{\alpha}(z,\zeta^{}_1,\eta^{}_0)|
\leqslant
C K^{|\alpha|}e^{-\delta|\zeta^{}_1|} \quad ((z,\zeta^{}_1) \in D).
\]
 Thus if $|w^{}_1|< \dfrac{ \,\delta\,}{2}$, we have
\[
|\varpi^{}_\alpha(P)(z,w^{}_1,\eta)|
\leqslant \frac{\,C(K|\eta|)^{|\alpha|}\,}{\,|\alpha|!\,}
\int_{d}^\infty \hspace{-1.5ex} e^{-(\delta-|w^{}_1|)|\zeta^{}_1|}d|\zeta^{}_1| 
\leqslant \frac{\,2C(K|\eta|)^{|\alpha|}\,}{\,\delta|\alpha|!\,}\,.
\]
Thus $\varpi(P)(z,z+w,\eta)$   is  holomorphic on 
\[
\smashoperator[r]{\Bcap_{2\leqslant i\leqslant n}}\{(z,w,\eta)\in \mathbb{C}^{2n} \times  S;\, 
\|z\|<r^{}_0,  \,|w^{}_1|<\dfrac{\,\delta\,}{2},\,  K|\eta| < |w^{}_i|\},
\]
hence $[\varpi(P)(z,w,\eta)\,dw]=0$. If we change $\eta^{}_0$ or  $d$ in \eqref{S.eq2.12}, 
for the same reasoning as above, we see that $[\varpi(P)(z,w,\eta)\,dw]=0$. Therefore 
we obtain a well-defined linear mapping \eqref{S.eq2.14}.\end{proof}

%\subsection{The correspondence of kernel functions and symbols}
Now we shall prove our fundamental theorem for the symbol theory:

\begin{thm} \label{S.thm2.1} 
The mappings $\sigma$ and $\varpi$ are   inverse to each other. 
In particular 
\[
\sigma\colon \mathscr{E}^{\smash{\mathbb{R}}}_{X, z^*_0}\earrow  \mathfrak{S} ^{}_{z^* _0} /\mathfrak{N} ^{}_{z^* _0}\, .
\] 
\end{thm}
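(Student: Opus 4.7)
The plan is to prove both compositions are the identity: (a) $\sigma \circ \varpi = \mathrm{id}$ on $\mathfrak{S}_{z^*_0}/\mathfrak{N}_{z^*_0}$, and (b) $\varpi \circ \sigma = \mathrm{id}$ on $\mathscr{E}^{\mathbb{R}}_{X,z^*_0}$. Both directions hinge on the same Borel--Laplace inversion identity, so the technical heart of the argument is done once but applied twice.

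For (a), I start with $P(z,\zeta,\eta) \in \mathfrak{S}_{z^*_0}$ and, using \eqref{S.eq2.11}, replace it by $P^{\mathcal{B}}$ modulo $\mathfrak{N}_{z^*_0}$. I form $\varpi(P)(z,z+w,\eta)$ via \eqref{S.eq2.13}, then apply $\sigma$. The computation at the end of the proof of Proposition~\ref{S.rem5.2} shows that the $w'$-integrations in $\sigma$ pick out, by Cauchy residues on $\gamma_2(0,\eta;\varrho)\times\dots\times\gamma_n(0,\eta;\varrho)$, precisely the Laurent coefficients of \eqref{S.eq2.13}, giving
\[
\sigma(\varpi(P))(z,\zeta,\eta) = \Sum_{\alpha\in\mathbb{N}_0^{n-1}} (\zeta')^{\alpha}\, \dfrac{1}{2\pi\im}\smashoperator{\int\limits_{\hspace{6ex}\gamma_1(0,\eta;\varrho,\theta)}} \varpi_\alpha(P)(z,w_1,\eta)\, e^{w_1\zeta_1}\,dw_1.
\]
Substituting the Laplace representation \eqref{S.eq2.12} and exchanging the order of integration (legitimate by the estimate \eqref{S.eq2.17} combined with \eqref{S.eq2.6}), the iterated integral becomes
\[
\dfrac{1}{2\pi\im}\int_d^\infty \dfrac{P^{\mathcal{B}}_\alpha(z,\zeta_1',\eta)}{(\zeta_1')^{|\alpha|}}\,\smashoperator{\int\limits_{\hspace{6ex}\gamma_1(0,\eta;\varrho,\theta)}} e^{w_1(\zeta_1 - \zeta_1')}\,dw_1\,d\zeta_1',
\]
and the inner Hankel-type contour integral in $w_1$ serves, by Cauchy's theorem, as a reproducing kernel concentrated at $\zeta_1' = \zeta_1$. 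This identifies the sum as $P^{\mathcal{B}}(z,\zeta,\eta)$ modulo a term with exponential decay in $\|\eta\zeta\|$, hence lying in $\mathfrak{N}_{z^*_0}$.

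For (b), I take a representative $\psi(z,w,\eta)\,dw$ and its Laurent expansion \eqref{S.eq2.1}. The proof of Proposition~\ref{S.rem5.2} already shows
\[
\sigma(\psi)(z,\zeta,\eta) = \Sum_{\alpha\in\mathbb{N}_0^{n-1}} \dfrac{(\zeta')^\alpha}{\alpha!}\, \widehat{\psi}_\alpha(z,\zeta_1,\eta), \qquad \widehat{\psi}_\alpha(z,\zeta_1,\eta) := \smashoperator{\int\limits_{\hspace{6ex}\gamma_1(0,\eta;\varrho,\theta)}} \psi_\alpha(z,w_1,\eta)\, e^{w_1\zeta_1}\,dw_1,
\]
which is already the Taylor expansion appearing in \eqref{S.eq2.6q}, so $P_\alpha(z,\zeta_1,\eta_0) = \widehat{\psi}_\alpha(z,\zeta_1,\eta_0)/(\alpha!\,\zeta_1^{|\alpha|})$. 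Substituting into \eqref{S.eq2.12}--\eqref{S.eq2.13} and simplifying the $\alpha!$ factors gives
\[
\varpi(\sigma(\psi))(z,z+w,\eta) = \Sum_{\alpha\in\mathbb{N}_0^{n-1}} \dfrac{1}{(2\pi\im)^n (w')^{\alpha + \mathbf{1}_{n-1}}} \int_d^\infty \widehat{\psi}_\alpha(z,\zeta_1,\eta_0)\dfrac{\varGamma_{|\alpha|}(\zeta_1,\eta)}{\zeta_1^{|\alpha|}}\,\zeta_1^{|\alpha|}\,e^{-w_1\zeta_1}\,d\zeta_1.
\]
By \eqref{S.eq2.5} I may replace $\zeta_1^{|\alpha|}\varGamma_{|\alpha|}(\zeta_1,\eta)$ by $1$ modulo an exponentially decaying term (which contributes a zero class in $\mathscr{E}^{\mathbb{R}}_{X,z^*_0}$ by the same estimation as in the proof of Proposition~\ref{welldefomga}). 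After this replacement, Fubini and the reverse direction of the Borel--Laplace pairing reproduce $\psi_\alpha(z,w_1,\eta_0)\,(2\pi\im)$, and since $\partial_\eta\psi$ is a zero class, $\psi_\alpha(z,w_1,\eta_0) \equiv \psi_\alpha(z,w_1,\eta)$ in the inductive limit $\varinjlim_{\boldsymbol{\kappa}} E^{\mathbb{R}}_X(\boldsymbol{\kappa})$. Termwise comparison with \eqref{S.eq2.1} concludes.

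The main obstacle is the rigorous justification of the Borel--Laplace inversion inside the iterated integrals: first, Fubini's theorem for the Hankel--Laplace pair, which requires the exponential estimates \eqref{S.eq2.8}, \eqref{S.eq2.17}, and \eqref{S.eq2.19} together with the decay in \eqref{S.eq2.6}; second, identifying the Hankel contour integral of $e^{w_1(\zeta_1 - \zeta_1')}$ as a reproducing kernel on the relevant space of germs. The discrepancy $\zeta_1^{|\alpha|}\varGamma_{|\alpha|}(\zeta_1,\eta) - 1$ prevents both inversions from being exact identities; tracking that these discrepancies produce elements of $\mathfrak{N}_{z^*_0}$ (respectively, kernels representing the zero class in $\mathscr{E}^{\mathbb{R}}_{X,z^*_0}$) through the estimates of the previous two sections is where the bookkeeping is genuinely delicate.
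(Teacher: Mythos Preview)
Your overall strategy---prove $\sigma\circ\varpi=\mathrm{id}$ and $\varpi\circ\sigma=\mathrm{id}$ separately, replace $P$ by $P^{\mathcal{B}}$ via \eqref{S.eq2.11}, and track the discrepancy $\zeta_1^{|\alpha|}\varGamma_{|\alpha|}(\zeta_1,\eta)-1$ into $\mathfrak{N}_{z^*_0}$---matches the paper's two-step architecture. The residue computation in the $w'$-variables is also correct.

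The gap is in your description of the core inversion step. You claim that the inner integral $\int_{\gamma_1(0,\eta;\varrho,\theta)} e^{w_1(\zeta_1-\xi_1)}\,dw_1$ ``serves, by Cauchy's theorem, as a reproducing kernel concentrated at $\xi_1=\zeta_1$.'' This is not correct: $\gamma_1$ is an open Hankel arc with endpoints $\beta_0\eta$ and $\beta_1\eta$, and the integral evaluates to an explicit rational-exponential function of $\zeta_1-\xi_1$, not a delta-like object. Moreover, you cannot apply Fubini to the iterated integral $\int_{\gamma_1}\int_d^\infty$ as written, since along parts of $\gamma_1$ one has $\Re(w_1\xi_1)\leqslant 0$ and the $\xi_1$-integral diverges. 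The paper resolves both issues simultaneously by a contour manipulation you have not mentioned: deform the ray $[d,\infty)$ to two tilted paths $\varSigma_\pm$ in the upper and lower half-planes, split $\gamma_1=\gamma_1^+\vee\gamma_1^-$ accordingly so that $\Re(w_1\xi_1)>0$ on each pairing, and only then exchange the order. The explicit evaluation of $\int_{\gamma_1^\pm}e^{w_1(\zeta_1-\xi_1)}\,dw_1$ produces the denominator $(\xi_1-\zeta_1)$ together with exponential factors at the endpoints; regrouping yields $I+I^-+I^+$, where $I^\pm\in\mathfrak{N}_{z^*_0}$ and $I$ is a genuine Cauchy integral in $\xi_1$ over the closed contour $\varSigma_--\varSigma_+$ with the damping factor $e^{a(\zeta_1-\xi_1)}$. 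The reproduction $I=P^{\mathcal{B}}$ thus comes from the residue theorem in the $\xi_1$-variable, not in $w_1$.

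The same issue recurs in part (b): your ``reverse Borel--Laplace pairing'' is again not a direct reproduction. The paper first evaluates $\int_d^\infty e^{(p-w_1)\zeta_1}\,d\zeta_1=-e^{(p-w_1)d}/(p-w_1)$ (valid where $\Re(p-w_1)<0$ and extended analytically), then deforms the $p$-contour $\gamma_1(0,\eta_0;\varrho,\theta)$ to a path $\gamma_1'$ encircling $w_1$, and recovers $\psi_\alpha(z,w_1,\eta)$ via Cauchy's formula in $p$, with the remainder $\varPi$ along $\gamma_1'$ shown to be a zero class. So in both directions the reproducing mechanism is: evaluate the inner integral explicitly to manufacture a Cauchy kernel, then apply the residue theorem in the \emph{other} variable after a contour deformation. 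Your proposal correctly anticipates that this is the delicate point, but misidentifies which integral plays which role.
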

\begin{proof}
We may assume that $z^*_0=(0;1,0,\dots,0)$, and we may also assume $\|\zeta\|=|\zeta^{}_1|$ on a neighborhood  
of $z^*_0$ in the course of proof. 
\paragraph{\textbf{Step 1.}}
We shall show $\sigma\cdot\varpi=\mathds{1} \colon
\mathfrak{S}^{}_{z^*_0}/\mathfrak{N}^{}_{z^*_0} 
\rightarrow \mathfrak{S}^{}_{z^*_0}/\mathfrak{N}^{}_{z^*_0}$. Let $P(z,\zeta,\eta)\in
\mathfrak{S}^{}_{z^*_0}$. 
Assume that  $P^{\mathcal{B}}(z,\zeta,\eta)\in \mathfrak{S}_{z^*_0}$ is holomorphic on a neighborhood of
\[
\widetilde{V}
:= \smashoperator[r]{\Bcap_{2 \leqslant i\leqslant n}}\{(z,\zeta,\eta)\in \mathbb C^{2n} \times  S;\, \|z\| < r^{}_0,\, 
|\zeta^{}_1|\geqslant d,\,d\lvert \arg \zeta^{}_1| \leqslant 1,\,d|\zeta^{}_i|
\leqslant |\zeta^{}_1| \}. 
\]
 By the definition of $\varpi$, we have
\allowdisplaybreaks
\begin{align*}
\sigma \cdot \varpi(P)(z,\zeta,\eta) & = \smashoperator{\int\limits_{\hspace{6ex}\gamma(0, \eta;\varrho,\theta)}} 
\varpi(P)(z,z+w, \eta)\,e^{\langle w,\zeta \rangle}dw
\\
&
 = 
  \int\limits_{\gamma(0, \eta;\varrho,\theta) } \hspace{-1ex}dw\smashoperator[l]{\Sum_{\alpha\in \mathbb {N}_0^{n-1}}}\!
  \frac{\alpha!\,e^{\langle w,\zeta \rangle}}
       {\,(2\pi\im\,)^{n}\,
          (w')^{\alpha+\mathbf{1}^{}_{n-1}}}
\int_{d}^\infty  \hspace{-1.5ex} P^{\mathcal{B}}_\alpha(z,\xi^{}_1,\eta)\,
\dfrac{\,e^{- w^{}_1\xi^{}_1}}{\xi^{|\alpha|}_1}\,d\xi^{}_1
              \\
&
= \smashoperator[r]{\Sum_{\alpha\in \mathbb{ N}_0^{n-1}}}\,  (\zeta')^{\alpha}
   \int\limits_{\gamma^{}_1(0, \eta;\varrho,\theta) } \hspace{-2ex}dw^{}_1\,\frac{\,e^{w^{}_1\zeta^{}_1}} {2\pi\im}
\int_{d}^\infty \hspace{-2ex}   P^{\mathcal{B}}_\alpha(z,\xi^{}_1,\eta)\,
\dfrac{\,e^{- w^{}_1\xi^{}_1}}{\xi^{|\alpha|}_1}\,d\xi^{}_1\,.
\end{align*}
We   set 
\[
\widetilde{V}_{\varepsilon}
:= \smashoperator[r]{\Bcap_{i=2}^n}\{(z,\zeta)\in \mathbb C^{2n};\, \|z\| < r^{}_0,\,
|\zeta^{}_1|\geqslant \frac{d}{\, \varepsilon\,},\,
\lvert\arg\zeta^{}_1| \leqslant \varepsilon,\,
|\zeta^{}_i|
\leqslant \varepsilon |\zeta^{}_1|\}. 
\]
\begin{figure}[t]
\centering
{\footnotesize
\begin{picture}(450,180)(-100,-90)
{\thicklines \bezier{100}(19,14)(25,9)(25,0)}
{\thicklines\dottedline{4}(35,14)(90,35)}
{\thicklines\dottedline{4}(35,-14)(90,-35)}
{\thicklines \bezier{7}(35,14)(45,0)(35,-14)}
\put(75,15){$\lvert \arg\zeta^{}_1|\leqslant \varepsilon$ }
\put(-20,90){$\Im w^{}_1$ }
\put(90,-3){$\Re w^{}_1$}
\put(42,6){$d/\varepsilon$}
\put(37,-3){$\bullet$}
\put(20,-10){$d$}
\put(16,24){$d^+$}
\put(-80,0){\vector(1,0){165}}
\put(-11,-80){\vector(0,1){165}}
\put(0,75){$\Re(w^{}_1\xi^{}_1)=0$ $(|\xi^{}_1|>d)$}
\put(-13,0){ \line(2,1){30}}
\put(19,15){\thicklines \vector(2,1){70}}
\put(16,12){$\bullet$}
\put(22,-3){$\bullet$}
\put(45,50){$\xi^{}_1\in\varSigma^{}_{+}$}
\put(92,50){$\infty$}
\put(-65,-81){\line(2,3){54}}
\put(-11,0){\line(-2,3){54}}
\put(-52,-82){\line (1,2){75}}
{\thicklines \bezier{1000}(-37,-75)(0,-40)(-2,0)}
\put(0,-10){$a$}
\put(-5,-3){$\bullet$}
\put(-47,-85){$\beta^{}_0 \eta $}
\put(25,-60){$\Re(w^{}_1\xi^{}_1)>0$ $(|\xi^{}_1|>d)$}
\put(0,-30){$\gamma^{-}_1(0, \eta;\varrho,\theta)$}
\put(-38,-77){$\bullet$}
\put(-45,77){$\beta^{}_1 \eta $}
\put(-37,69){$\bullet$}
\put(140,0){\vector(1,0){144}}
\put(190,-80){\vector(0,1){165}}
\put(-80,0){\vector(1,0){165}}
\put(-11,-80){\vector(0,1){165}}
\put(201,-75){$\Re(w^{}_1\xi^{}_1)=0$ $(|\xi^{}_1|>d)$}
\put(190,0){\line(2,-1){30}}
\put(246,-52){$\xi^{}_1\in \varSigma^{}_-$}
\put(300,-60){$\infty$}
\put(136,-81){\line(2,3){54}}
\put(190,0){\line(-2,3){54}}
\put(149,82){\line (1,-2){75}}
{\thicklines \bezier{1000}(199,0)(200,30)(165,72)}
\put(197,-2){$\bullet$}
\put(205,5){$a $}
\put(155,-85){$\beta^{}_0 \eta $}
\put(163,-77){$\bullet$}
\put(156,77){$\beta^{}_1 \eta$}
\put(200,57){$\Re(w^{}_1\xi^{}_1)>0$ $(|\xi^{}_1|>d)$}
\put(200,30){$\gamma^+_1(0, \eta;\varrho,\theta)$}
\put(163,69){$\bullet$}
\put(180,90){$\Im w^{}_1$ }
\put(290,-3){$\Re w^{}_1$}
{\thicklines \bezier{100}(219,-14)(225,-9)(225,0)}
\put(220,6){$d$}
\put(222,-2){$\bullet$}
\put(217,-14){\thicklines \vector(2,-1){80}}
\put(212,-26){$d^-$}
\put(216,-17){$\bullet$}
{\thicklines\dottedline{4}(235,14)(290,35)}
{\thicklines\dottedline{4}(235,-14)(290,-35)}
{\thicklines \bezier{7}(235,14)(245,0)(235,-14)}
\put(245,6){$d/\varepsilon$}
\put(237,-2){$\bullet$}
\put(275,15){$\lvert \arg\zeta^{}_1|\leqslant \varepsilon$ }
\end{picture}
}
\caption{}
\label{S.pic3}
%\end{center}
\end{figure}%
We deform the path of integration $\displaystyle\int_{d}^\infty \hspace{-2ex} d\xi^{}_1$  in two ways as follows:  
Let $\delta>0$ be a 
sufficiently small constant and $d^\pm$ intersection points of the circle 
$|\tau|= d$ and $\{\xi^{}_1\in \mathbb{C};\,  \pm \Im \xi^{}_1
= \delta\Re \xi^{}_1 > 0 \}$. 
 Let $\varSigma^{}_{\pm}$ be paths starting from 
$d$, first going to $d^{\pm}$ along the 
circle and next going to the infinity along the half lines 
$\{\xi^{}_1 \in \mathbb C;\,   \pm\Im \xi^{}_1
= \delta\Re \xi^{}_1 > 0 \}$ respectively (see Figure \ref{S.pic3}). 
According to these deformations, we divide the path $\gamma^{}_1(0, \eta;\varrho,\theta) $ into 
two parts:
\[
\gamma_1^{\pm}(0, \eta;\varrho,\theta) 
:= \gamma^{}_1(0, \eta;\varrho,\theta) \cap\{ w^{}_1 \in \mathbb C;\,\pm \Im w^{}_1>0 \}.
\]
We take $a\in \gamma^{}_1(0, \eta;\varrho,\theta)  \cap \mathbb{R}$.
Now we can change the order of integration in $I$ (cf.\ Figure~\ref{S.pic3}) and obtain:
\begin{align*}
\sigma \cdot \varpi(P)(z,\zeta,\eta)
=  {} &  \smashoperator[r]{\Sum_{\alpha\in\mathbb{N}_0^{n-1}}}\,
    (\zeta')^{\alpha}\Bigl(
    \int_{\varSigma^{}_-} \hspace{-1.5ex} d\xi^{}_1\,
    \frac{\,P^{\mathcal{B}}_{\alpha}(z, \xi^{}_1,\eta)\,}
         {2\pi\im\xi_1^{|\alpha|}}\!
    \int_{\gamma_1^+(0, \eta;\varrho,\theta) } \hspace{-6ex}
e^{w^{}_1(\zeta^{}_1-\xi^{}_1)} \,dw^{}_1
    \\*&
+     \! \int_{\varSigma^{}_+} \hspace{-1.5ex} d\xi^{}_1\,
      \frac{\,P^{\mathcal{B}}_{\alpha}(z, \xi^{}_1,\eta)\,}
           {2\pi\im\xi_1^{|\alpha|}}\!
      \int_{\gamma_1^-(0, \eta;\varrho,\theta) } \hspace{-6ex}e^{w^{}_1(\zeta^{}_1-\xi^{}_1)}\,dw^{}_1\Bigr)\\
= {}&  \smashoperator[r]{\Sum_{\alpha\in\mathbb{N}_0^{n-1}}}\,
    (\zeta')^{\alpha}
    \int_{\varSigma^{}_-}\!\!\!
    \frac{\,P^{\mathcal{B}}_{\alpha}(z, \xi^{}_1,\eta)\,
 (e^{a(\zeta ^{}_1-\xi^{}_1)}-e^{\,\beta^{}_1\eta(\zeta ^{}_1- \xi^{}_1)})\,}
         {2\pi\im \xi_1^{|\alpha|}(\xi^{}_1-\zeta ^{}_1)}\,  d\xi^{}_1
\\*
& +\smashoperator{\Sum_{\alpha\in\mathbb{N}_0^{n-1}}}\,
    (\zeta')^{\alpha}
    \int_{\varSigma^{}_+}\!\!\!
    \frac{\, P^{\mathcal{B}}_{\alpha}(z, \xi^{}_1,\eta) \,
(e^{\,\beta^{}_0\eta(\zeta ^{}_1- \xi^{}_1)}-e^{a(\zeta ^{}_1- \xi^{}_1)})\,}
         {2\pi\im \xi_1^{|\alpha|} (\xi^{}_1-\zeta ^{}_1)}\, d\xi^{}_1\,.
\end{align*}
 Here we remark that  $a>0$ can be taken as sufficiently  small.
Further we set
\allowdisplaybreaks
\begin{align*}
I
 & :=
\smashoperator[r]{ \Sum_{\alpha\in\mathbb{N}_0^{n-1}}}\,
  (\zeta')^{\alpha}
    \int_{\varSigma^{}_- - \varSigma^{}_+}
    \frac{\, P^{\mathcal{B}}_{\alpha}(z, \xi^{}_1,\eta) \,e^{a(\zeta^{}_1- \xi^{}_1)}\,}
         {\,2\pi\im \xi_1^{|\alpha|}(\xi^{}_1-\zeta^{}_1)\,}\,    d\xi^{}_1,
\\
I^{-}
& := -\smashoperator{\Sum_{\alpha\in\mathbb{ N}_0^{n-1}}}\,
  (\zeta')^{\alpha}
    \int_{\varSigma^{}_-}\!\!
    \frac{\,P^{\mathcal{B}}_{\alpha}(z, \xi^{}_1,\eta) \,e^{\,\beta_1\eta(\zeta^{}_1-\xi^{}_1)}}{\,2\pi\im \xi_1^{|\alpha|}(\xi^{}_1-\zeta^{}_1)\,}\,
    d\xi^{}_1,
\\
I^{+}
& := \smashoperator[r]{\Sum_{\alpha\in\mathbb{ N}_0^{n-1}}}\,
(\zeta')^{\alpha}
    \int_{\varSigma^{}_+}\!\!
    \frac{\,P^{\mathcal{B}}_{\alpha}(z, \xi^{}_1,\eta)\, e^{\,\beta^{}_0\eta(\zeta^{}_1-\xi^{}_1)}}
{\,2\pi\im \xi_1^{|\alpha|}(\xi^{}_1-\zeta^{}_1)\,}\,d\xi^{}_1.
\end{align*}
Then 
$\sigma \cdot \varpi(P)(z,\zeta,\eta) = I+I^{-} +I^{+}$.  
Let us recall that we have discussed in $\lvert\arg\zeta^{}_1| \leqslant \varepsilon$, 
$|\zeta^{}_1|\geqslant \dfrac{d}{\, \varepsilon\,}>d$ and 
$|\zeta^{}_i| \leqslant \varepsilon |\zeta^{}_1|$ $(2 \leqslant i\leqslant n)$.
We can find  $\varepsilon^{}_0$, $c>0$ such that   
$|\zeta^{}_1-\xi^{}_1| \geqslant c |\xi^{}_1| \geqslant cd$ and $\Re (\zeta^{}_1\beta^{}_1\eta) 
\leqslant -2c|\eta\zeta^{}_1|$ hold for any $\varepsilon\in
\bigl]0,\varepsilon^{}_0\bigr[$ and $(\xi^{}_1,\eta) \in \varSigma^{}_-\times  S$. Further there exists a constant 
 $h^{}_0>0$ such that   $\Re(\beta^{}_1\eta\xi^{}_1) \geqslant  2h^{}_0|\eta\xi^{}_1|$ holds  for 
any $\xi^{}_1 \in \varSigma^{}_-\smallsetminus \{|\xi^{}_1|=d\}$ and $\eta\in  S$. 
For any $Z\Subset  S$,  choose $h=h^{}_Z>0$ 
 as 
$h^{}_Z<h^{}_0 m^{}_Z$ in \eqref{S.eq2.10}. Hence replacing $\varepsilon>0$ as 
$2K\varepsilon \leqslant c$
on $\widetilde{V}_{\varepsilon} \times Z$  we have
\begin{align}
\label{S.eq2.23}
  |I^{-}|
\leqslant {} &
\smashoperator[r]{\Sum_{|\alpha|=0}^\infty}
\frac{\,C^{}_{h_Z}(K\varepsilon|\eta\zeta^{}_1|)^{|\alpha|}\,e^{-2c|\eta\zeta^{}_1|}\,}{\,2\pi c d \,|\alpha|!\,} 
\\*
& \times \Bigl(e^{(h^{}_Z+\lvert\Re(\beta^{}_1\eta)|) d}
 \int_{|\xi^{}_1| = d}\hspace{-0.4ex} |d\xi^{}_1|+
              \int_{d}^\infty \hspace{-1ex} e^{-(2h^{}_0|\eta| -h^{}_Z)|\xi^{}_1|}\,d|\xi^{}_1|
         \Bigr)
\notag\\
\leqslant {} &
\frac{\,2^{n-2}C^{}_{h_Z}e^{-c |\eta\zeta^{}_1|}\,}{\,c\,}\,
  \biggl( e^{(h^{}_Z+|\beta^{}_1|r) d}+
              \frac{e^{-h^{}_0dr}}
{\, 2\pi h^{}_0d m^{}_Z\,}
         \biggr).
\notag\end{align}
Hence  we see that $I^{-}\in \mathfrak{N}^{}_{z^*_0}$. 
 Similarly, we have
\begin{equation}
\label{S.eq2.23a}
  |I^{\,+}|
\leqslant 
\frac{\,2^{n-2}C^{}_{h_Z}e^{-c |\eta\zeta^{}_1|}\,}{\,c\,}\,
  \biggl( e^{(h^{}_Z+|\beta^{}_0|r) d}+
              \frac{e^{-h^{}_0dr}}
{\, 2\pi h^{}_0dm^{}_Z\,}
         \biggr),
\end{equation}
 hence $I^{\,+}\in \mathfrak{N}^{}_{z^*_0}$. 
Now we consider $I$. For any $K\Subset \{\zeta^{}_1\in \mathbb{C};\,
\lvert \arg\zeta^{}_1| \leqslant \varepsilon\}$,  we see that  the integral operator
\[
\int_{\varSigma^{}_{-}-\varSigma^{}_+}d\xi^{}_1 \,
\frac{\,e^{a(\zeta^{}_1-\xi^{}_1)}\,}{2\pi\im (\xi^{}_1-\zeta^{}_1)}
\]
has the Cauchy kernel with a damping factor since $-\Re(a\xi^{}_1) <0$. Hence, 
\[
I =
\smashoperator[r]{\Sum_{\alpha\in\mathbb N_0^{n-1}}}
\,\,\,P^{\mathcal{B}}_{\alpha}(z, \zeta^{}_1,\eta)
\biggl(\frac{\zeta'}{\zeta^{}_1}\biggr)^{\!\!\alpha} =P^{\mathcal{B}}(z, \zeta,\eta)
\]
holds if $\zeta^{}_1$ is located in the domain surrounded by 
$\varSigma^{}_{-}-\varSigma^{}_+$. 
Thus we have 
\[
\sigma\cdot\varpi(P)(z,\zeta,\eta)-P(z,\zeta,\eta)= 
\sigma\cdot\varpi(P)(z,\zeta,\eta)-P^{\mathcal{B}}(z,\zeta,\eta)+P^{\mathcal{B}}(z,\zeta,\eta)-P(z,\zeta,\eta)
 \in \mathfrak{N}^{}_{z^*_0}, 
\]
that is,
$
\sigma\cdot\varpi= \mathds{1}  \colon  \mathfrak{S}^{}_{z^*_0}/
\mathfrak{N}^{}_{z^*_0} 
\earrow \mathfrak{S}^{}_{z^*_0}/
\mathfrak{N}^{}_{z^*_0}$\,. 
\paragraph{\textbf{Step 2.}}
Let $P=[\psi(z,w,\eta)\,dw]\in \mathscr {E}^{\mathbb R}_{X,z^*_0}$. 
Then we can assume that a representative $\psi(z,z+w,\eta)$ has the
form as in \eqref{S.eq2.1}. By Proposition \ref{S.rem5.2}, each coefficient $P^{}_{\alpha}(z,\zeta^{}_1,\eta)$ in \eqref{S.eq2.6q}  
is  written as
\[
P^{}_{\alpha}(z,\zeta^{}_1,\eta)
= \frac{\,\zeta_1^{\,|\alpha|}}{\alpha!} 
\smashoperator{\int\limits_{\hspace{6ex}\gamma^{}_1(0, \eta;\varrho,\theta) }}
\psi^{}_{\alpha}(z,p,\eta)\, e^{p\zeta^{}_1}\,dp.
\]
We assume that each $\psi^{}_{\alpha}(z,p,\eta)$ is holomorphic on 
\[
\{(z,p,\eta) \in \mathbb{C}^{n+1}\times  S;\,
\|z\| < 2r^{}_0,\, |w_1 | <\varrho |\eta|,\, \lvert\arg  p|< \dfrac{\,\pi\,}{2} + \theta\}. 
\]
Fix $\eta^{}_0\in S\cap \mathbb{R}$, and 
we take $\varrho'<\varrho$ as $\varrho'|\eta| <|\eta^{}_0|$ for any $\eta\in S$. 
By \eqref{S.eq2.2}, there exists $c >0$ and   for any $Z\Subset  S$ there exists
  $C^{}_{Z}>0$ such that for any $\eta\in Z$,
\begin{equation}\label{S.eq2.24}
\sup\{|\psi^{}_{\alpha}(z,p,\eta)|;\,  \|z\| \leqslant  r^{}_0,\, p\in \gamma^{}_1(0, \eta^{}_0;\varrho,\theta)\}
\leqslant C_{Z}(c|\eta|)^{|\alpha|+n-1}.
\end{equation}
By the definition, we have
\begin{multline*}
\varpi\cdot\sigma(\psi)(z,z+w,\eta) 
\\
= \Sum_{|\alpha|=0}^\infty
\frac{1}{\,2\pi\im (w')^{\alpha+\mathbf{1}_{n-1}}\,} 
\int_{d}^\infty \hspace{-1.5ex}d\zeta^{}_1\,
\zeta_1^{|\alpha|}\varGamma^{}_{|\alpha|}(\zeta^{}_1,\eta)\,e^{- w^{}_1 \zeta^{}_1}\!\!
\smashoperator{\int\limits_{\hspace{7ex}\gamma^{}_1(0, \eta^{}_0;\varrho,\theta)}}\psi^{}_{\alpha}(z,p,\eta^{}_0)\,e^{p\zeta^{}_1}dp. 
\end{multline*}
We set
\[
\varpi'\cdot\sigma(\psi)(z,z+w,\eta):=
\Sum_{|\alpha|=0}^\infty
\frac{1}{\,2\pi\im (w')^{\alpha+\mathbf{1}_{n-1}}\,} 
\int_{d}^\infty \hspace{-1.5ex}d\zeta^{}_1\,e^{- w^{}_1 \zeta^{}_1}\!\!
\smashoperator{\int\limits_{\hspace{7ex}\gamma^{}_1(0, \eta^{}_0;\varrho,\theta)}} \psi^{}_{\alpha}(z,p,\eta)\,e^{p\zeta^{}_1}dp. 
\]
We assume that $\Re (\eta \zeta^{}_1) \geqslant 2\delta^{}_0 |\eta\zeta^{}_1|>0$ for some $\delta^{}_0 \in \bigl]0, \dfrac{1}{\;2\;}\bigr[$. 
We deform the path $\gamma^{}_1(0, \eta^{}_0;\varrho, \theta)$ as 
$|e^{p\zeta^{}_1}|\leqslant e^{-\delta|\eta\zeta^{}_1|/2}$ for $\lvert\arg \zeta^{}_1|\leqslant \varepsilon$. 
Then by \eqref{S.eq2.5} and \eqref{S.eq2.24}, 
 for any $Z\Subset  S$, there exists  $C^{}_{Z}>0$ such that if $2|w^{}_1|< \delta^{}_0|\eta|$ and 
$c|\eta|<\delta^{}_0 |w^{}_i|$ ($2 \leqslant  i \leqslant n$), we have
\begin{align*}
\Bigl|&\Sum_{|\alpha|=0}^\infty
\frac{1}{\,2\pi\im (w')^{\alpha+\mathbf{1}_{n-1}}\,} 
\int_{d}^\infty \hspace{-1.5ex}d\zeta^{}_1\,
(1- \zeta_1^{|\alpha|}\varGamma^{}_{|\alpha|}(\zeta^{}_1,\eta))\,e^{- w^{}_1 \zeta^{}_1}\!\!
\smashoperator{\int\limits_{\hspace{7ex}\gamma^{}_1(0, \eta^{}_0;\varrho,\theta)} } 
\psi^{}_{\alpha}(z,p,\eta)\,e^{p\zeta^{}_1}dp\Bigr|
\\
& \leqslant 
\Sum_{|\alpha|=1}^\infty
\frac{C^{}_Z|\gamma^{}_1(0, \eta^{}_0;\varrho,\theta)|(c|\eta|)^{|\alpha|+n-1}}{\,2\pi (w')^{\alpha+\mathbf{1}_{n-1}}\,} 
\int_{d}^\infty %\hspace{-1ex}
\dfrac{\,e^{-(\delta^{}_0|\eta|/2-|w^{}_1|)| \zeta^{}_1|}\,}{\delta_0^{|\alpha|-1}}\,d\zeta^{}_1
\\
& \leqslant 
\Sum_{|\alpha|=1}^\infty
\frac{C^{}_Z\delta^{\,n}_0|\gamma^{}_1(0, \eta^{}_0;\varrho,\theta)|}{\,\pi (\delta^{}_0|\eta|-2|w^{}_1|)\,} 
\Bigl|\Bigl(\dfrac{c\eta}{\delta^{}_0 w'}\Bigr)^{\!\!\alpha+\mathbf{1}_{n-1}}\Bigr|<\infty.
\end{align*}
Here $|\gamma^{}_1(0, \eta^{}_0;\varrho,\theta)|$ denotes the length of $\gamma^{}_1(0, \eta^{}_0;\varrho,\theta)$. 
Next we consider
\begin{equation}\label{5.21z}
\smashoperator{\int\limits_{\hspace{7ex}\gamma^{}_1(0, \eta^{}_0;\varrho,\theta)}} \psi^{}_{\alpha}(z,p,\eta)\,e^{p\zeta^{}_1}dp
-\smashoperator{\int\limits_{\hspace{7ex}\gamma^{}_1(0, \eta^{}_0;\varrho,\theta)}}
 \psi^{}_{\alpha}(z,p,\eta^{}_0)\,e^{p\zeta^{}_1}dp
= \smashoperator{\int\limits_{\hspace{7ex}\gamma^{}_1(0, \eta^{}_0;\varrho,\theta)} } dp\, e^{p\zeta^{}_1}
\int_{\eta^{}_0}^{\eta}\hspace{-1ex}\partial_{\eta}\psi^{}_{\alpha}(z,p,\tau)\,d\tau
\,.
\end{equation}
Since $\partial_{\eta}\psi(z,w,\eta)$ is holomorphic on $|w^{}_1|<\varrho |\eta|$, 
as in  \eqref{S.eq2.24}
there exists $c$ and for any $S\Subset Z$ there exists   $C^{}_Z>0$ such that for any $\eta\in Z$,
 \[
\sup\{|\psi^{}_{\alpha}(z,p,\eta)-\psi^{}_{\alpha}(z,p,\eta^{}_0)|;\,  \|z\| \leqslant  r^{}_0,\, |p| \leqslant 
\varrho'|\eta|\}
\leqslant C_{Z}(c|\eta|)^{|\alpha|+n-1}.
\]
Thus we can change  the path of the integration in \eqref{5.21z}  as    $|e^{p\zeta^{}_1}|\leqslant e^{-c'|\eta\zeta|}$ 
for $\lvert\arg \zeta^{}_1|\leqslant \varepsilon$. Hence if $\eta\in Z$,  $|w^{}_1|< c'|\eta|$ and $c|\eta|<\delta^{}_0 |w^{}_i|$ 
($2 \leqslant  i \leqslant n$),  we have 
\begin{align*}
\Bigl|&\Sum_{|\alpha|=0}^\infty
\frac{1}{\,2\pi\im (w')^{\alpha+\mathbf{1}_{n-1}}\,} 
\int_{d}^\infty \hspace{-1.5ex}d\zeta^{}_1\,e^{- w^{}_1 \zeta^{}_1}\!\!
\smashoperator{\int\limits_{\hspace{7ex}\gamma^{}_1(0, \eta^{}_0;\varrho,\theta) } } 
(\psi^{}_{\alpha}(z,p,\eta)-\psi^{}_{\alpha}(z,p,\eta^{}_0))\,e^{p\zeta^{}_1}dp\Bigr|
\\
&\leqslant 
\Sum_{|\alpha|=0}^\infty
\frac{ C^{}_{Z}|\gamma^{}_1(0, \eta^{}_0;\varrho,\theta)|(c|\eta|)^{|\alpha|+n-1}}{\,2\pi (w')^{\alpha+\mathbf{1}_{n-1}}\,} 
\int_{d}^\infty \hspace{-1.5ex}e^{- (c'|\eta|-|w^{}_1|) |\zeta^{}_1|}\,d\zeta^{}_1
\\
&\leqslant 
\Sum_{|\alpha|=0}^\infty
\frac{ C^{}_{Z}|\gamma^{}_1(0, \eta^{}_0;\varrho,\theta)|}{\,2\pi  (c'|\eta|-|w^{}_1|)\,} 
\Bigl|\Bigl(\dfrac{c\eta}{\delta^{}_0 w'}\Bigr)^{\!\!\alpha+\mathbf{1}_{n-1}}\Bigr|<\infty.
\end{align*}
Summing up we can prove that  
$[\varpi'\cdot\sigma(\psi)(z,w,\eta)\,dw]=[\varpi\cdot\sigma(\psi)(z,w,\eta)\,dw] \in 
\varinjlim\limits_{\boldsymbol{\kappa}} \widehat{E}^{\mathbb{R}}_{X}(\boldsymbol{\kappa})$.
We may assume  that  
$\varpi'\cdot \sigma(\psi)(z,z+w,\eta)$ is holomorphic on 
\[
\smashoperator[r]{\Bcap_{2\leqslant i \leqslant n}}\{(z,w,\eta)\in \mathbb{C}^{2n}\times  S;\,
\|z\|< r^{}_0,\, \lvert \arg w^{}_1| < \delta'+\dfrac{\,\pi\,}{2},\, c|\eta|<|w^{}_i| \}
\]
with   some constants $r^{}_0$, $\delta'$, $c>0$. 
\[
\smashoperator[r]{\Bcap_{2\leqslant i \leqslant n}}\{(z,w,\eta)\in \mathbb{C}^{2n}\times  S;\,
\|z\|< r^{}_0,\, 0 <|w^{}_1|,\,\lvert \arg w^{}_1| < \delta'+\dfrac{\,\pi\,}{2},\, c|\eta|<|w^{}_i| \}
\]
with   some constants $r^{}_0$, $\delta'$, $c>0$. 
Let $\gamma_1'$ be a path starting from $\beta^{}_0\eta^{}_0$, 
ending at $\beta^{}_1\eta^{}_0$  and detouring  
$w^{}_1$  clockwise as in Figure~\ref{S.pic1}. 
\begin{figure}
{\footnotesize
\begin{picture}(450,120)(-200,-60)
{\thicklines \bezier{1000}(-17,-35)(30,0)(-16,33)}
\put(3,-3){$\blacktriangle$}
\put(45,-5){$\bullet$}
\put(35,-10){$w^{}_1$}
\put(88,2){$\blacktriangledown$}
{\thicklines \bezier{1000}(-17,-35)(200,10)(-16,33)}
\put(-10,-3){\line(-1,2){30}}
\put(-10,-3){\line(-1,-2){30}}
\put(8,10){$\gamma^{}_1(0, \eta^{}_0;\varrho,\theta) $}
\put(98,2){$\gamma'_1$}
\put(-25,-50){$ \beta^{}_0\eta^{}_0$}
\put(-18,-37){$\bullet$}
\put(-25,40){$ \beta^{}_1\eta^{}_0$}
\put(-17,30){$\bullet$}
\end{picture}
}
\caption{}
\label{S.pic1}
\end{figure}
If $\Re \,(p-w^{}_1) <0$, we have
\[\displaystyle
  \int_{d}^\infty \hspace{-1ex}
  e^{(p-w^{}_1)\zeta^{}_1}d\zeta^{}_1
= -\frac{e^{(p-w^{}_1) d}}{p-w^{}_1},
 \]
and the right-hand side extends analytically. % to $p\not=w^{}_1$. 
Thus  on the common domain of definition    we have
\allowdisplaybreaks
\begin{align*}
\varpi'\cdot \sigma(\psi)(z,z+w,\eta)
={} & \Sum_{\alpha\in \mathbb N_0^{n-1}}
\frac{-1}{\, (w')^{\alpha+\mathbf{1}_{n-1}}\,} 
\int\limits_{\gamma^{}_1(0, \eta^{}_0;\varrho,\theta) }\!\!\frac{\psi^{}_{\alpha}(z,p,\eta)\, e^{(p-w^{}_1) d}}{2\pi\im\,(p-w^{}_1)}\,dp
\\
={} &   \Sum_{\alpha\in \mathbb N_0^{n-1}} \!
\frac{-1}{\, (w')^{\alpha+\mathbf{1}_{n-1}}\,}
\oint\limits_{\gamma^{}_1(0, \eta^{}_0;\varrho,\theta) \vee\gamma'_1}\hspace{-1ex}
\frac{\, \psi^{}_{\alpha}(z,p,\eta)\, e^{(p-w^{}_1) d}\,}{2\pi\im\,(p-w^{}_1)}\,dp 
\\*
&+\Sum_{\alpha\in \mathbb N_0^{n-1}} \! 
\frac{1}{\, (w')^{\alpha+\mathbf{1}_{n-1}}\,}
\int_{\gamma'_1}\!\frac{\, \psi^{}_{\alpha}(z,p,\eta)\, e^{(p-w^{}_1) d}\,}{2\pi\im\,(p-w^{}_1)}\,dp 
\\
={} &\psi(z,z+w,\eta)+\varPi,
\end{align*}
where
\[
\varPi := \Sum_{\alpha\in \mathbb N_0^{n-1}} \! 
\frac{1}{\, (w')^{\alpha+\mathbf{1}_{n-1}}\,}
\int_{\gamma'_1}\!\frac{\, \psi^{}_{\alpha}(z,p,\eta)\, e^{(p-w^{}_1) d}\,}{2\pi\im\,(p-w^{}_1)}\,dp .
\]
As in \eqref{S.eq2.24},  there exist $c$,  $c^{}_1 >0$ and   for any $Z\Subset  S$, there exists
  $C^{}_{Z}>0$ such that 
\[
\Bigl|\frac{\,e^{(p-w^{}_1)d}\,}{p-w^{}_1}
 \Bigr|
        \leqslant C^{}_Z,\qquad |\psi^{}_{\alpha}(z,p,\eta)|\leqslant C^{}_Z(c|\eta|)^{|\alpha|+n-1},
\]
hold on $\{\|z \|<c^{}_1,\,|w^{}_1|<c^{}_1|\eta|,\, p\in\gamma_1' ,\,\eta \in Z\}$. 
Thus, on $\{\|z\| <c^{}_1, \, |w^{}_1|<c^{}_1|\eta|,\,\eta \in Z\}$  we have 
\[
|\varPi | \leqslant
\Sum_{\alpha\in \mathbb N_0^{n-1}} \!
\dfrac{\,C^{\;2}_Z|\gamma'_1|\,}
{2\pi}\,\Bigl|\!\Bigl(\dfrac{c|\eta|}{w'}\Bigr)^{\!\alpha+\mathbf{1}_{n-1}}\Bigr|.
\]
By taking $\delta>0$ as $c\delta< 1$ and $\delta<c^{}_1$, we see that  $\varPi$ is holomorphic on 
\begin{align*}
\Bcup_{Z\Subset  S}
\smashoperator[r]{\Bcap_{i=2}^n}\{(z,w,\eta)\in   \mathbb{C}^{2n}\times Z;\, \|z\|< \delta,\,
|w^{}_1| < \delta |\eta|<\delta^2 |w^{}_i|\}&
\\=
\smashoperator{\Bcap_{i=2}^n}\{(z,w,\eta)\in   \mathbb{C}^{2n}\times  S;\, \|z\|< \delta,\,
|w^{}_1| < \delta |\eta|<\delta^2 |w^{}_i|\}&.
\end{align*}
Thus   $\varpi\cdot\sigma=
 \mathds{1}\colon \mathscr{E}^{\smash{\mathbb{R}}}_{X, z^*_0} \earrow \mathscr{E}^{\smash{\mathbb{R}}}_{X, z^*_0}$. 

By Steps 1 and 2,
we see that  $\sigma^{-1}= \varpi $,  
hence $\sigma\colon \mathscr{E}^{\smash{\mathbb{R}}}_{X, z^*_0}\earrow  \mathfrak{S} ^{}_{z^* _0} /\mathfrak{N} ^{}_{z^* _0} $. 
\end{proof}

%\begin{comment}
%\begin{rem}
Let $P\in \mathfrak{S}_{z^*_0}$, and  consider 
 $[\varpi (P)(z,w,\eta)\,dw] \in  \varinjlim\limits_{\boldsymbol{\kappa}} E^{\mathbb{R}}_X(\boldsymbol{\kappa}) $.  
Here we can assume that $\varpi(P)(z,z+w,\eta)$ is  holomorphic  on $V$ in  \eqref{S.eq2.21}. 
Take $c^{}_0>1$ such that $c^{}_0\Re \zeta^{}_1 \geqslant |\zeta^{}_1|$ for $\lvert \arg \zeta^{}_1| \leqslant \theta'$. 
In \eqref{S.eq2.8}, we take  $\{\varepsilon^{}_{\nu} \}_{\nu=1}^\infty\subset \mathbb{R}^{}_{>0}$ and  $C>0$ as
\[
1\gg  \varepsilon^{}_1 >  \varepsilon^{}_2 > \cdots > \varepsilon^{}_\nu \xrightarrow[\nu]{}0, \qquad
  \frac{2C^{}_{\varepsilon^{}_{\nu}/2}}{\,\varepsilon^{}_\nu\,}\leqslant 2^\nu C,
\]
Set $\varepsilon^{}_{\alpha}:= \varepsilon^{}_{|\alpha|}$ for short, and  we define
\begin{align*}
\varpi_{0,\alpha}(P)(z,w^{}_1) 
&:=\int_{d}^\infty \hspace{-1.5ex}P^{}_{\alpha}(z,\zeta^{}_1,\eta^{}_0)\,
\varGamma^{}_{|\alpha|}(\zeta^{}_1,c^{}_0\varepsilon_\alpha-w^{}_1)\,e^{- w^{}_1 \zeta^{}_1}\,d\zeta^{}_1\,,
\\
\varpi_0(P)(z,z+w)
&:= \Sum_{\alpha\in \mathbb N_0^{n-1}}
\frac{\,\alpha!\, \varpi_{0,\alpha}(P)(z,w^{}_1)} 
{\,(2\pi\im\,)^{n}\,
(w')^{\alpha+ \mathbf{1}_{n-1}}\,}\,.
\end{align*}
\begin{thm}
\textup{(1)} The $\varpi_0$ induces the mapping 
$\varpi_0 \colon \mathfrak{S}_{z^*_0}\big/\mathfrak{N}_{z^*_0}\to \mathscr{E}^{\smash{\mathbb{R}}}_{X, z^*_0}$\,.

\textup{(2)} 
It follows that  $[\varpi (P)(z,w,\eta)\,dw]=[\varpi _0(P)(z,w)\,dw] \in  
\varinjlim\limits_{\boldsymbol{\kappa}} E^{\mathbb{R}}_X(\boldsymbol{\kappa})$, and  
the following diagram is commutative\textup{:}
%this gives  
%an explicit description of the isomorphism $
%\mathscr{E}^{\smash{\mathbb{R}}}_{X, z^*_0}\earrow
%\varinjlim\limits_{\boldsymbol{\kappa}} E^{\mathbb{R}}_X(\boldsymbol{\kappa})$. 
\begin{equation}
\label{commA}
\vcenter{
\xymatrix @R=3ex{
\mathscr{S}_{z^*_0}\big/\mathscr{N}_{z^*_0}\ar[r]^-{\dsim}
\ar@{}[d]|{\rotatebox{-90}{$\simeq$}}
%\ar@{-}[d]^-{\displaystyle \!\wr}
&\mathfrak{S}_{z^*_0}\big/\mathfrak{N}_{z^*_0}  \ar@ <.6ex>[d]^-{\varpi} \ar[ld]_-{\varpi_0}
\\
\mathscr{E}^{\smash{\mathbb{R}}}_{X, z^*_0} \ar[r]^-{\dsim}
& \varinjlim\limits_{\boldsymbol{\kappa}} E^{\mathbb{R}}_X(\boldsymbol{\kappa}).
 \ar@ <.6ex>[u]^-{\sigma}
}}\end{equation}
Here, the isomorphism $\mathscr{E}^{\smash{\mathbb{R}}}_{X, z^*_0} \earrow \mathscr{S}_{z^*_0}\big/\mathscr{N}_{z^*_0} $ 
is induced by 
\[
\psi(z,w)\,dw \mapsto  \sigma(\psi)(z,\zeta,\eta^{}_0)=  
\smashoperator{\int\limits_{\hspace{7ex}\gamma(0,\eta^{}_0;\varrho,\theta)}}
\psi(z,z+w) \,e^{\langle w, \zeta\rangle} dw
\] 
for any fixed $\eta^{}_0 \in S$. 
\end{thm}
\begin{rem}
(1)  The isomorphism 
$\mathscr{E}^{\smash{\mathbb{R}}}_{X, z^*_0} \earrow \mathscr{S}_{z^*_0}\big/\mathscr{N}_{z^*_0} $ 
is established  in \cite{a2},  \cite{A-K-Y} and \cite{aky}.

(2) From the diagram \eqref{commA},  we obtain 
an explicit description of the isomorphism $
\mathscr{E}^{\smash{\mathbb{R}}}_{X, z^*_0}\simeq
\varinjlim\limits_{\boldsymbol{\kappa}} E^{\mathbb{R}}_X(\boldsymbol{\kappa})$. 
\end{rem}
\begin{proof}
If $ \varepsilon_{\nu} < \delta^{}_1|w^{}_1|$,  $\Re( w^{}_1 \zeta^{}_1)\geqslant \delta^{}_1 |w^{}_1 \zeta^{}_1|$ 
and  $0\leqslant t\leqslant 1$,  we have 
\[
|e^{-(t(c^{}_0\varepsilon_\nu -w^{}_1)+w^{}_1)\zeta^{}_1}|= e^{-tc^{}_0\varepsilon_\nu\Re \zeta^{}_1-(1-t) \Re(w^{}_1\zeta^{}_1)}
\leqslant
 e^{-t\varepsilon_\nu|\zeta^{}_1|-(1-t) \varepsilon^{}_\nu|\zeta^{}_1|}=
 e^{-\varepsilon_\nu|\zeta^{}_1|}.
\]
Thus
\begin{align*}
|\varGamma^{}_{\nu}(\zeta^{}_1,c^{}_0\varepsilon_\nu-w^{}_1)\,e^{- w^{}_1 \zeta^{}_1}|&=
\Bigl|\frac{1}{\,(\nu-1)!\,}
 \int_{0}^{c^{}_0\varepsilon_\nu-w^{}_1} \hspace{-0.7ex} 
e^{-(s+w^{}_1)\zeta^{}_1} s^{\nu-1}ds\Bigr|
\\
&
= 
\Bigl|\frac{(c^{}_0\varepsilon_\nu -w^{}_1)^\nu}{\,(\nu-1)!\,}
\int_{0}^{1} \hspace{-1ex} e^{-(t(c^{}_0\varepsilon_\nu -w^{}_1)+w^{}_1)\zeta^{}_1}t^{\nu-1}dt\Bigr|
\\
&
\leqslant  \frac{(c^{}_0\varepsilon_\nu +|w^{}_1|)^\nu e^{-\varepsilon_{\nu}|\zeta^{}_1|}}{\,(\nu-1)!\,}
 \int_{0}^{1} \hspace{-1ex}  t^{\nu-1}dt = \frac{(c^{}_0\varepsilon_\nu +|w^{}_1|)^\nu e^{-\varepsilon_{\nu}|\zeta^{}_1|}}{\,\nu!\,}\,.
\end{align*}
Set 
\[   L'_{\alpha}
 :=  \{(z,w^{}_1)\in \mathbb{C}^{n+1};\,\|z\| < r^{}_0,\,\lvert \arg w^{}_1| < \delta'+\dfrac{\,\pi\,}{2},\, 
   \varepsilon_{\alpha} < \delta^{}_1|w^{}_1|   \}\,.                    
\]
Then   taking $h=\dfrac{\,\varepsilon^{}_{\alpha}\,}{2}$ in \eqref{S.eq2.8} we have
\allowdisplaybreaks
\begin{align*}
\sup_{  L'_{\alpha}}|\varpi_{0,\alpha}(P)(z,w^{}_1)|& \leqslant
 \dfrac{C^{}_{\varepsilon^{}_{\alpha}/2} (K(c^{}_0\varepsilon_\alpha+|w^{}_1|))^{|\alpha|}}{|\alpha|!}
\int_{d}^\infty \hspace{-1.5ex}e^{-\varepsilon^{}_{\alpha}| \zeta^{}_1|/2}\,d|\zeta^{}_1|
\\
&
\leqslant \dfrac{2C^{}_{\varepsilon^{}_{\alpha}/2} (K(c^{}_0\varepsilon_\alpha+|w^{}_1|))^{|\alpha|}}{\,\varepsilon^{}_{\alpha}\,
|\alpha|!}
 \leqslant \dfrac{C (2K(c^{}_0\varepsilon_\alpha+|w^{}_1|))^{|\alpha|}}{|\alpha|!}\,.
\end{align*}

If $(z,w^{}_1) \in L'_{\nu}$ and $2K(c^{}_0\varepsilon^{}_\nu+|w^{}_1|) <|w^{}_i| $,  we have
\begin{align*}
|\varpi_0(P)(z,z+w)|& 
\leqslant \Sum_{\alpha\in \mathbb N_0^{n-1}}
\dfrac{C}{\,(2\pi)^{n}|(w')^{\mathbf{1}_{n-1}}|\,}\,
\Bigl|\!\Bigl(\dfrac{2K(c^{}_0\varepsilon_\alpha+|w^{}_1|)}{w'}\Bigr)^{\!\alpha}\Bigr|<\infty,
\end{align*}
hence $\varpi_0(P)(z,z+w)$ is holomorphic on 
\begin{multline*}
\Bcup_{\nu=1}^\infty\smashoperator[r]{\Bcap_{2\leqslant i\leqslant n}}\{(z,w)\in\mathbb{C}^{2n};\,(z,w^{}_1) \in L'_{\nu}\,,\,
2K(c^{}_0\varepsilon^{}_\nu+|w^{}_1|)<|w^{}_i| \}
\\
=
\smashoperator[r]{\Bcap_{2\leqslant i\leqslant n}}\{(z,w)\in \mathbb{C}^{2n};\, 
\|z\|<r^{}_0,  \,  2K|w^{}_1| < |w^{}_i|,\,\lvert \arg w^{}_1| < \delta'+\dfrac{\,\pi\,}{2}\}.
\end{multline*}
Therefore   $\varpi_0(P)(z,w)\,dw$ defines  a germ of $\mathscr{E}^{\smash{\mathbb{R}}}_{X, z^*_0}$.  
There  exists $\delta>0$ such that $\Re(\eta \zeta^{}_1) \geqslant 2\delta|\eta \zeta^{}_1|$ for any $\eta \in S$ and 
$\lvert\arg\zeta^{}_1| \leqslant\theta'$. 
Suppose that $|w^{}_1|<\delta |\eta|$. 
For any $Z\Subset S$, there exists $N^{}_Z\in \mathbb{N}$ such that 
$\delta m^{}_Z \geqslant \varepsilon^{}_\nu$ for any $\nu \geqslant N^{}_Z$. 
Thus 
if  $0\leqslant t\leqslant 1$,  we have 
\begin{align*}
|e^{-((1-t)c^{}_0\varepsilon_\nu+t (\eta+w^{}_1))\zeta^{}_1}|& = 
e^{-(1-t)c^{}_0\varepsilon_\nu\Re \zeta^{}_1-t \Re((\eta+w^{}_1)\zeta^{}_1)}
\leqslant
 e^{-(1-t)\varepsilon_\nu|\zeta^{}_1|-t(2\delta|\eta|-|w^{}_1|) |\zeta^{}_1|}
\\
&\leqslant  e^{-(1-t)\varepsilon_\nu|\zeta^{}_1|-t\delta m^{}_Z |\zeta^{}_1|}
\leqslant e^{-\varepsilon_\nu|\zeta^{}_1|}.
\end{align*}
Thus we have 
\begin{align*}
|(\varGamma^{}_{\nu}(\zeta^{}_1,\eta)&-\varGamma^{}_{\nu}(\zeta^{}_1,c^{}_0\varepsilon_\nu-w^{}_1))\,e^{- w^{}_1 \zeta^{}_1}|
=
\Bigl|\frac{1}{\,(\nu-1)!\,}
 \int_{c^{}_0\varepsilon_\nu-w^{}_1} ^{\eta}\hspace{-6ex} e^{- (s+w^{}_1)\zeta^{}_1} s^{\nu-1}ds\Bigr|
\\
&
=
\Bigl|\frac{ (\eta+w^{}_1-c^{}_0\varepsilon_\nu)^\nu}{\,(\nu-1)!\,}
 \int_{0} ^{1} \hspace{-1ex} e^{-((1-t)c^{}_0\varepsilon_\nu+t (\eta+w^{}_1))\zeta^{}_1} t^{\nu-1}dt\Bigr|
\\
&
\leqslant
\frac{(|\eta|+\delta|\eta|+c^{}_0 \delta m^{}_Z)^\nu e^{-\varepsilon_\nu|\zeta^{}_1|}}{\,(\nu-1)!\,}
\int_{0}^{1} \hspace{-1ex} t^{\nu-1}dt
\leqslant\frac{((1+\delta+c^{}_0\delta )|\eta|)^\nu e^{-\varepsilon_\nu|\zeta^{}_1|}}{\,\nu!\,}\,.
\end{align*}
Set $K^{}_1:= K(1+ \delta+c^{}_0\delta)$.
Choosing $h=\dfrac{\,\varepsilon^{}_{\alpha}\,}{2}$ in \eqref{S.eq2.8} $(|\alpha| \geqslant N^{}_Z)$ we have
\begin{align*}
|\varpi^{}_\alpha(P)(z,w^{}_1,\eta)-\varpi_{0,\alpha}(P)(z,w^{}_1)|& \leqslant
\frac{C^{}_{\varepsilon^{}_{\alpha}/2}(K^{}_1|\eta|)^\nu e^{-\varepsilon_\nu|\zeta^{}_1|}}{\,|\alpha|!\,}
\int_{d}^\infty \hspace{-1.5ex}e^{-\varepsilon^{}_{\alpha}| \zeta^{}_1|/2}\,d|\zeta^{}_1|
\\
&
\leqslant \dfrac{2C^{}_{\varepsilon^{}_{\alpha}/2} (K^{}_1|\eta|)^{|\alpha|}}{\varepsilon^{}_{\alpha}\,|\alpha|!\,}
 \leqslant \dfrac{C (2K^{}_1|\eta|)^{|\alpha|}}{|\alpha|!}\,.
\end{align*}
Thus if $2K^{}_1|\eta|<|w^{}_i|  $,  we have 
\begin{align*}
|&\varpi(P)(z,z+w,\eta)-\varpi_0(P)(z,z+w)| 
\\ &\leqslant 
\Big|\smashoperator[r]{\Sum_{|\alpha|<N^{}_Z}}
\frac{\,\alpha! (\varpi_\alpha(P)(z,w^{}_1,\eta)-\varpi'_\alpha(P)(z,w^{}_1))} 
{\,(2\pi\im\,)^{n}\,
(w')^{\alpha+ \mathbf{1}_{n-1}}\,}\Bigr|
+
\Sum_{|\alpha|\geqslant N^{}_Z}
\dfrac{C}{\,(2\pi)^{n}|(w')^{\mathbf{1}_{n-1}}|\,}\,
\Bigl|\!\Bigl(\dfrac{2K^{}_1|\eta|}{w'}\Bigr)^{\!\!\alpha}\Bigr|
\\
&<\infty,
\end{align*}
hence $\varpi(P)(z,z+w,\eta)-\varpi_0(P)(z,z+w)$ is holomorphic on
\begin{multline*}
\Bcup_{Z\Subset S}\smashoperator[r]{\Bcap_{2\leqslant i\leqslant n}}\{(z,w,\eta)\in\mathbb{C}^{2n}\times Z;\,
\|z\|< r^{}_0,\,|w^{}_1|< \delta|\eta|,\,2K^{}_1|\eta|<|w^{}_i|  \}
\\
=
\smashoperator[r]{\Bcap_{2\leqslant i\leqslant n}}\{(z,w,\eta)\in\mathbb{C}^{2n}\times S;\,
\|z\|< r^{}_0,\,|w^{}_1|< \delta|\eta|,\,2K^{}_1|\eta|<|w^{}_i|  \}.
\end{multline*}
Therefore
 we have  $[\varpi (P)(z,w,\eta)\,dw]=[\varpi_0(P)(z,w)\,dw] \in  
\varinjlim\limits_{\boldsymbol{\kappa}} E^{\mathbb{R}}_X(\boldsymbol{\kappa})$.
\end{proof}
%, and  this gives  an explicit description of the isomorphism $
%\mathscr{E}^{\smash{\mathbb{R}}}_{X, z^*_0}\earrow
%\varinjlim\limits_{\boldsymbol{\kappa}} E^{\mathbb{R}}_X(\boldsymbol{\kappa})$. 
%\end{rem}
%\end{comment}

\section{Classical Formal Symbols with an Apparent Parameter}\label{sec:cl-formal-symbol}

\begin{defn}[see \cite{aky}, \cite{b}]  Let $t$ be an indeterminate. 

(1) 
$P(t;z,\zeta) = \Sum_{\nu=0}^\infty t^\nu P^{}_\nu(z,\zeta)$
is an element of $\widehat{\mathscr{S}} _{\rm cl}(\varOmega)$  if 
$
    P(t;z,\zeta)\in \varGamma(\varOmega^{}_ \rho[d^{}_ \rho];
\mathscr{O}^{}_{T^*X}) \llbracket t\rrbracket 
$ 
for some $d>0$ and $\rho\in \bigl]0,1\bigr[$,  and 
there exists a  constant $A >0$   satisfying  the following:
for any  $h >0$  there exists a  constant  $C_{h}>0$  such that 
\[
          |P^{}_\nu(z,\zeta)|
\leqslant \frac{\,C_{h}\nu!A^{\nu} e^{h \|\zeta\|}}{\|\zeta\|^{\nu}} \quad (\nu \in\mathbb{N}^{}_0,\, 
(z;\zeta) \in \varOmega^{}_ \rho[d^{}_ \rho]).
\]

(2) 
$  
  P(t;z,\zeta) = \Sum_{\nu=0}^\infty t^\nu P^{}_\nu(z,\zeta,\eta) 
\in \widehat{\mathscr{S}} _{\rm cl}(\varOmega)$
is an element of $\widehat{\mathscr{N}} _{\rm cl}(\varOmega) $  if 
there exists a    constant $A >0$   satisfying  the following: 
for any  $h >0$  there exists a  constant  $C_{h}>0$  
such that 
\[
          \Bigl|\smashoperator[r]{\Sum_{\nu=0}^{m-1}} P^{}_\nu(z,\zeta)\Bigr|
 \leqslant \frac{\,C_{h}m!A^{m}e^{h\|\zeta\|}}{\|\zeta\|^{m}}
\quad (m \in\mathbb{N},\,(z;\zeta) \in \varOmega^{}_ \rho[d^{}_ \rho]).
\]

(3) We set  
\[
\widehat{\mathscr{S}}^{}_{{\rm cl},z^*_0}  := \varinjlim_{ \varOmega}
\widehat{\mathscr{S}}_{\rm cl}(\varOmega) \supset
\widehat{\mathscr{N}}^{}_{{\rm cl}, z^*_0}  := \varinjlim _{ \varOmega}
\widehat{\mathscr{N}} _{\rm cl}(\varOmega). 
\]
\end{defn}
 We  call each element of $ \widehat{\mathscr{S}} _{\rm cl}(\varOmega)$ (resp.\ 
$\widehat{\mathscr{N}} _{\rm cl}(\varOmega)$) a \textit{classical formal symbol}  (resp.\ \textit{classical formal null-symbol}) 
\textit{on $\varOmega$}.

\begin{defn}  Let $t$ be an indeterminate. Then 
we define a set $\widehat{\mathfrak{N}} _{\rm cl}(\varOmega; S) $ as follows:
 $  
  P(t;z,\zeta,\eta) = \Sum_{\nu=0}^\infty t^\nu P^{}_\nu(z,\zeta,\eta) \in \widehat{\mathfrak{N}} _{\rm cl}(\varOmega; S) $  if 
\begin{enumerate}[(i)]
\item $ 
    P(t;z,\zeta,\eta)\in \varGamma(\varOmega^{}_ \rho [d^{}_ \rho]\times S;
\mathscr{O}^{}_{T^*X \times \mathbb{C}})\llbracket t\rrbracket 
$ for some $d>0$ and $\rho\in \bigl]0,1\bigr[$, 
\item
there exists a  constant $A >0$, and  
for any $Z\Subset  S$, $h>0$, there exists  $C_{h,Z}>0$ such that
\begin{equation}\label{S.eq3.1a}
   \Bigl|\smashoperator[r]{\Sum_{\nu=0}^{m-1}} P^{}_\nu(z,\zeta,\eta)\Bigr|
 \leqslant \frac{\, C_{h,Z}\,m!A^{m}e^{h\|\zeta\|}}{\|\eta\zeta\|^{m}} 
\quad (m \in\mathbb{N},\,(z;\zeta,\eta) \in \varOmega^{}_ \rho[d^{}_{\rho}] \times Z).
\end{equation}
 \end{enumerate}
\end{defn}

\begin{defn} 
 We define a set $\widehat{\mathfrak{S}} _{\rm cl}(\varOmega; S)$ as follows:
  $P(t;z,\zeta,\eta) = \Sum_{\nu=0}^\infty t^\nu P^{}_\nu(z,\zeta,\eta) \in 
\widehat{\mathfrak{S}} _{\rm cl}(\varOmega; S)$  if 
\begin{enumerate}[(i)]
\item $ 
    P(t;z,\zeta,\eta)\in \varGamma(\varOmega^{}_ \rho [d^{}_ \rho]\times S;
\mathscr{O}^{}_{T^*X \times \mathbb{C}}) \llbracket t\rrbracket 
$ for some $d>0$ and $\rho\in \bigl]0,1\bigr[$, 
\item
there exists a  constant $A >0$,  and  
for any $Z\Subset  S$, $h>0$  there exists  $C^{}_{h,Z} >0$ such that
\begin{equation}\label{S.eq3.2a}
          |P^{}_\nu(z,\zeta,\eta)|
\leqslant \frac{\,C_{h,Z}\,\nu!A^{\nu} e^{h\|\zeta\|}}{\|\eta\zeta\|^{\nu}}\,
\quad (\nu \in\mathbb{N}^{}_0,\,(z;\zeta,\eta) \in \varOmega^{}_ \rho[d^{}_{\rho}] \times Z).
\end{equation}
\item $ 
   \partial_\eta  P(t;z,\zeta,\eta)
\in \widehat{\mathfrak{N}} _{\rm cl}(\varOmega; S)
$.
\end{enumerate}
\end{defn}
 We  call each element of $ \widehat{\mathfrak{S}}^{}_{\rm cl} (\varOmega; S) $ (resp.\ 
$\widehat{\mathfrak{N}}^{}_{\rm cl} (\varOmega; S)$) a \textit{classical formal  symbol}  (resp.\ \textit{classical 
formal null-symbol}) 
\textit{on $\varOmega$ with an apparent parameter in $ S$}.
\begin{lem}
$ \widehat{\mathfrak{N}} _{\rm cl}(\varOmega; S) 
\subset  \widehat{\mathfrak{S}} _{\rm cl}(\varOmega; S)$. 
\end{lem}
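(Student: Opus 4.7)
The plan is to verify, for an arbitrary $P(t;z,\zeta,\eta) = \sum_{\nu=0}^\infty t^\nu P_\nu(z,\zeta,\eta) \in \widehat{\mathfrak{N}}_{\rm cl}(\varOmega; S)$, conditions (ii) and (iii) in the definition of $\widehat{\mathfrak{S}}_{\rm cl}(\varOmega; S)$; condition (i) is literally identical in the two definitions and so is automatic. Let $A$ denote the universal constant furnished by the hypothesis, set $S_m := \sum_{\nu=0}^{m-1} P_\nu$ (with $S_0 := 0$), and note the telescoping identity $P_\nu = S_{\nu+1} - S_\nu$.

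For condition (ii), I would combine the triangle inequality with the partial-sum estimate \eqref{S.eq3.1a} applied for $m = \nu+1$ and $m = \nu$, and then use the elementary inequality $\nu+1 \leqslant 2^\nu$ to absorb the extra factor $(\nu+1)!/\nu! = \nu+1$ geometrically, replacing the base constant $A$ by $2A$. The additional power $\|\eta\zeta\|^{-1}$ coming from the $S_{\nu+1}$ term is bounded by $(m_Z d_\rho)^{-1}$ on $\varOmega_\rho[d_\rho] \times Z$ and is folded into the $(h,Z)$-dependent multiplier, yielding the individual term bound \eqref{S.eq3.2a} with universal constant $A' := 2A$ and a new multiplier $C'_{h,Z} := C_{h,Z}\bigl(1 + A/(m_Z d_\rho)\bigr)$.

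For condition (iii), I would show that $\partial_\eta P \in \widehat{\mathfrak{N}}_{\rm cl}(\varOmega; S)$ by transferring the partial-sum bound from $S_m$ to $\partial_\eta S_m = \sum_{\nu=0}^{m-1} \partial_\eta P_\nu$ via Cauchy's integral formula on a small circle $|\eta' - \eta| = \delta'|\eta|$, exactly as in the proof of Lemma \ref{S.lem1.3}. Fixing $\delta' \in \bigl]0,1\bigr[$ once and for all (say $\delta' = 1/2$) and enlarging $Z \Subset S$ to $Z' \Subset S$ as in \eqref{S.eq.1.3a}, the inequality $|\eta'| \geqslant (1-\delta')|\eta|$ on the integration circle controls $\|\eta'\zeta\|^{-m}$ by $((1-\delta')\|\eta\zeta\|)^{-m}$; dividing by $\delta'|\eta| \geqslant \delta' m_Z$ then produces a bound of the shape \eqref{S.eq3.1a} for $\partial_\eta S_m$ with universal constant $A'' := A/(1-\delta')$.

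The main point requiring care is bookkeeping of constants: verifying that $A'$ and $A''$ are genuinely universal in $(h, Z)$ so that they qualify as the ``$A$'' in the target definitions. Fixing $\delta'$ once and for all, together with the elementary estimate $\nu+1 \leqslant 2^\nu$, makes this automatic; the $(h,Z)$-dependence is absorbed entirely into the new multipliers $C'_{h,Z}$ and $C''_{h,Z}$.
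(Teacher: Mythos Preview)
Your approach is essentially the same as the paper's: telescope $P_\nu = S_{\nu+1} - S_\nu$ for condition (ii), and apply the Cauchy estimate in $\eta$ for condition (iii). One small correction: you cannot fix $\delta' = \tfrac{1}{2}$ once and for all, since for $Z$ close to $\partial S$ the enlargement $Z'$ of \eqref{S.eq.1.3a} may fail to satisfy $Z' \Subset S$. As in the paper (and in the proof of Lemma~\ref{S.lem1.3} that you invoke), let $\delta' = \delta'(Z) \in \bigl]0, \tfrac{1}{2}\bigr]$ depend on $Z$; the universality of $A'' = 2A$ then follows not from $\delta'$ being fixed but from the uniform bound $(1-\delta')^{-1} \leqslant 2$, while the $Z$-dependent factor $(\delta' m_Z)^{-1}$ is absorbed into $C''_{h,Z}$.
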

\begin{proof} We assume \eqref{S.eq3.1a}. 
 Take $C'$, $B>0$ as  $2(\nu+1) A^{\nu+1} \leqslant d^{}_\rho C'B^{\nu}$ and  
$2A^{\nu} \leqslant C'B^{\nu} $
for any $\nu \in \mathbb{N}_0$. Then for any $\nu\in \mathbb{N}$ 
and $(z;\zeta,\eta) \in \varOmega ^{}_ \rho[d ^{}_{\rho}]\times Z$ we have
\begin{align*}
|P_\nu(z,\zeta,\eta)|& =  \Bigl|
\smashoperator[r]{\Sum_{i=0}^{\nu}}  
 P _i(z,\zeta,\eta)-\smashoperator{\Sum_{i=0}^{\nu-1}}  
  P _i(z,\zeta,\eta)\Bigr| 
\leqslant  
\Bigl|\smashoperator[r]{\Sum_{i=0}^{\nu}}
 P _i(z,\zeta,\eta)\Bigr| + \Bigl|\smashoperator[r]{\Sum_{i=0}^{\nu-1}} 
 P _i(z,\zeta,\eta)\Bigr| 
\\
&\leqslant \frac{ \,C_{h,Z}\,(\nu+1)!A^{\nu+1} e^{h\|\zeta\|}}{\|\eta\zeta\|^{\nu+1}}+
 \frac{\, C_{h,Z}\, \nu!A^{\nu}e^{h\|\zeta\|}}{\|\eta\zeta\|^{\nu}}\leqslant \frac{\,C' C_{h,Z}\,\nu!B^{\nu} 
e^{h\|\zeta\|}}{|\eta|\|\eta\zeta\|^{\nu}}.
\end{align*}
 Next,  for any $Z \Subset  S$,  
take  $\delta'$ and $Z'$ as in \eqref{S.eq.1.3a}.
Then by the Cauchy inequality, 
for any $h>0$ there exists a constant  $C_{h,Z'} >0$  such that 
for any $m\in \mathbb{N}$ and $(z;\zeta,\eta) \in \varOmega ^{}_ \rho[d ^{}_{ \rho}] \times Z$,
\[
   \Bigl|\smashoperator[r]{\Sum_{\nu=0}^{m-1}} \partial_\eta P^{}_\nu(z,\zeta,\eta)\Bigr|
 \leqslant \dfrac{1}{\delta'|\eta|}\sup_{|\eta-\eta'|= \delta'|\eta|}
\Bigl|\smashoperator[r]{\Sum_{\nu=0}^{m-1}}  P^{}_\nu(z,\zeta,\eta')\Bigr|
\leqslant\frac{\,C_{h,Z'}\,m!(2A)^{m}e^{h\|\zeta\|}}{\delta'm^{}_Z \|\eta\zeta\|^{m}}\,.
\qedhere
\]
\end{proof}
For any $z^*_0\in \dot{T}^*X$, we set  
\[
\widehat{\mathfrak{S}}^{}_{{\rm cl},z^*_0}   := \smashoperator{\varinjlim_{ \varOmega,  S}}
\widehat{\mathfrak{S}}_{\rm cl}(\varOmega; S) \supset
\widehat{\mathfrak{N}}_{{\rm cl }, z^*_0}  := \smashoperator{\varinjlim _{ \varOmega,  S}}
\widehat{\mathfrak{N}} _{\rm cl}(\varOmega; S). 
\]

\begin{prop}\label{S.prop3.5}
Let  $P(t;z,\zeta,\eta) \in \widehat{\mathfrak{S}} _{\rm cl}(\varOmega; S)$. 
Then   for any  $\eta^{}_0 \in  S$, it follows that 
$P(t;z,\zeta,\eta^{}_0)\in \widehat{\mathscr{S}} _{\rm cl}(\varOmega) $ and 
$P(t;z,\zeta,\eta)- P(t;z,\zeta,\eta^{}_0) \in \widehat{\mathfrak{N}} _{\rm cl}(\varOmega; S) $. 
\end{prop}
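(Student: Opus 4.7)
The plan is to follow the same strategy as in Proposition \ref{S.prop1.4}(2), adapted to the formal-series setting. The key observation is that the two hypotheses on $P(t;z,\zeta,\eta)\in\widehat{\mathfrak{S}}_{\rm cl}(\varOmega;S)$ --- namely the coefficient-wise bound \eqref{S.eq3.2a} together with $\partial_\eta P\in\widehat{\mathfrak{N}}_{\rm cl}(\varOmega;S)$ --- are precisely what is needed to play off the ``specialize at $\eta=\eta_0$'' argument against a telescoping integral in $\eta$.

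For part (1), I would just substitute $\eta=\eta_0$ into \eqref{S.eq3.2a} applied to $Z=\{\eta_0\}\Subset S$. Since $\|\eta_0\zeta\|=|\eta_0|\|\zeta\|$, one gets
\[
|P_\nu(z,\zeta,\eta_0)|\leqslant \frac{C_{h,\{\eta_0\}}\,\nu!\,(A/|\eta_0|)^{\nu}e^{h\|\zeta\|}}{\|\zeta\|^{\nu}}\quad ((z;\zeta)\in\varOmega_\rho[d_\rho]),
\]
so $P(t;z,\zeta,\eta_0)\in\widehat{\mathscr{S}}_{\rm cl}(\varOmega)$ with new constant $A' = A/|\eta_0|$.

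For part (2), given $Z\Subset S$, I would choose (as in the proof of Proposition~\ref{S.prop1.4}) a relatively compact $Z'\Subset S$ containing both $Z$ and $\eta_0$ so that for every $\eta\in Z$ the straight segment $[\eta_0,\eta]$ lies in $Z'$; note $m_{Z'}=\min_{\tau\in Z'}|\tau|>0$. Interchange sum and integral to get, for every $m\in\mathbb N$,
\[
\Sum_{\nu=0}^{m-1}\bigl(P_\nu(z,\zeta,\eta)-P_\nu(z,\zeta,\eta_0)\bigr)=\int_{\eta_0}^{\eta}\Sum_{\nu=0}^{m-1}\partial_\tau P_\nu(z,\zeta,\tau)\,d\tau.
\]
Since $\partial_\eta P\in\widehat{\mathfrak{N}}_{\rm cl}(\varOmega;S)$, there exist $A^{}_1>0$ and, for each $h>0$, a constant $C^{}_{h,Z'}>0$ with
\[
\Bigl|\Sum_{\nu=0}^{m-1}\partial_\tau P_\nu(z,\zeta,\tau)\Bigr|\leqslant \frac{C^{}_{h,Z'}\,m!\,A_1^{m}e^{h\|\zeta\|}}{\|\tau\zeta\|^{m}}\leqslant \frac{C^{}_{h,Z'}\,m!\,A_1^{m}e^{h\|\zeta\|}}{m_{Z'}^{m}\|\zeta\|^{m}}
\]
on $\varOmega_\rho[d_\rho]\times Z'$. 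Using the trivial bound $|\eta-\eta_0|\leqslant 2r$ and the identity $\|\zeta\|^{-m}=|\eta|^{m}\|\eta\zeta\|^{-m}\leqslant r^{m}\|\eta\zeta\|^{-m}$ for $\eta\in Z\subset S$, I obtain
\[
\Bigl|\Sum_{\nu=0}^{m-1}\bigl(P_\nu(z,\zeta,\eta)-P_\nu(z,\zeta,\eta_0)\bigr)\Bigr|\leqslant \frac{2r\,C^{}_{h,Z'}\,m!\,B^{m}e^{h\|\zeta\|}}{\|\eta\zeta\|^{m}}
\]
on $\varOmega_\rho[d_\rho]\times Z$, with $B=A^{}_1 r/m_{Z'}$. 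This is exactly estimate \eqref{S.eq3.1a} for $P(t;z,\zeta,\eta)-P(t;z,\zeta,\eta_0)$, so the difference belongs to $\widehat{\mathfrak{N}}_{\rm cl}(\varOmega;S)$.

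I don't anticipate a serious obstacle; the only thing to be a little careful about is the choice of $Z'$ (it must stay relatively compact in $S$ while containing the segments from $\eta_0$ to points of $Z$) and the bookkeeping of constants when passing from $\|\zeta\|^{-m}$ to $\|\eta\zeta\|^{-m}$, since this is what converts the estimate involving $m_{Z'}$ into the geometric factor $B^{m}$ in the null-symbol bound. Everything else is a direct formal analogue of Proposition~\ref{S.prop1.4}(2), the fundamental-theorem-of-calculus step simply being applied termwise under the summation up to order $m-1$.
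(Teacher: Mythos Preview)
Your overall strategy mirrors the paper's exactly, and Part~(1) is fine. There is, however, a genuine gap in Part~(2), precisely at the place you flagged. The definition of $\widehat{\mathfrak{N}}_{\rm cl}(\varOmega;S)$ requires a \emph{single} constant $A$ (your $B$) valid for \emph{every} $Z\Subset S$; only the prefactor $C_{h,Z}$ may depend on $Z$. Your $B=A_{1}r/m_{Z'}$ depends on $Z$ through $m_{Z'}$, and $m_{Z'}$ can be made arbitrarily small by letting $Z$ approach the vertex of $S$. So the estimate you wrote does not yet verify membership in $\widehat{\mathfrak{N}}_{\rm cl}(\varOmega;S)$.

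The fix --- which is exactly what the paper does --- is to avoid the crude replacement $|\tau|^{-m}\leqslant m_{Z'}^{-m}$ and instead compare $|\tau|$ along the segment $[\eta_0,\eta]$ to its endpoints. Because $S$ is a narrow sector, one has (up to a harmless constant depending only on $S$) $|\tau|\geqslant\min(|\eta_0|,|\eta|)$ on the segment. Splitting into the cases $|\eta|\geqslant|\eta_0|$ and $|\eta|\leqslant|\eta_0|$ and using $|\eta|<1$ then converts the bound to one with the uniform constant $A_0:=A/|\eta_0|$ in place of your $B$: in the first case $A^{m}/\|\eta_0\zeta\|^{m}=A_0^{m}/\|\zeta\|^{m}\leqslant A_0^{m}/\|\eta\zeta\|^{m}$, and in the second $A^{m}/\|\eta\zeta\|^{m}\leqslant A_0^{m}/\|\eta\zeta\|^{m}$. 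With this adjustment your argument is complete and coincides with the paper's.
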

\begin{proof}
Set $A^{}_0:=A/|\eta^{}_0|>A$. 
For any $h>0$, there exists   a constant  $C_{h,\eta^{}_0}>0$ such that 
for any $(z;\zeta) \in \varOmega ^{}_ \rho[d^{}_{\rho}]$ 
the following holds:
\[
          |P^{}_\nu(z,\zeta, \eta^{}_0)|
\leqslant \frac{\,C _{h,\eta^{}_0}\nu! A_0^{\,\nu} e^{h\|\zeta\|}}{\|\zeta\|^{\nu}};
\]
that is, $P(t;z,\zeta,\eta^{}_0)\in \widehat{\mathscr{S}} _{\rm cl}(\varOmega) $.
For any $Z \Subset  S$,  let   $Z' \Subset  S$ be 
the convex hull
of   $Z \cup\{\eta^{}_0\}$.  
Since
\[P_\nu(z,\zeta,\eta)=P _\nu(z, \zeta, \eta ^{}_0) + \displaystyle \int_{\eta ^{}_0}^{\eta}
\partial_\eta  P _\nu(z,\zeta,\tau)\,d\tau,
\] 
and
 $\partial_\eta  P(t;z,\zeta,\eta) \in \widehat{\mathfrak{N}} _{\rm cl} (\varOmega; S) $, 
 there exists $A>0$  so that 
for any $h>0$
 we can find  a  constant  $C_{h,Z'}>0$ such that for any $m\in \mathbb{N}$ and $(z;\zeta,\eta) 
\in \varOmega ^{}_ \rho[d ^{}_ {\rho}]\times Z 
\subset \varOmega ^{}_ \rho[d ^{}_ {\rho}]\times Z' $ the following holds: if
 $|\eta| \geqslant |\eta^{}_0|$
\begin{align*}
  \Bigl|\smashoperator[r]{\Sum_{\nu=0}^{m-1}} (P^{}_\nu(z,\zeta,\eta)-P_\nu(z,\zeta,\eta^{}_0))\Bigr|
& = \Bigl| \smashoperator[r]{\Sum_{\nu=0}^{m-1}} \int_{\eta^{}_0}^{\eta}
\partial_\eta  P _\nu(z,\zeta,\tau)\,d\tau\Bigr|
 = \Bigl|  \int_{\eta^{}_0}^{\eta}\smashoperator[r]{\Sum_{\nu=0}^{m-1}}
\partial_\eta  P _\nu(z,\zeta,\tau)\,d\tau\Bigr|  
\\
&
 \leqslant |\eta-\eta^{}_0|  
\frac{\,C_{h,Z'}\,m!A^{m}e^{h\|\zeta\|}}{\|\eta^{}_0\zeta\|^{m}}
\leqslant  \frac{\,r C_{h,Z'}\, m!A_0^{\,m}e^{h\|\zeta\|}}{\|\eta\zeta\|^{m}},
\end{align*}
and if $|\eta|\leqslant  |\eta^{}_0|$,
\begin{align*}
  \Bigl|\smashoperator[r]{\Sum_{\nu=0}^{m-1}} (P^{}_\nu(z,\zeta,\eta)-P_\nu(z,\zeta,\eta^{}_0))\Bigr|
&
\leqslant  \frac{\,r C_{h,Z'}\, m!A^{m}e^{h\|\zeta\|}}{\|\eta\zeta\|^{m}}
\leqslant \frac{\,r C_{h,Z'}\, m!A_0^{\,m}e^{h\|\zeta\|}}{\|\eta\zeta\|^{m}}.
\end{align*}
Hence $P(t;z,\zeta,\eta)- P(t;z,\zeta,\eta^{}_0) \in \widehat{\mathfrak{N}} _{\rm cl}(\varOmega; S) $. 
\end{proof}
Since $\|\eta\zeta\|<\|\zeta\|$ for any $\eta\in  S$, 
we can regard that
\begin{align*} 
\widehat{\mathscr{S}}^{}_{\rm cl}(\varOmega) & =\{ P(t;z,\zeta,\eta)\in 
\widehat{\mathfrak{S}}^{}_{\rm cl}(\varOmega;  S);\, \partial_\eta  P(t;z,\zeta,\eta)
=0\} \subset \widehat{\mathfrak{S}}^{}_{\rm cl}(\varOmega;  S),
\\ \widehat{\mathscr{N}}^{}_{\rm cl}(\varOmega) & =
\widehat{\mathscr{S}}^{}_{\rm cl}(\varOmega) \cap  \widehat{\mathfrak{N}}^{}_{\rm cl}(\varOmega;  S)
\subset \widehat{\mathfrak{N}}^{}_{\rm cl}(\varOmega;  S). 
\end{align*}
  Hence we have an  injective 
mapping $
\widehat{\mathscr{S}}^
{}_{\rm cl} (\varOmega)/\widehat{\mathscr{N}}^{}_{\rm cl}(\varOmega)\hookrightarrow 
 \widehat{\mathfrak{S}}^{}_{\rm cl}(\varOmega;  S)  /\widehat{\mathfrak{N}}^{}_{\rm cl}(\varOmega;  S) $. 
Moreover
\begin{prop}

$\widehat{\mathscr{S}}^{}_{\rm cl}(\varOmega) 
\big/ \widehat{\mathscr{N}}^{}_{\rm cl}(\varOmega)\earrow 
\widehat{\mathfrak{S}}^{}_{\rm cl}(\varOmega;  S) 
\big/ \widehat{\mathfrak{N}}^{}_{\rm cl}(\varOmega;  S) $.
\end{prop}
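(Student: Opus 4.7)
The plan is to proceed in exactly the same manner as the corresponding non-formal statement proved just before Definition of the Wick product: the injectivity has already been observed in the remark preceding the statement, so only surjectivity has to be established, and this will follow at once from Proposition \ref{S.prop3.5}.

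Concretely, let $P(t;z,\zeta,\eta) \in \widehat{\mathfrak{S}}_{\rm cl}(\varOmega; S)$ be an arbitrary classical formal symbol with an apparent parameter, representing a class in $\widehat{\mathfrak{S}}_{\rm cl}(\varOmega; S)/\widehat{\mathfrak{N}}_{\rm cl}(\varOmega; S)$. First I would fix any $\eta_0 \in S$ and apply Proposition \ref{S.prop3.5} to obtain that $P(t;z,\zeta,\eta_0) \in \widehat{\mathscr{S}}_{\rm cl}(\varOmega)$ and that
\[
P(t;z,\zeta,\eta) - P(t;z,\zeta,\eta_0) \in \widehat{\mathfrak{N}}_{\rm cl}(\varOmega; S).
\]

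Next, since $P(t;z,\zeta,\eta_0)$ is independent of $\eta$, it satisfies $\partial_\eta P(t;z,\zeta,\eta_0) = 0$, so that under the inclusion $\widehat{\mathscr{S}}_{\rm cl}(\varOmega) \hookrightarrow \widehat{\mathfrak{S}}_{\rm cl}(\varOmega; S)$ it represents a well-defined class in the right-hand quotient. The displayed congruence above then says that
\[
[P(t;z,\zeta,\eta)] = [P(t;z,\zeta,\eta_0)] \in \widehat{\mathfrak{S}}_{\rm cl}(\varOmega; S)/\widehat{\mathfrak{N}}_{\rm cl}(\varOmega; S),
\]
and the latter lies in the image of $\widehat{\mathscr{S}}_{\rm cl}(\varOmega)/\widehat{\mathscr{N}}_{\rm cl}(\varOmega)$. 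Combining this with the injectivity already noted completes the proof.

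There is no real obstacle here: all the analytic work — showing that restricting to a fixed $\eta_0$ produces a genuine classical formal symbol and that the difference is a classical formal null-symbol with an apparent parameter — has been absorbed into Proposition \ref{S.prop3.5}. The present statement is a formal corollary of that proposition, entirely parallel to the non-formal case treated earlier.
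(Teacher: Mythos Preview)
Your proof is correct and follows essentially the same approach as the paper: injectivity is noted from the preceding remarks, and surjectivity is obtained by fixing $\eta_0 \in S$ and invoking Proposition~\ref{S.prop3.5} to get $P(t;z,\zeta,\eta_0) \in \widehat{\mathscr{S}}_{\rm cl}(\varOmega)$ with $[P(t;z,\zeta,\eta)] = [P(t;z,\zeta,\eta_0)]$.
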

\begin{proof}
Let us take any $P(t;z,\zeta,\eta)\in \widehat{\mathfrak{S}}^{}_{\rm cl}(\varOmega;  S)  $. 
We  
fix $\eta^{}_0 \in S$. 
Then by Proposition  \ref{S.prop3.5}, we have  
$P(t;z,\zeta,\eta^{}_0) \in \widehat{\mathscr{S}}^{}_{\rm cl}(\varOmega)$ and
 $[P(t;z,\zeta,\eta)] =[P(t;z,\zeta,\eta^{}_0)] \in  \widehat{\mathfrak{S}}^{}_{\rm cl}(\varOmega;  S) 
 / \widehat{\mathfrak{N}}^{}_{\rm cl}(\varOmega;  S)$.
\end{proof}

\begin{prop} \label{S.prop3.7}
$\widehat{\mathfrak{N}}_{\rm cl}(\varOmega; S)\cap \varGamma(\varOmega^{}_ \rho[d^{}_ \rho]\times   S ;
\mathscr{O}^{}_{T^*X \times \mathbb{C}})=
\mathfrak{N} (\varOmega; S)$.
\end{prop}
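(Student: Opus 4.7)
The plan is to reduce both conditions to estimates on the single holomorphic function $P$ and to convert between the two by a Stirling-type optimization over $m$. For a series with $P_\nu=0$ for all $\nu\geqslant 1$, one has $\sum_{\nu=0}^{m-1}P_\nu=P_0=P$ for every $m\geqslant 1$, so \eqref{S.eq3.1a} becomes: there exist $A>0$ so that, for every $h>0$ and $Z\Subset S$, some $C_{h,Z}>0$ satisfies $|P(z,\zeta,\eta)|\leqslant C_{h,Z}\,m!A^m e^{h\|\zeta\|}/\|\eta\zeta\|^m$ for all $m\in\mathbb{N}$. We must show that this family of inequalities, valid for every $m$ (and every $h$), is equivalent to a single exponential decay of the form $|P|\leqslant C_Z e^{-\delta\|\eta\zeta\|}$.

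For the inclusion $\mathfrak{N}(\varOmega;S)\subset\widehat{\mathfrak{N}}_{\rm cl}(\varOmega;S)$, assume $|P|\leqslant C_Z e^{-\delta\|\eta\zeta\|}$. The elementary identity $\max_{x\geqslant 0}x^m e^{-\delta x}=(m/(e\delta))^m$, together with the Stirling bound $m^m\leqslant e^m m!$, gives $x^m e^{-\delta x}\leqslant m!\,\delta^{-m}$ uniformly in $x$ and $m$. Choosing any $A\geqslant 1/\delta$ therefore yields
\[
|P(z,\zeta,\eta)|\leqslant C_Z\,\frac{m!A^m}{\|\eta\zeta\|^m} \qquad (m\in\mathbb{N}),
\]
which is stronger than \eqref{S.eq3.1a} since $e^{h\|\zeta\|}\geqslant 1$. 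Hence $P\in\widehat{\mathfrak{N}}_{\rm cl}(\varOmega;S)$.

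For the reverse inclusion, assume \eqref{S.eq3.1a} and fix $Z\Subset S$. Set $x:=\|\eta\zeta\|$. Since $\|\zeta\|\leqslant x/m^{}_Z$ on $\varOmega^{}_\rho[d^{}_\rho]\times Z$, the factor $e^{h\|\zeta\|}$ is bounded by $e^{(h/m^{}_Z)x}$, so choosing $h:=m^{}_Z/(2A)$ gives $e^{h\|\zeta\|}\leqslant e^{x/(2A)}$. For $x\geqslant A$ take $m:=\lfloor x/A\rfloor\geqslant 1$; then $mA/x\leqslant 1$ and $m\geqslant x/A-1$, and by Stirling
\[
\frac{m!A^m}{x^m}\leqslant e\sqrt{m}\Bigl(\frac{mA}{ex}\Bigr)^{\!m}\leqslant e\sqrt{m}\,e^{-m}\leqslant e^2\sqrt{x/A}\,e^{-x/A}.
\]
Combining, $|P|\leqslant C'_Z\sqrt{x}\,e^{-x/A+x/(2A)}=C'_Z\sqrt{x}\,e^{-x/(2A)}$, and absorbing the $\sqrt{x}$ into the exponential (slightly reducing the rate) gives $|P|\leqslant C''_Z e^{-\delta\|\eta\zeta\|}$ with, say, $\delta:=1/(4A)$. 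On the bounded range $x\in[m^{}_Z d^{}_\rho,\,A]$ (using $|\eta|\geqslant m^{}_Z$ and $\|\zeta\|\geqslant d^{}_\rho$) the $m=1$ estimate already bounds $|P|$ by a constant while $e^{-\delta x}$ stays bounded below, so the same inequality holds trivially there. Thus $P\in\mathfrak{N}(\varOmega;S)$.

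The only real subtlety is the $e^{h\|\zeta\|}$ factor on the right-hand side of \eqref{S.eq3.1a}: it is not a priori decaying, and if $h$ were fixed it would swamp the decay produced by the Stirling optimization. What saves the argument is the combination of two ingredients already built into the definitions, namely the freedom to choose $h$ as small as we please and the uniform lower bound $|\eta|\geqslant m^{}_Z>0$ on each $Z\Subset S$; together these turn $e^{h\|\zeta\|}$ into $e^{cx}$ with $c$ strictly less than $1/A$, which can be absorbed into the $e^{-x/A}$ produced by the optimal choice of $m$.
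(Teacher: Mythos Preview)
Your proof is correct and follows essentially the same approach as the paper's: both directions reduce to a Stirling-type trade-off between $m!A^m/\|\eta\zeta\|^m$ and $e^{-\delta\|\eta\zeta\|}$, with the optimal $m$ chosen near $\|\eta\zeta\|/A$ for the harder inclusion, and the key observation that the freedom in $h$ together with $|\eta|\geqslant m_Z$ on $Z$ allows the $e^{h\|\zeta\|}$ factor to be absorbed. The paper organizes the exponent as $(h-|\eta|/A)\|\zeta\|$ rather than rewriting $\|\zeta\|\leqslant\|\eta\zeta\|/m_Z$, but the content is identical and both arrive at a $Z$-independent $\delta$ comparable to $1/A$.
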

\begin{proof} 
Let  $P(z,\zeta,\eta)\in\widehat{\mathfrak{N}}_{\rm cl}(\varOmega; S) \cap \varGamma(\varOmega^{}_ \rho[d^{}_ \rho]\times   S ;
\mathscr{O}^{}_{T^*X \times \mathbb{C}})$.  
Then  there exists $A>0$ and 
for any $Z \Subset   S$, $h>0$ there exists  
 $C_{h,Z}>0$  such that 
\[
          |P(z,\zeta,\eta)|
\leqslant \frac{\,C_{h,Z}\,\nu!A^{\nu} e^{h\|\zeta\|}}{\|\eta\zeta\|^{\nu}}
 \quad (\nu \in\mathbb{N}^{}_0,\, (z;\zeta,\eta) \in \varOmega ^{}_ \rho[d ^{}_{\rho}] \times Z).
\]
Then for any $\zeta$ with $\|\zeta\| \geqslant d^{}_{\rho}$, taking   $\nu$ as the integer part of $\dfrac{\,\|\eta\zeta\|\,}{A}$, we have
\[
  |P(z,\zeta, \eta)|
\leqslant C^{}_{h,Z}\Bigl(\frac{2\pi \|\eta\zeta\|}{A}\Bigr)^{\!1/2}\,e^{(h-|\eta|/A)\|\zeta\|-1}.
\]
We choose $h$ as $2hA=m^{}_Z$. Hence $e^{(h-|\eta|/A)\|\zeta\|}\leqslant e^{-\|\eta\zeta\|/(2A)}$. 
Then we can find $\delta$, $C'_Z>0$ such that 
\[
C^{}_{h,Z}\Bigl(\frac{2\pi \|\eta\zeta\|}{A}\Bigr)^{\!1/2}e^{-\|\eta\zeta\|/(2A)} \leqslant C'_Ze^{-\delta\|\eta\zeta\|}.
\]
Here $\delta$ is not depend on  $Z$. Thus \eqref{S.eq1.2a} holds.
 Conversely, assume \eqref{S.eq1.2a}. 
Set $A:=\dfrac{1}{\,\delta\,}$. Then for any $m\in \mathbb{N}^{}_0$ and  $h>0$, we have
\[
|P(z,\zeta,\eta)| \leqslant C_Z e^{-\delta\|\eta\zeta\|}\leqslant \dfrac{\,C^{}_Zm!A^{m} }{\|\eta\zeta\|^m}
\leqslant \dfrac{\,C^{}_Zm!A^{m} e^{h\|\zeta\|}}{\|\eta\zeta\|^m}\,.
\]
Hence 
$P(z,\zeta,\eta) \in\widehat{\mathfrak{N}}_{\rm cl}(\varOmega; S) \cap \varGamma(\varOmega^{}_ \rho[d^{}_ \rho]\times   S ;
\mathscr{O}^{}_{T^*X \times \mathbb{C}}) $.
\end{proof}
By Proposition \ref{S.prop3.7}
we can also regard $ \mathfrak{S} (\varOmega; S) =\widehat{\mathfrak{S}}_{\rm cl}(\varOmega; S)\cap \varGamma(\varOmega^{}_ \rho[d^{}_ \rho]\times   S ;
\mathscr{O}^{}_{T^*X \times \mathbb{C}})$. Moreover 
\begin{thm}\label{S.thm3.8}
 Let  $\varOmega \cset T^*X$ be  any sufficiently  small neighborhood of $z^* _0=(z^{}_0;\zeta^{}_0)\in  \dot{T}^*X$.  
Then for any  $  P(t;z,\zeta,\eta) =\smashoperator{\Sum_{\nu=0}^\infty}  t^\nu P^{}_\nu(z,\zeta,\eta)\in 
\widehat{\mathfrak{S}}_{\rm cl}(\varOmega; S)
$,  there exists    $P(z,\zeta) \in \mathscr{S}(\varOmega)$ such that 
\[
P(t;z,\zeta,\eta) - P(z,\zeta) 
\in \widehat{\mathfrak{N}}_{\rm cl}(\varOmega; S).\]
\end{thm}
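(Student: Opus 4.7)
I would prove Theorem~\ref{S.thm3.8} in two stages: first reduce the problem to the classical ($\eta$-free) setting by freezing the apparent parameter, then perform a Borel-type resummation built out of the incomplete gamma-type functions already introduced in Definition~\ref{S.defn2.1}.

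\emph{Stage 1 (freezing $\eta$).} Fix any $\eta_0\in S$ and set $Q_\nu(z,\zeta):=P_\nu(z,\zeta,\eta_0)$. By Proposition~\ref{S.prop3.5} the formal series $Q(t;z,\zeta):=\Sum_{\nu}t^\nu Q_\nu(z,\zeta)$ lies in $\widehat{\mathscr{S}}_{\rm cl}(\varOmega)$, and the difference $P(t;z,\zeta,\eta)-Q(t;z,\zeta)$ lies in $\widehat{\mathfrak{N}}_{\rm cl}(\varOmega; S)$. Moreover $\widehat{\mathscr{N}}_{\rm cl}(\varOmega)\subset\widehat{\mathfrak{N}}_{\rm cl}(\varOmega; S)$: a bound $|\smashoperator{\Sum_{\nu<m}}R_\nu(z,\zeta)|\leqslant C_h m!A^{m}e^{h\|\zeta\|}/\|\zeta\|^{m}$ can be rewritten, via $\|\eta\zeta\|=|\eta|\|\zeta\|$ and $|\eta|<1/2$ on $S$, as $C_h m!(A/2)^{m}e^{h\|\zeta\|}/\|\eta\zeta\|^{m}$ after absorbing $(2|\eta|)^m\leqslant 1$. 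It therefore suffices to produce a genuine symbol $P(z,\zeta)\in\mathscr{S}(\varOmega)$ with $Q(t;z,\zeta)-P(z,\zeta)\in\widehat{\mathscr{N}}_{\rm cl}(\varOmega)$, since then adding the two differences gives the conclusion.

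\emph{Stage 2 (holomorphic resummation).} After shrinking $\varOmega$ so that $\|\zeta\|=|\zeta_1|$ on $\varOmega_\rho$ (as arranged in \S\ref{sec:kernel-symbol}), I would define
\[
P(z,\zeta):=\Sum_{\nu=0}^{\infty}Q_\nu(z,\zeta)\,\zeta_1^{\nu}\,\varGamma_\nu(\zeta_1,\eta_\nu),
\]
where $\{\eta_\nu\}\subset S\cap\mathbb{R}_{>0}$ is a sequence tending to zero, calibrated so that $|\eta_\nu\zeta_1|$ crosses the threshold $\asymp\nu$ precisely when $\|\zeta\|\asymp A_0\nu$ (here $A_0=A/|\eta_0|$). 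For the \emph{tail} $\Sum_{\nu\geqslant m}$ in the asymptotic relation, one uses the bound $|\varGamma_\nu(\zeta_1,\eta_\nu)|\leqslant|\eta_\nu|^\nu/\nu!$ from \eqref{S.eq2.6} together with the factorial bound \eqref{S.eq3.2a} on $Q_\nu$; the resulting geometric series is dominated, after the calibration of $\eta_\nu$, by $C m!A_0^m e^{h\|\zeta\|}/\|\zeta\|^m$. For the \emph{head} $\Sum_{\nu<m}(1-\zeta_1^\nu\varGamma_\nu(\zeta_1,\eta_\nu))Q_\nu$, the complementary estimate \eqref{S.eq2.5} yields an exponential decay $e^{-\delta_0|\eta_\nu\zeta_1|}$ which, combined with the crossover at $\|\zeta\|\asymp A_0\nu$, produces a bound of the same order. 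Holomorphy of $P(z,\zeta)$ on $\varOmega_\rho[d_\rho]$ (possibly after a further shrink of $\varOmega$ and an enlargement of $d$) and the symbol bound $|P(z,\zeta)|\leqslant C_h e^{h\|\zeta\|}$ follow by Cauchy-type considerations parallel to those used to verify well-definedness of $\varpi$ in Proposition~\ref{welldefomga}.

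\emph{Main obstacle.} The delicate point is the calibration of $\eta_\nu$: it must be chosen so that the crossover between the two regimes occurs exactly at the scale $\|\zeta\|\asymp A_0\nu$ dictated by \eqref{S.eq3.2a}, while simultaneously preserving uniform control over all truncation indices $m$ needed for membership in $\widehat{\mathscr{N}}_{\rm cl}(\varOmega)$. This is the standard Borel-summation balance for classical analytic symbols established in \cite{a2}, \cite{aky}; invoking it for $Q(t;z,\zeta)$ and combining with Stage~1 completes the proof.
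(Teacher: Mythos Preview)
Your Stage~1 is exactly the paper's reduction (via Proposition~\ref{S.prop3.5} and the inclusion $\widehat{\mathscr{N}}_{\rm cl}(\varOmega)\subset\widehat{\mathfrak{N}}_{\rm cl}(\varOmega;S)$), and your Stage~2 has the right shape: resummation by the cutoff functions $\varGamma_\nu$ of Definition~\ref{S.defn2.1}, followed by a head/tail split using \eqref{S.eq2.6} and \eqref{S.eq2.5}.

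The difference is that the paper does \emph{not} use a $\nu$-dependent sequence $\eta_\nu\to 0$. It fixes a single small constant $a$ with $0<a<\min\{1,\tfrac{1}{2A_0}\}$ (where $A_0=A/|\eta_0|$) and sets
\[
P(z,\zeta)=\Sum_{\nu=0}^\infty P_\nu(z,\zeta,\eta_0)\,\zeta_1^\nu\,\varGamma_\nu(\zeta_1,a).
\]
The bound $|\varGamma_\nu(\zeta_1,a)|\leqslant a^\nu/\nu!$ from \eqref{S.eq2.6} already contains the factorial decay needed to offset the $\nu!A_0^\nu$ in the classical symbol estimate, so convergence and the symbol bound $|P|\leqslant C_h e^{h\|\zeta\|}$ come from the geometric series $\Sum (aA_0)^\nu$. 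For the head, with a fixed $a$ one simply uses $e^{-\delta_0 a|\zeta_1|}\leqslant (m-\nu)!/(\delta_0 a\|\zeta\|)^{m-\nu}$ in \eqref{S.eq2.5}; for the tail, one estimates $\int_0^a e^{-2\delta_0 s\|\zeta\|}s^{\nu-1}\,ds$ directly. Both give $m!B^m e^{h\|\zeta\|}/\|\zeta\|^m$ with a single $B$.

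In other words, the ``calibration of $\eta_\nu$'' you identify as the main obstacle is not needed: the $\nu$-dependence is already encoded in the functions $\varGamma_\nu$, and the crossover scale you aim for ($|\eta_\nu\zeta_1|\asymp\nu$ at $\|\zeta\|\asymp A_0\nu$) is achieved automatically by a constant $a\asymp 1/A_0$. Your scheme would work once you realize the sequence can be taken constant, but as written the insistence on $\eta_\nu\to 0$ introduces an unnecessary complication and leaves the argument incomplete.
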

\begin{proof} 
We may assume that $\zeta^{}_0=(1,0,\dots,0)$, 
$\varOmega \cset \{(z;\zeta);\,\Re \zeta^{}_1\geqslant 2\delta^{}_0|\zeta^{}_1|\}$ for some 
$0<\delta^{}_0 <\dfrac{1}{\;2\;}$,  $\|\zeta\| = |\zeta_1|$ on $\varOmega^{}_ \rho[d^{}_ \rho]$, and 
$P^{}_\nu(z,\zeta,\eta) \in \varGamma(\varOmega^{}_ \rho [d^{}_ \rho]\times S;
\mathscr{O}^{}_{T^*X \times \mathbb{C}})$.
Fix $\eta^{}_0\in   S$. 
Thus $  P(t;z,\zeta,\eta^{}_0) \in 
\widehat{\mathscr{S}}_{\rm cl}(\varOmega) $ by Proposition  \ref{S.prop3.5}. 
Set $A^{}_0:= A/|\eta^{}_0|$. 
We take a constant $a$ as  
\begin{equation}\label{S.eq3.1}
0<a <\min\{1,\dfrac{\,1\,}{\,2A^{}_0\,}\}
\end{equation} 
and set
 $B:=\max\{\dfrac{\,1\,}{\,\delta^{}_0a\,},\dfrac{\,A^{}_0\,}{\,2\delta^{}_0\,}\}$. 
Using  the function $\varGamma^{}_\nu(\tau,a)$ in Definition \ref{S.defn2.1}, we set 
\[
P(z,\zeta) := \Sum_{\nu=0}^\infty P^{}_\nu(z,\zeta,\eta^{}_0) \,\zeta_1^{\nu}\,
\varGamma_{\nu}(\zeta^{}_1,a).
\]
By \eqref{S.eq3.1}  and \eqref{S.eq2.6} for any $h>0$  
  on $\varOmega ^{}_\rho[d^{}_ \rho] $ we have
\[
|P(z,\zeta)| 
 \leqslant \Sum_{\nu=0}^\infty \frac{\,C_{h}\nu! A_0^{\,\nu}e^{h\|\zeta\|}\,|\zeta_1|^{\nu}\,a^\nu\,}
{\|\zeta\|^\nu\,\nu!} 
= C_{h}e^{h\|{\zeta\|}}\Sum_{\nu=0}^\infty (aA^{}_0)^\nu
\leqslant 2C_{h}e^{h\|{\zeta\|}}.
\]
Therefore $ P(z,\zeta) \in \mathscr{S}(\varOmega)$. 
On the other hand, 
for any $m \in \mathbb{N}$,  on  $\varOmega ^{}_ \rho[d^{}_ \rho]$ we have
\[
|P(z,\zeta)  -  \smashoperator{\Sum_{\nu=0}^{m-1}} P^{}_\nu(z,\zeta, \eta^{}_0) | 
\leqslant   \smashoperator{\Sum_{\nu=1}^{m-1}} |P^{}_\nu(z,\zeta, \eta^{}_0)| \,|1-\zeta_1^{\nu}\,
\varGamma_{\nu}(\zeta^{}_1,a)|     + \smashoperator{\Sum_{\nu=m}^{\infty}} |
        P^{}_\nu(z,\zeta, \eta^{}_0)\, \zeta_1^{\nu}\,
\varGamma_{\nu}(\zeta^{}_1,a)| .
\]
For any $1 \leqslant \nu\leqslant m-1$, we have $
e^{-a\delta^{}_0 |\zeta_1|}= e^{-a\delta^{}_0 \|\zeta\|}
       \leqslant \dfrac{\,(m-\nu)!\,}{\,(a\delta^{}_0\|\zeta\|)^{m-\nu}\,}.
$
Thus by \eqref{S.eq2.5} and  \eqref{S.eq3.1} we have 
\allowdisplaybreaks
\begin{align*}
 \Sum_{\nu=1}^{m-1} & |P^{}_\nu(z,\zeta, \eta^{}_0)| \,|1-\zeta_1^{\nu}\,
\varGamma_{\nu}(\zeta^{}_1,a)|   
      \leqslant\Sum_{\nu=1}^{m-1}
\frac{\,\delta^{}_0C_{h}\nu! A_0^{\,\nu}e^{h\|\zeta\|}e^{-a\delta^{}_0 |\zeta_1|}\,}{(\delta^{}_0\|\zeta\|)^\nu}     
\\
&       \leqslant 
\Sum_{\nu=1}^{m-1}\frac{\,\delta^{}_0C_{h}\nu! \,(m-\nu)!A_0^{\,\nu} e^{h\|\zeta\|}\,}
{(\delta^{}_0\|\zeta\|)^\nu\,(a\delta^{}_0 \|\zeta\|)^{m-\nu}\,}
    \leqslant 
       \frac{\,\delta^{}_0C_{h} m!B^{m}e^{h\|\zeta\|}\,}{\,\|\zeta\|^m\,}
\Sum_{\nu=1}^{m-1}\!(aA^{}_0)^\nu
 \leqslant \frac{\,\delta^{}_0C_{h}  m!B^{m}e^{h\|\zeta\|}\,}{\,\|\zeta\|^m\,}\,.
\end{align*}
Next, since
\[
 \int_{0}^a e^{- 2\delta^{}_0s \|\zeta\|}s^{k+m-1}\,ds <
a^k\int_{0}^\infty e^{- 2\delta^{}_0s \|\zeta\|}  s^{m-1}\,ds = \frac{a^k(m-1)!\,}{( 2\delta^{}_0\|\zeta\|)^m},
\]
we have
\begin{align*}
\Sum_{\nu=m}^{\infty} &|P^{}_\nu(z,\zeta, \eta^{}_0)\,\zeta_1^{\nu}\,
\varGamma_{\nu}(\zeta^{}_1,a)|
\leqslant  C_{h} e^{h\|\zeta\|}\Sum_{\nu=m}^{\infty}\frac{\,\nu A_0^{\,\nu}|\zeta_1|^\nu \,}{\,\|\zeta\|^\nu\,} \,
\int_{0}^a e^{- 2\delta^{}_0s \|\zeta\|}  \,s^{\nu-1}\,ds
\\
&= C_{h} e^{h\|\zeta\|}\Sum_{k=0}^{\infty}(k+m)A_0^{k+m} \int_{0}^a e^{- 2\delta^{}_0s  \|\zeta\|} s^{k+m-1}\,ds
\\
&  \leqslant  \frac{\, C_{h}(m-1)!A_0^m e^{h\|\zeta\|}\,}{( 2\delta^{}_0 \|\zeta\|)^m}\Sum_{k=0}^{\infty}(k+m)(aA^{}_0)^{k}
\\
&
\leqslant  \frac{\,  C_{h}(m-1)! B^{m}e^{h\|\zeta\|}}{\|\zeta\|^m}\,2(m+1)
\leqslant  \frac{\,4   C_{h}\,m!B^{m} e^{h\|\zeta\|}}{\|\zeta\|^m}.
\end{align*}
Thus  by Proposition \ref{S.prop3.5}, we have
\begin{align*}
P(t;z,\zeta,\eta) - P(z,\zeta) &= (P(t;z,\zeta,\eta)-P(t;z,\zeta,\eta^{}_0)) 
+ (P(t;z,\zeta,\eta^{}_0)- P(z,\zeta)) 
\\
&\in  \widehat{\mathfrak{N}}_{\rm cl}(\varOmega; S)+
\widehat{\mathscr{N}}_{\rm cl}(\varOmega)\subset \widehat{\mathfrak{N}}_{\rm cl}(\varOmega; S).\qedhere
\end{align*}
\end{proof}
\begin{cor}\label{S.cor3.9}
 Let  $\varOmega \cset T^*X$ be  any sufficiently  small neighborhood of $z^*_0 =(z^{}_0;\zeta^{}_0)\in  \dot{T}^*X$.  
Then 
 $ \mathfrak{S} (\varOmega; S) \big/
 \mathfrak{N} (\varOmega; S) \earrow \widehat{\mathfrak{S}}^{}_{\rm cl}(\varOmega;  S) 
\big/ \widehat{\mathfrak{N}}^{}_{\rm cl}(\varOmega;  S) $.
\end{cor}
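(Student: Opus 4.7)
The plan is to derive this corollary directly from Theorem \ref{S.thm3.8} and Proposition \ref{S.prop3.7}. The natural map is induced by viewing an honest symbol $P(z,\zeta,\eta) \in \mathfrak{S}(\varOmega; S)$ as the constant (in $t$) classical formal symbol. Both the numerator and the denominator of the quotient on the left inject into those of the right, the latter by Proposition \ref{S.prop3.7}, so the induced map on quotients is at least well-defined. Thus the job splits into proving injectivity and surjectivity.

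For injectivity, I would take $P(z,\zeta,\eta) \in \mathfrak{S}(\varOmega; S)$ whose image lies in $\widehat{\mathfrak{N}}_{\rm cl}(\varOmega; S)$. Since $P$ is viewed as a classical formal symbol with only its $0$-th term nonzero, this image is literally $P$ itself, living in $\varGamma(\varOmega^{}_\rho[d^{}_\rho] \times S;\mathscr{O}^{}_{T^*X \times \mathbb{C}})$. Proposition \ref{S.prop3.7} then gives $P \in \mathfrak{N}(\varOmega; S)$, so $[P] = 0$ on the left.

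For surjectivity, I would pick any $P(t;z,\zeta,\eta) \in \widehat{\mathfrak{S}}_{\rm cl}(\varOmega; S)$ and apply Theorem \ref{S.thm3.8} to produce $Q(z,\zeta) \in \mathscr{S}(\varOmega)$ with $P(t;z,\zeta,\eta) - Q(z,\zeta) \in \widehat{\mathfrak{N}}_{\rm cl}(\varOmega; S)$. Since $\mathscr{S}(\varOmega) \subset \mathfrak{S}(\varOmega; S)$ (being those symbols with $\partial_\eta Q = 0$), $Q$ is an element of the left-hand numerator whose image in the right-hand quotient represents the class of $P$. This settles surjectivity.

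There is no real obstacle here: the hard analytic work—constructing the honest symbol that represents a classical formal symbol modulo null-symbols—is already done in the proof of Theorem \ref{S.thm3.8} via the realization $Q(z,\zeta) = \sum_\nu P_\nu(z,\zeta,\eta_0)\,\zeta_1^\nu \varGamma_\nu(\zeta_1,a)$, and the identification of null-symbols with formal null-symbols that happen to be holomorphic is provided by Proposition \ref{S.prop3.7}. The corollary is essentially the packaging of these two facts as the statement that the quotient is insensitive to whether one works with honest or formal symbols.
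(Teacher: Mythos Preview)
Your proof is correct and matches the paper's intended argument: the paper states this as an immediate corollary without proof, relying precisely on Proposition \ref{S.prop3.7} (which gives $\mathfrak{N}(\varOmega;S) = \widehat{\mathfrak{N}}_{\rm cl}(\varOmega;S) \cap \varGamma(\varOmega_\rho[d_\rho]\times S;\mathscr{O})$, hence injectivity) and Theorem \ref{S.thm3.8} (which provides the $\mathscr{S}(\varOmega)$-representative, hence surjectivity). Your identification of these two ingredients and the way you package them is exactly what the paper has in mind.
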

\begin{defn} 
As in the case of   $ \mathfrak{S}(\varOmega; S)$,
 for any $P(t;z,\zeta,\eta)\in \widehat{\mathfrak{S}}^{}_{\textrm{cl}}(\varOmega; S)$ we set 
\[\wick{P(t;z,\zeta,\eta)}: = P(t;z,\zeta,\eta) \bmod 
 \widehat{\mathfrak{N}}^{}_{\textrm{cl}}(\varOmega; S) \in 
 \widehat{\mathfrak{S}}^{}_{\textrm{cl}}(\varOmega; S)
\big/ \widehat{\mathfrak{N}}^{}_{\textrm{cl}}(\varOmega; S)
\]
which is also called   the \textit{normal product} or the  \textit{Wick product} of $P(t;z,\zeta,\eta)$. 
\end{defn}
Take  $\varOmega\cset T^*\mathbb{C}^n$. 
Let 
$z=(z^{}_1,\ldots,z^{}_n)$ and  $w=(w^{}_1,\ldots,w^{}_n)$ 
be local coordinates on a neighborhood of 
$\Cl \pi(\varOmega)\subset X$, and  $(z;\zeta)$, $(w; \lambda)$  corresponding  local coordinates 
on a neighborhood of    $\Cl \varOmega$. Let 
$z=\varPhi(w)$ be the coordinate transformation. 
We define $J^{*}_{\varPhi}(z',z)$  by the relation
 $\varPhi^{-1}(z')- \varPhi^{-1}(z)=J^{*}_{\varPhi}(z',z) (z'-z)$. 
Then   ${}^{\mathsf{t}}\! J^{*}_{\varPhi}(z,z) \lambda 
= {}^{\mathsf{t}}\!\bigl[\dfrac{\partial w}{\partial z}(z)\bigr] \lambda =\zeta$. 
 Here 
$\dfrac{\partial w}{\partial z}$ stands for the Jacobian matrix of $\varPhi^{-1}$. 
Let 
$P(t;z,\zeta,\eta)\in \widehat{\mathfrak{S}}^{}_{\textrm{cl}}(\varOmega; S) $ 
with respect to  $(z;\zeta)$.  Then we set $\varPhi^*P(t;w, \lambda, \eta)= 
\Sum_{\nu=0}^\infty t^\nu \varPhi^*P^{}_\nu(w, \lambda,\eta)$ by
\[
\varPhi^*P(t;w, \lambda,\eta)
:= e^{t\langle \partial_{\zeta'},\partial_{z'}\rangle} 
   P(t; \varPhi(w),\zeta'+ {}^{\mathsf{t}}\! J^{*}_{\varPhi}(\varPhi(w) +z', \varPhi(w)) \lambda, \eta)
\big{|}{}_{\atop{z'=0}{\zeta'=0}}\,,
\]
i.e.
\[
(\varPhi^*P)^{}_\nu(w, \lambda,\eta) = \Sum_{k+|\alpha|=\nu} 
  \dfrac{1}{\alpha!}\,
 \partial_{\zeta'}^{\,\,\alpha}\partial_{z'}^{\,\,\alpha}P^{}_k(\varPhi(w),
\zeta'+{}^{\mathsf{t}}\! J^{*}_{\varPhi}(\varPhi(w) +z', \varPhi(w)) \lambda,\eta)
\big{|}{}_{\atop{z'=0}{\zeta'=0}}\,.
\]
\begin{thm}\label{S.thm3.11} 
$(1)$
$\varPhi^*P(t;w, \lambda,\eta) $ defines an element of 
$ \widehat{\mathfrak{S}}^{}_{\rm cl}(\varOmega; S) $ 
with respect to  $(w; \lambda)$. Moreover if 
$P(t;z,\zeta,\eta) \in \widehat{\mathfrak{N}}^{}_{\rm cl}(\varOmega; S)  $, 
it follows that $\varPhi^*P(t;w, \lambda, \eta) \in \widehat{\mathfrak{N}}^{}_{\rm cl}(\varOmega; S)  $. 
\par
$(2)$ $\mathds{1}^*$ is the identity, and if  $z=\varPhi(w)$  and  $w=\varPsi(v)$ are complex coordinate transformations,
$\varPsi^*\varPhi^*=(\varPhi\varPsi)^*$ holds. 
\end{thm}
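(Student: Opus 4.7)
\textbf{Proof proposal for Theorem \ref{S.thm3.11}.} The plan is to establish the symbol and null-symbol estimates for $\varPhi^*P$ in part (1) via a Cauchy inequality in the auxiliary variables $(z',\zeta')$, and then to deduce part (2) by reducing the composition law to the classical ($\eta$-independent) case already treated in \cite{aky}.

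First I would fix local data: shrink $\varOmega$ so that $\|\zeta\| = |\zeta_1|$ on $\varOmega_\rho[d_\rho]$ and so that $\varPhi$ is biholomorphic on a neighborhood of $\overline{\pi(\varOmega)}$. Because ${}^{\mathsf{t}}\!J^*_\varPhi(\varPhi(w),\varPhi(w))\lambda$ gives the cotangent action, the corresponding Jacobian is nonsingular, so on a conic neighborhood of the target point there are constants $c_1,c_2>0$ with $c_1\|\lambda\| \leqslant \|\zeta\| \leqslant c_2\|\lambda\|$ after the substitution $\zeta = {}^{\mathsf{t}}\!J^*_\varPhi(\varPhi(w),\varPhi(w))\lambda$. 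Recall that
\[
(\varPhi^*P)_\nu(w,\lambda,\eta) = \Sum_{k+|\alpha|=\nu} \frac{1}{\alpha!}\,\partial_{\zeta'}^{\,\,\alpha}\partial_{z'}^{\,\,\alpha} P_k\bigl(\varPhi(w),\,\zeta'+{}^{\mathsf{t}}\!J^*_\varPhi(\varPhi(w)+z',\varPhi(w))\lambda,\,\eta\bigr)\Big|_{z'=\zeta'=0}.
\]

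The core estimate for (1) goes as follows: on a slightly smaller conic subset, choose Cauchy radii $r_{z'}=r_0$ (a geometric constant) and $r_{\zeta'}=c_0\|\lambda\|$ so that the shifted argument $(\varPhi(w)+z',\,\zeta'+{}^{\mathsf{t}}\!J^*_\varPhi(\cdots)\lambda)$ remains in $\varOmega_\rho[d_\rho]$. Applying the Cauchy inequality in both variable groups and then the bound \eqref{S.eq3.2a} for $P_k$ gives
\[
\Bigl|\frac{1}{\alpha!}\,\partial_{\zeta'}^{\,\,\alpha}\partial_{z'}^{\,\,\alpha} P_k(\cdots)\Big|_{z'=\zeta'=0}\Bigr| \leqslant \frac{\alpha!\,C_{h,Z}\,k!\,A^k\,e^{h\|\zeta\|}}{r_0^{|\alpha|}(c_0\|\lambda\|)^{|\alpha|}\,\|\eta\zeta\|^k}.
\]
Using $\|\lambda\|^{|\alpha|} = \|\eta\lambda\|^{|\alpha|}/|\eta|^{|\alpha|}$, $\|\eta\zeta\|\asymp\|\eta\lambda\|$, and the standard combinatorial bound $\alpha!k! \leqslant \nu!$ for $k+|\alpha|=\nu$, summing over $k+|\alpha|=\nu$ and absorbing the resulting geometric series in $|\eta|/(r_0c_0)$ (which converges since $|\eta|<1/2$ and $r_0c_0$ may be taken of order unity) yields
\[
|(\varPhi^*P)_\nu(w,\lambda,\eta)| \leqslant \frac{C'_{h,Z}\,\nu!\,A'^{\,\nu}\,e^{h'\|\lambda\|}}{\|\eta\lambda\|^\nu}
\]
for suitable constants $A',h'>0$. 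This is \eqref{S.eq3.2a} for $\varPhi^*P$. The null-symbol estimate \eqref{S.eq3.1a} for the partial sums $\Sum_{\nu=0}^{m-1}(\varPhi^*P)_\nu$ follows by the same scheme after the reindexing $\Sum_{k+|\alpha|<m}=\Sum_{|\alpha|<m}\Sum_{k=0}^{m-|\alpha|-1}$, applying \eqref{S.eq3.1a} to the inner sum of $P_k$'s before the Cauchy estimate. Finally, since $\partial_\eta$ commutes with the $(z',\zeta')$-derivatives and with the substitution (which does not involve $\eta$), one has $\partial_\eta(\varPhi^*P) = \varPhi^*(\partial_\eta P)$; the hypothesis $\partial_\eta P \in \widehat{\mathfrak{N}}_{\rm cl}(\varOmega;S)$ then gives $\partial_\eta(\varPhi^*P) \in \widehat{\mathfrak{N}}_{\rm cl}(\varOmega;S)$ by the null-symbol case just proved. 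This completes (1).

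For (2), the identity $\mathds{1}^* = \mathds{1}$ is immediate from $J^*_{\mathds{1}} = \mathds{1}$ and the vanishing of the shifts at $z'=\zeta'=0$. The composition $\varPsi^*\varPhi^* = (\varPhi\varPsi)^*$ is a formal identity in $t$ that does not see $\eta$: fixing any $\eta_0\in S$ and viewing $P(t;z,\zeta,\eta_0)$ as a classical formal symbol, both sides agree by the classical transformation law of symbolic calculus established in \cite{aky}, since $\eta_0$ is an inert scalar parameter under all differentiations and substitutions. By Proposition \ref{S.prop3.5}, the class of $P$ in $\widehat{\mathfrak{S}}_{\rm cl}/\widehat{\mathfrak{N}}_{\rm cl}$ is represented by $P(t;z,\zeta,\eta_0)$, and the well-definedness from (1) transports the classical equality to the present setting. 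The main obstacle I anticipate is the combinatorial bookkeeping in Step 1: specifically, choosing $r_{\zeta'}$ proportional to $\|\lambda\|$ is what produces the denominator $\|\eta\lambda\|^{\nu}$ (rather than $\|\eta\lambda\|^k$) in the final bound, and this is precisely the technical content of the theorem; the corresponding estimate for null-symbol partial sums is more delicate because of the interplay between the Cauchy loss and the partial summation, and must be arranged carefully to remain uniform in $m$.
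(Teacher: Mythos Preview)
Your treatment of (1) matches the paper's almost exactly: Cauchy's inequality with radii of size $\delta$ in $z'$ and $\varepsilon\|\zeta\|$ (equivalently $c_0\|\lambda\|$) in $\zeta'$, the same reindexing $\Sum_{|\alpha|<m}\Sum_{k=0}^{m-|\alpha|-1}$ for the null estimate, and the commutation $\partial_\eta\varPhi^*P=\varPhi^*\partial_\eta P$. One cosmetic point: the paper passes from $\|\lambda\|^{-|\alpha|}$ to $\|\eta\lambda\|^{-|\alpha|}$ simply via $\|\lambda\|\geqslant\|\eta\lambda\|$, whereas you rewrite through $|\eta|^{|\alpha|}$ and speak of a ``geometric series'' --- the sum over $k+|\alpha|=\nu$ is finite, so that phrase is imprecise, but the resulting bound is the same.

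For (2) you take a genuinely different route. The paper proves $\varPsi^*\varPhi^*=(\varPhi\varPsi)^*$ by a direct formal computation: it rewrites $\varPhi^*P$ in the form $e^{\langle\partial_{\zeta'},\partial_{z'}\rangle}P(z,\zeta_0+\zeta',\eta)\,e^{\langle J^*_\varPhi(z+tz',z)z',\lambda\rangle}e^{-\langle z',\zeta_0\rangle}\big|_{0}$, invokes a combinatorial lemma (Lemma~\ref{S.lem3.12}) to collapse the iterated exponential operators, and uses the chain-rule identity for $J^*_\varPhi$, $J^*_\varPsi$, $J^*_{\varPhi\varPsi}$. Your observation that $\eta$ is an inert spectator is correct and does reduce the claim, coefficient by coefficient in $t$, to the $\eta$-free transformation law; since the coefficients are holomorphic in $\eta$ and $\eta_0\in S$ is arbitrary, this yields the exact identity in $\widehat{\mathfrak{S}}_{\rm cl}(\varOmega;S)$, provided that identity is indeed available in \cite{aky}. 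However, your final sentence invoking Proposition~\ref{S.prop3.5} should be dropped: that proposition only gives equality modulo $\widehat{\mathfrak{N}}_{\rm cl}$, whereas the theorem asserts exact equality of formal symbols --- your preceding sentence already suffices, and the appeal to Proposition~\ref{S.prop3.5} muddies the logic. The paper's longer, self-contained argument has the side benefit that Lemma~\ref{S.lem3.12} and the exponential rewriting are reused verbatim in the proof of Theorem~\ref{S.thm3.15}\,(3).
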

\begin{proof}
(1) 
Assume that 
$P^{}_\nu(z,\zeta,\eta)\in \varGamma(\varOmega^{}_ \rho [d^{}_ \rho]\times S;
\mathscr{O}^{}_{T^*X \times \mathbb{C}})$. 
Note that $\partial^{}_\eta\varPhi^*P(t;w, \lambda,\eta) =
\varPhi^* (\partial^{}_\eta P)(t;w, \lambda,\eta) $. We may assume that  on a neighborhood of $\varOmega^{}_\rho$, 
there exist  $c>1$ and $0<c'<1$ such that
\begin{equation}\label{S.eq3.2}
c'\|\lambda\| <  \|\zeta\| = \bigl\| {}^{\mathsf{t}}\!\bigl[\dfrac{\partial w}{\partial z}(z)\bigr]  \lambda\bigr\| < c\|\lambda\|
\end{equation}
We can choose 
$0<\varepsilon<1$ such that  $c'- c \varepsilon>0$, and that there exists  $\delta>0$ such that 
if $ \|z'\|\leqslant \delta$,  it follows that 
\begin{equation}\label{S.eq3.3}
\left\{
\begin{aligned}
&\|{}^{\mathsf{t}}\! J^{*}_{\varPhi}(z+z',z) \lambda-\zeta\|  \leqslant \varepsilon\|\zeta\| ,
\\
&c'\|\lambda\| \leqslant \|{}^{\mathsf{t}}\! J^{*}_{\varPhi}(z+z',z) \lambda\| \leqslant c\|\lambda\|.
\end{aligned}\right.
\end{equation}
Moreover take $\rho'\in \bigl]0, \rho\bigr[$ and replacing  $\varepsilon$, $\delta>0$ if necessary,  setting
\[
\varOmega'_{\rho'}:=\smashoperator[r]{\Bcup_{ (z;\zeta)\in\varOmega_{\rho'}}}
\{(z;\zeta'+{}^{\mathsf{t}}\! J^{*}_{\varPhi}(z+z',z) \lambda);\,
      \|z'\|\leqslant \delta,\,
   \|\zeta'\| \leqslant \varepsilon \|\zeta\| \},
\]
we have
\begin{equation}\label{S.eq3.4}
\varOmega'_{\rho'}[d^{}_{\rho'}]
 \cset \varOmega^{}_ \rho[d^{}_ \rho].
\end{equation}
Thus for any $h>0$,   $Z\Subset  S$ and $(z;\zeta'+{}^{\mathsf{t}}\! J^{*}_{\varPhi}(z+z',z) \lambda,\eta)
\in \varOmega'_{\rho'}[d^{}_{\rho'}] \times Z$, we have
 \[
 | P^{}_k(z,\zeta'+{}^{\mathsf{t}}\! J^{*}_{\varPhi}(z+z',z) \lambda,\eta)| 
\leqslant 
 \frac{C_{h,Z}\,k!A^{k}e^{h\|\zeta'+{}^{\mathsf{t}}\! J^{*}_{\varPhi}(z+z',z) \lambda\|}\,}
{\|\eta(\zeta'+{}^{\mathsf{t}}\! J^{*}_{\varPhi}(z+z',z) \lambda)\|^k}\,.
\]
Set $\varPhi(w): =z$. 
If $(z;\zeta,\eta) \in \varOmega^{}_{\rho'}[d^{}_{\rho'}]\times Z$,   we have
\allowdisplaybreaks
\begin{align}
\frac{1}{\,\alpha!\,}\bigl| &\partial_{\zeta'}^{\,\,\alpha}\partial_{z'}^{\,\,\alpha}
P^{}_k (z,\zeta'+{}^{\mathsf{t}}\! J^{*}_{\varPhi}(z +z', z) \lambda,\eta)
\big{|}{}_{\atop{z'=0}{\zeta'=0}}\bigr| 
\label{S.eq3.5}
\\
& = \biggl| \frac{\alpha!}{(2\pi\im\,)^{2n}}
  \smashoperator{\oint_{\atop{|z'_i|=\delta}
         {|\zeta'_i|= \varepsilon\|\zeta\|}}}
  \frac{P^{}_k(z,\zeta'+{}^{\mathsf{t}}\! J^{*}_{\varPhi}(z +z', z) \lambda,\eta)\,
 dz' d\zeta'}   {\,(\zeta')^{\alpha+\mathbf{1}^{}_n}\,(z')^{\alpha+\mathbf{1}^{}_n}\,}\Biggr| 
\notag
\\
& \leqslant
  \frac{\alpha!}{\,(\varepsilon\delta\|\zeta\|)^{|\alpha|}\,}
  \smashoperator{\sup_{\atop{|z'_i|=\delta}{|\zeta'_i|=\varepsilon\|\zeta\|}}}
|P^{}_k(z,\zeta'+{}^{\mathsf{t}}\! J^{*}_{\varPhi}(z +z', z) \lambda,\eta)| \notag
         \\
& \leqslant
  \frac{\,C^{}_{h,Z}\,\alpha!\, k! A^{k}\,}
       {(\varepsilon\delta c'\|\lambda\|)^{|\alpha|}}
   \smashoperator{\sup_{\atop{|z'_i|=\delta}{|\zeta'_i|=\varepsilon\|\zeta\|}}}\,
\frac{e^{h\|\zeta'+{}^{\mathsf{t}}\! J^{*}_{\varPhi}(z +z', z) \lambda\|}\,}
{\|\eta(\zeta'+{}^{\mathsf{t}}\! J^{*}_{\varPhi}(z +z', z) \lambda)\|^k} \notag
\\& 
\leqslant
   \frac{\,C^{}_{h,Z}\,\alpha!\,k!A^{k}e^{hc(1+\varepsilon)\|\lambda\|}\,}
       {(\varepsilon\delta c'\|\lambda\|)^{|\alpha|}((c'-c\varepsilon)\|\eta\lambda\|)^{k}}
\leqslant
   \frac{C^{}_{h,Z}\,\alpha!\,k!A^{k}e^{2hc\|\lambda\|}\,}
       {(\varepsilon \delta c'\|\lambda\|)^{|\alpha|}((c'-c\varepsilon)\|\eta\lambda\|)^{k}}\,. \notag
\end{align}
Set
$B:=\dfrac{2}{\, \varepsilon\delta c'\,}$ and 
replacing  $\varepsilon$, $\delta>0$ as $C := \dfrac{\, \varepsilon \delta c'A \,}{\,2(c'-c \varepsilon)\,}<1$. 
Then 
if $\|\zeta\|\geqslant c'\|\lambda\|\geqslant (\nu +1)d^{}_{\rho'}$, we have
\begin{align*}
| (\varPhi^*P)^{}_\nu(w, \lambda,\eta)| 
& \leqslant 
C^{}_{h,Z}\,e^{2hc\|\lambda\|}
\Sum_{k= 0}^\nu   \frac{\,k!A^{k} }
       {((c'-c \varepsilon)\|\eta\lambda\|)^{k}}
\Sum_{|\alpha|= \nu-k}   \frac{\,\alpha ! \,}
       {(\varepsilon \delta c'\|\lambda\|)^{|\alpha|}}
\\
&\leqslant C^{}_{h,Z}e^{2hc\|\lambda\|}
\Sum_{k= 0}^\nu   \frac{\, k!A^{k} }
       {((c'-c\varepsilon)\|\eta\lambda\|)^{k}}
\,\frac{\,2^{n+\nu-k-1} \,(\nu-k)!\,}
       {\,(\varepsilon \delta c'\|\lambda\|)^{\nu-k}\,}
\\
& \leqslant \frac{\, 2^{n-1}
C^{}_{h,Z}\,\nu! B^\nu e^{2hc\|\lambda\|}\,}
       {\|\eta\lambda\|^{\nu}\, }\smashoperator{\Sum_{k=0}^\infty} C^{k} 
\leqslant \frac{\, 2^{n-1}
C^{}_{h,Z} \,\nu! B^\nu  e^{2hc\|\lambda\|}\,}
       {\,(1-C)\,\|\eta\lambda\|^{\nu}\, }\,.
 \end{align*}
\par
Next, if  $P(t;z,\zeta,\eta)\in \widehat{\mathfrak{N}}^{}_{\rm cl}(\varOmega; S)  $, 
for any  $m \in \mathbb{N}$ 
\[
\dfrac{1}{\,\alpha!\,}\Bigl| \smashoperator[r]{\Sum_{k=0}^{m-1}}\partial_{\zeta'}^{\,\,\alpha}\partial_{z'}^{\,\,\alpha}
P^{}_k(z,\zeta'+{}^{\mathsf{t}}\! J^{*}_{\varPhi}(z +z', z) \lambda,\eta)
\big{|}{}_{\atop{z'=0}{\zeta'=0}}\Bigr|
\leqslant
   \frac{\,C_{h,Z}\,\alpha!\, m! A^{m} e^{2hc\|\lambda\|}\,}
       {(\varepsilon\delta c'\|\lambda\|)^{|\alpha|}((c'-c \varepsilon)\|\eta\lambda\|)^{m}}\,.
\]
Hence setting $\varPhi(w)=z$,  we have
\allowdisplaybreaks
\begin{align*}
\Big|\! \Sum_{\nu=0}^{m-1} &(\varPhi^*P)^{}_\nu(w, \lambda,\eta)\Big|
 = \Bigl| \Sum_{|\alpha|=0}^{m-1}\Sum_{k=0}^{m-1-|\alpha|}\!\!
\frac{1}{\,\alpha!\,}\partial_{\zeta'}^{\,\,\alpha}\partial_{z'}^{\,\,\alpha}P^{}_k
(z,\zeta'+{}^{\mathsf{t}}\! J^{*}_{\varPhi}(z+z', z) \lambda,\eta)
\big{|}{}_{\atop{z'=0}{\zeta'=0}}\Bigr|
\\
&
\leqslant 
C_{h,Z}e^{2hc\|\lambda\|}\Sum_{|\alpha|=0}^{m-1}\frac{\,\alpha!\,(m-|\alpha|)!A^{m-|\alpha|}\,}
       {(\varepsilon \delta c'\|\lambda\|)^{|\alpha|}((c'- c \varepsilon)\|\eta\lambda\|)^{m-|\alpha|}}
\\
&\leqslant 
\frac{\, C_{h,Z}e^{2hc\|\lambda\|}\,}{\|\eta\lambda\|^m}\smashoperator{\Sum_{\nu=0}^{m-1}}\frac{\,2^{n+\nu-1}m!A^{m-\nu}\,}
       {\,(\varepsilon \delta c')^{\nu}(c'- c \varepsilon)^{m-\nu}\,}
\\
&=
\frac{\,2^{n-1} C_{h,Z} \,m! B^m e^{2hc\|\lambda\|}\,}
       {\|\eta\lambda\|^{m}}
\smashoperator{\Sum_{\nu=0}^{m-1}}C^{m-\nu}
\leqslant 
\frac{\,2^{n-1} C_{h,Z} \,m!B^{m}e^{2hc\|\lambda\|}\,}
       {(1-C)\|\eta\lambda\|^{m}}\, .
\end{align*}
\par
(2) It is  trivial that $\mathds{1}^*$ is the identity. In order to prove that $\varPsi^*\varPhi^*=(\varPhi\varPsi)^*$, 
it is enough to show that $\varPsi^*\varPhi^* P(t;v,\xi,\eta) =(\varPhi\varPsi)^* P(t;v,\xi,\eta)$ 
for  any  $P(t;z,\zeta,\eta)\in \mathscr{O}_{T^*X \times S,(z^{}_0,\zeta^{}_0,\eta)} \llbracket t\rrbracket $ 
for any fixed $(z^{}_0;\zeta^{}_0)\in \Cl \varOmega$. Note that
 $ {}^{\mathsf{t}}\!\bigl[\dfrac{\partial w}{\partial z}(z)\bigr]  \lambda =\zeta$ and $ 
{}^{\mathsf{t}}\!\bigl[\dfrac{\partial v}{\partial w}(w)\bigr] \xi= \lambda $. 
\begin{lem}[see \cite{a4}, \cite{m}]\label{S.lem3.12}
For any $n$-tuple 
$A(t;z,\zeta)=(A^{}_1(z,\zeta),\ldots,A^{}_n(z,\zeta))$ of holomorphic functions, 
and  holomorphic function $Q(z,\zeta)$, the following holds\textup{:}
\[
  e^{\langle \partial_{\zeta},\partial_{z} \rangle}
 Q(z,\zeta)\,e^{\langle z,A(z,\zeta)\rangle}\big{|}{}^{}_{z=0}
 =  e^{\langle \partial_{\zeta},\partial_{z} \rangle}\smashoperator[l]{\Sum_{\alpha\in \mathbb{N}_0^{\,n}}}
 \frac{1}{\,\alpha!\,}\,\partial_{\zeta}^{\;\alpha}
\bigl( Q(z,\zeta)A(z,\zeta)^\alpha\bigr)\big{|}{}^{}_{z=0}\,.
\]
\end{lem}
\begin{proof}
We have:
\allowdisplaybreaks
\begin{align*}
 e^{\langle \partial_{\zeta},\partial_{z} \rangle}
 Q(z,\zeta)\,e^{\langle z,A(z,\zeta)\rangle}
 \big{|}{}^{}_{z=0} &=\Sum_{\alpha,\beta\in\mathbb{N}_0^{\,n}} 
    \frac{1}{\,\alpha!\,\beta!\,}\,   \partial_\zeta^{\,\beta}
     \partial_z^{\,\beta}( Q(z,\zeta)\,z^\alpha A(z,\zeta)^\alpha) \big{|}_{ z=0}
\\
&  =\Sum_{\alpha,\beta} \Sum_{\gamma \leqslant \beta}\binom{\beta}{\gamma}
    \frac{1}{\,\alpha!\,\beta!\,}\,   \partial_\zeta^{\,\beta}\Bigl(\dfrac{\partial^{\gamma} z^\alpha}{\partial z^{\gamma}}
\,\partial_z^{\,\beta-\gamma}( Q(z,\zeta)\,A(z,\zeta)^\alpha) \Bigr)
\big{|}_{ z=0}
\\
&  = \Sum_{\alpha \leqslant\beta} 
    \frac{1}{\,\alpha!\,(\beta-\alpha)!\,}\,   \partial_\zeta^{\,\beta}
   \partial_z^{\,\beta-\alpha}( Q(z,\zeta)\,A(z,\zeta)^\alpha) 
\big{|}_{ z=0}
\\
&  = e^{\langle \partial_{\zeta},\partial_{z} \rangle}\Sum_{\alpha \in\mathbb{ N}_0^{\,n}} 
    \frac{1}{\,\alpha!\,}\,   \partial_\zeta^{\,\alpha}( Q(z,\zeta)\,A(z,\zeta)^\alpha) 
\big{|}_{z=0}\,.
\qedhere
\end{align*}
\end{proof}
\begin{rem}\label{S.rem3.13}
Take $\tau\in \mathbb{C}$ and $z'\in \mathbb{C}^n$ such that $|\tau z'|$ is sufficiently small, hence 
$J^{*}_{\varPhi}(z +\tau z',z)$ is holomorphic. Then by Lemma \ref{S.lem3.12} for any holomorphic function 
$Q(\zeta)$,  we have
\begin{align*}
 e^{\langle \partial_{\zeta'},\partial_{z'}\rangle}
 Q(\zeta')\, 
e^{\langle z',{}^{\mathsf{t}}\! J^{*}_{\varPhi}(z +\tau z', z) \lambda - \zeta^{}_0\rangle}&
\big{|}{}_{\atop{z'=0}{\zeta'=0}}
\\
&= e^{\langle \partial_{\zeta'},\partial_{z'}\rangle}
\Sum_{\alpha} \frac{1}{\,\alpha!\,}\,\partial_{\zeta'}^{\,\,\alpha}
\bigl(Q(\zeta')
({}^{\mathsf{t}}\! J^{*}_{\varPhi}(z+\tau z', z) \lambda - \zeta^{}_0)^\alpha\bigr)
\big{|}{}_{\atop{z'=0}{\zeta'=0}}
\\
&=    e^{\langle \partial_{\zeta'},\partial_{z'}\rangle}
 Q(\zeta'+{}^{\mathsf{t}}\! J^{*}_{\varPhi}(z +\tau z', z) \lambda- \zeta^{}_0 )
\big{|}{}_{\atop{z'=0}{\zeta'=0}}
\\
 &= 
\Sum_{\alpha} \dfrac{1}{\alpha!}\, \,\partial_{z'}^{\,\,\alpha}\!
\,\partial_{\zeta'}^{\,\,\alpha}
Q(\zeta'+{}^{\mathsf{t}}\! J^{*}_{\varPhi}(z+\tau z', z) \lambda- \zeta^{}_0)\big{|}{}_{\atop{z'=0}{\zeta'=0}}
\\
 &= 
\Sum_{\alpha} \dfrac{ \tau^{|\alpha|}}{\alpha!}\, \,\partial_{z'}^{\,\,\alpha}\!
\,\partial_{\zeta'}^{\,\,\alpha}
Q(\zeta'+{}^{\mathsf{t}}\! J^{*}_{\varPhi}(z+ z', z) \lambda- \zeta^{}_0)\big{|}{}_{\atop{z'=0}{\zeta'=0}}\,.
\end{align*}
Hence  as a \textit{formal power series with respect to $t$}, we have  
\begin{align*}
 e^{\langle \partial_{\zeta'},\partial_{z'}\rangle}
Q(\zeta')\, 
e^{\langle z',{}^{\mathsf{t}}\! J^{*}_{\varPhi}(z +t z', z) \lambda - \zeta^{}_0\rangle}
\big{|}{}_{\atop{z'=0}{\zeta'=0}}
&= 
\Sum_{\alpha} \dfrac{ t^{|\alpha|}}{\alpha!}\, \,\partial_{z'}^{\,\,\alpha}\!
\,\partial_{\zeta'}^{\,\,\alpha}
Q(\zeta'+{}^{\mathsf{t}}\! J^{*}_{\varPhi}(z+ z', z) \lambda- \zeta^{}_0)\big{|}{}_{\atop{z'=0}{\zeta'=0}}
\\
& =
e^{t\langle \partial_{\zeta'},\partial_{z'}\rangle} 
Q(\zeta'+{}^{\mathsf{t}}\! J^{*}_{\varPhi}(z+ z', z) \lambda - \zeta^{}_0)\big{|}{}_{\atop{z'=0}{\zeta'=0}}\,.
\end{align*}
In what follows,  we use this type of arguments. 
\end{rem}
By Lemma \ref{S.lem3.12} and Remark \ref{S.rem3.13},  setting $\varPhi(w)=z$ we have 
\begin{align*}
 e^{\langle \partial_{\zeta'},\partial_{z'}\rangle}&
 P(t; z,\zeta^{}_0+\zeta',\eta)\, e^{\langle J^{*}_{\varPhi}(z +tz',z)z', \lambda \rangle}
e^{-\langle z',\zeta^{}_0\rangle}
\big{|}{}_{\atop{z'=0}{\zeta'=0}}
\\
 &= \Sum_{\nu=0}^\infty t^\nu e^{\langle \partial_{\zeta'},\partial_{z'}\rangle}
 P^{}_\nu(z,\zeta^{}_0+\zeta',\eta)\, 
e^{\langle z',{}^{\mathsf{t}}\! J^{*}_{\varPhi}(z +tz', z) \lambda - \zeta^{}_0\rangle}
\big{|}{}_{\atop{z'=0}{\zeta'=0}}
\\
&=  \Sum_{\nu=0}^\infty t^\nu  e^{t\langle \partial_{\zeta'},\partial_{z'}\rangle}
 P^{}_\nu(z,\zeta'+{}^{\mathsf{t}}\! J^{*}_{\varPhi}(z +z', z) \lambda,\eta )
\big{|}{}_{\atop{z'=0}{\zeta'=0}}
\\
 &= 
 e^{t\langle \partial_{\zeta'},\partial_{z'}\rangle} 
P(t; z,\zeta'+{}^{\mathsf{t}}\! J^{*}_{\varPhi}(z+ z', z) \lambda, 
\eta )\big{|}{}_{\atop{z'=0}{\zeta'=0}}
=
\varPhi^*P(t;w, \lambda,\eta).
\end{align*}
On the other hand, we have
\[
w  + J^{*}_{\varPhi}(z +z', z )z'  
 = \varPhi^{-1}(z)+ \varPhi^{-1}(z +z')- \varPhi^{-1}(z) = \varPhi^{-1}(z +z').
\]
Hence  we have
\begin{align*}
J^{*}_{\varPsi}(w+ J^{*}_{\varPhi}(z +z', z)z',w)\,
J^{*}_{\varPhi}&(z +z', z )z'
\\
& =
J^{*}_{\varPsi}(\varPhi^{-1}(z +z'),\varPhi^{-1}(z))\,J^{*}_{\varPhi}(z+z',z)(z+z'-z)
\\
& =
J^{*}_{\varPsi}(\varPhi^{-1}(z+z'),\varPhi^{-1}(z))\,(\varPhi^{-1}(z+z')-\varPhi^{-1}(z))
\\
& =\varPsi^{-1}\varPhi^{-1}(z+z')-\varPsi^{-1}\varPhi^{-1}(z)
= J^{*}_{\varPhi\varPsi}(z+z',z)z'.
\end{align*} 
Thus  as a formal power series  with respect  to $t$, we have 
\[
J^{*}_{\varPsi}(w+ J^{*}_{\varPhi}(z+tz',z)tz',w)\,J^{*}_{\varPhi}(z+tz',z)z'
= J^{*}_{\varPhi\varPsi}(z+tz',z)z'.
\]
Set  $z^{}_0:=\varPhi(w^{}_0)$ and  $ {}^{\mathsf{t}}\!\bigl[\dfrac{\partial w}{\partial z}(z^{}_0)\bigr]\lambda ^{}_0=\zeta^{}_0$. 
Then on a neighborhood  of  $(w^{}_0;\eta^{}_0)$, setting $z=\varPhi (w)=  \varPhi\varPsi(v) $  we have
\allowdisplaybreaks
\begin{align*}
\varPsi^* \varPhi^*P(t;v,\xi ,\eta) 
={}&
  e^{\langle \partial_{\lambda'},\partial_{w'}\rangle}\varPhi^*P(t; w,\lambda^{}_0+ \lambda',\eta)\, 
 e^{\langle J^{*}_{\varPsi}(w+tw',w)w',\xi\rangle}
 e^{-\langle w', \lambda ^{}_0\rangle}
\big{|}{}_{\atop{w'=0}{\lambda'=0}}
\\
={}& 
   e^{\langle \partial_{\lambda'},\partial_{w'}\rangle}e^{\langle \partial_{\zeta'},\partial_{z'}\rangle}
\bigl(P(t;z,\zeta^{}_0+\zeta',\eta)\, e^{\langle J^{*}_{\varPhi}(z+tz',z)z', \lambda ^{}_0+\lambda'\rangle}
 e^{-\langle z',\zeta^{}_0\rangle}
\\*
& \cdot
  e^{\langle J^{*}_{\varPsi}(w+tw',w)w',\xi\rangle}\,
 e^{-\langle w', \lambda ^{}_0\rangle}\bigr) 
\big{|}{}_{\atop{z'=w'=0}{\zeta'= \lambda'=0}}\,.
  \end{align*}
By Lemma  \ref{S.lem3.12} and Remark \ref{S.rem3.13} we have
\begin{align*}
  e^{\langle \partial_{\lambda'},\partial_{w'}\rangle}&
e^{\langle J^{*}_{\varPhi}(z+tz',z)z', \lambda ^{}_0+ \lambda'\rangle}
 e^{\langle J^{*}_{\varPsi}(w+tw',w)w',\xi\rangle}
 e^{-\langle w', \lambda ^{}_0\rangle}
\big{|}{}_{\atop{w'=0}{\lambda'=0}}
\\
&=
 e^{\langle \partial_{\lambda'},\partial_{w'}\rangle}
e^{\langle J^{*}_{\varPhi}(z+tz',z)z', \lambda'\rangle} 
e^{\langle J^{*}_{\varPsi}(w+tw',w)w',\xi\rangle}
 e^{\langle J^{*}_{\varPhi}(z+tz',z)z'-w', \lambda ^{}_0\rangle}
\big{|}{}_{\atop{w'=0}{\lambda'=0}}
\\
  & =   e^{\langle \partial_{\lambda'},\partial_{w'}\rangle}\Sum_{\alpha} \frac{1}{\,\alpha!\,}\,\partial_{w'}^{\,\,\alpha}
\bigl((J^{*}_{\varPhi}(z+tz',z)z')^\alpha
\,e^{\langle J^{*}_{\varPsi}(w+tw',w)w',\xi\rangle}
 e^{\langle J^{*}_{\varPhi}(z+tz',z)z'-w', \lambda ^{}_0\rangle}\bigr)\big{|}{}_{\atop{w'=0}{\lambda'=0}}
\\
  &=\Sum_{\alpha} \frac{1}{\,\alpha!\,}(J^{*}_{\varPhi}(z+tz',z)z')^\alpha
\,\partial_{w'}^{\,\,\alpha}\bigl( e^{\langle J^{*}_{\varPsi}(w+tw',w)w',\xi\rangle}\,
 e^{\langle J^{*}_{\varPhi}(z+tz',z)z'-w', \lambda ^{}_0\rangle}\bigr)\big{|}{}^{}_{w'=0}
\\
  & = e^{\langle J^{*}_{\varPsi}(w+J^{*}_{\varPhi}(z+tz',z)tz',w)J^{*}_{\varPhi}(z+tz',z)z',\xi\rangle}
=  e^{\langle J^{*}_{\varPhi \varPsi}(z+tz',z)z',\xi\rangle}.
\end{align*}
Therefore
\begin{align*}
\varPsi^* \varPhi^*P(t;v,\xi ,\eta) & = 
e^{\langle \partial_{\zeta'},\partial_{z'}\rangle}
P(t;z,\zeta^{}_0+\zeta',\eta)\,
e^{\langle J^{*}_{\varPhi \varPsi}(z+tz',z)z',\xi\rangle} e^{-\langle z',\zeta^{}_0\rangle}
\big{|}{}_{\atop{z'=0}{\zeta'=0}}
\\
 &= (\varPhi\varPsi)^*P(t;v,\xi ,\eta).
\qedhere
\end{align*}
\end{proof}
\begin{defn}
Under the notation above,  we define a coordinate transformation $\varPhi^*$ 
associated with $\varPhi$ by 
\[
 \varPhi^*(\wick{P})(t;w, \lambda,\eta):= \wick{\varPhi^*P(t;w, \lambda,\eta)}. 
\]
\end{defn}
\begin{lem}\label{S.lem3.14}
Let 
$\varOmega\subset T^*X$ be a conic  open subset, $d>0$, $\rho\in \big]0,1\big[$. 
Assume that $ P(z,\zeta,\eta) \in \varGamma(\varOmega^{}_ \rho [d^{}_ \rho]\times S;
\mathscr{O}^{}_{T^*X \times \mathbb{C}})$ and 
$\nu$,  $m$, $N \in \mathbb{N}^{}_0$ satisfy the following\textup{:}

for any $h>0$ and $Z\Subset  S$,  there exists a constant $C_{h,Z}>0$  such that 
for any  $\rho' \in \big]0, \rho\big[$,  on  $\varOmega^{}_{\rho'}[d^{}_{\rho'}] \times Z$
\begin{equation}\label{eq.a.3.8}
 |P(z,\zeta,\eta)| \leqslant \frac{C_{h,Z}e^{h\|\zeta\|}}{\,(\rho-\rho')^{N\nu}\|\eta\zeta\|^m \,}.
\end{equation}
Then  for any   $\alpha$, $\beta \in\mathbb{N}^{\,n}_0$ $(|\alpha|+|\beta|>0)$, 
$\rho' \in \bigl]0, \rho\bigr[$, and     $(z;\zeta,\eta)\in \varOmega^{}_{\rho'}[d^{}_ {\rho'}] \times Z $, 
the following hold\textup{:}

$(1)$ If $\nu\in \mathbb{N}$,  set  $C^{}_\nu:=\dfrac{\,\nu+1\,}{\nu} $. Then
\allowdisplaybreaks
\[
\begin{aligned}
 |\partial_z^{\,\alpha}\partial_\zeta^{\,\beta}P(z,\zeta,\eta)|
&\leqslant 
  \frac{\,C_{h,Z}C^{\,m}_\nu e^{N}(\nu+1)^{|\alpha+\beta|}\alpha!\,\beta!\,e^{2h\|\zeta\|}\,}
       {\, (\rho-\rho')^{|\alpha+\beta|+N\nu}\, \|\eta\zeta\|^{m+|\beta|}} &&(\beta\ne 0),
\\
 |\partial_z^{\,\alpha}P(z,\zeta,\eta)|
& \leqslant 
   \frac{C_{h,Z}\,e^{N}(\nu+1)^{|\alpha|}\alpha!\, e^{h\|\zeta\|}\,}
       {\, (\rho-\rho')^{|\alpha|+N\nu} \,\|\eta\zeta\|^{m}} &&(\beta= 0). 
\end{aligned}
\]
\par
$(2)$ If $\nu=0$, then 
\allowdisplaybreaks
\[
\begin{aligned}
 |\partial_z^{\,\alpha}\partial_\zeta^{\,\beta}P(z,\zeta,\eta)|
&\leqslant 
  \frac{ C_{h,Z}\,\alpha!\,\beta!\,e^{2h\|\zeta\|} \,}
       {\,(\rho -\rho')^{|\alpha+\beta|}(1-\rho +\rho')^m\|\eta\zeta\|^{m+|\beta|}} &&(\beta\ne 0),
\\
 |\partial_z^{\,\alpha}P(z,\zeta,\eta)|
& \leqslant 
   \frac{C_{h,Z}\,\alpha!\,e^{h\|\zeta\|}\,}
       {\, (\rho-\rho')^{|\alpha|} \|\eta\zeta\|^{m}} &&(\beta = 0).
\end{aligned}
\]
\end{lem}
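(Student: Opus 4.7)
The proof proceeds by a direct Cauchy integral formula estimate on a polydisc contained in a slightly larger version of the domain where the hypothesis \eqref{eq.a.3.8} holds. Fix $(z;\zeta,\eta)\in\varOmega^{}_{\rho'}[d^{}_{\rho'}]\times Z$ and multi-indices $(\alpha,\beta)$ with $|\alpha|+|\beta|>0$. Since $(z;\zeta)\in\varOmega^{}_{\rho'}$, write $(z;\zeta)=(z^{}_0+\widetilde{z};\,\zeta^{}_0+\widetilde{\zeta})$ with $(z^{}_0;\zeta^{}_0)\in\varOmega$, $\|\widetilde{z}\|\leqslant\rho'$, $\|\widetilde{\zeta}\|\leqslant\rho'\|\zeta^{}_0\|$; in particular $(1-\rho')\|\zeta^{}_0\|\leqslant\|\zeta\|\leqslant(1+\rho')\|\zeta^{}_0\|$, so $\|\zeta\|$ and $\|\zeta^{}_0\|$ are comparable up to a factor depending only on $\rho'<1$.

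In case (1) with $\nu\in\mathbb{N}$, set $\sigma:=(\rho-\rho')/(\nu+1)$ and $\rho'':=\rho'+\sigma$, so that $\rho-\rho''=\nu(\rho-\rho')/(\nu+1)$ and
\[
(\rho-\rho'')^{-N\nu}=\Bigl(\frac{\,\nu+1\,}{\nu}\Bigr)^{\!N\nu}(\rho-\rho')^{-N\nu}\leqslant e^{N}(\rho-\rho')^{-N\nu},
\]
explaining the factors $e^{N}$ and $(\rho-\rho')^{-N\nu}$ in the conclusion of case (1). In case (2) ($\nu=0$) we simply take $\sigma:=\rho-\rho'$ and $\rho'':=\rho$, so these factors collapse to $1$. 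Now apply the Cauchy integral formula on the polydisc $T$ around $(z;\zeta)$ of radius $\sigma$ in each $z$-coordinate and radius $(1-\rho')\sigma\|\zeta^{}_0\|$ in each $\zeta$-coordinate. A direct check using the decomposition above shows $T\subset\varOmega^{}_{\rho''}[d^{}_{\rho''}]\times Z$: one has $\|\widetilde{z}+z'\|\leqslant\rho'+\sigma=\rho''$, $\|\widetilde{\zeta}+\zeta'\|\leqslant\rho'\|\zeta^{}_0\|+(1-\rho')\sigma\|\zeta^{}_0\|\leqslant\rho''\|\zeta^{}_0\|$, and $\|\zeta+\zeta'\|\geqslant(1-\sigma)\|\zeta\|\geqslant d(1-\rho'')=d^{}_{\rho''}$. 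Moreover, on $T$ one obtains
\[
\|\eta(\zeta+\zeta')\|^{-m}\leqslant(1-\sigma)^{-m}\|\eta\zeta\|^{-m},
\]
which is exactly $C_\nu^{m}\|\eta\zeta\|^{-m}$ in case (1) and $(1-\rho+\rho')^{-m}\|\eta\zeta\|^{-m}$ in case (2); similarly $e^{h\|\zeta+\zeta'\|}\leqslant e^{2h\|\zeta\|}$ since $\|\zeta+\zeta'\|\leqslant(1+\sigma)\|\zeta\|\leqslant 2\|\zeta\|$.

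Cauchy's inequality on $T$ then yields
\[
|\partial_z^{\,\alpha}\partial_\zeta^{\,\beta}P(z,\zeta,\eta)|\leqslant\frac{\alpha!\,\beta!}{\sigma^{|\alpha|}((1-\rho')\sigma\|\zeta^{}_0\|)^{|\beta|}}\sup_{T}|P|,
\]
and substituting the hypothesis \eqref{eq.a.3.8} with $\rho''$ in place of $\rho'$, together with the bounds established above on $\|\eta(\zeta+\zeta')\|$ and $e^{h\|\zeta+\zeta'\|}$, the comparison $\|\zeta^{}_0\|\geqslant\|\zeta\|/(1+\rho')$, and the trivial inequality $\|\zeta\|^{-|\beta|}\leqslant\|\eta\zeta\|^{-|\beta|}$ (valid since $|\eta|\leqslant 1$), yields the claimed estimates. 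In particular, $\sigma^{-(|\alpha|+|\beta|)}=((\nu+1)/(\rho-\rho'))^{|\alpha+\beta|}$ produces the factor $(\nu+1)^{|\alpha+\beta|}(\rho-\rho')^{-|\alpha+\beta|}$ in case (1). The case $\beta=0$ is easier: no shift in the $\zeta$-directions is required, so $C_\nu^{m}$ (resp.\ $(1-\rho+\rho')^{-m}$) does not appear. The main obstacle is purely bookkeeping: verifying the polydisc inclusion $T\subset\varOmega^{}_{\rho''}[d^{}_{\rho''}]$ and absorbing the harmless factors $(1\pm\rho')^{|\beta|}$ arising from the equivalence $\|\zeta\|\asymp\|\zeta^{}_0\|$ into the constants (legitimate since $\rho'<1$ is fixed below $1$ throughout the statement).
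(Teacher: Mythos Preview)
Your overall strategy---Cauchy's inequality on a polydisc centered at $(z;\zeta)$ with an intermediate parameter $\rho''=\rho'+\sigma$---is exactly the paper's, and your derivation of the factors $e^{N}$, $(\nu+1)^{|\alpha+\beta|}$, $(\rho-\rho')^{-N\nu}$, $C_\nu^{m}$ in case~(1) and $(1-\rho+\rho')^{-m}$ in case~(2) is correct.

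The one genuine defect is your choice of the $\zeta$-radius. You take it to be $(1-\rho')\sigma\|\zeta_0\|$, which forces you at the end to convert $\|\zeta_0\|^{-|\beta|}$ into $\|\eta\zeta\|^{-|\beta|}$, picking up an extra factor $\bigl(\tfrac{1+\rho'}{1-\rho'}\bigr)^{|\beta|}$. You call this ``harmless'' and propose to absorb it, but the lemma states the constants \emph{explicitly} and claims the bound for all $\rho'\in\,]0,\rho[$ and all $\alpha,\beta$: there is nothing in the stated conclusion that can swallow a factor growing geometrically in $|\beta|$ (even one bounded uniformly in $\rho'$ by $\bigl(\tfrac{1+\rho}{1-\rho}\bigr)^{|\beta|}$). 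So as written your argument does not prove the inequality as stated.

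The remedy is the paper's choice: take the $\zeta$-radius equal to $\sigma\|\eta\zeta\|$ rather than $(1-\rho')\sigma\|\zeta_0\|$. The inclusion $T\subset\varOmega_{\rho''}[d_{\rho''}]$ still holds, because the standing assumption $|\eta|<\tfrac12$ combined with $\|\zeta\|\leqslant 2\|\zeta_0\|$ gives $\|\eta\zeta\|<\|\zeta_0\|$, whence $\|\widetilde{\zeta}+\zeta'\|\leqslant\rho'\|\zeta_0\|+\sigma\|\eta\zeta\|\leqslant(\rho'+\sigma)\|\zeta_0\|=\rho''\|\zeta_0\|$. With this radius Cauchy produces the factor $(\sigma\|\eta\zeta\|)^{-|\beta|}$ directly, and no conversion---hence no spurious factor---is needed. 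Everything else in your outline then goes through unchanged.
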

\begin{proof} 
(1)  Set $\rho'':=\rho'+\dfrac{\rho-\rho'}{\nu+1}$. Note that 
for any  $(z;\zeta)\in \varOmega^{}_{\rho'}[d^{}_ {\rho'}]$ and $(z',\zeta')$ with 
 $\|z'\| \leqslant \dfrac{\,\rho-\rho'\,}{\nu+1}$ and 
 $\|\zeta'\| \leqslant \dfrac{\,(\rho-\rho')\|\eta\zeta\|\,}{\nu+1}<\|\zeta\|$, we have 
 $(z+z';\zeta+\zeta')\in \varOmega^{}_{\rho''}[d^{}_ {\rho''}]$. 
Indeed, 
by the definition there exists  $(z^{}_0;\zeta^{}_0)\in \varOmega$ such that $\|z-z^{}_0\|\leqslant \rho'$ and 
$\|\zeta-\zeta^{}_0\|\leqslant \rho'\|\zeta^{}_0\|$, 
hence we have 
$\|\zeta\|\leqslant (\rho'+1)\|\zeta^{}_0\|\leqslant 2\|\zeta^{}_0\|$. 
Recall that we assumed that $S\subset\{\eta\in \mathbb{C};\,|\eta|<\dfrac{1}{\,2\,}\}$. 
Therefore  
\[
\|\zeta+\zeta'-\zeta^{}_0\| \leqslant\rho'\|\zeta^{}_0\|+\dfrac{\rho-\rho'}{\nu+1}\|\eta\zeta\| 
\leqslant\rho'\|\zeta^{}_0\|+\dfrac{\rho-\rho'}{\nu+1}\|\zeta^{}_0\| =  \rho''\|\zeta^{}_0\|. 
\]
Further we  have
\begin{equation}\label{eq.6.9a}
2\|\zeta\| \geqslant
\|\zeta+\zeta'\| \geqslant \Bigl(1-\dfrac{|\eta|(\rho-\rho')}{\nu+1}\Bigr)\|\zeta\| 
\geqslant \Bigl(1-\dfrac{\rho-\rho'}{\nu+1}\Bigr)\|\zeta\| 
 \geqslant  d(1-\rho''), 
\end{equation}
hence  $(z+z';\zeta+\zeta')\in \varOmega^{}_{\rho''}[(j+1)d^{}_ {\rho''}] $. 
Thus
replacing  $\rho'$ with $\rho''$ in  \eqref{eq.a.3.8}, 
for any $h>0$ and  $(z;\zeta,\eta)\in \varOmega^{}_{\rho'}[d^{}_ {\rho'}] \times Z $
we have
\allowdisplaybreaks
\begin{align*}
 \smashoperator[r]{
\sup_{\atop{|z'_i|=(\rho-\rho')/(\nu+1)}{|\zeta'_i|=(\rho-\rho')\|\eta\zeta\|/(\nu+1)}} }&
|P(z+z',\zeta+\zeta',\eta)|
\leqslant
\frac{C_{h,Z}e^{2h\|\zeta\|}}{\,\bigl(\bigl(1- \dfrac{1}{\nu+1}\bigr)(\rho-\rho') \big)^{N\nu} 
((1 - \dfrac{\rho-\rho'}{\nu+1})\|\eta\zeta\|)^m\,}
\\
&
\leqslant
\frac{C_{h,Z}e^{2h\|\zeta\|}}{\,\big(\dfrac{\nu}{\nu+1}(\rho-\rho') \big)^{N\nu} 
\bigl(\dfrac{\nu}{\nu+1}\|\eta\zeta\|\bigr)^m\,}
\leqslant
  \frac{\, C_{h,Z} C^{\,m}_\nu e^{N}e^{2h\|\zeta\|}}{\,(\rho-\rho')^{N\nu}\,\|\eta\zeta\|^m \,}\,.
\end{align*}
Therefore if $\beta \ne 0$, we have
\allowdisplaybreaks
\begin{align*}
| \partial_z^{\alpha}\partial_\zeta^{\,\beta}P(z,\zeta,\eta)|&\leqslant
  \frac{\,(\nu+1)^{|\alpha+\beta|}\,\alpha!\,\beta!}{\,(\rho-\rho')^{|\alpha+\beta|}\,\|\eta\zeta\|^{|\beta|}\,}
  \smashoperator[r]{\sup_{\atop{|z'_i|=(\rho-\rho')/(\nu+1)}{|\zeta'_i|=(\rho-\rho')\|\eta\zeta\|/(\nu+1)}} }
|P(z+z',\zeta+\zeta',\eta)|
\\& 
\leqslant 
  \frac{\,C^{}_{h,Z}C^{\,m}_\nu e^{N}(\nu+1)^{|\alpha+\beta|} \alpha!\,\beta!\,e^{2h\|\zeta\|}\,}
       {(\rho-\rho')^{|\alpha+\beta|+N\nu}\,\|\eta\zeta\|^{m+|\beta|}}\,.
 \end{align*}
If $\beta=0$, we have
\allowdisplaybreaks
\begin{align*}
| \partial_z^{\,\alpha}&P(z,\zeta,\eta)|\leqslant
  \frac{\,(\nu+1)^{|\alpha|}\,\alpha!\,}{\,(\rho-\rho')^{|\alpha|}\,}
    \smashoperator[r]{\sup_{\scriptscriptstyle{|z'_i|=(\rho-\rho')/(\nu+1)}}}|P(z+z',\zeta,\eta)|
\\
& \leqslant
  \frac{\,C^{}_{h,Z}(\nu+1)^{|\alpha|}\alpha!\, e^{h\|\zeta\|}}{\,(\rho-\rho')^{|\alpha|}
\big((\rho-\rho')(1- \dfrac{1}{\nu+1}) \big)^{N\nu}\, \|\eta\zeta\|^m \,}
\leqslant 
\frac{C_{h,Z}\,e^{N}(\nu+1)^{|\alpha|}\alpha!\, e^{h\|\zeta\|}\,}
       {\, (\rho-\rho')^{|\alpha|+N\nu} \,\|\eta\zeta\|^{m}}\,.
 \end{align*}

(2) We may choose $|z'_i|= \rho-\rho'$ 
and $|\zeta'_i|= (\rho-\rho')\|\eta\zeta\|$. 
\end{proof}

\begin{thm}\label{S.thm3.15}
For any $P(t;z,\zeta)$, $Q(t;z,\zeta)\in \widehat{\mathfrak{S}}^{}_{\rm cl}(\varOmega; S)$, 
set 
\begin{align*}
Q\circ P(t;z,\zeta,\eta):= {}& e^{t\langle \partial_{\zeta'},\partial_{z'} \rangle}
 Q(t;z,\zeta', \eta)\,P(t; z',\zeta,\eta)\big{|}{}_{\atop{z'=z}{\zeta'=\zeta}}
\\
= {} & e^{t\langle \partial_{\zeta'},\partial_{z'} \rangle}
 Q(t;z,\zeta+\zeta', \eta)\,P(t;z+z', \zeta,\eta)\big{|}{}_{\atop{z'=0}{\zeta'=0}}\,.
\end{align*}

$(1)$
$Q\circ P(t;z,\zeta,\eta)\in  \widehat{\mathfrak{S}}^{}_{\rm cl}(\varOmega; S) $. 
Moreover  if  either $P(t; z,\zeta,\eta)$ or  $Q(t; z,\zeta,\eta)$ is an element of  
$\widehat{\mathfrak{N}}^{}_{\rm cl}(\varOmega; S) $, it follows that 
$Q\circ P(t;z,\zeta,\eta)\in  \widehat{\mathfrak{N}}^{}_{\rm cl}(\varOmega; S) $. 

$(2)$  $R\circ (Q \circ P)= (R\circ Q) \circ P$ holds.

$(3)$  Let $\varPhi(w)=z$ be a holomorphic coordinate transformation. Then 
\[
\varPhi^* Q\circ \varPhi^*P(t;w,\lambda,\eta)= \varPhi^* (Q\circ P)(t;w,\lambda,\eta).
\]
 \end{thm}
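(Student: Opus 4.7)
The plan is to first derive the explicit formula for the Taylor coefficients of $Q\circ P$ in $t$ and then to establish the symbol estimate (S.eq3.2a) by Cauchy-type derivative bounds from Lemma \ref{S.lem3.14}. Expanding the exponential $e^{t\langle\partial_{\zeta'},\partial_{z'}\rangle}=\sum_\alpha (t^{|\alpha|}/\alpha!)\,\partial_{\zeta'}^\alpha\partial_{z'}^\alpha$ and collecting powers of $t$,
\[
(Q\circ P)_\nu(z,\zeta,\eta)=\smashoperator{\Sum_{k+\ell+|\alpha|=\nu}}\tfrac{1}{\alpha!}\,\partial_\zeta^\alpha Q^{}_k(z,\zeta,\eta)\,\partial_z^\alpha P^{}_\ell(z,\zeta,\eta).
\]
From this formula $\partial_\eta(Q\circ P)=(\partial_\eta Q)\circ P+Q\circ(\partial_\eta P)$, so condition (iii) of $\widehat{\mathfrak{S}}^{}_{\rm cl}$ reduces to the bounds (i), (ii) applied to $Q$, $P$, $\partial_\eta Q$, $\partial_\eta P$; thus the analytic work is concentrated in the estimate for (S.eq3.2a) and for the null-symbol bound.

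For the estimate itself, I would shrink to $\varOmega'\cset\varOmega$ and fix $\rho_0<\rho$. Lemma \ref{S.lem3.14}(2) (applied with $\nu=0$, $m=k$ to $Q_k$ and $m=\ell$ to $P_\ell$, at scale $\rho-\rho_0$) yields, for $(z;\zeta,\eta)\in\varOmega_{\rho_0}[d_{\rho_0}]\times Z$,
\[
|\partial_\zeta^\alpha Q^{}_k|\leqslant\frac{C_{h,Z}\,k!A^k\alpha!\,e^{2h\|\zeta\|}}{((\rho-\rho_0)\|\eta\zeta\|)^{|\alpha|}\|\eta\zeta\|^k},\qquad |\partial_z^\alpha P^{}_\ell|\leqslant\frac{C_{h,Z}\,\ell!A^\ell\alpha!\,e^{2h\|\zeta\|}}{(\rho-\rho_0)^{|\alpha|}\|\eta\zeta\|^\ell}.
\]
Multiplying, using $k!\,\ell!\,\alpha!\leqslant\nu!$, counting multi-indices by $\#\{\alpha:|\alpha|=p\}\leqslant 2^{n+p}$, and summing over $k+\ell+|\alpha|=\nu$ absorbs all combinatorial factors into a single geometric progression in $A$ and $(\rho-\rho_0)^{-2}$, producing
\[
|(Q\circ P)_\nu|\leqslant\frac{C''\,\nu!\,B^\nu e^{2h\|\zeta\|}}{\|\eta\zeta\|^\nu}
\]
for some $B=B(A,n,\rho-\rho_0)$, which is (S.eq3.2a). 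For the null-symbol preservation in (1), assuming for instance $P\in\widehat{\mathfrak{N}}^{}_{\rm cl}(\varOmega;S)$ (the case $Q\in\widehat{\mathfrak{N}}^{}_{\rm cl}$ being symmetric), one commutes $\partial_z^\alpha$ with the finite sum and writes
\[
\Sum_{\nu=0}^{m-1}(Q\circ P)_\nu=\smashoperator{\Sum_{k+|\alpha|<m}}\tfrac{1}{\alpha!}\,\partial_\zeta^\alpha Q^{}_k\cdot\partial_z^\alpha\!\!\!\Sum_{\ell=0}^{m-k-|\alpha|-1}\!\!\!P^{}_\ell.
\]
The inner partial sum of $P$ satisfies the stronger null-symbol bound (S.eq3.1a) with $m'=m-k-|\alpha|$, which passes to $\partial_z^\alpha$ of it via Lemma \ref{S.lem3.14}(1) (with $\nu=m',N=0$); combined with the Cauchy bound on $\partial_\zeta^\alpha Q^{}_k$, the same summation as above gives the required bound $C\,m!B^m e^{h\|\zeta\|}/\|\eta\zeta\|^m$.

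Statement (2) is a formal identity: both $(R\circ Q)\circ P$ and $R\circ(Q\circ P)$ equal
\[
e^{t(\langle\partial_{\zeta'},\partial_{z'}\rangle+\langle\partial_{\zeta''},\partial_{z''}\rangle)}R(t;z,\zeta+\zeta'+\zeta'',\eta)\,Q(t;z+z',\zeta+\zeta'',\eta)\,P(t;z+z'+z'',\zeta,\eta)\big|_{z'=z''=\zeta'=\zeta''=0},
\]
which follows by expanding the exponentials and applying the Leibniz rule coefficient by coefficient in $t^\nu$. For (3), following the template in the proof of Theorem \ref{S.thm3.11}(2), I would expand $\varPhi^*Q\circ\varPhi^*P(t;w,\lambda,\eta)$ by its definition, then use Lemma \ref{S.lem3.12} and Remark \ref{S.rem3.13} to collapse the two exponentials of Euler type into a single exponential; the chain-rule identity $J^*_\varPhi(z+z',z)\,z'=\varPhi^{-1}(z+z')-\varPhi^{-1}(z)$ (already exploited in that proof) is exactly what matches the composition of coordinate transformations with the bidifferential calculus, yielding $\varPhi^*(Q\circ P)$. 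The main obstacle of the whole theorem is the analytic part of (1): arranging the combinatorial sum so that the factors $\alpha!\,k!\,\ell!$, $(\rho-\rho_0)^{-2|\alpha|}$, and $A^{k+\ell}$ recombine into the clean form $\nu!\,B^\nu$; once this is in hand, the null-symbol case, associativity, and coordinate compatibility are variations on the same bookkeeping.
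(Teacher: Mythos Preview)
Your approach is essentially the paper's: the same coefficient formula $(Q\circ P)_\nu=\sum_{k+\ell+|\alpha|=\nu}\tfrac{1}{\alpha!}\,\partial_\zeta^\alpha Q_k\,\partial_z^\alpha P_\ell$, Cauchy bounds from Lemma~\ref{S.lem3.14} on each factor, the same reorganization $\sum_{\nu<m}(Q\circ P)_\nu=\sum_{k+|\alpha|<m}\tfrac{1}{\alpha!}\,\partial_\zeta^\alpha Q_k\cdot\partial_z^\alpha\!\sum_{\ell<m-k-|\alpha|}P_\ell$ for the null-symbol case, and for (3) the formal exponential calculus (the paper's decisive identity there is the additivity $J^*_\varPhi(z+tz'',z)z''+J^*_\varPhi(z+tz''+tz',z+z'')z'=J^*_\varPhi(z+tz''+tz',z)(z''+z')$, not merely the defining relation you quote). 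Two slips to correct: Lemma~\ref{S.lem3.14}(2) actually gives an extra factor $(1-\rho+\rho_0)^{-k}$ in your bound for $\partial_\zeta^\alpha Q_k$ (harmless---absorb it into $A$, as the paper does via its constant $C$); more importantly, for $\partial_z^\alpha\bigl(\sum_{\ell<m'}P_\ell\bigr)$ you must use part~(2) of that lemma (i.e.\ $\nu=0$), not part~(1) with $\nu=m'$, since the latter would insert a factor $(m'+1)^{|\alpha|}$ that, after summing over $\alpha$, leaves an uncontrolled extra $m!$ and kills the estimate.
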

\begin{proof}
(1) We assume $P(t; z,\zeta,\eta)  = \Sum_{\nu= 0}^\infty t^ \nu  P^{}_ \nu(z,\zeta,\eta)$, $
Q(t; z,\zeta,\eta)   = \Sum_{\nu =0}^\infty t^\nu  Q^{}_ \nu(z,\zeta,\eta)  
\in \varGamma(\varOmega^{}_ \rho [d^{}_ \rho]\times S;
\mathscr{O}^{}_{T^*X \times \mathbb{C}}) \llbracket t\rrbracket$. 
If we set  $Q \circ P(t;z,\zeta,\eta) =\Sum_{\nu=0}^\infty t^\nu  R^{}_\nu(z,\zeta,\eta)$, we have
\[
R^{}_\nu(z,\zeta,\eta)
 = \Sum_{|\alpha|+k+l=\nu} 
  \frac{1}{\alpha!}\,\partial_\zeta^{\alpha} Q^{}_l(z,\zeta,\eta)\cdot \partial_z^{\alpha} P^{}_k(z,\zeta,\eta).
\]
Therefore 
$R(t;z,\zeta,\eta) \in \varGamma(\varOmega^{}_ \rho [d^{}_ \rho]\times S;
\mathscr{O}^{}_{T^*X \times \mathbb{C}})\llbracket t\rrbracket $. 
For any 
 $\rho' \in  \bigl]0, \rho\bigr[$, $k\in \mathbb{N}^{}_0$ and $Z\Subset   S$, 
on  $\varOmega^{}_ {\rho'}[d^{}_ {\rho'}]  \times 
Z\subset \varOmega^{}_ \rho[d^{}_ {\rho}]\times Z$  we have
$|P^{}_k(z,\zeta,\eta)|,$ $|Q^{}_k(z,\zeta,\eta)| \leqslant \dfrac{C_{h,Z}k! A^{k} e^{h\|\zeta\|}}{\,\|\eta\zeta\|^k \,}$\,. 
Hence by Lemma \ref{S.lem3.14}, we have 
\begin{align}
  |\partial_{\zeta}^{\,\alpha}Q^{}_l(z,\zeta,\eta)| &
\leqslant \frac{\,  C_{h,Z}\,\alpha!\, l! A^{l}e^{2h\|\zeta\|}\,}
       {(\rho-\rho')^{|\alpha|} (1-\rho+\rho')^{l}\|\eta\zeta\|^{l+|\alpha|}},
 \notag
\\ \label{S.eq3.7}
|\partial_{z}^{\alpha}P^{}_k(z,\zeta,\eta)|
&
\leqslant 
  \frac{ C_{h,Z}\,\alpha!\, k! A^{k}e^{h\|\zeta\|}}
       {\,(\rho-\rho')^{|\alpha|}\, \|\eta\zeta\|^{k}}.
\end{align}
We choose $\rho'\in\bigl]0, \rho\bigr[$ as 
  $C:=\dfrac{(\rho-\rho')^2A}{\,2(1-\rho+\rho')\,}<1$, and set $B:=\dfrac{1}{(\rho-\rho')^2\,}$. 
Then on   $\varOmega^{}_ {\rho'}[d^{}_{\rho'}] \times Z$ we have 
\begin{align*}
|R^{}_\nu(z,\zeta,\eta)|
  &\leqslant  \Sum_{k+l=0}^ \nu \frac{C_{h,Z}^{\;2}\,k!\,l! A^{k+l}e^{3h\|\zeta\|}}{(1-\rho+\rho')^l}
\smashoperator[l]{\Sum_{|\alpha|= \nu-k-l}}
 \frac{\,\alpha!\,}
       {\, (\rho-\rho')^{2|\alpha|}\,\|\eta\zeta\|^{k+l+|\alpha|}\,}
\\
 & \leqslant  
 \frac{\,2^{n-1} C_{h,Z}^{\;2}\,\nu! B^\nu e^{3\|\zeta\|}\,}       {\,\|\eta\zeta\|^{\nu}\,}
\smashoperator{\Sum_{k+l=0}^ \nu} C^{k+l}
\leqslant  
 \frac{\,2^{n-1} C_{h,Z}^{\;2}\,\nu!  B^\nu e^{3\|\zeta\|}\,}{(1-C)^2\,\|\eta\zeta\|^\nu}.
\end{align*}
Assume that 
$P(t;z,\zeta,\eta) \in \widehat{\mathfrak{N}}^{}_{\textrm{cl}}(\varOmega; S)$.  
Then by \eqref{S.eq3.7} for any   $m\in \mathbb{N}$ we have
\[
 \biggl|\Sum_{k=0}^{m-1}\partial_z^{\alpha}P^{}_k(z,\zeta,\eta)\biggr|
\leqslant 
  \frac{\, C_{h,Z}\,\alpha!\, m!A^{m}e^{2\|\zeta\|}\,}
       {(\rho-\rho')^{|\alpha|}\,\|\eta\zeta\|^{m}\,}
\]
Then on   $\varOmega^{}_ {\rho'}[d^{}_{\rho'}] \times Z$ we have 
\allowdisplaybreaks
\begin{align*}
\Bigl|&\Sum_{\nu=0}^{m-1} R^{}_ \nu(z,\zeta,\eta)\Bigr|  
=\Bigl|\Sum_{l+|\alpha|=0}^{m-1} \frac{1}{\,\alpha!\,}\,
         \partial_{\zeta}^{\,\alpha}Q^{}_l(z,\zeta,\eta) \smashoperator[r]{\Sum_{k=0}^{m-l-1-|\alpha|}}
 \partial_z^{\alpha}P^{}_k(z,\zeta,\eta)\Bigr| 
\\
& \leqslant  \smashoperator[r]{\Sum_{l+|\alpha|=0}^{m-1}} \,\,
  \frac{\,C_{h,Z}^{\;2}\,\alpha!\, l!\,(m-l-|\alpha|)!A^{m-|\alpha|} e^{3h\|\zeta\|}\,}
       {\|\eta\zeta\|^{m}(1-\rho+\rho')^{l}(\rho -\rho')^{2|\alpha|}}
\\
& \leqslant \frac{\,2^{n-1} C_{h,Z}^{\;2}\,m! B^{m}e^{3h\|\zeta\|}\,}
       {\|\eta\zeta\|^{m}\,}\smashoperator{\Sum_{l=0}^{m-1}}C^l
\smashoperator[r]{\Sum_{i=0}^{m-l-1}}\,((1-\rho+\rho')C)^{m-l-i} 
\leqslant  \frac{\,2^{n-1}C C_{h,Z}^{\;2}\,m! B^{m}e^{3h\|\zeta\|}\,}
       {\,(1-C)^2\|\eta\zeta\|^{m}\,}\,.
\end{align*}
Therefore 
 $Q\circ P(t;z,\zeta,\eta) \in \widehat{\mathfrak{N}}^{}_{\textrm{cl}}(\varOmega; S)$. 
For the same reasoning, we can show that if 
$Q(t;z,\zeta,\eta) \in \widehat{\mathfrak{N}}^{}_{\textrm{cl}}(\varOmega; S)$, 
we have
$Q\circ P(t;z,\zeta,\eta) \in \widehat{\mathfrak{N}}^{}_{\textrm{cl}}(\varOmega; S)$.
In particular, since 
\[
\partial_{\eta} (Q\circ P )(t;z,\zeta,\eta) = \partial_{\eta} Q\circ P (t;z,\zeta,\eta) 
+ Q\circ  \partial_{\eta} P (t;z,\zeta,\eta), 
\]
we see that if $P(t;z,\zeta,\eta)$, $Q(t;z,\zeta,\eta) \in 
\widehat{\mathfrak{S}}^{}_{\textrm{cl}}(\varOmega; S)$, 
we have 
$Q\circ P(t;z,\zeta,\eta) \in \widehat{\mathfrak{S}}^{}_{\textrm{cl}}(\varOmega; S)$.

The proof of (2)  is easy. 

(3) We may show the equality  around   each point $(\varPhi^{-1} (z^{}_0),
{}^{\mathsf{t}}\!\bigl[\dfrac{\partial z}{\partial w}(w^{}_0)\bigr]\zeta^{}_0)= 
(w^{}_0, \lambda^{}_0) $ as  a formal series. 
Set $z:= \varPhi(w)$. 
We remark that 
\[ 
\partial_{\lambda'}^{\,\alpha}
 e^{\langle J^{*}_{\varPhi}(z +z'',z)z'', \lambda +\lambda'\rangle}
=  e^{\langle J^{*}_{\varPhi}(z +z'',z)z'', \lambda +\lambda'\rangle}
 \,(\varPhi^{-1}(z +z'')- w)^{\alpha},
\]
hence as a formal power series with respect to $t$,  we have
\[ 
t^{|\alpha|}\partial_{\lambda'}^{\,\alpha}
 e^{\langle J^{*}_{\varPhi}(z+tz'',z)z'', \lambda +\lambda'\rangle}
=  e^{\langle J^{*}_{\varPhi}(z +tz'',z)z', \lambda +\lambda'\rangle}
 \,(\varPhi^{-1}(z +tz'')- w)^{\alpha}.
\] 
Further since
\[
J^{*}_{\varPhi}(z +z'',z)z''+
J^{*}_{\varPhi}(z +z''+z',z+z'')z'=\varPhi^{-1}(z+z''+z')- \varPhi^{-1}(z)
= J^{*}_{\varPhi}(z +z''+z',z)(z''+z'),
\]
as a formal power series with respect to $t$,  we have 
\[ 
J^{*}_{\varPhi}(z+tz'',z)z''+
J^{*}_{\varPhi}(z+tz''+tz',z+z'')z'
\\
= J^{*}_{\varPhi}(z +tz''+tz',z)(z''+z').
\] 
Therefore we have 
\allowdisplaybreaks
\begin{align*}
\varPhi^*Q\circ\varPhi ^*P&(t;w,\lambda,\eta)
\\
={}& e^{t\langle \partial_{\lambda'},\partial_{\widetilde{w}}\rangle}
 e^{\langle \partial_{\zeta''},\partial_{z''}\rangle}
 Q(t; z,\zeta^{}_0+\zeta'',\eta)\, e^{\langle J^{*}_{\varPhi}(z +tz'',z)z'', \lambda +\lambda'\rangle}
e^{-\langle z'',\zeta^{}_0\rangle}
\\*
& \cdot
e^{\langle \partial_{\zeta'},\partial_{z'}\rangle}
 P(t; \varPhi(w+ \widetilde{w}),\zeta^{}_0+\zeta',\eta)\, e^{\langle J^{*}_{\varPhi}(\varPhi(w+ \widetilde{w}) +tz',\varPhi(w+ \widetilde{w}))z', \lambda \rangle}
e^{-\langle z',\zeta^{}_0\rangle}
\big{|}{}_{\atop{z'=z''= \widetilde{w} =0}{\zeta'=\zeta''=\lambda'=0}}  
\\
={}&  e^{\langle \partial_{\zeta''},\partial_{z''}\rangle}\Sum_{\alpha}
Q(t; z,\zeta^{}_0+\zeta'',\eta)\, 
e^{\langle J^{*}_{\varPhi}(z +tz'',z)z'', \lambda \rangle}
e^{-\langle z'',\zeta^{}_0\rangle}
\,
\frac{(\varPhi^{\,-1}(z +tz'')-w)^{\alpha}}{\alpha!}
\\*
&\cdot
\partial_{w}^{\,\alpha}(
e^{\langle \partial_{\zeta'},\partial_{z'}\rangle}
 P(t; \varPhi(w),\zeta^{}_0+\zeta',\eta)\, e^{\langle J^{*}_{\varPhi}(\varPhi(w) +tz',\varPhi(w))z', \lambda \rangle}
e^{-\langle z',\zeta^{}_0\rangle}
\big{|}{}_{\atop{z'=z''=0}{\zeta'=\zeta''=0}} 
\\
={}& 
e^{\langle \partial_{\zeta''},\partial_{z''}\rangle}(
Q(t; z,\zeta^{}_0+\zeta'',\eta)\, 
e^{\langle J^{*}_{\varPhi}(z+tz'',z)z'', \lambda \rangle}
e^{-\langle z'',\zeta^{}_0\rangle}
\\*
& \cdot
e^{\langle \partial _{\zeta' },\partial_{z'}\rangle}
P(t; z +tz'',\zeta^{}_0+\zeta' ,\eta)\,
e^{\langle J^{*}_{\varPhi}(z+tz''+tz',z+tz'')z', \lambda \rangle}
e^{-\langle z',\zeta^{}_0\rangle}
\big{|}{}_{\atop{z'=z''=0}{\zeta'=\zeta''=0}} 
\\
={}& e^{\langle \partial_{\zeta' },\partial_{z'}\rangle}
e^{\langle \partial_{\zeta''},\partial_{z''}\rangle} 
Q(t; z ,\zeta^{}_0+\zeta'',\eta)\,P(t; z +tz'',\zeta^{}_0+\zeta' ,\eta)
\\* &\cdot
e^{\langle J^{*}_{\varPhi}(z +tz''+tz',z)(z''+z'), \lambda \rangle}
e^{-\langle z''+z',\zeta^{}_0\rangle}
\big{|}{}_{\atop{z'=z''=0}{\zeta'=\zeta''=0}}  
\\
={}& \Sum_{\alpha,\beta,\gamma} \frac{t^{|\alpha|}}{\alpha!\,\beta!\,\gamma!\,}\,
\partial_{\zeta'}^{\alpha+\beta} Q(t; z ,\zeta^{}_0+\zeta',\eta)\,\partial_{z}^{\,\alpha}
\partial_{\zeta'}^{\,\gamma} P(t; z,\zeta^{}_0+\zeta' ,\eta)
\\*& \cdot
\partial_{z'}^{\,\gamma}\partial_{z''}^{\,\beta}
e^{\langle J^{*}_{\varPhi}(z +tz''+tz',z)(z''+z'), \lambda \rangle}
e^{-\langle z''+z',\zeta^{}_0\rangle}
\big{|}{}_{\atop{z'=z''=0}{\zeta'=0}}
\\
={}& \Sum_{\beta,\gamma} \frac{1}{(\beta+\gamma)!\,}\,
\partial_{\zeta}^{\beta+\gamma}(Q \circ P)(t; z,\zeta^{}_0+\zeta' ,\eta)\,
\partial_{z'}^{\beta+\gamma}e^{\langle J^{*}_{\varPhi}(z+tz',z)z', \lambda \rangle}
e^{-\langle z',\zeta^{}_0\rangle}
\big{|}{}_{\atop{z'=0}{\zeta'=0}}
\\
={}& 
 e^{\langle \partial_{\zeta' },\partial_{z'}\rangle}(Q\circ P)(t;z,\zeta^{}_0+\zeta',\eta)\,
e^{\langle J^{*}_{\varPhi}(z+tz',z)z', \lambda \rangle}
e^{-\langle z',\zeta^{}_0\rangle}\big{|}{}_{\atop{z'=0}{\zeta'=0}}
\\
= {}&
  \varPhi^*(Q\circ P)(t;w,\lambda,\eta).
\qedhere
\end{align*}
\end{proof}
\begin{defn}
For any
 $\wick{P(t;z,\zeta,\eta)}$, $\wick{Q(t;z,\zeta,\eta)}\in 
\widehat{\mathfrak{S}}^{}_{\textrm{cl}}(\varOmega; S)\big/\widehat{\mathfrak{N}}^{}_{\textrm{cl}}(\varOmega; S)$, we define the product by:
\[
   \wick{Q(t;z,\zeta ,\eta)} \, \wick{P(t;z,\zeta ,\eta)}
:=\wick{Q\circ P(t;z,\zeta ,\eta)}.
\]
\end{defn}
\begin{thm}\label{S.thm3.17}
For any $P(t;z,\zeta,\eta)\in 
\widehat{\mathfrak{S}}^{}_{\rm cl}(\varOmega; S)$, set 
\[
P^*(t;z,\zeta,\eta):=e^{t\langle \partial_\zeta,\partial_z \rangle}
 P(t;z,-\zeta,\eta). 
\]

$(1)$
 $P^*(t;z,\zeta,\eta)\in  
\widehat{\mathfrak{S}}^{}_{\rm cl}(\varOmega^a; S)$,  where $\varOmega^a:=\{(z;\zeta);\,
(z;-\zeta)\in \varOmega\}$, and   $P^{**}=P$. 
Moreover if  $P(t;z,\zeta,\eta)\in \widehat{\mathfrak{N}}^{}_{\rm cl}(\varOmega; S) $, 
it follows that  $P^*(t;z,\zeta,\eta)\in \widehat{\mathfrak{N}}^{}_{\rm cl}(\varOmega^a; S) $.  

$(2)$  $
(Q \circ P)^*(t;z,\zeta,\eta)= P^* \circ Q^*(t;z,\zeta,\eta)$. 

$(3)$  Let $\varPhi(w)=z$ be a holomorphic coordinate transformation. Then  it follows that 
on $\widehat{\mathfrak{S}}^{}_{\rm cl}(\varOmega^a;S) \smashoperator{\tens_{\mathscr{O}^{}_{X}}} \varOmega^{}_{X}$,
\begin{equation}\label{eq6.11}
\varPhi^*(P^*(t;z,\zeta,\eta) \tens dz)= \varPhi^*(P(t;z,\zeta,\eta) \tens dz)^*.
\end{equation}
Here $\varPhi^*$ also stands for  the pull-back of differential forms, and   \eqref{eq6.11} 
 means that 
\[
\det \dfrac{\partial z}{\partial w}\,\varPhi^*(P^*) = (\det \dfrac{\partial z}{\partial w} \varPhi^* P)^* =
(\varPhi^* P)^* \circ\det \dfrac{\partial z}{\partial w}\,.
\]
\end{thm}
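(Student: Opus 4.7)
The proof follows the pattern of Theorems \ref{S.thm3.11} and \ref{S.thm3.15}: analytic estimates via Lemma \ref{S.lem3.14}, then purely formal verification of algebraic identities by Taylor/Leibniz expansion. The strategy is to treat (1) as a standard symbol-class estimate, and (2), (3) as formal power series manipulations.

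\medskip

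\noindent\textbf{Part (1).} Write $P^*(t;z,\zeta,\eta)=\sum_{\nu=0}^\infty t^\nu P^*_\nu(z,\zeta,\eta)$, where
\[
P^*_\nu(z,\zeta,\eta)=\smashoperator{\Sum_{k+|\alpha|=\nu}}\frac{1}{\alpha!}\,
(\partial_\zeta^{\,\alpha}\partial_z^{\,\alpha}P^{}_k)(z,-\zeta,\eta).
\]
Apply Lemma \ref{S.lem3.14}(2) (case $\nu=0$) to the symbol estimate $|P^{}_k|\leqslant C_{h,Z}k!A^k e^{h\|\zeta\|}/\|\eta\zeta\|^k$ to bound $|\partial_\zeta^{\,\alpha}\partial_z^{\,\alpha}P^{}_k|$, then sum over $k+|\alpha|=\nu$ exactly as in the proof of Theorem \ref{S.thm3.15}, choosing $\rho'$ so that $A(\rho-\rho')^2/(1-\rho+\rho')<1$ and using $\alpha!k!\leqslant\nu!$ together with $\sum_{|\alpha|=j}\alpha!\leqslant 2^{n+j-1}j!$. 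For the null-symbol implication, observe that
\[
\smashoperator{\Sum_{\nu=0}^{m-1}}P^*_\nu(z,\zeta,\eta)=\smashoperator{\Sum_{|\alpha|\leqslant m-1}}\frac{1}{\alpha!}\,\partial_\zeta^{\,\alpha}\partial_z^{\,\alpha}\Bigl[\smashoperator{\Sum_{k=0}^{m-1-|\alpha|}}P^{}_k\Bigr](z,-\zeta,\eta),
\]
and apply Lemma \ref{S.lem3.14} to the partial sums of $P$. Condition (iii) in Definition of $\widehat{\mathfrak S}_{\rm cl}$ is immediate from $\partial_\eta P^*=(\partial_\eta P)^*$. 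Involutivity $P^{**}=P$ reduces, after using $(\partial_\zeta^{\,\beta}f)(z,-\zeta)=(-1)^{|\beta|}\partial_\zeta^{\,\beta}[f(z,-\zeta)]$, to the multinomial identity
\[
\smashoperator{\Sum_{\alpha+\beta=\gamma}}\binom{\gamma}{\alpha}(-1)^{|\beta|}=\prod_i(1-1)^{\gamma_i}=\delta^{}_{\gamma,0}.
\]

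\medskip

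\noindent\textbf{Part (2).} This is the main obstacle. I will expand both sides as formal power series in $t$ and match coefficients via multi-Leibniz. Writing $\tilde Q(z,\zeta):=Q(z,-\zeta)$ and $\tilde P(z,\zeta):=P(z,-\zeta)$, one checks that
\[
(Q\circ P)(z,-\zeta,\eta)=\smashoperator{\Sum_\alpha}\frac{(-t)^{|\alpha|}}{\alpha!}\,\partial_\zeta^{\,\alpha}\tilde Q\cdot\partial_z^{\,\alpha}\tilde P.
\]
Apply $e^{t\langle\partial_\zeta,\partial_z\rangle}$ to both sides, expand the exponential via the ordinary and mixed Leibniz rules for $\partial_\zeta^{\,\beta}\partial_z^{\,\beta}$ applied to a product, and compare with the expansion of $P^*\circ Q^*$ obtained from $\partial_\zeta^{\,\gamma}P^*=(-1)^{|\gamma|}\sum_\mu\frac{t^{|\mu|}}{\mu!}(\partial_\zeta^{\,\mu+\gamma}\partial_z^{\,\mu}P)(z,-\zeta)$ and the analogous formula for $\partial_z^{\,\gamma}Q^*$. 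The coefficient matching is encoded by the change of indices $\nu=\alpha+\beta_1$, $\mu=\alpha+\beta_4$, $\gamma=\beta_3-\beta_1=\beta_2-\beta_4$ together with the telescoping identity $\sum_{\alpha+\beta=\gamma}\binom{\gamma}{\alpha}(-1)^{|\beta|}=\delta^{}_{\gamma,0}$ used in Part (1). The expected difficulty is purely combinatorial bookkeeping; no new analytic input is needed. (A more conceptual route---invoking $(\hat Q\hat P)^*=\hat P^*\hat Q^*$ for the underlying operators via Theorem \ref{th:action-commutative}---is tempting but not yet available at this point of the exposition, since the compatibility of formal adjoints of symbols with operator adjoints has not been established.)

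\medskip

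\noindent\textbf{Part (3).} The Jacobian factor $\det(\partial z/\partial w)$ is essential because the formal adjoint depends on the reference density: if we pair against $dw$ instead of $dz$, the adjoint operation is conjugated by this determinant. Expand $\varPhi^*(P^*\tens dz)=\det(\partial z/\partial w)\varPhi^*(P^*)$ using the definition of $\varPhi^*$ from Section \ref{sec:cl-formal-symbol} together with the definition of $P^*$, and expand $\varPhi^*(P\tens dz)^*=(\det(\partial z/\partial w)\,\varPhi^*P)^*$, which by Part (2) equals $(\varPhi^*P)^*\circ\det(\partial z/\partial w)$. Then verify the identity coefficient-by-coefficient as a formal power series in $t$, using Lemma \ref{S.lem3.12} and the ``formal Taylor'' manipulations of Remark \ref{S.rem3.13} as in the proof of Theorem \ref{S.thm3.11}(2). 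The hard part is again tracking the combinatorics, in particular showing that the extra Jacobian factor on each side cancels consistently after substitution $\zeta\mapsto-\zeta$; this is where associativity of $\circ$ (part (2) of Theorem \ref{S.thm3.15}) and the compatibility $\varPhi^*Q\circ\varPhi^*P=\varPhi^*(Q\circ P)$ (part (3) of the same theorem) enter decisively.
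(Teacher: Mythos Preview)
Your Part (1) matches the paper. Note that the paper dispatches $P^{**}=P$ in one line by observing that the substitution $\zeta\mapsto-\zeta$ flips the sign of $\langle\partial_\zeta,\partial_z\rangle$, so $P^{**}=e^{t\langle\partial_\zeta,\partial_z\rangle}e^{-t\langle\partial_\zeta,\partial_z\rangle}P=P$; this is your multinomial identity repackaged as an operator cancellation.

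For Part (2) your coefficient-matching plan is viable in principle, but the paper avoids all the bookkeeping by staying at the level of exponential operators with auxiliary variables. After writing $(Q\circ P)(z,-\zeta,\eta)=e^{-t\langle\partial_{\zeta''},\partial_{z'}\rangle}Q(t;z,-\zeta-\zeta'',\eta)\,P(t;z+z',-\zeta,\eta)\big|_{z'=\zeta''=0}$, one introduces independent variables $z',z'',\zeta',\zeta''$ so that the outer $e^{t\langle\partial_\zeta,\partial_z\rangle}$ acts as $e^{t\langle\partial_{\zeta'}+\partial_{\zeta''},\,\partial_{z'}+\partial_{z''}\rangle}$. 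The four exponential factors commute, one cancels $e^{-t\langle\partial_{\zeta''},\partial_{z'}\rangle}$, two combine into $P^*$ and $Q^*$ separately, and the surviving cross factor $e^{t\langle\partial_{\zeta'},\partial_{z''}\rangle}$ is exactly the composition rule for $P^*\circ Q^*$. No multinomial identities enter.

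For Part (3) your sketch misses the organizing idea. The paper does \emph{not} attempt a direct coefficient-by-coefficient verification; it proceeds in three steps. (i) Check \eqref{eq6.11} by hand for the generators $a(z,\eta)$ (trivial) and $\zeta_i$. The $\zeta_i$ case is the substantive computation: one must show
\[
\Sum_{k=1}^n\Bigl(\frac{\partial\det J}{\partial w_k}\,\frac{\partial w_k}{\partial z_i}
+\det J\,\frac{\partial}{\partial w_k}\Bigl(\frac{\partial w_k}{\partial z_i}\Bigr)\Bigr)=0,
\qquad J=\frac{\partial z}{\partial w},
\]
via the logarithmic-derivative formula for $\det J$ and the derivative of $J^{-1}$. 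This Jacobian identity, not combinatorics, is the heart of the proof. (ii) Show that if $P$ and $Q$ satisfy \eqref{eq6.11} then so does $Q\circ P$, using Part (2) of the present theorem together with Theorem \ref{S.thm3.15}(3). (iii) Taylor-expand a general $P$ as $\sum_\alpha p_\alpha(t;z,\zeta_0,\eta)\circ(\zeta-\zeta_0)^\alpha$ and reduce to (i) and (ii). You allude to the ingredients of step (ii), but without the reduction (i)$+$(iii) and the Jacobian identity there is no clear path to completing the argument.
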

\begin{proof}
(1) For $P(t;z,\zeta,\eta)  = \Sum_{i=0}^\infty t^i  P^{}_i(z,\zeta,\eta)$, 
we set  $P^*(t;z,\zeta ,\eta) =\Sum\limits_{i=0}^\infty t^j  P^{*}_i(z,\zeta ,\eta)$. Then
\[
P^{*}_i(z,\zeta ,\eta)
 =\Sum_{|\alpha|+k=i} 
  \frac{1}{\alpha!}\,\partial_\zeta^{\,\alpha}\,
 \partial_z^{\,\alpha}P^{}_k(t;z,-\zeta ,\eta).
\]
As in the proof of Theorem  \ref{S.thm3.15},  we can show 
$P^*(t;z,\zeta,\eta)\in  
\widehat{\mathfrak{S}}^{}_{\rm cl}(\varOmega^a; S)$, 
and   that if  $P(t;z,\zeta)\in \widehat{\mathfrak{N}}^{}_{\rm cl}(\varOmega; S) $, 
we have   $P^*(t;z,\zeta)\in \widehat{\mathfrak{N}}^{}_{\rm cl}(\varOmega^a; S) $.  
Moreover
\begin{align*}
P^{**}(t;z,\zeta,\eta)
 &=e^{t\langle \partial_\zeta,\partial_z \rangle}P^*(t;z,-\zeta ,\eta)
=e^{t\langle \partial_\zeta,\partial_z \rangle}e^{-t\langle \partial_\zeta,\partial_z \rangle}P(t;z,\zeta ,\eta)
= P(t;z,\zeta,\eta).
\end{align*}

(2) We have
\begin{align*}
(Q &\circ P)^*(t;z,\zeta,\eta)  = e^{t\langle \partial_\zeta,\partial_z \rangle}(Q \circ P(t;z,-\zeta,\eta))
\\
&
=   e^{t\langle \partial_\zeta,\partial_z \rangle}(e^{-t\langle \partial_{\zeta''},\partial_{z'} \rangle}
 Q(t;z,-\zeta-\zeta'', \eta)\,P(t;z+z', -\zeta,\eta)\big{|}{}_{\atop{z'=0}{\zeta''=0}})
\\
&
=   e^{t\langle \partial_{\zeta'}+ \partial_{\zeta''},\partial_{z'}+ \partial_{z''} \rangle}e^{-t\langle \partial_{\zeta''},\partial_{z'} \rangle}
 Q(t;z+z'',-\zeta-\zeta'', \eta)\,P(t;z+z', -\zeta-\zeta',\eta)\big{|}{}_{\atop{z'=z''=0}{\zeta'=\zeta''=0}}
\\
&
=   e^{t\langle \partial_{\zeta'}, \partial_{z''} \rangle}e^{t\langle \partial_{\zeta'},\partial_{z'} \rangle}
P(t;z+z', -\zeta-\zeta',\eta)
e^{t\langle  \partial_{\zeta''}, \partial_{z''} \rangle}
 Q(t;z+z'',-\zeta-\zeta'', \eta)\,
\big{|}{}_{\atop{z'=z''=0}{\zeta'=\zeta''=0}}
\\
&
=   e^{t\langle \partial_{\zeta'}, \partial_{z''} \rangle}
P^*(t;z+z', \zeta+\zeta',\eta)\,
 Q^*(t;z+z'',\zeta+\zeta'', \eta)\,
\big{|}{}_{\atop{z'=z''=0}{\zeta'=\zeta''=0}}
\\
&
=   e^{t\langle \partial_{\zeta'}, \partial_{z''} \rangle}
P^*(t;z, \zeta+\zeta',\eta)\,
 Q^*(t;z+z'',\zeta, \eta)\,
\big{|}{}_{\atop{z''=0}{\zeta'=0}} =P^* \circ Q^*(t;z,\zeta,\eta).
\end{align*}

(3) (i) For any holomorphic function $a(z,\eta)$, we have 
\allowdisplaybreaks
\begin{align*}
\varPhi^*(a(z,\eta)^* \tens dz)
&=\varPhi^*(a(z,\eta) \tens dz) = a(\varPhi(z),\eta) 
 \tens \det \dfrac{\partial z}{\partial w}\,dw 
= a(\varPhi(z),\eta) \det \dfrac{\partial z}{\partial w}
 \tens dw, 
\\
\varPhi^*(a(z,\eta)\tens dz)^*&
= ( a(\varPhi(z),\eta) \det \dfrac{\partial z}{\partial w})^*\tens dw =
a(\varPhi(z),\eta) \det \dfrac{\partial z}{\partial w}\tens dw .
\end{align*}
Set $J:= \dfrac{\partial z}{\partial w}$ for short. 
Since  $\varPhi^*(\zeta^{}_i) = \Sum_{k=1}^n \dfrac{\partial w^{}_k}{\partial z^{}_i}\,\lambda^{}_k$, 
we have 
\allowdisplaybreaks
\begin{align*}
\varPhi^*((\zeta^{}_i)^* \tens dz)
&= -\varPhi^*(\zeta^{}_i \tens dz) = -
\Sum_{k=1}^n \dfrac{\partial w^{}_k}{\partial z^{}_i}\,\lambda^{}_{k} 
 \tens \det \dfrac{\partial z}{\partial w}\,dw 
=-\det J
\Sum_{k=1}^n \dfrac{\partial w^{}_k}{\partial z^{}_i}\,\lambda^{}_{k} \tens dw ,
\\
\varPhi^*(\zeta^{}_i\tens dz)^*&
= (\det J \Sum_{k=1}^n \dfrac{\partial w^{}_k}{\partial z^{}_i}\,\lambda^{}_{k} )^*\tens dw 
\\
& =
-\Sum_{k=1}^n \!\Bigl(\det J\,\dfrac{\partial w^{}_k}{\partial z^{}_i}\,\eta^{}_{k} 
+\frac{\partial  \det J}{\partial w^{}_k}\,
\dfrac{\partial w^{}_k}{\partial z^{}_i}
+\det J \dfrac{\partial }{\partial w^{}_k}\Bigl(
\frac{\partial w^{}_k}{\partial z^{}_i}\Bigr)\Bigr) \otimes dw.
\end{align*}
Here we remark 
\[
\frac{\partial \det J}{\partial w^{}_k}
=\det J\operatorname{Tr} \Bigl( J^{\,-1}
\frac{\partial J}{\partial w^{}_k}\Bigr),
\quad
\dfrac{\partial  J^{\,-1}}{\partial w^{}_k} =- J^{\,-1} \dfrac{\partial J}{\partial w^{}_k}
J^{\,-1}.
\] 
Hence we have
\begin{align*}
\Sum_{k=1}^n \!\Bigl(\frac{\partial  \det J}{\partial w^{}_k}\,
\dfrac{\partial w^{}_k}{\partial z^{}_i}
+ \det J& \dfrac{\partial}{\partial w^{}_k}\Bigl(
\frac{\partial w^{}_k}{\partial z^{}_i}\Bigr)\Bigr)
\\
&
=
\det J \Sum_{k=1}^n
\Bigl( \operatorname{Tr}\Bigl(J^{\,-1}
\frac{\partial  J}{\partial w^{}_k}\Bigr)\dfrac{\partial w^{}_k}{\partial z^{}_i} 
-\Sum_{\mu,\nu=1}^n\frac{\partial w^{}_{k}}{\partial z^{}_\mu}
\frac{\partial^2 z^{}_{\mu }}{\partial w^{}_k\partial w^{}_\nu}\frac{\partial w^{}_{\nu}}{\partial z^{}_i}\Bigr)
\\
&= 
\det J\Sum_{k,\mu,\nu=1}^n\!
 \Bigl(\frac{\partial w^{}_\nu}{\partial z^{}_\mu}
\dfrac{\partial^2 z^{}_{\mu}}{\partial w^{}_\nu \partial w^{}_k}\frac{\partial w^{}_k}{\partial z^{}_i}
-\frac{\partial w^{}_k}{\partial z^{}_\mu} \dfrac{\partial^2 z^{}_{\mu}}{\partial w^{}_k \partial w^{}_\nu} 
\frac{\partial w^{}_\nu}{\partial z^{}_i}\Bigr)=0.
\end{align*}
Therefore $\varPhi^*((\zeta^{}_i)^* \tens dz)=\varPhi^*(\zeta^{}_i\tens dz)^*$. 

(ii)  If $P$ and $Q$ satisfy \eqref{eq6.11}, then
\begin{align*}
\varPhi^*&((Q\circ P)^*\otimes dz)= \varPhi^*(P^* \circ  Q^*\otimes dz) =
\varPhi^*(P^*) \circ \varPhi^*(Q^*) \tens \det \dfrac{\partial z}{\partial w}\,dw
\\
& =\det \dfrac{\partial z}{\partial w}\,\varPhi^*(P^*)\circ\varPhi^*( Q^*) \tens dw
=  (\varPhi^* P)^*\circ \det \dfrac{\partial z}{\partial w}\,\varPhi^*(Q^*) \tens dw
\\
&= (\varPhi^* P)^* \circ \varPhi^*(Q^*) \circ  \det \dfrac{\partial z}{\partial w} \tens dw
= \varPhi^* (Q\circ P)^*  \circ  \det \dfrac{\partial z}{\partial w}   \tens dw
\\
&= (\det \dfrac{\partial z}{\partial w}\,\varPhi^*(Q \circ P))^*  \tens dw
= (\det \dfrac{\partial z}{\partial w}\,\varPhi^*(Q \circ  P)  \tens dw)^*
= \varPhi^*( Q \circ P\otimes dz)^*
\end{align*}

(iii) Take any point $(z^{}_0;\zeta^{}_0)\in \varOmega^{}_{\rho}$ and consider the Taylor expansion
\[
P(t;z,\zeta,\eta) =\Sum_{\alpha} p^{}_{\alpha}(t;z,\zeta^{}_0,\eta)\,(\zeta-\zeta^{}_0)^{\alpha}
=\Sum_{\alpha} p^{}_{\alpha}(t;z,\zeta^{}_0,\eta) \circ (\zeta-\zeta^{}_0)^{\alpha}.
\]
We may prove \eqref{eq6.11} in  a  formal  sense. 
Then by  induction and (i),  (ii), we obtain
\begin{align*}
\varPhi^* (P^*\tens dz) &=  \Sum_{\alpha}
\varPhi^* ((p^{}_{\alpha}(t;z,\zeta^{}_0,\eta) \circ (\zeta-\zeta^{}_0)^{\alpha })^* \tens dz)
\\
&=\Sum_{\alpha} \varPhi^* (p^{}_{\alpha}(t;z,\zeta^{}_0,\eta) \circ (\zeta-\zeta^{}_0)^{\alpha } \tens dz)^*
=
\varPhi^*(P\otimes dz)^*.
\qedhere
 \end{align*}
\end{proof}
 \begin{defn}
For any
 $\wick{P(t;z,\zeta,\eta)}\in 
\widehat{\mathfrak{S}}^{}_{\textrm{cl}}(\varOmega; S)
\big/\widehat{\mathfrak{N}}^{}_{\textrm{cl}}(\varOmega; S)$, we define the formal adjoint by 
\[
 (\wick{P(t;z,\zeta, \eta)})^*:=\wick{P^*(t;z,\zeta ,\eta)} \in 
\widehat{\mathfrak{S}}^{}_{\textrm{cl}}(\varOmega^a; S)
\big/\widehat{\mathfrak{N}}^{}_{\textrm{cl}}(\varOmega^a; S).
\]
\end{defn}

\begin{rem}
We identify $X$ with the diagonal set of $X \times X$. Then the 
sheaf  $\mathscr{D}^\infty_{X}$ of  holomorphic differential operators of infinite order on $X$ is 
defined by
\[
\mathscr{D}^\infty_{X}:= H_{X}^{n}(\varOmega^{\smash{(0,n)}}_{X\times X}).
\]  
Recall that for any open subset  $U\subset X$, a section 
\smash{$P(z,\partial^{}_z) =\Sum_{\alpha}a^{}_\alpha(z)\,\partial_z^{\alpha}\in  \varGamma(U;
\mathscr{D}^{\infty}_{X})$} is given by the following equivalent conditions:
\par
$(1)$ For any  $W \Subset U$ and $h>0$ there exists $C>0$  such that 
\[
\sup\limits_{z\in W} |a^{}_\alpha(z)| \leqslant \frac{C h^{|\alpha|}}{|\alpha|!}\,.
\]

$(2)$  Set $P(z,\zeta) :=\Sum_{\alpha}a^{}_\alpha(z)\,\zeta^{\,\alpha}$.
For any  $W \Subset U$ and $h>0$ there exists $C>0$  such that 
\[
\sup_{z\in W}|P(z,\zeta)|
  \leqslant 
  C e^{h\|\zeta\|}.
\]
$\mathscr{D}^{\infty}_X$ is a sheaf of rings on $X$, and:
\begin{enumerate}[(1)] 
\item the coordinate transform  is given by
\[
\varPhi^*P(w,\lambda) =  e^{\langle \partial_{\zeta'},\partial_{z'} \rangle}P
(\varPhi(w),\zeta'+{}^{\mathsf{t}}\!J^{*}_{\varPhi}(\varPhi(w) +z', \varPhi(w))\lambda)
\big{|}{}_{\atop{z'=0}{\zeta'=0}}\,,
\]
\item the product is given by 
\[
Q\circ P(z,\zeta)=  e^{\langle \partial_{\zeta'},\partial_{z'} \rangle}
 Q(z,\zeta+\zeta')\,P(z+z', \zeta)\big{|}{}_{\atop{z'=0}{\zeta'=0}}\,,
\]
\item the formal adjoint is given by
\[
P^*(z,\zeta)=e^{\langle \partial_\zeta,\partial_z \rangle}
 P(z,-\zeta). 
\]
\end{enumerate}
From our point of view, 
the sheaf $\mathscr{E}^{\mathbb{R}}_{X}$ on  $T^*X$  can be defined 
 $\mathscr{E}^{\mathbb{R}}_{X}|{}^{}_{X}:=\mathscr{D}^\infty_{X}$,  and on   $\dot{T}^*X$, 
associated conic sheaf with
\[
\mathrm{Op}(\dot{T}^*X) \owns V \mapsto 
\varinjlim_{ S}\mathfrak{S}(\mathbb{R}^{}_{>0}V; S)
\big/\mathfrak{N}(\mathbb{R}^{}_{>0}V; S) =
\mathscr{S}(\mathbb{R}^{}_{>0}V)\big/\mathscr{N}(\mathbb{R}^{}_{>0}V), 
\]
or equivalently 
\[
\mathrm{Op}(\dot{T}^*X) \owns V \mapsto 
\varinjlim_{ S}\widehat{\mathfrak{S}}_{\rm cl}(\mathbb{R}^{}_{>0}V; S)\big/
\widehat{\mathfrak{N}}_{\rm cl}(\mathbb{R}^{}_{>0}V; S) =
\widehat{\mathscr{S}}_{\rm cl}(\mathbb{R}^{}_{>0}V)\big/
\widehat{\mathscr{N}}_{\rm cl}(\mathbb{R}^{}_{>0}V) .
\] 
\end{rem}

\begin{thm} Let $[\psi(z,w,\eta)\,dw]$, $[\varphi(z,w,\eta)\,dw]\in \mathscr{E}^{\smash{\mathbb{R}}}_{X, z^*_0}$.  
Set for short
 \[
\sigma(\psi) \odot\sigma(\varphi)(z,\zeta,\eta):= 
\Sum_{\alpha}\dfrac{1}{\,\alpha!\,} 
\partial_ \zeta ^ \alpha\sigma(\psi) (z,\zeta,\eta) 
\, \partial^ \alpha_z\sigma(\varphi) (z,\zeta,\eta).\]
Then the following hold\textup{:}

$(1)$ 
 $\sigma(\psi)\odot \sigma(\varphi)(z,\zeta,\eta)\in \mathfrak{S}^{}_{z^*_0}$.

$(2)$ $\sigma(\psi)\odot \sigma(\varphi)(z,\zeta,\eta)
-\sigma(\psi)\circ \sigma(\varphi)(t;z,\zeta,\eta)
\in \widehat{\mathfrak{N}}^{}_{{\rm cl},z^*_0}$. 

$(3)$
$\sigma(\mu(\psi\tens \varphi))(z,\zeta,\eta) - \sigma(\psi)\odot \sigma(\varphi)(z,\zeta,\eta)\in 
\mathfrak{N}^{}_{z^*_0}$.

\noindent 
In other words,  the product in  $\mathscr{E}^{\smash{\mathbb{R}}}_{X,z^*_0}$ coincides with 
that  of  the classical symbols given  in Theorem \ref{S.thm3.15} through the symbol mapping $\sigma$. 
\end{thm}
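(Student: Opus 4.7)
The plan is to prove (3) first via an integral identity that expresses the symbol of the kernel composition directly in terms of $\sigma(\psi)$ and $\sigma(\varphi)$; (1) and (2) will follow as corollaries of the remainder estimates.

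Step 1 (integral identity). Starting from
\[
\sigma(\mu(\psi\otimes\varphi))(z,\zeta,\eta)=\smashoperator{\int_{\gamma}}e^{\langle w,\zeta\rangle}\,dw\smashoperator{\int_{\gamma(z,\eta;\varrho,\theta)}}\psi(z,\tilde w,\eta)\,\varphi(\tilde w,z+w,\eta)\,d\tilde w,
\]
I change variables $\tilde w=z+u$ in the inner integral, set $v=w-u$ in the outer, and exchange the order of integration, justified on the pseudoconvex domains $\widehat W^{(*,*)}_{\boldsymbol\kappa}$ by Fubini and the path freedom of Lemma~\ref{lem:path-fundamental}. Recognising the $v$-integral as $\sigma(\varphi)(z+u,\zeta,\eta)$ gives
\[
\sigma(\mu(\psi\otimes\varphi))(z,\zeta,\eta)=\smashoperator{\int_{\gamma(z,\eta;\varrho,\theta)}}\psi(z,z+u,\eta)\,\sigma(\varphi)(z+u,\zeta,\eta)\,e^{\langle u,\zeta\rangle}\,du.
\]

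Step 2 (Taylor expansion in $u$). I Taylor-expand $\sigma(\varphi)(z+u,\zeta,\eta)$ in $u$ about $u=0$ up to order $m$ with Cauchy integral remainder $R_m$, substitute into the identity above, and use the elementary observation
\[
\int\psi(z,z+u,\eta)\,u^\alpha\,e^{\langle u,\zeta\rangle}\,du=\partial_\zeta^\alpha\sigma(\psi)(z,\zeta,\eta)
\]
(differentiating the definition of $\sigma(\psi)$ in $\zeta$ under the integral). This yields
\[
\sigma(\mu(\psi\otimes\varphi))=\sum_{|\alpha|<m}\frac{1}{\alpha!}\,\partial_\zeta^\alpha\sigma(\psi)\,\partial_z^\alpha\sigma(\varphi)+\widetilde R_m,
\]
with $\widetilde R_m:=\int\psi(z,z+u,\eta)\,R_m\,e^{\langle u,\zeta\rangle}\,du$. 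Since $\sigma(\psi),\sigma(\varphi)\in\mathfrak S^{}_{z^*_0}$, the Cauchy estimates of Lemma~\ref{S.lem3.14} give (verbatim as in the proof of Theorem~\ref{S.thm3.15}), for $R_\nu:=\sum_{|\alpha|=\nu}\tfrac{1}{\alpha!}\partial_\zeta^\alpha\sigma(\psi)\partial_z^\alpha\sigma(\varphi)$, the bound $|R_\nu|\le C_{h,Z}\,\nu!\,B^\nu\,e^{h\|\zeta\|}/\|\eta\zeta\|^\nu$; a parallel computation—using that $\partial_z^\alpha\sigma(\varphi)(z+su,\zeta,\eta)$ stays bounded on $s\in[0,1]$ and that $|u|\lesssim|\eta|$ on $\gamma(z,\eta;\varrho,\theta)$—delivers $|\widetilde R_m|\le C_{h,Z}\,m!\,A^m\,e^{h\|\zeta\|}/\|\eta\zeta\|^m$.

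Step 3 (conclusions). For (1) and (3), I choose $m=m(z,\zeta,\eta)$ to be the integer part of $\|\eta\zeta\|/(eA)$; Stirling then turns the bound on $\widetilde R_m$ into $|\widetilde R_m|\le C\,e^{-\delta\|\eta\zeta\|}$, placing $\widetilde R_m\in\mathfrak N^{}_{z^*_0}$. The finite partial sum $P^{(m)}:=\sum_{|\alpha|<m}\tfrac{1}{\alpha!}\partial_\zeta^\alpha\sigma(\psi)\partial_z^\alpha\sigma(\varphi)$ at this optimal $m$ is a genuine holomorphic symbol; a variant of the same estimate shows that two optimal truncations differ by a null symbol, so $\sigma(\psi)\odot\sigma(\varphi)$ is well defined as an element of $\mathfrak S^{}_{z^*_0}/\mathfrak N^{}_{z^*_0}$, giving (1); and the identity $\sigma(\mu(\psi\otimes\varphi))-P^{(m)}=\widetilde R_m$ gives (3). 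For (2), view $\sigma(\psi)\odot\sigma(\varphi)$ as a classical formal symbol of $t$-degree $0$; the partial sums in $t$ of the difference $\sigma(\psi)\odot\sigma(\varphi)-\sigma(\psi)\circ\sigma(\varphi)=P^{(0)}-\sum_\nu t^\nu R_\nu$ truncated at $t$-degree $m-1$ equal $\sigma(\psi)\odot\sigma(\varphi)-\sum_{\nu<m}R_\nu$, and this is bounded by $|\widetilde R_m|+|\sigma(\mu(\psi\otimes\varphi))-P^{(m)}|+(\text{shift between truncations})$, all controlled by the classical bound $C_{h,Z}\,m!\,A^m\,e^{h\|\zeta\|}/\|\eta\zeta\|^m$, which is exactly the defining estimate for $\widehat{\mathfrak N}^{}_{\textrm{cl},z^*_0}$.

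The main obstacle is the bound on $\widetilde R_m$ in Step 2: the Taylor remainder contains derivatives $\partial_z^\alpha\sigma(\varphi)(z+su,\zeta,\eta)$ evaluated at shifted base points, which must be controlled uniformly in $s\in[0,1]$ and $u\in\gamma(z,\eta;\varrho,\theta)$, and integration against the kernel factor $\psi(z,z+u,\eta)$ (whose $u$-behaviour is only known through its cohomology class in $E^{\mathbb R}_X(\boldsymbol\kappa)$) must produce the classical formal-symbol decay $m!\,A^m/\|\eta\zeta\|^m$. The apparent parameter $\eta$ is what makes this work: it sets the diameter of $\gamma(z,\eta;\varrho,\theta)$, yielding a factor $(|\eta|/\delta)^m$ from the Taylor tail, while the oscillatory factor $e^{\langle u,\zeta\rangle}$ together with the symbol estimate $|\sigma(\psi)|\lesssim e^{h|\eta\zeta|}$ contributes the $\|\eta\zeta\|^{-m}$ factor, matching exactly the null-symbol scale upon optimal choice of $m$.
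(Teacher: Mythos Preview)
Your argument has a genuine gap in what (1) actually asks. The quantity $\sigma(\psi)\odot\sigma(\varphi)$ is \emph{defined} as the infinite sum $\sum_\alpha\tfrac{1}{\alpha!}\partial_\zeta^\alpha\sigma(\psi)\,\partial_z^\alpha\sigma(\varphi)$, and (1) requires this sum to converge to an honest holomorphic function in $\mathfrak{S}_{z^*_0}$. Your Step~3 instead treats it as a formal object to be resummed: you pick $m=m(\zeta,\eta)$ optimally and declare $P^{(m)}$ to be ``a genuine holomorphic symbol''. But with $m$ depending on $(\zeta,\eta)$, neither $P^{(m)}$ nor $\widetilde R_m$ is holomorphic, so the assertion $\widetilde R_m\in\mathfrak{N}_{z^*_0}$ does not parse, and the conclusion you draw is that $\sigma(\psi)\odot\sigma(\varphi)$ is well defined in the \emph{quotient} $\mathfrak{S}_{z^*_0}/\mathfrak{N}_{z^*_0}$, which is not the statement.

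The point you are missing (though you brush against it in your final paragraph) is a sharper termwise bound. Differentiating the defining integral of $\sigma(\psi)$ in $\zeta$ brings down $w^\alpha$; since $|w_i|\lesssim s'|\eta|$ on $\gamma(0,\eta;\varrho,\theta)$ one gets
\[
|\partial_\zeta^\alpha\sigma(\psi)(z,\zeta,\eta)|\le C_{h,Z}\,(s'|\eta|)^{|\alpha|}\,e^{h\|\zeta\|}
\]
with \emph{no factorial}. Paired with the ordinary Cauchy bound $|\partial_z^\alpha\sigma(\varphi)|\le C_{h,Z}\,\alpha!\,R^{-|\alpha|}e^{h\|\zeta\|}$, the general term of the sum is dominated by $C_{h,Z}^{2}(s'|\eta|/R)^{|\alpha|}e^{2h\|\zeta\|}$, a geometric series once $S$ is shrunk so that $rs'<R$. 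This proves (1) outright; the same bound applied to $\partial_\eta$ gives the null estimate for $\partial_\eta(\sigma(\psi)\odot\sigma(\varphi))$. Your Step~2 bound $|R_\nu|\le C_{h,Z}\nu!B^\nu e^{h\|\zeta\|}/\|\eta\zeta\|^\nu$ is correct but far too weak for (1); it is exactly what one needs for (2), where the paper indeed uses Lemma~\ref{S.lem3.14} on the $\zeta$-derivatives to produce the $m!/\|\eta\zeta\|^m$ factor in the tail $\sum_{|\alpha|\ge m}$.

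For (3) the paper then reverses your logic: because the Taylor series converges, one may sum it in full under the integral, recognise $\sum_\alpha\tfrac{\tilde w^\alpha}{\alpha!}\partial_z^\alpha\varphi(z,z+w,\eta)=\varphi(z+\tilde w,z+\tilde w+w,\eta)$, change $w\mapsto w-\tilde w$, and arrive at the double integral for $\sigma(\mu(\psi\otimes\varphi))$ \emph{up to contour-shift errors}. Those errors---coming from replacing $\gamma(0,\eta;\varrho,\theta)$ by $\gamma(\tilde w,\eta;\varrho,\theta)$, and from using nested contours of different radii $\varrho',\tilde\varrho,\varrho$---are precisely what your Step~1 (``justified \dots\ by Fubini and the path freedom'') sweeps under the rug; in the paper they are handled one by one, each landing in $\mathfrak{N}_{z^*_0}$ because the difference paths lie in the region $\Re\langle w_1,\zeta_1\rangle\le -\delta|\eta\zeta_1|$. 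That is where the analytic content of (3) actually resides.
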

\begin{proof}
 We assume that 
$\sigma(\psi)(z,\zeta,\eta)$, $\sigma(\varphi)(z,\zeta,\eta) \in  \varGamma(\varOmega^{}_ \rho[d^{}_ \rho]\times   S ;
\mathscr{O}^{}_{T^*X \times \mathbb{C}})$. 

(1) Take $\rho'\in \bigl]0,\rho\bigr [ $.
Fix any $Z\Subset  S$ and $h>0$. 
In $\gamma(0, \eta;\varrho,\theta)$, 
we can change $\gamma_i(0, \eta;\varrho)$ as $ \{w_i = |\eta|s' e^{2\pi\iim t};\, 0 \leqslant t \leqslant 1\}$ with 
$0<\varrho^{-1}<s'$ $(2\leqslant i \leqslant n)$, and $ \gamma_1(0, \eta;\varrho, \theta)\subset \{|w_1| \leqslant |\eta|s' \}$. 
Therefore, we have
\begin{align*}
|\partial_ \zeta ^ \alpha\sigma(\psi) (z,\zeta,\eta)| & =\Bigl| \partial_ \zeta ^ \alpha
\smashoperator{\int\limits_{\hspace{6ex}\gamma(0, \eta;\varrho,\theta) }} 
\psi(z,z+\widetilde{w},\eta) \,e^{\langle \widetilde{w}, \zeta\rangle} d\widetilde{w}\Bigr|
 =\Bigl| 
\smashoperator{\int\limits_{\hspace{6ex}\gamma(0, \eta;\varrho,\theta) }} \widetilde{w}^\alpha\psi(z,z+\widetilde{w},\eta) \,e^{\langle \widetilde{w}, \zeta\rangle} 
d\widetilde{w}\Bigr|
\\& 
\leqslant 
(|\eta|s')^{|\alpha|} C^{}_{h,Z}e^{h\|\zeta\|}.
\end{align*}
For the same reason, we have
\[
| \partial_ \eta\partial_ \zeta ^ \alpha\sigma(\psi) (z,\zeta,\eta)| =
| \partial_ \zeta ^ \alpha\partial_ \eta\sigma(\psi) (z,\zeta,\eta)| 
\leqslant (|\eta|s')^{|\alpha|} Ce^{-\delta\|\eta\zeta\|}.
\]
In the same way taking $\|z\|\leqslant  r'_0  <r'$, for some $R>0$ we have 
\begin{align*}
|\partial_ z ^ \alpha\sigma(\varphi) (z,\zeta,\eta)| &\leqslant \dfrac{ \,C_{h,Z}\,\alpha!\, e^{h\|\zeta\|}}{R^{|\alpha|}},
\\
| \partial_ \eta\partial_z^ \alpha\sigma(\varphi) (z,\zeta,\eta)| &\leqslant \dfrac{ \,C^{}_{Z}\, \alpha!\,e^{-\delta\|\eta\zeta\|}}{R^{|\alpha|}}.
\end{align*}
Hence taking $r$ small enough as $rs'<R$, we have 
\[
|\sigma(\psi)\odot \sigma(\varphi)(z,\zeta,\eta)|
\leqslant \dfrac{C_{h,Z}^{\;2}\, e^{2h\|\zeta\|}}{(1-|\eta| s'/R)^n}\,.
\]
For any $(z; \zeta,\eta )\in \varOmega ^{}_ {\rho'}[d ^{}_ {\rho'}]\times Z$, choosing $h= \delta m^{}_Z/2$, 
we have
\begin{align*}
|\partial_ \eta( & \sigma(\psi)\odot \sigma(\varphi))(z,\zeta,\eta)|
\\
&= \Sum_{\alpha}\dfrac{1}{\,\alpha!\,} 
( \partial_ \eta\partial_ \zeta ^ \alpha\sigma(\psi) (z,\zeta,\eta) 
\, \partial^ \alpha_z\sigma(\varphi) (z,\zeta,\eta)+
\partial_ \zeta ^ \alpha\sigma(\psi) (z,\zeta,\eta) 
\, \partial_ \eta \partial^ \alpha_z\sigma(\varphi) (z,\zeta,\eta))
\\
& \leqslant  
\dfrac{4C^{}_{h,Z}Ce^{-\delta\|\eta\zeta\|/2}}{(1-|\eta| s'/R)^n}\,.
\end{align*}

(2) Fix   $m\in \mathbb{N}$. Then by Lemma \ref{S.lem3.14}, 
 under the same notation of 
proof of (1), for any  $\beta,\gamma\in \mathbb{N}_0^{\,n}$ with $|\beta|=m$, on $\varOmega ^{}_ {\rho'}[d ^{}_ {\rho'}]\times Z$ 
we have
\begin{align*}
|\partial_ \zeta ^{\beta+\gamma}\sigma(\psi) (z,\zeta,\eta)| & =\Bigl| \partial_ \zeta ^{\beta+\gamma}
\smashoperator{\int\limits_{\hspace{6ex}\gamma(0, \eta;\varrho,\theta)}}
 \psi(z,z+\widetilde{w},\eta) \,e^{\langle \widetilde{w}, \zeta\rangle} d\widetilde{w}\Bigr|
 \\
&=\Bigl| \partial_ \zeta ^{\;\beta}
\smashoperator{\int\limits_{\hspace{6ex}\gamma(0, \eta;\varrho,\theta) }}
 \widetilde{w}^\gamma\psi(z,z+\widetilde{w},\eta) \,e^{\langle \widetilde{w}, \zeta\rangle} 
d\widetilde{w}\Bigr|
 \leqslant  \dfrac{
(|\eta|s')^{|\gamma|} C^{}_{h,Z}\,m!A^{m}e^{2h\|\zeta\|}}{(\rho-\rho')^{m}\|\eta\zeta\|^{m}}.
\end{align*}
Therefore, setting $B:= \dfrac{2A}{(\rho-\rho')R}$  we obtain
\begin{align*}
\Bigl| \sigma(\psi)&\odot \sigma(\varphi)(z,\zeta,\eta)
       - \Sum_{|\alpha|=0}^{m-1}\dfrac{1}{\,\alpha!\,} 
\partial_ \zeta ^ \alpha\sigma(\psi) (z,\zeta,\eta) 
\, \partial^ \alpha_z\sigma(\varphi) (z,\zeta,\eta)
\Bigr|
\\
       &
 = \Bigl|\Sum_{|\alpha| \geqslant m}\dfrac{1}{\,\alpha!\,} 
\partial_ \zeta ^ \alpha\sigma(\psi) (z,\zeta,\eta) 
\, \partial^ \alpha_z\sigma(\varphi) (z,\zeta,\eta)
           \Bigr|
\\
&
 = \Bigl| \Sum_{|\beta|= m} \Sum_{|\gamma|= 0}^\infty \dfrac{1}{\, (\beta+\gamma)!\,} 
\partial_ \zeta ^{\beta+\gamma}\sigma(\psi) (z,\zeta,\eta) 
\, \partial ^{\beta+\gamma}_z\sigma(\varphi) (z,\zeta,\eta)
           \Bigr|
\\
 &\leqslant 
\dfrac{C^{\,2}_{h,Z}\,m!A^{m}e^{3h\|\zeta\|}}{(\rho-\rho')^m\|\eta\zeta\|^{m}}\Sum_{|\beta|= m} \Sum_{|\gamma|= 0}^\infty 
\dfrac{ (|\eta|s')^{|\gamma|}}{R^{| \beta +\gamma|}}
 \leqslant 
\dfrac{2^{n-1}C^{\,2}_{h,Z}\,m! B^m e^{3h\|\zeta\|}}{(1-|\eta| s'/R)^n\|\eta\zeta\|^{m}}\, .
\end{align*}

(3) Take two paths $\gamma(0,\eta;\tilde{\varrho},\tilde{\theta})$ 
and $\gamma(0,\eta;\varrho',\theta')$. Here 
we take $\tilde{\varrho}$  is sufficiently smaller  
than $\varrho$,  
and next we take $\varrho'$  is sufficiently smaller   than $\tilde{\varrho}$. 
Hence we may assume
\begin{align*}
\mu(\psi\tens \varphi)(z,z+w,\eta)
&=\smashoperator{\int\limits_{\hspace{6ex}\gamma(0, \eta;\tilde{\varrho},\tilde{\theta})}} \psi(z,z+\widetilde{w},\eta) 
 \, \varphi(z+\widetilde{w},z+w,\eta) \,d\widetilde{w},
\\
\sigma(\mu(\psi\tens \varphi))(z,\zeta,\eta)&
=\smashoperator{\int\limits_{\hspace{6ex}\gamma(0, \eta;\varrho',\theta')}}
\mu(\psi\tens \varphi)(z,z+w,\eta)\,e^{\langle w, \zeta\rangle} dw
\\
& =\int\limits_{\gamma(0, \eta;\varrho',\theta') } \hspace{-2ex} dw
\smashoperator{\int\limits_{\hspace{6ex}\gamma(0, \eta;\tilde{\varrho},\tilde{\theta})}} \psi(z,z+\widetilde{w},\eta) 
 \, \varphi(z+\widetilde{w},z+w,\eta)\,e^{\langle w, \zeta\rangle} d\widetilde{w}
\\
&=
\int\limits_{\gamma(0, \eta;\tilde{\varrho},\tilde{\theta}) } \hspace{-2ex} d\widetilde{w}
\smashoperator{\int\limits_{\hspace{7ex}\gamma(0, \eta;\varrho',\theta')}}\psi(z,z+\widetilde{w},\eta) 
 \, \varphi(z+\widetilde{w},z+w,\eta) \,e^{\langle w, \zeta\rangle} dw.
\end{align*}
Then   we find
\allowdisplaybreaks
\begin{align*}
 \sigma(\psi)\odot \sigma(\varphi)(z,\zeta,\eta)
=
&{}
 \Sum_{\alpha}\dfrac{1}{\,\alpha!\,}\smashoperator{\int\limits_{\hspace{6ex}\gamma(0, \eta;\varrho,\theta) }} 
 \widetilde{w}^\alpha\psi(z,z+\widetilde{w},\eta) \,
e^{\langle \widetilde{w}, \zeta\rangle} d\widetilde{w}
\smashoperator{\int\limits_{\hspace{6ex}\gamma(0, \eta;\varrho,\theta)}} \partial_ z ^ \alpha\varphi(z,z+w,\eta) \,e^{\langle w, \zeta\rangle} dw
\\
=
&{}
 \Sum_{\alpha}\dfrac{1}{\,\alpha!\,}\smashoperator{\int\limits_{\hspace{6ex}\gamma(0, \eta;\tilde{\varrho},\tilde{\theta})}} 
\widetilde{w}^\alpha\psi(z,z+\widetilde{w},\eta) \,
e^{\langle \widetilde{w}, \zeta\rangle} d\widetilde{w}
\smashoperator{\int\limits_{\hspace{6ex}\gamma(0, \eta;\varrho,\theta) }} \partial_ z ^ \alpha\varphi(z,z+w,\eta) \,e^{\langle w, \zeta\rangle} dw
\\*
&
+
\Sum_{\alpha}\dfrac{1}{\,\alpha!\,}
\smashoperator{\int\limits_{\hspace{16ex}\gamma(0, \eta;\varrho,\theta) \vee (-\gamma(0, \eta;\tilde{\varrho},\tilde{\theta})) } }
%\hspace{1ex}
 \widetilde{w}^\alpha\psi(z,z+\widetilde{w},\eta) \,
e^{\langle \widetilde{w}, \zeta\rangle} d\widetilde{w}
\smashoperator{\int\limits_{\hspace{6ex}\gamma(0, \eta;\varrho,\theta)}} 
 \partial_ z ^ \alpha\varphi(z,z+w,\eta) \,e^{\langle w, \zeta\rangle} dw
\end{align*}
By the Cauchy integration theorem, $\gamma^{}_1(0, \eta;\varrho,\theta) \vee (-\gamma^{}_1(0, \eta;\tilde{\varrho},\tilde{\theta})) $ 
can be changed to the  following two  segment paths: 
\[
\bigl[\dfrac{\tilde{\varrho}\eta}{2} \,e^{-\iim (\pi + \tilde{\theta})/2},\dfrac{\varrho\eta}{2} \,e^{-\iim (\pi + \theta)/2}\bigr],
\quad
\bigl[\dfrac{\varrho\eta}{2} \,e^{\iim (\pi + \theta)/2} ,\dfrac{\tilde{\varrho}\eta}{2} \,e^{\iim (\pi +\tilde{ \theta})/2}\bigr].
\]
Then we  can find 
 $\delta>0$ such that on the two paths above  $\Re\langle w^{}_1, \zeta^{}_1\rangle \leqslant -\delta|\eta\zeta^{}_1|$ holds. 
Thus  as in (1)  we 
can see 
\[
\Sum_{\alpha}\dfrac{1}{\,\alpha!\,}
\smashoperator{\int\limits_{\hspace{16ex}\gamma(0, \eta;\varrho,\theta)\vee( -\gamma(0, \eta;\tilde{\varrho},\tilde{\theta})) }} %\hspace{-13ex}
\widetilde{w}^\alpha\psi(z,z+\widetilde{w},\eta) \,
e^{\langle \widetilde{w}, \zeta\rangle} d\widetilde{w}
\smashoperator{\int\limits_{\hspace{6ex}\gamma(0, \eta;\varrho,\theta)}} \partial_ z ^ \alpha\varphi(z,z+w,\eta) \,e^{\langle w, \zeta\rangle} dw
\in \mathfrak{N}^{}_{z^*_0}\,.
\]
Next   we find
\begin{align*}
 \Sum_{\alpha}\dfrac{1}{\,\alpha!\,}&
\smashoperator{\int\limits_{\hspace{6ex}\gamma(0, \eta;\tilde{\varrho},\tilde{\theta})}}  \widetilde{w}^\alpha\psi(z,z+\widetilde{w},\eta) \,
e^{\langle \widetilde{w}, \zeta\rangle} d\widetilde{w}
\smashoperator{\int\limits_{\hspace{6ex}\gamma(0, \eta;\varrho,\theta)}}
 \partial_ z ^ \alpha\varphi(z,z+w,\eta) \,e^{\langle w, \zeta\rangle} dw
\\
={}
&{} \smashoperator{\int\limits_{\hspace{14ex}\gamma(0, \eta;\tilde{\varrho},\tilde{\theta}) \times\gamma(0, \eta;\varrho,\theta) }} 
\psi(z,z+\widetilde{w},\eta)
\Bigl( \Sum_{\alpha}\dfrac{\widetilde{w}^\alpha}{\,\alpha!\,}
\partial_ z ^ \alpha\varphi(z,z+w,\eta)\Bigr) e^{\langle \widetilde{w}+w, \zeta\rangle} d\widetilde{w}\,dw
\\
={}& \smashoperator{\int\limits_{\hspace{14ex}\gamma(0, \eta;\tilde{\varrho},\tilde{\theta}) \times\gamma(0, \eta;\varrho,\theta)} }
 \psi(z,z+\widetilde{w},\eta) 
 \, \varphi(z+\widetilde{w},z+\widetilde{w}+w,\eta) \,e^{\langle \widetilde{w}+w, \zeta\rangle} d\widetilde{w}\,dw
\\
={}&\int\limits_{\gamma(0, \eta;\tilde{\varrho},\tilde{\theta}) }\hspace{-2ex}d\widetilde{w}
\smashoperator{\int\limits_{\hspace{6ex}\gamma(\widetilde{w}, \eta;\varrho,\theta) }}  \psi(z,z+\widetilde{w},\eta) 
 \, \varphi(z+\widetilde{w},z+w,\eta) \,e^{\langle w, \zeta\rangle} dw
\\
={}&\int\limits_{\gamma(0, \eta;\tilde{\varrho},\tilde{\theta})}\hspace{-2ex} d\widetilde{w}
\smashoperator{\int\limits_{\hspace{6ex}\gamma(0, \eta;\varrho,\theta) } }\psi(z,z+\widetilde{w},\eta) 
 \, \varphi(z+\widetilde{w},z+w,\eta) \,e^{\langle w, \zeta\rangle} dw
\\*
&
+ \int\limits_{\gamma(0, \eta;\tilde{\varrho},\tilde{\theta})}\hspace{-2ex} d\widetilde{w}
\smashoperator{\int\limits_{\hspace{17ex}\gamma(\widetilde{w}, \eta;\varrho,\theta) \vee(-\gamma(0, \eta;\varrho,\theta))}}
 \psi(z,z+\widetilde{w},\eta) 
 \, \varphi(z+\widetilde{w},z+w,\eta) \,e^{\langle w, \zeta\rangle} dw.
\end{align*}
We consider
\allowdisplaybreaks
\begin{align*}
\sigma(\mu(\psi\tens \varphi)) (z,\zeta,\eta)& -\int\limits_{\gamma(0, \eta;\tilde{\varrho},\tilde{\theta}) } \hspace{-2ex} d\widetilde{w}
\smashoperator{\int\limits_{\hspace{6ex}\gamma(0, \eta;\varrho,\theta)} }\psi(z,z+\widetilde{w},\eta) 
 \, \varphi(z+\widetilde{w},z+w,\eta) \,e^{\langle w, \zeta\rangle} dw
\\
&=\int\limits_{\gamma(0, \eta;\tilde{\varrho},\tilde{\theta}) } \hspace{-2ex} d\widetilde{w}
 \smashoperator{\int\limits_{\hspace{17ex}\gamma(0, \eta;\varrho',\theta') \vee (-\gamma(0, \eta;\varrho,\theta))}}\psi(z,z+\widetilde{w},\eta) 
 \, \varphi(z+\widetilde{w},z+w,\eta) \,e^{\langle w, \zeta\rangle} dw.
\end{align*}
By the Cauchy integration theorem, $\gamma^{}_1(0, \eta;\varrho',\theta')\vee ( -\gamma^{}_1(0, \eta;\varrho,\theta))$ 
can be changed to the  following two  segment paths: 
\[
\bigl[\dfrac{\varrho\eta}{2} \,e^{-\iim (\pi + \theta)/2},\dfrac{\varrho'\eta}{2} \,e^{-\iim (\pi + \theta')/2}\bigr],
\quad
\bigl[\dfrac{\varrho'\eta}{2} \,e^{\iim (\pi + \theta')/2} ,\dfrac{\varrho\eta}{2} \,e^{\iim (\pi + \theta)/2}\bigr].
\]
Then we  can find 
 $\delta>0$ such that on the two paths above  $\Re\langle w^{}_1, \zeta^{}_1\rangle \leqslant -\delta|\eta\zeta^{}_1|$ holds. 
Thus    we  can see 
\[
\sigma(\mu(\psi\tens \varphi)) (z,\zeta,\eta) -\int\limits_{\gamma(0, \eta;\tilde{\varrho},\tilde{\theta})}\hspace{-2ex} d\widetilde{w}
 \smashoperator{\int\limits_{\hspace{6ex}\gamma(0, \eta;\varrho,\theta)}}\psi(z,z+\widetilde{w},\eta) 
 \, \varphi(z+\widetilde{w},z+w,\eta) \,e^{\langle w, \zeta\rangle} dw\in \mathfrak{N}^{}_{z^*_0}.
\]
Next we consider two  segment paths 
\[
\bigl[\dfrac{\varrho\eta}{2} \,e^{-\iim (\pi + \theta)/2},\dfrac{\varrho\eta}{2} \,e^{-\iim (\pi + \theta)/2}+\widetilde{w}_1\bigr],
\quad
\bigl[\dfrac{\varrho\eta}{2}\, e^{\iim (\pi + \theta)/2}+\widetilde{w}_1 ,\dfrac{\varrho\eta}{2} \,e^{\iim (\pi + \theta)/2}\bigr].
\]
Since $\tilde{\varrho}$ is sufficiently smaller than   $\varrho$ and  $|\widetilde{w}_1|\leqslant \dfrac{\tilde{\varrho}|\eta|}{2}$,  
we  can find  $\delta>0$ such that on the two paths above  
 $\Re\langle w^{}_1, \zeta^{}_1\rangle \leqslant -\delta|\eta\zeta^{}_1|$ holds.   Therefore   we 
can conclude that
\[
\int\limits_{\gamma(0, \eta;\tilde{\varrho},\tilde{\theta}) }\hspace{-2ex} d\widetilde{w}
\smashoperator{\int\limits_{\hspace{14ex}\gamma(\widetilde{w}, \eta;\varrho,\theta)-\gamma(0, \eta;\varrho,\theta) }} 
 \psi(z,z+\widetilde{w},\eta) 
 \, \varphi(z+\widetilde{w},z+w,\eta) \,e^{\langle w, \zeta\rangle} dw\in \mathfrak{N}^{}_{z^*_0}.
\]
The proof is complete. 
\end{proof}
\begin{rem}
 Let $[\psi(z,w,\eta)\,dw]\in \mathscr{E}^{\smash{\mathbb{R}}}_{X, z^*_0}$. Then 
we can also prove the following:
\begin{enumerate}[(1)]
\item We have  
\[\wick{P^*(t;z,\zeta,\eta)} = \wick{\smashoperator{\int\limits_{\hspace{6ex}\gamma(0, \eta;\varrho,\theta)}}
\psi(z-w,z,\eta)\,e^{-\langle w,\zeta \rangle}\,dw}.
\]
\item
Let $z=\varPhi(w)$ be a complex coordinate transformation. Then (see \eqref{B4})
\[
\wick{\varPhi^*P(t;w,\lambda,\eta)} = \wick{
\smashoperator{\int\limits_{\hspace{6ex}\gamma(z, \eta;\varrho,\theta)} } \psi(z,z',\eta)\,e^{\langle \varPhi^{-1}(z')-
\varPhi^{-1}(z),\lambda \rangle} dz'}.
\]
\end{enumerate}
\end{rem}

\section{Formal Symbols with an Apparent  Parameter}\label{sec:formal-symbol}

\begin{defn}[see \cite{a2}, \cite{aky}]  Let $t$ be an indeterminate. 

(1) 
$  
  P(t;z,\zeta) = \Sum_{\nu=0}^\infty t^\nu P^{}_\nu(z,\zeta)$
is an element of $\widehat{\mathscr{S}}(\varOmega) $  if  $ 
    P_\nu(z,\zeta)\in \varGamma(\varOmega^{}_ \rho [(\nu+1)d_ \rho ];
\mathscr{O}^{}_{T^*X})$ for some $d>0$ and $\rho \in \bigl]0,1\bigr[$,  
and there exists a   constant $A \in \bigl]0,1\bigr[$    satisfying  the following: 
for any  $h >0 $  there exists a  constant   $C_h >0$  such that   
\[
          |P^{}_\nu(z,\zeta)| \leqslant C_h A^{\nu} e^{h\|\zeta\|} \quad  
(\nu \in \mathbb{N}^{}_0,\,(z;\zeta) \in \varOmega^{}_ \rho [(\nu+1)d^{}_ \rho]).
\]

(2) Let 
$  
  P(t;z,\zeta) = \Sum_{\nu=0}^\infty t^\nu P^{}_\nu(z,\zeta)\in \widehat{\mathscr{S}} (\varOmega)$. 
Then $ P(t;z,\zeta)$ 
is an element of $\widehat{\mathscr{N}} (\varOmega)$  if 
there exists a   constant $A \in \bigl]0,1\bigr[$    satisfying  the following: 
for any  $h >0 $  there exists a  constant   $C_h>0$  such that 
\[
          \Bigl|\smashoperator[r]{\Sum_{\nu=0}^{m-1}} P^{}_\nu(z,\zeta)\Bigr|
 \leqslant C_h A^{m}e^{h\|\zeta\|} \quad (m \in\mathbb{N},\,(z;\zeta) \in \varOmega_ \rho [md_ \rho ]).
\]

(3) For  $z^*_0 \in \dot{T}^*X$, we set  
\[
\widehat{\mathscr{S}}^{}_{z^*_0 }  := \varinjlim_{ \varOmega}
\widehat{\mathscr{S}}(\varOmega) \supset
\widehat{\mathscr{N}}^{}_{z^*_0 }  := \varinjlim _{ \varOmega}
\widehat{\mathscr{N}}(\varOmega). 
\]
\end{defn}
We  call each element of $ \widehat{\mathscr{S}} (\varOmega) $ (resp.\ 
$\widehat{\mathscr{N}}(\varOmega)$) a \textit{formal  symbol}  (resp.\ \textit{formal null-symbol}) 
\textit{on $\varOmega$}.

For $U \subset  S$ and $m\in \mathbb{N}$, we set 
\[
(\varOmega^{}_\rho * U)[md^{}_\rho] := \{(z;\zeta,\eta) \in \varOmega_ \rho \times U;\,\|\eta\zeta\|\geqslant m d^{}_\rho\}
\subset \varOmega_ \rho[md^{}_\rho] \times U.
\]
\begin{defn}  Let $t$ be an indeterminate. 
We say that
$    P(t;z,\zeta,\eta) = \Sum_{\nu=0}^\infty t^\nu P^{}_\nu(z,\zeta,\eta)$
is an element of $\widehat{\mathfrak{N}}(\varOmega; S)$  if 
\begin{enumerate}[(i)]
\item $ 
    P_\nu(z,\zeta,\eta)\in
\varGamma((\varOmega^{}_ \rho *S )[(\nu+1)d^{}_ \rho];
\mathscr{O}^{}_{T^*X \times \mathbb{C}})$ for some $d>0$ and $\rho \in \bigl]0,1\bigr[$, 
\item
there exists a    constant $A \in \bigl]0,1\bigr[$,
and for any $Z \Subset   S $, $h>0$    there exists   $C_{h,Z}>0$ such that
\begin{equation}
\label{S.eq4.1a}
   \Bigl|\smashoperator[r]{\Sum_{\nu=0}^{m-1}} P^{}_\nu(z,\zeta,\eta)\Bigr|
 \leqslant C_{h,Z}A^{m}e^{h\|\zeta\|}
\quad (m \in\mathbb{N},\,(z;\zeta,\eta) \in(\varOmega^{}_\rho * Z)[md^{}_\rho]).
\end{equation}
\end{enumerate}
\end{defn}

\begin{defn} 
(1) We say that  $P(t;z,\zeta,\eta) = \Sum_{\nu=0}^\infty t^\nu P^{}_\nu(z,\zeta,\eta) $
is an element of $\widehat{\mathfrak{S}}(\varOmega; S)$  if 
\begin{enumerate}[(i)]
\item $ 
    P_\nu(z,\zeta,\eta)\in
\varGamma((\varOmega^{}_ \rho * S)[(\nu+1)d^{}_ \rho];
\mathscr{O}^{}_{T^*X \times \mathbb{C}})$ for some $d>0$ and $\rho\in \bigl]0,1\bigr[$, 
\item
there exists a    constant $A \in \bigl]0,1\bigr[$, 
and for any $Z \Subset   S $, $h>0$,     there exists $C_{h,Z}>0$ such that
\[
| P^{}_\nu(z,\zeta,\eta)|
 \leqslant C_{h,Z}A^{\nu}e^{h\|\zeta\|}
\quad   
(\nu \in\mathbb{N}^{}_0,\,(z;\zeta,\eta) \in (\varOmega^{}_\rho * Z)[(\nu+1)d^{}_\rho]).
\] 
\item $ 
   \partial_\eta  P(t;z,\zeta,\eta)
\in \widehat{\mathfrak{N}} (\varOmega; S)
$.
\end{enumerate}
\end{defn}
We  call each element of $ \widehat{\mathfrak{S}}(\varOmega; S) $ (resp.\ 
$\widehat{\mathfrak{N}} (\varOmega; S)$) a \textit{formal  symbol}  (resp.\ \textit{formal null-symbol}) 
\textit{on $\varOmega$ with an apparent parameter in $ S$}.
\begin{lem}\label{S.lem4.3}
$\widehat{\mathfrak{N}}(\varOmega; S) \subset \widehat{\mathfrak{S}}(\varOmega; S)$. 
\end{lem}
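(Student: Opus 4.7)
Let $P(t;z,\zeta,\eta)=\sum_{\nu=0}^\infty t^\nu P_\nu(z,\zeta,\eta)\in \widehat{\mathfrak{N}}(\varOmega; S)$. We must verify the three conditions (i), (ii), (iii) in the definition of $\widehat{\mathfrak{S}}(\varOmega; S)$. Condition (i) — that each $P_\nu$ is holomorphic on $(\varOmega_\rho * S)[(\nu+1)d_\rho]$ — is built into the definition of $\widehat{\mathfrak{N}}$, so nothing is to be done there. The strategy for (ii) and (iii) is exactly the one used in Lemma before Proposition \ref{S.prop3.5} for the classical case: telescope the partial sums for (ii), and apply Cauchy's inequality in $\eta$ for (iii).

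For (ii), fix $Z\Subset  S$ and $h>0$, and let $A\in\bigl]0,1\bigr[$, $C_{h,Z}>0$ be the constants from \eqref{S.eq4.1a}. For any $\nu\in \mathbb{N}$ and $(z;\zeta,\eta)\in (\varOmega_\rho * Z)[(\nu+1)d_\rho]$ we have a fortiori $(z;\zeta,\eta)\in (\varOmega_\rho * Z)[\nu d_\rho]$, hence both partial-sum bounds (for $m=\nu+1$ and $m=\nu$) apply. Writing
\[
P_\nu(z,\zeta,\eta)=\Sum_{i=0}^{\nu} P_i(z,\zeta,\eta)-\Sum_{i=0}^{\nu-1} P_i(z,\zeta,\eta),
\]
we conclude
\[
|P_\nu(z,\zeta,\eta)|\leqslant C_{h,Z}A^{\nu+1}e^{h\|\zeta\|}+C_{h,Z}A^{\nu}e^{h\|\zeta\|}\leqslant 2C_{h,Z}A^{\nu}e^{h\|\zeta\|},
\]
which is precisely (ii) (with the same constant $A$).

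For (iii), fix $Z\Subset  S$ and choose $\delta'\in \bigl]0,\tfrac{1}{2}\bigr[$ and $Z'\Subset  S$ as in \eqref{S.eq.1.3a}, so that the closed disks $\{\eta':|\eta'-\eta|\leqslant \delta'|\eta|\}$ for $\eta\in Z$ are all contained in $Z'$. Set $d':=d/(1-\delta')$; then for any $(z;\zeta,\eta)\in (\varOmega_\rho * Z)[m d'_\rho]$ and any $\eta'$ on the Cauchy circle $|\eta'-\eta|=\delta'|\eta|$ we have $|\eta'|\geqslant (1-\delta')|\eta|$, hence $(z;\zeta,\eta')\in (\varOmega_\rho * Z')[m d_\rho]$. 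Applying \eqref{S.eq4.1a} on $Z'$ and invoking the Cauchy estimate for the finite sum $\sum_{\nu=0}^{m-1}P_\nu$ yields
\[
\Bigl|\Sum_{\nu=0}^{m-1}\partial_\eta P_\nu(z,\zeta,\eta)\Bigr|\leqslant \frac{1}{\delta'|\eta|}\smashoperator[r]{\sup_{|\eta'-\eta|=\delta'|\eta|}}\Bigl|\Sum_{\nu=0}^{m-1}P_\nu(z,\zeta,\eta')\Bigr|\leqslant \frac{C_{h,Z'}}{\,\delta' m_Z^{}\,}\,A^{m}e^{h\|\zeta\|}.
\]
The same Cauchy formula also shows that each $\partial_\eta P_\nu$ is holomorphic on $(\varOmega_\rho * S)[(\nu+1)d'_\rho]$, so condition (i) is met for $\partial_\eta P$ with the modified constant $d'$. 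Therefore $\partial_\eta P\in \widehat{\mathfrak{N}}(\varOmega; S)$, which is (iii).

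The only real subtlety is bookkeeping the domain shift in Step 3: one must replace $d$ by $d/(1-\delta')$ to absorb the shrinkage of $\|\eta'\zeta\|$ when moving $\eta$ along the Cauchy circle, and simultaneously enlarge $Z$ to $Z'$ so that the circle stays inside $ S$. Once this is set up, the estimates on partial sums transfer automatically, and the same $A\in\bigl]0,1\bigr[$ serves for $\partial_\eta P$.
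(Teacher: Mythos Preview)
Your proof is correct and follows essentially the same approach as the paper: the telescoping identity $P_\nu=\sum_{i\leqslant\nu}P_i-\sum_{i\leqslant\nu-1}P_i$ for (ii), and Cauchy's inequality in $\eta$ over a slightly enlarged $Z'\Subset S$ for (iii). The only cosmetic difference is in the domain bookkeeping for (iii): the paper absorbs the shrinkage of $|\eta'|$ by replacing $d$ with $2d$ (working on $(\varOmega_\rho*Z)[m(2d)_\rho]$), whereas you use the sharper $d'=d/(1-\delta')$; both are valid since the definitions of $\widehat{\mathfrak{N}}$ and $\widehat{\mathfrak{S}}$ allow $d$ to be chosen freely.
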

\begin{proof}
We assume \eqref{S.eq4.1a}.
 For any $\nu\in \mathbb{N}$ and $(z;\zeta,\eta) \in 
(\varOmega^{}_\rho * Z)[(\nu+1)d^{}_\rho]\subset (\varOmega^{}_\rho * Z)[\nu d^{}_\rho]$, we have
\begin{align*}
|P_\nu(z,\zeta,\eta)|& =  \Bigl|
\smashoperator[r]{\Sum_{i=0}^{\nu}}  
 P _i(z,\zeta,\eta)-\smashoperator{\Sum_{i=0}^{\nu-1}}  
  P _i(z,\zeta,\eta)\Bigr| 
\leqslant  
\Bigl|\smashoperator[r]{\Sum_{i=0}^{\nu}}
 P _i(z,\zeta,\eta)\Bigr| + \Bigl|\smashoperator[r]{\Sum_{i=0}^{\nu-1}} 
 P _i(z,\zeta,\eta)\Bigr| 
\\
&\leqslant C_{h,Z}A^{\nu+1} e^{h\|\zeta\|}+
C_{h,Z}A^{\nu} e^{h\|\zeta\|} \leqslant C_{h,Z}(A+1)A^{\nu} e^{h\|\zeta\|}.
\end{align*}
 Next, 
for any $Z \Subset  S$,  
take  $\delta'$ and $Z'$ as in \eqref{S.eq.1.3a}.
Then by the Cauchy inequality, 
for any $h>0$ there exist constants  $C_{h,Z'}$, $R >0$ such that 
for any $m\in \mathbb{N}$ and $(z;\zeta,\eta) \in 
(\varOmega^{}_\rho * Z)[m(2d)^{}_\rho]$, 
 the following holds:
\[
\Bigl|\smashoperator[r]{\Sum_{\nu=0}^{m-1}} \partial_\eta P^{}_\nu(z,\zeta,\eta)\Bigr| \leqslant 
\dfrac{1}{\delta'|\eta|}\sup_{|\eta-\eta'|= \delta'|\eta|}\Bigl|\smashoperator[r]{\Sum_{\nu=0}^{m-1}}  P^{}_\nu(z,\zeta,\eta')\Bigr|\leqslant\frac{C_{h,Z'}A^{m} e^{h\|\zeta\|}}{\delta'm^{}_Z}\,.
\qedhere
\]
\end{proof}
We set  
\[
\widehat{\mathfrak{S}}_{z^*_0}   := \smashoperator{\varinjlim_{ \varOmega,  S}}
\widehat{\mathfrak{S}}(\varOmega; S) \supset
\widehat{\mathfrak{N}} _{z^*_0}  := \smashoperator{\varinjlim _{ \varOmega,  S}}
\widehat{\mathfrak{N}}(\varOmega; S). 
\]

\begin{prop}\label{S.prop4.5}
Let   $P(t;z, \zeta, \eta)\in  \widehat{\mathfrak{S}}(\varOmega; S)$. 
Then for any  $\eta^{}_0 \in  S$, 
$P(t;z,\zeta,\eta^{}_0)\in \widehat{\mathscr{S}}(\varOmega) $ and 
$P(t;z,\zeta,\eta)- P(t;z,\zeta,\eta^{}_0) \in \widehat{\mathfrak{N}}(\varOmega; S) $. 
\end{prop}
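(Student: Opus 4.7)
The plan is to mimic the argument of Proposition \ref{S.prop3.5}, but with the growth control adapted to the formal (non-classical) setting, where estimates live on the enlarging sets $(\varOmega^{}_\rho * Z)[(\nu+1)d^{}_\rho]$ rather than carrying $1/\|\eta\zeta\|^\nu$ factors.

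For the first claim, I would specialize $\eta = \eta^{}_0$ in the defining estimate of $\widehat{\mathfrak{S}}(\varOmega; S)$. Taking $Z:=\{\eta^{}_0\}$ and setting $d^{}_0 := d/|\eta^{}_0|$, the condition $(z;\zeta,\eta^{}_0) \in (\varOmega^{}_\rho * \{\eta^{}_0\})[(\nu+1)d^{}_\rho]$ reads $|\eta^{}_0|\,\|\zeta\| \geqslant (\nu+1)d^{}_\rho$, i.e.\ $\|\zeta\| \geqslant (\nu+1)(d^{}_0)^{}_\rho$. Hence for every $h>0$ there exists $C^{}_{h,\eta^{}_0}>0$ so that $|P^{}_\nu(z,\zeta,\eta^{}_0)|\leqslant C^{}_{h,\eta^{}_0}A^\nu e^{h\|\zeta\|}$ on $\varOmega^{}_\rho[(\nu+1)(d^{}_0)^{}_\rho]$, which is precisely the defining bound for $P(t;z,\zeta,\eta^{}_0)\in \widehat{\mathscr{S}}(\varOmega)$ (with $d^{}_0$ in place of $d$).

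For the second claim, I would use the fundamental theorem of calculus together with hypothesis (iii): since $\partial^{}_\eta P(t;z,\zeta,\eta) \in \widehat{\mathfrak{N}}(\varOmega; S)$, for $Z\Subset S$ let $Z'\Subset S$ be the convex hull of $Z\cup\{\eta^{}_0\}$. Then there exist $A'\in\bigl]0,1\bigr[$ and, for each $h>0$, a constant $C^{}_{h,Z'}>0$ such that
\[
\Bigl|\smashoperator[r]{\Sum_{\nu=0}^{m-1}} \partial^{}_\eta P^{}_\nu(z,\zeta,\tau)\Bigr|
\leqslant C^{}_{h,Z'}(A')^m e^{h\|\zeta\|}
\quad
((z;\zeta,\tau)\in(\varOmega^{}_\rho * Z')[m d^{}_\rho]).
\]
Exchanging sum and integral yields
\[
\smashoperator[r]{\Sum_{\nu=0}^{m-1}}(P^{}_\nu(z,\zeta,\eta)-P^{}_\nu(z,\zeta,\eta^{}_0))
=\int_{\eta^{}_0}^{\eta}\smashoperator{\Sum_{\nu=0}^{m-1}}\partial^{}_\eta P^{}_\nu(z,\zeta,\tau)\,d\tau.
\]
To apply the above estimate uniformly along the segment from $\eta^{}_0$ to $\eta$, I would choose a new constant $\widetilde{d}:=d\cdot(\max_{\tau\in Z'}|\tau|)/m^{}_{Z'}$, so that whenever $(z;\zeta,\eta)\in(\varOmega^{}_\rho * Z)[m\widetilde{d}^{}_\rho]$ we automatically have $(z;\zeta,\tau)\in(\varOmega^{}_\rho * Z')[m d^{}_\rho]$ for every $\tau$ on the straight segment joining $\eta^{}_0$ and $\eta$. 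Bounding the integral by its length times the supremum of the integrand gives
\[
\Bigl|\smashoperator[r]{\Sum_{\nu=0}^{m-1}}(P^{}_\nu(z,\zeta,\eta)-P^{}_\nu(z,\zeta,\eta^{}_0))\Bigr|
\leqslant r\, C^{}_{h,Z'}(A')^m e^{h\|\zeta\|},
\]
which is precisely the defining estimate \eqref{S.eq4.1a} for $P(t;z,\zeta,\eta)-P(t;z,\zeta,\eta^{}_0)\in \widehat{\mathfrak{N}}(\varOmega; S)$ (with constant $A'\in\bigl]0,1\bigr[$ and $\widetilde{d}$ in place of $d$).

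The only delicate point, and the step I expect to require the most care, is the bookkeeping of the parameters $d$ and $\rho$: one must pass from the domain controlled by $Z$ to the enlarged domain controlled by $Z'=\mathrm{conv}(Z\cup\{\eta^{}_0\})$, ensuring that the inclusion $(\varOmega^{}_\rho * Z)[m\widetilde{d}^{}_\rho]\subset(\varOmega^{}_\rho * Z')[m d^{}_\rho]$ holds along the whole segment of integration. Everything else is either routine (Cauchy integration, length of path) or a direct rewriting of hypotheses (ii) and (iii). Note also that the argument works symmetrically in the case $|\eta|\leqslant|\eta^{}_0|$, exactly as in the end of the proof of Proposition \ref{S.prop3.5}.
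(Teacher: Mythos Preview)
Your approach mirrors the paper's exactly: specialize $\eta=\eta_0$ for the first claim, then integrate $\partial_\eta P$ along the segment from $\eta_0$ to $\eta$ inside the convex hull $Z'$ for the second. The first claim is handled correctly.

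There is, however, a genuine gap in the second claim. Your constant $\widetilde{d}=d\cdot(\max_{\tau\in Z'}|\tau|)/m_{Z'}$ depends on $Z$: as $Z$ approaches the vertex of the sector $S$, the quantity $m_{Z'}$ tends to $0$ and $\widetilde{d}$ blows up. But the definition of $\widehat{\mathfrak{N}}(\varOmega;S)$ fixes one $d$ in condition~(i) and then requires the partial-sum estimate of condition~(ii) to hold on $(\varOmega_\rho*Z)[md_\rho]$ for \emph{every} $Z\Subset S$ with that same $d$. An estimate valid only on $(\varOmega_\rho*Z)[m\widetilde{d}_\rho]$ with $\widetilde{d}=\widetilde{d}(Z)$ unbounded does not verify membership in $\widehat{\mathfrak{N}}(\varOmega;S)$.

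The fix---and this is what the paper does---is to take the $Z$-independent choice $d':=d/|\eta_0|$. This works because $S$ is narrow and small (recall $r,\theta\in\,]0,\tfrac12[$ in \eqref{S.eq1.1}): for $\tau$ on the segment from $\eta_0$ to $\eta$ one has $|\tau|\geqslant\Re\tau\geqslant\min(\Re\eta_0,\Re\eta)\geqslant\cos(\theta/4)\min(|\eta_0|,|\eta|)$, so that whenever $(z;\zeta,\eta)\in(\varOmega_\rho*Z)[md'_\rho]$,
\[
\|\tau\zeta\|\;\geqslant\;\cos\tfrac{\theta}{4}\,\frac{\min(|\eta_0|,|\eta|)}{|\eta|}\,\|\eta\zeta\|
\;\geqslant\;\frac{\cos(\theta/4)}{\max(|\eta_0|,|\eta|)}\,md_\rho
\;>\;\frac{\cos(\theta/4)}{r}\,md_\rho\;>\;md_\rho,
\]
since $r<\tfrac12<\cos(\theta/4)$. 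Hence $(z;\zeta,\tau)\in(\varOmega_\rho*Z')[md_\rho]$ uniformly along the whole segment, and the null-symbol estimate for $\partial_\eta P$ can be applied with the single constant $d'$ for every $Z\Subset S$. With this correction your argument is complete and coincides with the paper's.
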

\begin{proof}
Set $d':= d/|\eta^{}_0|>d$. Then for any $h>0$, there exists a constant   $C_{h,\eta^{}_0}>0$ such that 
\[
          |P^{}_\nu(z,\zeta, \eta^{}_0)|
\leqslant C_{h,\eta^{}_0} A^{\nu} e^{h\|\zeta\|} 
\quad ((z;\zeta) \in \varOmega_ \rho[(\nu+1)d'_{\rho}]).
\]
Therefore $P(t;z,\zeta,\eta^{}_0)\in \widehat{\mathscr{S}}(\varOmega) $. 
For any $Z \Subset  S$,  let  $ Z' \Subset  S$ be the  convex hull 
of  $Z \cup\{\eta^{}_0\}$. 
Since 
\[P_\nu(z,\zeta,\eta)=P _\nu(z, \zeta, \eta ^{}_0) + \displaystyle \int_{\eta ^{}_0}^{\eta}
\partial_\eta  P _\nu(z,\zeta,\tau)\,d\tau
\] 
and
 $\partial_\eta  P(t;z,\zeta,\eta) \in \widehat{\mathfrak{N}}  (\varOmega; S) $, 
there exists a    constant $A \in \bigl]0,1\bigr[$, and  for any  $h >0$
 we can find  a  constant  $C_{h,Z'}>0$ such that for any $m\in \mathbb{N}$ and $(z;\zeta,\eta) 
\in (\varOmega^{}_\rho * Z)[md'_\rho]
\subset (\varOmega^{}_\rho * Z')[m d'_\rho]$,  the following holds:
\begin{align*}
  \Bigl|\smashoperator[r]{\Sum_{\nu=0}^{m-1}} (P^{}_\nu(z,\zeta,\eta)-P_\nu(z,\zeta,\eta^{}_0))\Bigr|
& = \Bigl| \smashoperator[r]{\Sum_{\nu=0}^{m-1}} \int_{\eta^{}_0}^{\eta}
\partial_\eta  P _\nu(z,\zeta,\tau)\,d\tau\Bigr|
 = \Bigl|  \int_{\eta^{}_0}^{\eta}\smashoperator[r]{\Sum_{\nu=0}^{m-1}}
\partial_\eta  P _\nu(z,\zeta,\tau)\,d\tau\Bigr|  
\\
&
 \leqslant |\eta-\eta^{}_0|  C_{h,Z'} A^{m}e^{h\|\zeta\|}
\leqslant  r  C_{h,Z'} A^{m}e^{h\|\zeta\|}.
\end{align*}
Hence $P(t;z,\zeta,\eta)- P(t;z,\zeta,\eta^{}_0) \in \widehat{\mathfrak{N}}(\varOmega; S) $. 
\end{proof}

\begin{prop}
There exists the following isomorphism\textup{:}
\[
\widehat{\mathscr{S}} (\varOmega)/\widehat{\mathscr{N}}(\varOmega)\earrow
\widehat{\mathfrak{S}}(\varOmega;  S)  /\widehat{\mathfrak{N}}(\varOmega;  S).
\]
\end{prop}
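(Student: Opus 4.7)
The plan is to mimic the classical formal case (the proof just before Proposition \ref{S.prop3.7} and again before Theorem \ref{S.thm3.8}), replacing the classical estimates by the formal-symbol estimates of Definitions 7.2--7.3. Before checking bijectivity I would record that the natural inclusions
\[
\widehat{\mathscr{S}}(\varOmega)\hookrightarrow \widehat{\mathfrak{S}}(\varOmega; S),\qquad
\widehat{\mathscr{N}}(\varOmega)\hookrightarrow \widehat{\mathfrak{N}}(\varOmega; S)
\]
are well defined: since $|\eta|<1/2$ for $\eta\in S$, the inclusion $(\varOmega_\rho*S)[md_\rho]\subset \varOmega_\rho[md_\rho]\times S$ holds, so any $\eta$--independent formal series satisfying the estimates on $\varOmega_\rho[md_\rho]$ automatically satisfies the same estimates on $(\varOmega_\rho*S)[md_\rho]$, with $C_{h,Z}:=C_h$ and $\partial_\eta P=0$. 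This yields the morphism of the statement.

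For surjectivity I would invoke Proposition \ref{S.prop4.5} directly. Given any $P(t;z,\zeta,\eta)\in \widehat{\mathfrak{S}}(\varOmega; S)$, fix once and for all some $\eta_0\in S$ and set $Q(t;z,\zeta):=P(t;z,\zeta,\eta_0)$. By Proposition \ref{S.prop4.5}, $Q\in \widehat{\mathscr{S}}(\varOmega)$ (this is exactly the content of the first half of that proposition), and the difference $P(t;z,\zeta,\eta)-Q(t;z,\zeta)$ lies in $\widehat{\mathfrak{N}}(\varOmega; S)$ (the second half). Hence $[P]=[Q]$ in $\widehat{\mathfrak{S}}(\varOmega; S)\big/\widehat{\mathfrak{N}}(\varOmega; S)$.

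For injectivity I would start from $P(t;z,\zeta)=\Sum_{\nu}t^\nu P_\nu(z,\zeta)\in \widehat{\mathscr{S}}(\varOmega)$ whose image lies in $\widehat{\mathfrak{N}}(\varOmega; S)$, and show $P\in\widehat{\mathscr{N}}(\varOmega)$. Specialising the defining estimate of $\widehat{\mathfrak{N}}(\varOmega; S)$ to the singleton $Z=\{\eta_0\}$ for any chosen $\eta_0\in S$, and using that the $P_\nu$ are independent of $\eta$, one obtains a constant $A\in\bigl]0,1\bigr[$ and, for each $h>0$, a constant $C_{h,\eta_0}>0$ such that
\[
\Bigl|\smashoperator[r]{\Sum_{\nu=0}^{m-1}}P_\nu(z,\zeta)\Bigr|\leqslant C_{h,\eta_0}A^m e^{h\|\zeta\|}
\]
for every $m\in\mathbb{N}$ and $(z;\zeta)\in\varOmega_\rho$ satisfying $|\eta_0|\|\zeta\|\geqslant md_\rho$. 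Setting $d':=d/|\eta_0|$, the latter region is precisely $\varOmega_\rho[md'_\rho]$; this is exactly the defining estimate of $\widehat{\mathscr{N}}(\varOmega)$ for the radius $d'$, hence $P\in \widehat{\mathscr{N}}(\varOmega)$.

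There is no serious obstacle: both halves reduce to Proposition \ref{S.prop4.5} and a trivial rescaling of the minimal absolute value $d_\rho\leadsto d'_\rho=d_\rho/|\eta_0|$. The only place where one must be a little careful is the verification that the natural inclusions of sheaves of formal symbols really do land inside the parameter-dependent versions with the same growth constant $A$; this follows immediately from $|\eta|<1$ on $S$, which makes the region $(\varOmega_\rho*S)[md_\rho]$ a subset of $\varOmega_\rho[md_\rho]\times S$.
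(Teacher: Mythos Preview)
Your proof is correct and follows essentially the same approach as the paper: surjectivity via Proposition~\ref{S.prop4.5} (specialize at $\eta_0$), and injectivity by observing that an $\eta$-independent element of $\widehat{\mathfrak{N}}(\varOmega;S)$ lies in $\widehat{\mathscr{N}}(\varOmega)$. The paper packages your injectivity argument slightly differently, recording the identifications $\widehat{\mathscr{S}}(\varOmega)=\{P\in\widehat{\mathfrak{S}}(\varOmega;S);\,\partial_\eta P=0\}$ and $\widehat{\mathscr{N}}(\varOmega)=\widehat{\mathscr{S}}(\varOmega)\cap\widehat{\mathfrak{N}}(\varOmega;S)$ up front, from which injectivity is immediate; your explicit specialization to $Z=\{\eta_0\}$ with the rescaling $d'=d/|\eta_0|$ is exactly what underlies the second identification.
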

\begin{proof}
We regard that
\begin{align*} 
\widehat{\mathscr{S}}(\varOmega) & =\{ P(t;z,\zeta,\eta)\in 
\widehat{\mathfrak{S}}(\varOmega;  S);\, \partial_\eta  P(t;z,\zeta,\eta)
=0\} \subset \widehat{\mathfrak{S}}(\varOmega;  S),
\\  
\widehat{\mathscr{N}}(\varOmega) & =
\widehat{\mathscr{S}}(\varOmega) \cap  \widehat{\mathfrak{N}}(\varOmega;  S)
\subset \widehat{\mathfrak{N}}(\varOmega;  S). 
\end{align*}
  Hence we have an  injective 
mapping $
\widehat{\mathscr{S}} (\varOmega)/\widehat{\mathscr{N}}(\varOmega)\hookrightarrow  
\widehat{\mathfrak{S}}(\varOmega;  S)  /\widehat{\mathfrak{N}}(\varOmega;  S) $.

Let $P(t;z,\zeta,\eta)\in \widehat{\mathfrak{S}}(\varOmega;  S)  $ and 
fix $\eta^{}_0 \in S$ arbitrary. 
Then by Proposition  \ref{S.prop4.5}, we have  
$P(t;z,\zeta,\eta^{}_0) \in \widehat{\mathscr{S}}(\varOmega)$ and
 $P(t;z,\zeta,\eta) -P(t;z,\zeta,\eta^{}_0) \in  \widehat{\mathfrak{N}}(\varOmega;  S)$.
\end{proof}
\begin{lem}\label{S.lem4.7}
$\widehat{\mathfrak{S}}_{\rm cl }(\varOmega;  S) \subset \widehat{\mathfrak{S}}(\varOmega;  S)$ and 
$\widehat{\mathfrak{N}}_{\rm cl }(\varOmega;  S) \subset \widehat{\mathfrak{N}}(\varOmega;  S)$.
\end{lem}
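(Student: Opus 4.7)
The plan is to convert the classical-symbol estimates, which bound each $P_\nu$ on the full product $\varOmega_\rho[d_\rho]\times Z$ but carry a factor $\nu! A^\nu/\|\eta\zeta\|^\nu$, into the non-classical estimates, which demand only a purely geometric bound $(A')^\nu$ on the smaller subset $(\varOmega_\rho * Z)[(\nu+1)d'_\rho]$. The point is that on that smaller subset $\|\eta\zeta\|\geqslant (\nu+1)d'_\rho$, so the factor $\nu!$ is absorbed by $(\nu+1)^\nu$ via the elementary inequality $\nu!\leqslant (\nu+1)^\nu$, and the $\|\eta\zeta\|^\nu$ in the denominator produces a harmless geometric factor $(d'_\rho)^{-\nu}$.

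First I would handle $\widehat{\mathfrak{N}}_{\rm cl}\subset \widehat{\mathfrak{N}}$. Start with $P(t;z,\zeta,\eta)=\smashoperator{\Sum_{\nu=0}^\infty} t^\nu P_\nu(z,\zeta,\eta)\in \widehat{\mathfrak{N}}_{\rm cl}(\varOmega;S)$ with classical constants $A,d,\rho$. Pick any $d'\geqslant d$ with $d'_\rho>A$, and set $A':=A/d'_\rho\in\bigl]0,1\bigr[$. Since $|\eta|<1/2<1$ on $S$ forces $\|\eta\zeta\|<\|\zeta\|$, any point $(z;\zeta,\eta)\in(\varOmega_\rho*Z)[md'_\rho]$ satisfies $\|\zeta\|\geqslant\|\eta\zeta\|\geqslant md'_\rho\geqslant d'_\rho\geqslant d_\rho$, so such a point lies in $\varOmega_\rho[d_\rho]\times Z$ and the original classical bound applies. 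Then on $(\varOmega_\rho*Z)[md'_\rho]$, using $m!\leqslant m^m$,
\[
\Bigl|\smashoperator{\Sum_{\nu=0}^{m-1}}P_\nu(z,\zeta,\eta)\Bigr|
\leqslant \frac{C_{h,Z}\,m!A^m e^{h\|\zeta\|}}{\|\eta\zeta\|^m}
\leqslant \frac{C_{h,Z}\,m!A^m e^{h\|\zeta\|}}{(md'_\rho)^m}
\leqslant C_{h,Z}(A')^m e^{h\|\zeta\|},
\]
which is precisely \eqref{S.eq4.1a} with constant $A'$. Hence $P\in\widehat{\mathfrak{N}}(\varOmega;S)$.

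For $\widehat{\mathfrak{S}}_{\rm cl}\subset \widehat{\mathfrak{S}}$, I would apply the same device to the individual coefficient bound \eqref{S.eq3.2a}: with the same choice of $d'$ and $A'$, on $(\varOmega_\rho*Z)[(\nu+1)d'_\rho]$
\[
|P_\nu(z,\zeta,\eta)|\leqslant \frac{C_{h,Z}\,\nu!A^\nu e^{h\|\zeta\|}}{((\nu+1)d'_\rho)^\nu}
\leqslant C_{h,Z}(A')^\nu e^{h\|\zeta\|},
\]
using $\nu!\leqslant (\nu+1)^\nu$ (trivial for $\nu=0$, and $\nu!\leqslant \nu^\nu\leqslant (\nu+1)^\nu$ for $\nu\geqslant 1$). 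The third axiom $\partial_\eta P\in \widehat{\mathfrak{N}}(\varOmega;S)$ is then immediate from the null-symbol inclusion, applied to $\partial_\eta P\in \widehat{\mathfrak{N}}_{\rm cl}(\varOmega;S)$.

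I do not anticipate a genuine obstacle. The only points requiring care are that the enlargement $d\leadsto d'$ must respect the pre-existing holomorphic-domain condition (which it does, since the classical symbol is holomorphic on $\varOmega_\rho[d_\rho]\times S$, hence a fortiori on $\varOmega_\rho[d'_\rho]\times S$), and that the constants $C_{h,Z}$, $A$, $\rho$ supplied by the classical side are independent of $\nu$ and $m$, so that the single choice of $d'$ works uniformly in the estimates.
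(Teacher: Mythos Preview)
Your proof is correct and follows essentially the same approach as the paper: replace $d$ by a larger $d'$ so that $A/d'_\rho<1$, then use $\nu!\leqslant(\nu+1)^\nu$ together with $\|\eta\zeta\|\geqslant(\nu+1)d'_\rho$ on the smaller domain to convert the factorial growth into a geometric one. The paper treats the $\widehat{\mathfrak{S}}_{\rm cl}$ case explicitly and declares the $\widehat{\mathfrak{N}}_{\rm cl}$ case ``similar''; you write out both, and you also make the domain-containment check $(\varOmega_\rho*Z)[md'_\rho]\subset\varOmega_\rho[d_\rho]\times Z$ explicit, which the paper leaves implicit.
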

\begin{proof}
Let  $P(t;z,\zeta,\eta)\in \widehat{\mathfrak{S}}_{\rm cl }(\varOmega;  S)  $, and  assume \eqref{S.eq3.2a}. 
We replace $d$ as  $B:=\dfrac{\,A\,}{d^{}_\rho}\in \bigl]0,1\bigr[$  if necessary. 
Hence 
on $(\varOmega^{}_\rho * Z)[(\nu+1)d^{}_\rho]$, we have 
\[
   |P_\nu(z,\zeta,\eta)| \leqslant
\frac{C^{}_{h,Z}\,\nu!\,e^{h\|\zeta\|}}{(\nu+1)^\nu}\Bigl(\frac{A}{d^{}_\rho}\Bigr)^{\!\nu}
\leqslant C^{}_{h,Z}B^{\nu}e^{h\|\zeta\|}.
\]
The proof of $\widehat{\mathfrak{N}}_{\rm cl }(\varOmega;  S) \subset \widehat{\mathfrak{N}}(\varOmega;  S)$
is similar.
\end{proof}

\begin{prop}\label{S.prop4.8}  $ \widehat{\mathfrak{N}}(\varOmega; S)
 \cap\varGamma(\varOmega^{}_ \rho [d^{}_ \rho]\times S;
\mathscr{O}^{}_{T^*X \times \mathbb{C}}) 
= \mathfrak{N}(\varOmega; S)$.  
\end{prop}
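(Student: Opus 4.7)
The statement says that within the space of genuine holomorphic functions on $\varOmega_\rho[d_\rho]\times S$ (no formal $t$-expansion), the formal null-symbol condition of $\widehat{\mathfrak{N}}(\varOmega;S)$ collapses to the exponential decay condition of $\mathfrak{N}(\varOmega;S)$. A single holomorphic $P(z,\zeta,\eta)$ is viewed as the formal series $P_0 = P$, $P_\nu = 0$ for $\nu\geqslant 1$; the plan is to verify both inclusions.

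The inclusion $\mathfrak{N}(\varOmega;S)\subset\widehat{\mathfrak{N}}(\varOmega;S)\cap\varGamma(\cdots)$ is immediate. Given the defining bound $|P(z,\zeta,\eta)|\leqslant C_Z e^{-\delta\|\eta\zeta\|}$ on $\varOmega_\rho[d_\rho]\times Z$, and noting that on $(\varOmega_\rho*Z)[md_\rho]$ one has $\|\eta\zeta\|\geqslant md_\rho$, one gets
\[
\Bigl|\Sum_{\nu=0}^{m-1}P_\nu(z,\zeta,\eta)\Bigr|=|P(z,\zeta,\eta)|\leqslant C_Z e^{-\delta m d_\rho}=C_Z A^m
\]
with $A:=e^{-\delta d_\rho}\in\bigl]0,1\bigr[$, independent of $Z$. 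This is stronger than the required $C_{h,Z}A^m e^{h\|\zeta\|}$, so $P\in\widehat{\mathfrak{N}}(\varOmega;S)$.

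For the reverse inclusion, one argues along the lines of Proposition \ref{S.prop3.7}. Let $P$ be holomorphic on $\varOmega_\rho[d_\rho]\times S$ and assume $|P|\leqslant C_{h,Z}A^m e^{h\|\zeta\|}$ on $(\varOmega_\rho*Z)[md_\rho]$. Fix $Z\Subset S$ and set $m^{}_Z=\min_{\eta\in Z}|\eta|$. For any point with $\|\eta\zeta\|\geqslant d_\rho$, choose $m=\lfloor\|\eta\zeta\|/d_\rho\rfloor\geqslant 1$; then $(z;\zeta,\eta)\in(\varOmega_\rho*Z)[md_\rho]$ and $A^m\leqslant A^{-1}e^{-|\log A|\|\eta\zeta\|/d_\rho}$. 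Now select $h=h^{}_Z:=m^{}_Z|\log A|/(2d_\rho)$. The key point, mimicking the "$2hA=m^{}_Z$" trick of Proposition \ref{S.prop3.7}, is that this choice satisfies $h\leqslant |\eta||\log A|/(2d_\rho)$ for every $\eta\in Z$; consequently
\[
h\|\zeta\|-\frac{|\log A|\|\eta\zeta\|}{d_\rho}=\|\zeta\|\Bigl(h-\frac{|\log A||\eta|}{d_\rho}\Bigr)\leqslant -\frac{|\log A|\|\eta\zeta\|}{2d_\rho}.
\]
Setting $\delta:=|\log A|/(2d_\rho)$, which depends only on $A$ and $d_\rho$, we obtain $|P|\leqslant C_{h^{}_Z,Z}A^{-1}e^{-\delta\|\eta\zeta\|}$ on the region $\|\eta\zeta\|\geqslant d_\rho$.

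It remains to handle the complementary region $\{(z;\zeta,\eta)\in\varOmega_\rho[d_\rho]\times Z:\|\eta\zeta\|<d_\rho\}$. Here $\|\zeta\|\leqslant d_\rho/m^{}_Z$, $\eta\in Z$, and $z\in\pi(\varOmega_\rho)$ which is relatively compact (since $\varOmega\cset T^*X$). After shrinking $\rho$ slightly to a $\rho'<\rho$ to secure compactness inside the open domain of holomorphy, one gets $|P|\leqslant M^{}_Z$ by continuity, and since $e^{-\delta\|\eta\zeta\|}\geqslant e^{-\delta d_\rho}$ on this set, the bound is absorbed into a constant. Taking $C'_Z=\max\bigl(C_{h^{}_Z,Z}A^{-1},\,M^{}_Z e^{\delta d_\rho}\bigr)$ gives $|P(z,\zeta,\eta)|\leqslant C'_Z e^{-\delta\|\eta\zeta\|}$ on $\varOmega_{\rho'}[d_{\rho'}]\times Z$ with $\delta$ universal, so $P\in\mathfrak{N}(\varOmega;S)$.

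The only real subtlety is the choice of $h$: it must depend on $Z$ (through $m^{}_Z$) in order to tame the $e^{h\|\zeta\|}$ factor while still leaving a $Z$-independent $\delta$ after writing $h\leqslant|\eta||\log A|/(2d_\rho)$ and pulling $|\eta|$ out of $\|\zeta\|$ to reconstruct $\|\eta\zeta\|$. The bounded-region estimate is routine once one observes that $\|\eta\zeta\|<d_\rho$ forces $\|\zeta\|$ into a compact annulus on $Z$.
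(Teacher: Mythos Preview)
Your proof is correct and follows essentially the same route as the paper: for the hard inclusion you pick $m=\lfloor\|\eta\zeta\|/d_\rho\rfloor$ and $h=h_Z=\delta m_Z$ with $\delta=|\log A|/(2d_\rho)$, exactly as the paper does (modulo a harmless typo in the paper's stated value of $\delta$). Your explicit treatment of the bounded region $\|\eta\zeta\|<d_\rho$ via compactness is a point the paper passes over silently, and your direct verification of the easy inclusion replaces the paper's appeal to Proposition~\ref{S.prop3.7} and Lemma~\ref{S.lem4.7}; both additions make the argument more self-contained without changing the strategy.
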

\begin{proof} 
If  $ P(z,\zeta,\eta)\in \widehat{\mathfrak{N}}(\varOmega; S) \cap\varGamma(\varOmega^{}_ \rho [d^{}_ \rho]\times S;
\mathscr{O}^{}_{T^*X \times \mathbb{C}}) 
$,   
we set $\delta:= -\dfrac{\,2\log A\,}{d^{}_{\rho}}>0$, 
and for any $Z\Subset  S$,  take $h= \delta m^{}_Z$.
For each $(z;\zeta,\eta) \in \varOmega^{}_ \rho[d^{}_\rho] \times Z$, we take $m$ as the integral part of $ \dfrac{\|\eta\zeta\|}{d^{}_\rho}$, 
hence $(m+1)d^{}_{\rho} > \|\eta\zeta\| \geqslant m d^{}_\rho$.
Thus there exists   $C^{}_{Z}>0$ such that
\[
          |P(z,\zeta,\eta)|
\leqslant C_{Z}A^{m} e^{\delta m^{}_Z\|\zeta\|} \leqslant  C^{}_{Z}A^{ \|\eta\zeta\|/d^{}_\rho-1}e^{\delta m^{}_Z\|\zeta\|}
\leqslant\dfrac{C^{}_{Z}}{A}
 e^{\delta m^{}_Z\|\zeta\| -2\delta\|\eta\zeta\|}
\leqslant 
\dfrac{C^{}_{Z}}{A}
 e^{-\delta\|\eta\zeta\|}.
\]
Hence we have \eqref{S.eq1.2a}. 
Conversely, by Proposition \ref{S.prop3.7} and Lemma \ref{S.lem4.7} we have 
\[
\mathfrak{N}(\varOmega; S)= \widehat{\mathfrak{N}}^{}_{\rm cl}(\varOmega; S)\cap\varGamma(\varOmega^{}_ \rho [d^{}_ \rho]\times S;
\mathscr{O}^{}_{T^*X \times \mathbb{C}}) 
\subset  \widehat{\mathfrak{N}}(\varOmega; S)\cap\varGamma(\varOmega^{}_ \rho [d^{}_ \rho]\times S;
\mathscr{O}^{}_{T^*X \times \mathbb{C}}).
\qedhere\]
\end{proof}
%By Proposition \ref{S.prop4.8}
%we can also regard $ \mathfrak{S} (\varOmega; S) =\widehat{\mathfrak{S}}(\varOmega; S)|_{t=0}$. Moreover
\begin{thm}\label{S.thm4.9}
Let $z^*_0\in \dot{T}^*X$ and $P(t;z,\zeta,\eta) \in \widehat{\mathfrak{S}}^{}_{z^*_0} $. Then 
there exists    $\widetilde{P}(z,\zeta,\eta)  \in \mathfrak{S}^{}_{z^*_0}$ such that 
\[  
P(t;z,\zeta,\eta)  -\widetilde{P}(z,\zeta,\eta)\in \widehat{\mathfrak{N}} ^{}_{z^*_0}\,.
\]
\end{thm}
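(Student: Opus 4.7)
The plan is to adapt the proof of Theorem~\ref{S.thm3.8} to the present, slightly wider class of formal symbols; the crucial change is that the $\varGamma_\nu$-cut-off must now involve the apparent parameter $\eta$ in order to handle the fact that each $P^{}_\nu$ is defined only on the $\nu$-dependent set $(\varOmega^{}_\rho*S)[(\nu+1)d^{}_\rho]$, rather than on a fixed set as in the classical case.

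First I would reduce to the $\eta$-independent case.  Fix $\eta^{}_0\in S$; by Proposition~\ref{S.prop4.5}, $P(t;z,\zeta,\eta^{}_0)\in\widehat{\mathscr{S}}(\varOmega)$ and $P(t;z,\zeta,\eta)-P(t;z,\zeta,\eta^{}_0)\in\widehat{\mathfrak{N}}(\varOmega;S)$, so it suffices to realize $\Sum_\nu P^{}_\nu(z,\zeta,\eta^{}_0)$ by an element of $\mathfrak{S}(\varOmega;S)$ modulo $\widehat{\mathfrak{N}}(\varOmega;S)$.  After a unitary rotation, assume $z^*_0=(0;1,0,\dots,0)$, so $\|\zeta\|=|\zeta^{}_1|$ and $\Re\zeta^{}_1\geqslant 2\delta^{}_0|\zeta^{}_1|$ on $\varOmega^{}_\rho$ for some $\delta^{}_0\in\bigl]0,\tfrac12\bigr[$.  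Setting $d':=d/|\eta^{}_0|$, one has $|P^{}_\nu(z,\zeta,\eta^{}_0)|\leqslant C^{}_h A^\nu e^{h\|\zeta\|}$ on $\varOmega^{}_\rho[(\nu+1)d'_\rho]$ for every $h>0$, with $A\in\bigl]0,1\bigr[$ fixed.

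The construction takes, for a small $a>0$ with $aA<1$, the $\eta$-twisted cut-off
\[
\chi^{}_\nu(\zeta^{}_1,\eta):=\zeta_1^{\,\nu}\,\varGamma^{}_\nu(\zeta^{}_1,a\eta),
\]
which by an elementary change of variables is jointly entire in $(\zeta^{}_1,\eta)$, and which by the estimates \eqref{S.eq2.6} and \eqref{S.eq2.5} satisfies $|\chi^{}_\nu|\leqslant|a\eta\zeta^{}_1|^\nu/\nu!$ and $|1-\chi^{}_\nu|\leqslant e^{-\delta^{}_0|a\eta\zeta^{}_1|}/\delta_0^{\,\nu-1}$ on the cone $\Re(a\eta\zeta^{}_1)\geqslant 2\delta^{}_0|a\eta\zeta^{}_1|$.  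The proposed realization is
\[
\widetilde{P}(z,\zeta,\eta):=\Sum_{\nu=0}^\infty P^{}_\nu(z,\zeta,\eta^{}_0)\,\chi^{}_\nu(\zeta^{}_1,\eta).
\]
The termwise bound $|P^{}_\nu\chi^{}_\nu|\leqslant C^{}_h e^{h\|\zeta\|}(aA|\eta\zeta^{}_1|)^\nu/\nu!$ provides the convergence and growth required for $\widetilde{P}\in\mathfrak{S}(\varOmega;S)$.  The null property is obtained from the expansion $P(t;z,\zeta,\eta^{}_0)-\widetilde{P}=\Sum_\nu P^{}_\nu(z,\zeta,\eta^{}_0)(1-\chi^{}_\nu)$, splitting at $\nu=m$, using the tail estimate on $1-\chi^{}_\nu$ for $\nu<m$ and the basic bound for $\nu\geqslant m$, in the pattern of \eqref{S.eq2.23} and \eqref{S.eq2.23a}; this yields a bound of the form $C^{}_{h,Z}(aA)^m e^{h'\|\zeta\|}$ on $(\varOmega^{}_\rho*Z)[md'_\rho]$, which is the defining estimate of $\widehat{\mathfrak{N}}(\varOmega;S)$.

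The principal obstacle, with no analogue in the proof of Theorem~\ref{S.thm3.8}, is justifying that the series $\Sum_\nu P^{}_\nu(z,\zeta,\eta^{}_0)\chi^{}_\nu(\zeta^{}_1,\eta)$ defines a single holomorphic function on a uniform set of the form $(\varOmega^{}_{\rho''}*S)[d''_{\rho''}]$, given that the $\nu$-th term is a priori defined only on the $\nu$-shrinking set $\varOmega^{}_\rho[(\nu+1)d'_\rho]$.  I expect to resolve this through a Cauchy-type integral reformulation of each term, expressing $P^{}_\nu(z,\zeta,\eta^{}_0)\chi^{}_\nu(\zeta^{}_1,\eta)$ via values of $P^{}_\nu$ along a contour in $\zeta^{}_1$ of radius of order $(\nu+1)d'_\rho$ that stays inside the domain of $P^{}_\nu$; the rapid vanishing of $\chi^{}_\nu$ like $(a\eta\zeta^{}_1)^\nu/\nu!$ at $\eta\zeta^{}_1=0$ is precisely what enables the Cauchy-kernel expansions so obtained to be summed uniformly in $\nu$ on the uniform set.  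Making this contour-deformation argument precise, with all constants tracked uniformly in $\nu$, is the technically demanding step of the proof.
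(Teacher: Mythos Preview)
Your reduction via Proposition~\ref{S.prop4.5} to the $\eta$-independent formal symbol $P(t;z,\zeta,\eta^{}_0)\in\widehat{\mathscr{S}}(\varOmega)$ is correct and matches the paper. Your instinct to adapt the $\varGamma^{}_\nu$-cut-off from Theorem~\ref{S.thm3.8}, now with argument $a\eta$ rather than a constant $a$, is also natural. The estimates you sketch for $|P^{}_\nu\chi^{}_\nu|$ and $|1-\chi^{}_\nu|$ are fine on the sets where $P^{}_\nu$ is defined.

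However, the step you label the ``principal obstacle'' is a genuine gap, and your proposed resolution does not close it. A Cauchy integral of $P^{}_\nu$ over a contour in $\zeta^{}_1$ lying in $\{|\xi^{}_1|\geqslant(\nu+1)d'_\rho\}$ cannot produce the value of $P^{}_\nu$ (or of $P^{}_\nu\chi^{}_\nu$) at interior points $|\zeta^{}_1|<(\nu+1)d'_\rho$, since $P^{}_\nu$ is holomorphic only in the exterior region; the rapid vanishing of $\chi^{}_\nu$ does not help here, because you are not removing a pole but trying to manufacture values where none exist. Any contour that encloses $\zeta^{}_1$ and stays in the domain of $P^{}_\nu$ forces $|\zeta^{}_1|>(\nu+1)d'_\rho$ already, so nothing is gained. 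Consequently your series $\sum_\nu P^{}_\nu\chi^{}_\nu$ is, as you note, not defined on any uniform set, and the convergence and growth estimates you write down are vacuous.

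The paper resolves this by a genuinely different mechanism: it passes each $P^{}_\nu$ through the kernel map $\varpi$ and back through the symbol map $\sigma$ of Section~\ref{sec:kernel-symbol}. Concretely, one first Taylor-expands $P^{}_\nu$ in $\zeta'/\zeta^{}_1$ and applies the $\mathcal{B}$-transform with $\varGamma^{}_{|\alpha|}(\zeta^{}_1,\eta)$ (indexed by the Taylor degree $|\alpha|$, not by $\nu$); then one forms the Laplace-type integral $\varpi^{}_\alpha(P^{}_\nu)(z,w^{}_1,\eta)=\int_{(\nu+1)d}^\infty P^{\mathcal{B}}_{\nu,\alpha}(z,\xi^{}_1,\eta)\,e^{-w^{}_1\xi^{}_1}\xi_1^{-|\alpha|}d\xi^{}_1$, which uses values of $P^{}_\nu$ only where it is defined and yields a kernel holomorphic on a $w^{}_1$-domain independent of $\nu$; finally one applies $\sigma$ (integration over $\gamma(0,\eta;\varrho,\theta)$) to obtain $\widetilde{P}^{}_\nu(z,\zeta,\eta)$ holomorphic on a \emph{uniform} set $\varOmega^{}_\rho[d^{}_\rho]\times S$ with $|\widetilde{P}^{}_\nu|\leqslant C^{}_{h,Z}A^\nu e^{h\|\zeta\|}$. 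The contour manipulations of Step~1 in Theorem~\ref{S.thm2.1} then show $\widetilde{P}^{}_\nu-P^{\mathcal{B}}_\nu=I^+_\nu+I^-_\nu$ is exponentially small in $|\eta\zeta^{}_1|$, uniformly in $\nu$, which gives the null-symbol estimate for $\widetilde{P}(t;\cdot)-P^{\mathcal{B}}(t;\cdot)$. The geometric series $\widetilde{P}:=\sum_\nu\widetilde{P}^{}_\nu$ then converges on the uniform domain and lies in $\mathfrak{S}^{}_{z^*_0}$.

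In short: the ``integral reformulation'' you anticipate is not a Cauchy integral in $\zeta^{}_1$ but the Laplace/inverse-Laplace pair $\sigma\circ\varpi$, and making it work requires the Taylor decomposition in $\zeta'/\zeta^{}_1$ and the detailed contour analysis already carried out in Section~\ref{sec:kernel-symbol}.
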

\begin{proof}
We may assume that
  $z_0^*=(0;1,0,\ldots,0)$. We fix $\eta^{}_0\in  S \cap \mathbb{R}$. 
Then by Proposition \ref{S.prop4.5} we have 
$P(t;z,\zeta,\eta^{}_0)\in \widehat{\mathscr{S}}(\varOmega) $ and 
$P(t;z,\zeta,\eta)- P(t;z,\zeta,\eta^{}_0) \in \widehat{\mathfrak{N}}(\varOmega; S) $.
We use the notation of the proof in Theorem \ref{S.thm2.1}; 
We develop  $P_\nu(z,\zeta,\eta^{}_0)$ into  the Taylor series with respect to
$(\zeta^{}_2/\zeta^{}_1,\dots, \zeta ^{}_n/\zeta ^{}_1)$:
\[
P_\nu(z,\zeta,\eta^{}_0)=\smashoperator[r]{\Sum_{\alpha\in{\mathbb{N}_0^{n-1}}}
}P^{}_{\nu,\alpha}(z,\zeta^{}_1,\eta^{}_0)
\biggl(\frac{\zeta'}{\zeta^{}_1}\biggr)^{\!\!\alpha},
\]
Then there exist sufficiently small $r^{}_0$, $\theta'>0$ and sufficiently large $d>0$ such that $P^{}_{\nu,\alpha}(z,\zeta^{}_1,\eta^{}_0)$ is
holomorphic on a common neighborhood of $D[(\nu+1)d]$ for each $\alpha \in \mathbb{N}_0^{n-1}$, where 
\[
D[(\nu+1)d]:=\{(z,\zeta^{}_1)\in  \mathbb{C}^{n+1};\,
\|z\|\leqslant r^{}_0,\:\lvert\arg\zeta^{}_1| \leqslant \theta',\, |\zeta^{}_1|\geqslant d (\nu+1)\}.
\]
It follows from the Cauchy inequality
that we can take  constants  $K>0$  and $A\in \big]0,1\big[$
so that for each  $h>0$  there exists $C^{}_{h}>0$ such that 
for every $\alpha \in \mathbb{N}_0^{n-1}$,
\[
|P^{}_{\nu,\alpha}(z,\zeta^{}_1 ,\eta^{}_0)|
\leqslant
C^{}_{h} A^\nu K^{|\alpha|}e^{h|\zeta^{}_1|} \quad ((z,\zeta^{}_1) \in D[(\nu+1)d]).
\]
We set 
$P^{\mathcal{B}}_{\nu,\alpha}(z,\zeta^{}_1 ,\eta)$ and $P^{\mathcal{B}}_\nu(z,\zeta,\eta)$ as in \eqref{S.eq2.8a}.
Then as in \eqref{S.eq2.11}, for  there exists  $\delta^{}_0>0$ and for any $Z \Subset S$, there exists $C'_Z$ such that 
for any  $(z,\zeta^{}_1,\eta) \in D[(\nu+1)d]\times Z$ and  $|\zeta^{}_i| \leqslant \varepsilon |\zeta^{}_1|$  we have
\[
|P^{}_\nu(z,\zeta,\eta^{}_0)- P^{\mathcal{B}}_\nu(z,\zeta,\eta)| \leqslant 2^{n-1}C'_{Z}A^\nu e^{-\delta^{}_0|\eta\zeta^{}_1|/2}.
\] 
Thus shrinking $\varOmega$ if necessary, setting  $A_1:= e^{-\delta^{}_0 d^{}_\rho/2}\in \big]0,1\bigr[$,
for  any 
$m \in\mathbb{N}$, we have on
$(\varOmega^{}_\rho * Z)[md^{}_\rho]$ 
\[
   \Bigl|\smashoperator[r]{\Sum_{\nu=0}^{m-1}} (P_\nu(z,\zeta,\eta^{}_0)- P^{\mathcal{B}}_\nu(z,\zeta,\eta))\Bigr|
 \leqslant \dfrac{\,C_{Z}A_1^{m}\,}{1-A}\,.
\]
Hence 
\begin{multline*}
P(t;z,\zeta,\eta)-  P^{\mathcal{B}}(t;z,\zeta,\eta) 
\\
=P(t;z,\zeta,\eta)-  P(t;z,\zeta,\eta^{}_0) +P(t;z,\zeta,\eta^{}_0)-  P^{\mathcal{B}}(t;z,\zeta,\eta) \in \widehat{\mathfrak{N}}(\varOmega; S).
\end{multline*} 
We set
\[
\varpi^{}_\alpha(P_{\nu})(x,w^{}_1,\eta)
:=
\int_{(\nu+1)d}^\infty \hspace{-1ex}P^{\mathcal{B}}_{\nu,\alpha}(z,\zeta^{}_1 ,\eta)
\,\dfrac{e^{- w^{}_1 \zeta^{}_1}}{\zeta_1^{|\alpha|}}\,d\zeta^{}_1.
\]
Recall $L$ of \eqref{S.eq2.16} and $L^{}_{k}$  of \eqref{S.eq2.18}.  
Hence as in \eqref{S.eq2.19}, 
$\varpi^{}_\alpha(P_{\nu})(x,w^{}_1,\eta)$  extends analytically to the domain $L \times  S$, and  for any $\eta \in   S$ we have 
\begin{equation}
\label{S.eq4.1}
\sup\{|\varpi^{}_\alpha(P_{\nu})(x,w^{}_1,\eta)|;\, (x,w^{}_1)\in L^{}_{k}\}
\leqslant
\dfrac{2kC^{}_{k,Z}A^\nu}{\, \delta^{}_1 |\alpha|!\,}\, (K|\eta|)^{|\alpha|}.
\end{equation}
Now we define
\[
\varpi(P_{\nu})(z,z+w,\eta)
:= \Sum_{\alpha\in \mathbb N_0^{n-1}}
\frac{\,\alpha!\, \varpi^{}_\alpha(P_{\nu})(z,w^{}_1,\eta)} 
{\,(2\pi\im\,)^{n}\,
(w')^{\alpha+ \mathbf{1}_{n-1}}\,}\,.
\]
The right-hand side converges locally uniformly $V^{}_{k}$ of \eqref{S.eq2.20}.  Hence $\varpi(P_{\nu})(z,z+w,\eta)$ is 
a holomorphic function
 defined on $V$ of \eqref{S.eq2.21}.
%Moreover we set
%$\psi (z,z+w,\eta):=\Sum_{\nu=0}^{\infty} \psi^{}_\nu(z,z+w,\eta) $. Then $\psi (z,z+w,\eta)$  is also  holomorphic on $V$ by \eqref{S.eq4.1}. 
Hence we can define
\allowdisplaybreaks
\begin{align*}
\widetilde{P}^{}_\nu(z,\zeta,\eta):= {}&
\smashoperator{\int\limits_{\hspace{6ex}\gamma(0,\eta;\varrho,\theta)}}\varpi(P_{\nu})(z,z+w,\eta) 
e^{\langle w, \zeta\rangle} dw
\\
={}&  \smashoperator[r]{\Sum_{\alpha\in \mathbb{ N}_0^{n-1}}}\,  (\zeta')^{\alpha}
   \int\limits_{\gamma^{}_1(0, \eta;\varrho,\theta) } \hspace{-2ex}dw^{}_1\,\frac{\,e^{w^{}_1\zeta^{}_1}} {2\pi\im}
\int_{(\nu+1)d}^\infty \hspace{-1ex} P^{\mathcal{B}}_{\nu,\alpha}(z,\xi^{}_1 ,\eta)
\,\dfrac{e^{- w^{}_1 \xi^{}_1}}{\xi_1^{|\alpha|}}\,d\xi^{}_1\,.
\end{align*}
By virtue of \eqref{S.eq4.1}, 
there exist conic neighborhood $\varOmega$ of $z^*_0$, $\rho\in \bigl]0,1\bigr[$ and $d>0$ such 
that  $\widetilde{P}^{}_\nu(z,\zeta,\eta)\in\varGamma(\varOmega^{}_ \rho [d^{}_ \rho]\times S;
\mathscr{O}^{}_{T^*X \times \mathbb{C}})$ 
and for any  $h>0$ and  $Z\Subset  S$  there exist constants  $C_{h,Z}>0$  such that 
\begin{equation}\label{S.eq4.2}
|\widetilde{P}^{}_\nu(z , \zeta, \eta)| \leqslant     C_{h,Z}A^{\nu} e^{h \|\zeta\|} \quad ((z;\zeta,\eta) \in \varOmega^{}_\rho[d^{}_\rho] \times Z).
\end{equation}
We   set 
\[
\widetilde{V}_{\varepsilon}[(\nu+1)d]
:= \smashoperator[r]{\Bcap_{i=2}^n}\{(z,\zeta)\in \mathbb C^{2n};\, \|z\| < r^{}_0,\,
|\zeta^{}_1|\geqslant \dfrac{\,(\nu+1)d\,}{\,\varepsilon\,},\,
\lvert\arg\zeta^{}_1| \leqslant \varepsilon,\,|\zeta^{}_i|
\leqslant \varepsilon|\zeta^{}_1|\}. 
\]
 Recall  $\varSigma^{}_{\pm}$ (cf.\ Figure~\ref{S.pic3}), 
and we set $\varSigma^{\nu}_{\pm}:=\{(\nu+1)\xi_1\in \mathbb{C};\, \xi_1 \in \varSigma^{}_{\pm}\}$.  
Then we have $\widetilde{P}^{}_\nu(z,\zeta,\eta) = I^{}_\nu +I^{-}_\nu +I^{+}_\nu$, where
\allowdisplaybreaks
\begin{align*}
I^{}_\nu
 & :=
\smashoperator[r]{ \Sum_{\alpha\in\mathbb{N}_0^{n-1}}}\,
  (\zeta')^{\alpha}
    \int_{\varSigma^{\nu}_- - \varSigma^{\nu}_+}
    \frac{\,  P^{\mathcal{B}}_{\nu,\alpha}(z,\xi^{}_1 ,\eta) \,e^{a(\zeta^{}_1- \xi^{}_1)}\,}
         {\,2\pi\im \xi_1^{|\alpha|}(\xi^{}_1-\zeta^{}_1)\,}\,    d\xi^{}_1,
\\
I^{-}_\nu
& := -\smashoperator{\Sum_{\alpha\in\mathbb{ N}_0^{n-1}}}\,
  (\zeta')^{\alpha}
    \int_{\varSigma^{\nu}_-}\!\!
    \frac{\, P^{\mathcal{B}}_{\nu,\alpha}(z,\xi^{}_1 ,\eta) \,e^{\,\beta_1(\eta)(\zeta^{}_1-\xi^{}_1)}}
{\,2\pi\im \xi_1^{|\alpha|}(\xi^{}_1-\zeta^{}_1)\,}\,
    d\xi^{}_1,
\\
I^{+}_\nu
& := \smashoperator[r]{\Sum_{\alpha'\in\mathbb{ N}_0^{n-1}}}\,
(\zeta')^{\alpha}
    \int_{\varSigma^{\nu}_+}\!\!
    \frac{\, P^{}_{\nu,\alpha}(z,\xi^{}_1 ,\eta)\, e^{\,\beta^{}_0(\eta)(\zeta^{}_1-\xi^{}_1)}}
{\,2\pi\im \xi_1^{|\alpha|}(\xi^{}_1-\zeta^{}_1)\,}\,d\xi^{}_1.
\end{align*}
On $\widetilde{V}_{\varepsilon}[(\nu+1)d] \times Z$, as in \eqref{S.eq2.23} and \eqref{S.eq2.23a}  we have
\begin{equation}\label{S.eq4.5}
\begin{aligned}
  |I_\nu^{\,-}|
&\leqslant 
\frac{\,2^{n-2}C^{}_{h_Z}A^\nu e^{-c |\eta\zeta^{}_1|}\,}{\,c\,}\,
  \biggl( e^{(h^{}_Z+|\beta^{}_1|r) d}+
              \frac{e^{-h^{}_0dr}}
{\, 2\pi h^{}_0dm^{}_Z\,}
         \biggr), 
\\
 |I_\nu^{\,+}|
&\leqslant 
\frac{\,2^{n-2}C^{}_{h_Z}A^\nu e^{-c |\eta\zeta^{}_1|}\,}{\,c\,}\,
  \biggl( e^{(h^{}_Z+|\beta^{}_0|r) d}+
              \frac{e^{-h^{}_0dr}}
{\, 2\pi h^{}_0dm^{}_Z\,}
         \biggr).
\end{aligned}
\end{equation}
Further 
\[
I^{}_\nu =
\smashoperator[r]{\Sum_{\alpha\in\mathbb N_0^{n-1}}}
\,\,\,P^{\mathcal{B}}_{\nu,\alpha}(z,\zeta^{}_1,\eta)
\biggl(\frac{\zeta'}{\zeta^{}_1}\biggr)^{\!\!\!\alpha} = P^{\mathcal{B}}_{\nu}(z,\zeta,\eta)
\]
holds if $\zeta^{}_1$ is located in the domain surrounded by 
$\varSigma^{\nu}_{-}-\varSigma^{\nu}_+$. 
Therefore, shrinking $\varOmega$ and replacing $d$ with a larger one  if necessary,  by  \eqref{S.eq4.5},  
there exists $\delta>0$ and 
for any  $Z\Subset  S$ there exist $C^{}_{Z}>0$ such that 
on $\varOmega^{}_\rho[(\nu+1)d^{}_\rho] \times Z$, the following holds:
\[
 |\widetilde{P}^{}_\nu (z,\zeta,\eta)-P^{\mathcal{B}}_\nu(z,\zeta,\eta) |
\leqslant 
 C^{}_Z A^{\nu}e^{-\delta \|\eta\zeta\|}.
\]
We set $\widetilde{P}(t;z,\zeta,\eta):= \Sum_{\nu=0}^\infty t^\nu \widetilde{P}^{}_\nu (z,\zeta,\eta)$.
We set  $A_2:= e^{-\delta d^{}_\rho}\in \big]0,1\bigr[$. Then
for  any 
$m \in\mathbb{N}$, on $(\varOmega^{}_\rho * Z)[m d^{}_\rho]$   we have
\[
   \Bigl|\smashoperator[r]{\Sum_{\nu=0}^{m-1}} (\widetilde{P}^{}_\nu(z,\zeta,\eta)-P^{\mathcal{B}}_\nu (z,\zeta,\eta))\Bigr|
 \leqslant \dfrac{\,C_{Z}A_2^{m}\,}{1-A}\,,
\]
i.e.\  $ \widetilde{P}(t;z,\zeta,\eta)- P^{\mathcal{B}}(t;z,\zeta,\eta) \in  \widehat{\mathfrak{N}}^{}_{z^*_0}\,.$ 
In particular by Lemma \ref{S.lem4.3},
\[
\partial_{\eta}\widetilde{P}(t;z,\zeta,\eta) =\partial_{\eta}(\widetilde{P}(t;z,\zeta,\eta)
- P^{\mathcal{B}}(t;z,\zeta,\eta)) +\partial_{\eta} P^{\mathcal{B}}(t;z,\zeta,\eta)\in  \widehat{\mathfrak{N}}^{}_{z^*_0}\,.
\] 
Thus $\widetilde{P}(t;z,\zeta,\eta)\in \widehat{\mathfrak{S}}^{}_{z^*_0}$. By \eqref{S.eq4.2}, 
we can define 
\[
\widetilde{P}(z,\zeta,\eta) := \Sum_{\nu=0}^\infty \widetilde{P}^{}_\nu (z,\zeta,\eta)
\in \varGamma(\varOmega^{}_ \rho [d^{}_ \rho]\times S;
\mathscr{O}^{}_{T^*X \times \mathbb{C}}), 
\]
and  we have 
\[
|\widetilde{P} (z,\zeta,\eta)|
 \leqslant 
 \frac{\, C^{}_{h,Z}e^{h\|\zeta\|}\,}{1-A} \qquad  ((z;\zeta,\eta) \in \varOmega^{}_\rho[d^{}_\rho] \times Z),
\]
\[
  \Bigl|\widetilde{P} (z,\zeta,\eta)-\smashoperator{\Sum_{\nu=0}^{m-1}} \widetilde{P}^{}_\nu (z,\zeta,\eta)\Bigr|
= \Bigl|\smashoperator[r]{\Sum_{\nu =m}^\infty} \widetilde{P}^{}_\nu (z,\zeta,\eta)\Bigr|
\leqslant 
  \frac{\,C^{}_{h,Z}\,A^{m} e^{h\|\zeta\|}\,}{1-A}\quad ((z;\zeta,\eta) \in \varOmega^{}_\rho[md^{}_\rho] \times Z).
\]
i.e.\  $\widetilde{P}(z,\zeta,\eta)- \widetilde{P}(t;z,\zeta,\eta) \in  \widehat{\mathfrak{N}}^{}_{z^*_0}\,.$ 
Moreover by Lemma \ref{S.lem4.3} and   Proposition  \ref{S.prop4.8}, we have 
\begin{align*}
\partial_{\eta}\widetilde{P} (z,\zeta,\eta) & =
\partial_{\eta}(\widetilde{P} (z,\zeta,\eta)- \widetilde{P}(t;z,\zeta,\eta))
+ \partial_{\eta}\widetilde{P}(t;z,\zeta,\eta) 
\\
&\in 
\widehat{\mathfrak{N}} ^{}_{z^*_0}\cap\varGamma(\varOmega^{}_ \rho [d^{}_ \rho]\times S;
\mathscr{O}^{}_{T^*X \times \mathbb{C}})  
 = \mathfrak{N} ^{}_{z^*_0}\,.
\end{align*}
Therefore $\widetilde{P}(z,\zeta,\eta)  \in \mathfrak{S}^{}_{z^*_0}$ and 
\begin{align*}
P(t;z,\zeta,\eta)  -\widetilde{P}(z,\zeta,\eta) ={} &
P(t;z,\zeta,\eta)-P^{\mathcal{B}}(t;z,\zeta,\eta)+P^{\mathcal{B}}(t;z,\zeta,\eta)  -\widetilde{P}(t;z,\zeta,\eta) 
\\*
&+\widetilde{P}(t;z,\zeta,\eta) -\widetilde{P}(z,\zeta,\eta)
\in \widehat{\mathfrak{N}} ^{}_{z^*_0}\,.
\qedhere
\end{align*}
\end{proof}
\begin{thm}
For any $z^*_0\in \dot{T}^*X$, the inclusions $
\mathfrak{S}^{}_{z^*_0}\subset \widehat{\mathfrak{S}}^{}_{{\rm cl},z^*_0}
\subset  \widehat{\mathfrak{S}}^{}_{z^*_0}
$ and $
\mathfrak{N}^{}_{z^*_0}\subset \widehat{\mathfrak{N}}^{}_{{\rm cl},z^*_0}
\subset  \widehat{\mathfrak{N}}^{}_{z^*_0}
$ induce
\[
\mathfrak{S}^{}_{z^*_0}\big/\mathfrak{N}^{}_{z^*_0} \earrow \widehat{\mathfrak{S}}^{}_{{\rm cl},z^*_0}\big/
\widehat{\mathfrak{N}}^{}_{{\rm cl},z^*_0} \earrow\widehat{\mathfrak{S}}^{}_{z^*_0}\big/\widehat{\mathfrak{N}}^{}_{z^*_0}\,.
\]
\end{thm}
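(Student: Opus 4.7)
The plan is to derive both isomorphisms directly from the material already assembled in the earlier sections, rather than to prove the injectivity and surjectivity of the second arrow from scratch.

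First, the left arrow $\mathfrak{S}_{z^*_0}/\mathfrak{N}_{z^*_0} \to \widehat{\mathfrak{S}}_{{\rm cl},z^*_0}/\widehat{\mathfrak{N}}_{{\rm cl},z^*_0}$ is an isomorphism by Corollary \ref{S.cor3.9} (one just passes the sectional isomorphism given there to the inductive limit over $\varOmega$ and $S$). So it remains only to handle the right arrow. For this I will show that the composition
\[
\mathfrak{S}_{z^*_0}/\mathfrak{N}_{z^*_0} \longrightarrow \widehat{\mathfrak{S}}_{{\rm cl},z^*_0}/\widehat{\mathfrak{N}}_{{\rm cl},z^*_0} \longrightarrow \widehat{\mathfrak{S}}_{z^*_0}/\widehat{\mathfrak{N}}_{z^*_0}
\]
induced by the two inclusions is an isomorphism; since the first factor is already known to be an isomorphism, the second factor will also be an isomorphism by the usual two-out-of-three argument ($g = (g\circ f)\circ f^{-1}$).

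Second, for surjectivity of the composition, I invoke Theorem \ref{S.thm4.9}: given any $P(t;z,\zeta,\eta)\in\widehat{\mathfrak{S}}_{z^*_0}$, it produces a (non-formal) symbol $\widetilde{P}(z,\zeta,\eta)\in\mathfrak{S}_{z^*_0}$ with $P-\widetilde{P}\in\widehat{\mathfrak{N}}_{z^*_0}$. Viewing $\widetilde{P}$ as a $t$-constant formal series places it in $\widehat{\mathfrak{S}}_{{\rm cl},z^*_0}$, and the class of $\widetilde{P}$ in $\mathfrak{S}_{z^*_0}/\mathfrak{N}_{z^*_0}$ is sent to the class of $P$.

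Third, for injectivity of the composition, suppose $P(z,\zeta,\eta)\in\mathfrak{S}_{z^*_0}$ (regarded as a $t$-constant formal series) belongs to $\widehat{\mathfrak{N}}_{z^*_0}$. Then $P$ is represented by a single holomorphic function lying in $\widehat{\mathfrak{N}}(\varOmega;S)\cap \varGamma(\varOmega_\rho[d_\rho]\times S;\mathscr{O}_{T^*X\times\mathbb{C}})$ for some admissible $\varOmega$, $\rho$, $d$, $S$. By Proposition \ref{S.prop4.8} this intersection equals $\mathfrak{N}(\varOmega;S)$, so $P\in\mathfrak{N}_{z^*_0}$, which is exactly what is needed.

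I expect the only potentially subtle point is the bookkeeping of parameters $(\varOmega,\rho,d,S)$ when passing between the non-classical formal condition (which involves the restricted set $(\varOmega_\rho *Z)[md_\rho]$) and the classical one (defined on $\varOmega_\rho[d_\rho]\times Z$), but this is already dealt with inside Theorem \ref{S.thm4.9} and Proposition \ref{S.prop4.8}, so at the level of germs the argument collapses to the two citations above and the two-out-of-three diagram chase.
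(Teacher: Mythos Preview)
Your argument is correct and uses the same ingredients as the paper (Corollary~\ref{S.cor3.9}, Theorem~\ref{S.thm4.9}, Proposition~\ref{S.prop4.8}); the paper additionally writes out an explicit compatibility check showing that the two preimages $P'$ and $P''$ of a classical formal symbol $P$ coming from Theorems~\ref{S.thm3.8} and~\ref{S.thm4.9} agree modulo $\mathfrak{N}_{z^*_0}$, but as you note this is automatic once one observes that all three maps are induced by the same chain of inclusions and hence the triangle commutes on the nose.
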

\begin{proof} 
By Proposition \ref{S.prop4.8} and Theorem  \ref{S.thm4.9}, we obtain an isomorphism 
$
\mathfrak{S}^{}_{z^*_0}\big/\mathfrak{N}^{}_{z^*_0} \earrow\widehat{\mathfrak{S}}^{}_{z^*_0}
\big/\widehat{\mathfrak{N}}^{}_{z^*_0}$,  and we shall show that this isomorphism  
 is compatible with $\mathfrak{S}^{}_{z^*_0}\big/\mathfrak{N}^{}_{z^*_0} \earrow 
\widehat{\mathfrak{S}}^{}_{{\rm cl},z^*_0}\big/\widehat{\mathfrak{N}}^{}_{{\rm cl},z^*_0}$ in Corollary \ref{S.cor3.9}. 
For any $
P(t;z,\zeta,\eta) \in \widehat{\mathfrak{S}}^{}_{\textrm{cl},z^*_0}\subset \widehat{\mathfrak{S}}^{}_{z^*_0}
$, by Theorems  \ref{S.thm3.8} and  \ref{S.thm4.9}, there exist  $P'(z,\zeta,\eta)$, $P''(z,\zeta,\eta) \in\mathfrak{S}^{}_{z^*_0}$ such that
\begin{equation*}
\left\{
\begin{aligned}
P(t;z,\zeta,\eta) - P'(z,\zeta,\eta) & \in \widehat{\mathfrak{N}}^{}_{\textrm{cl},z^*_0}\, ,
\\
P(t;z,\zeta,\eta) - P''(z,\zeta,\eta) & \in \widehat{\mathfrak{N}}^{}_{z^*_0}\, .
\end{aligned}
\right.
\end{equation*}
Then, by Propositions  \ref{S.prop3.7} and \ref{S.prop4.8} we have
\begin{align*}
 P'(z,\zeta,\eta) - P''(z,\zeta,\eta)  &\in \mathfrak{S}^{}_{z^*_0} \cap \widehat{\mathfrak{N}}^{}_{z^*_0} \cap\varGamma(\varOmega^{}_ \rho [d^{}_ \rho]\times S;
\mathscr{O}^{}_{T^*X \times \mathbb{C}}) 
\\
&
= \mathfrak{N}^{}_{z^*_0}
= \widehat{\mathfrak{N}}^{}_{\textrm{cl},z^*_0}\cap\varGamma(\varOmega^{}_ \rho [d^{}_ \rho]\times S;
\mathscr{O}^{}_{T^*X \times \mathbb{C}}).
\qedhere
\end{align*}
\end{proof}
\begin{rem}
Summing up, we have the following  commutative diagram:
\[
\xymatrix @C=1em @R=3ex{
\mathscr{E}^{\smash{\mathbb{R}}}_{X, z^*_0} \ar[r]^-{\dsim} \ar[d]^-{\displaystyle \!\wr}&
 \mathscr{S}^{}_{z^*_0}/\mathscr{N}^{}_{z^*_0} \ar[r]^-{\dsim}\ar[d]^-{\displaystyle \!\wr}
 & \widehat{\mathscr{S}} _{{\rm cl}, z^*_0}/\widehat{\mathscr{N}} _{{\rm cl}, z^*_0} 
\ar[d]^-{\displaystyle \!\wr}\ar[r]^-{\dsim}
&\widehat{\mathscr{S}} _{ z^*_0}/\widehat{\mathscr{N}} _{ z^*_0} \ar[d]^-{\displaystyle \!\wr}
\\
\varinjlim\limits_{\boldsymbol{\kappa}} E^{\mathbb{R}}_X(\boldsymbol{\kappa}) \ar[r]^-{\dsim} &\mathfrak{S}^{}_{z^*_0}/\mathfrak{N}^{}_{z^*_0} 
\ar[r]^-{\dsim}
& \widehat{\mathfrak{S}}_{{\rm cl}, z^*_0} /\widehat{\mathfrak{N}}_{{\rm cl}, z^*_0} \ar[r]^-{\dsim}
& \widehat{\mathfrak{S}}_{ z^*_0} /\widehat{\mathfrak{N}}_{ z^*_0} \,.
} 
\]
\end{rem}
\begin{defn} 
As in the case of   $ \mathfrak{S}(\varOmega; S)$ and $\widehat{\mathfrak{S}}_{\rm cl}(\varOmega; S)$,
 for any $P(t;z,\zeta,\eta)\in \widehat{\mathfrak{S}}(\varOmega; S)$ we set 
\[\wick{P(t;z,\zeta,\eta)}: = P(t;z,\zeta,\eta) \bmod 
 \widehat{\mathfrak{N}}(\varOmega; S) \in 
 \widehat{\mathfrak{S}}(\varOmega; S)
\big/ \widehat{\mathfrak{N}}(\varOmega; S)
\]
which is also called   the \textit{normal product} or the  \textit{Wick product} of $P(t;z,\zeta,\eta)$. 
\end{defn}

We use the notation of Theorem   \ref{S.thm3.11}. For any   $P(t;z,\zeta,\eta)\in
 \widehat{\mathfrak{S}}(\varOmega; S)$, we also set 
\[
\varPhi^*P(t;w,\lambda,\eta)
:= e^{t\langle \partial_{\zeta'},\partial^{}_{z'}\rangle} 
   P(t;\varPhi(w),\zeta'+ {}^{\mathsf{t}}\!J^{*}_{\varPhi}(\varPhi(w)+z', \varPhi(w) )\lambda, \eta)
\big{|}{}_{\atop{z'=0}{\zeta'=0}}\,.\]

\begin{thm}
$(1)$
$\varPhi^*P(t;w,\lambda, \eta) \in  \widehat{\mathfrak{S}}(\varOmega; S) $ with respect 
to coordinate system $(w;\lambda)$. 
Further if $P(t;z,\zeta, \eta) \in  \widehat{\mathfrak{N}}(\varOmega; S) $, it follows that
 $\varPhi^*P(t;w,\lambda, \eta) \in  \widehat{\mathfrak{N}}(\varOmega; S) $.
\par
$(2)$ $\mathds{1}^*$ is the identity, and for complex coordinate transformations  $z=\varPhi(w)$ and  $w=\varPsi(v)$, 
it follow that 
$\varPsi^*\varPhi^*P(t;v,\xi,\eta)-(\varPhi\varPsi)^*P(t;v,\xi,\eta)\in \widehat{\mathfrak{N}}^{}_{(v;\xi)}$. 
\end{thm}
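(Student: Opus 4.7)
The strategy is to adapt the proof of Theorem \ref{S.thm3.11} to the formal (non-classical) setting. The coefficientwise formula
\[
(\varPhi^*P)^{}_\nu(w,\lambda,\eta) = \smashoperator[r]{\Sum_{k+|\alpha|=\nu}}\frac{1}{\,\alpha!\,}\,
\partial_{\zeta'}^{\,\alpha}\partial_{z'}^{\,\alpha}P^{}_k(\varPhi(w),\zeta'+{}^{\mathsf{t}}\! J^{*}_{\varPhi}(\varPhi(w)+z',\varPhi(w))\lambda,\eta)
\big{|}{}_{\atop{z'=0}{\zeta'=0}}
\]
is the same \emph{finite} sum as in the classical case, so the only new point is that each $P^{}_k$ is now defined on the shrinking set $(\varOmega^{}_\rho * S)[(k+1)d^{}_\rho]$ with the uniform bound $|P^{}_k|\leqslant C^{}_{h,Z}A^k e^{h\|\zeta\|}$ for some $A\in\bigl]0,1\bigr[$, in place of the $k!/\|\eta\zeta\|^k$-type estimate of the classical case.

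For part $(1)$, I first fix $\rho'\in\bigl]0,\rho\bigr[$ and choose $\varepsilon,\delta>0$ so small that \eqref{S.eq3.3}--\eqref{S.eq3.4} hold; I then enlarge $d$ on the source side (possible after shrinking the base cone) so that, for every $k\leqslant\nu$ and every $(z;\zeta,\eta)\in(\varOmega^{}_{\rho'}*Z)[(\nu+1)d^{}_{\rho'}]$, the shifted point with $|z'_i|\leqslant\delta$, $|\zeta'_i|\leqslant\varepsilon\|\lambda\|$ automatically lies in $(\varOmega^{}_\rho*S)[(k+1)d^{}_\rho]$, the domain of $P^{}_k$. Applying the Cauchy formula on the polycycle $|z'_i|=\delta$, $|\zeta'_i|=\varepsilon\|\lambda\|$ and using $\|\zeta\|\geqslant c'\|\lambda\|$ from \eqref{S.eq3.2} yields
\[
\frac{1}{\,\alpha!\,}\,\Bigl|\partial_{\zeta'}^{\,\alpha}\partial_{z'}^{\,\alpha}P^{}_k(\varPhi(w),\zeta'+{}^{\mathsf{t}}\! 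J^{*}_{\varPhi}\lambda,\eta)
\big{|}{}_{\atop{z'=0}{\zeta'=0}}\Bigr|
\leqslant \frac{\alpha!\, C^{}_{h,Z}\, A^k e^{2hc\|\lambda\|}}{(\varepsilon\delta c'\|\lambda\|)^{|\alpha|}}\,.
\]
The factor $\alpha!/\|\lambda\|^{|\alpha|}$ is then controlled by the cone condition $\|\lambda\|\geqslant(\nu+1)d^{}_{\rho'}$, and the sum over the simplex $k+|\alpha|=\nu$ collapses to a geometric series of ratio strictly less than $1$ once $d$ is taken large enough, producing $|(\varPhi^*P)^{}_\nu|\leqslant C'B^\nu e^{2hc\|\lambda\|}$ with some $B\in\bigl]0,1\bigr[$. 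The preservation of $\widehat{\mathfrak{N}}$ follows from the identical argument applied to the partial-sum estimates for $P^{}_k$, and condition (iii) is automatic from $\partial_\eta\varPhi^*P=\varPhi^*(\partial_\eta P)$.

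For part $(2)$, the equality $\mathds{1}^*P=P$ is immediate. For the composition law, each coefficient of $t^\nu$ in either $\varPsi^*\varPhi^*P$ or $(\varPhi\varPsi)^*P$ is a \emph{finite} expression in derivatives of $P^{}_0,\ldots,P^{}_\nu$ and of $\varPhi,\varPsi$, so the purely algebraic manipulation in the proof of Theorem \ref{S.thm3.11}$(2)$ (via Lemma \ref{S.lem3.12} and Remark \ref{S.rem3.13}) applies verbatim and produces the equality of these two coefficients as germs of holomorphic functions at $(v;\xi)$. Hence the difference vanishes coefficientwise and \emph{a fortiori} belongs to $\widehat{\mathfrak{N}}^{}_{(v;\xi)}$. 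The principal obstacle is the bookkeeping in part $(1)$: the parameters $\rho',\varepsilon,\delta,d$ must be fixed in the correct order so that the Cauchy polycycle has radii independent of $\nu$ while every shifted point still lies in the admissible domain of $P^{}_k$ for every $k\leqslant\nu$; once the ordering of quantifiers is pinned down the geometric-series bound and part $(2)$ are routine.
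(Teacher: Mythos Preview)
Your treatment of part~(1) follows the paper's own argument: both shrink $\rho$ to $\rho'$, apply Cauchy's inequalities on a polycycle to bound $\dfrac{1}{\alpha!}\partial_{\zeta'}^{\,\alpha}\partial_{z'}^{\,\alpha}P_k$, and then sum over the simplex $k+|\alpha|=\nu$ into a geometric series after enlarging $d$. A small imprecision: the cone condition on $(\varOmega_{\rho'}*Z)[(\nu+1)d_{\rho'}]$ reads $\|\eta\zeta\|\geqslant(\nu+1)d_{\rho'}$, not $\|\lambda\|\geqslant(\nu+1)d_{\rho'}$; but since $|\eta|<1$ and $\|\zeta\|$ is comparable to $\|\lambda\|$ via \eqref{S.eq3.2}, your bound on $\alpha!/\|\lambda\|^{|\alpha|}$ goes through after adjusting constants. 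The paper writes $\|\eta\lambda\|$ in the denominator of the Cauchy estimate, but this is immaterial.

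For part~(2) you take a genuinely different route from the paper. The paper does \emph{not} rerun the algebraic computation of Theorem~\ref{S.thm3.11}\,(2); instead it invokes Theorem~\ref{S.thm4.9} to write $P=P_0+N$ with $P_0\in\mathfrak{S}_{z^*_0}\subset\widehat{\mathfrak{S}}_{{\rm cl},z^*_0}$ and $N\in\widehat{\mathfrak{N}}_{z^*_0}$, applies the exact identity $\varPsi^*\varPhi^*P_0=(\varPhi\varPsi)^*P_0$ available in the classical setting, and absorbs the $N$-terms via part~(1). Your observation is sharper: since the $\nu$-th coefficient of either $\varPsi^*\varPhi^*P$ or $(\varPhi\varPsi)^*P$ is a universal differential polynomial in $P_0,\dots,P_\nu$ and in $\varPhi,\varPsi$, the formal-series identity proved in Theorem~\ref{S.thm3.11}\,(2) yields equality of these polynomials, hence equality of the $\nu$-th coefficients as holomorphic functions on the common domain $(\varOmega_\rho*S)[(\nu+1)d_\rho]$ where $P_0,\dots,P_\nu$ are all defined. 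This gives the exact equality $\varPsi^*\varPhi^*P=(\varPhi\varPsi)^*P$ (not merely modulo $\widehat{\mathfrak{N}}$), which is stronger than the stated conclusion and bypasses Theorem~\ref{S.thm4.9} entirely. The paper's detour has the advantage of exhibiting the same reduction template it reuses in Theorems~\ref{S.thm7.13}\,(3) and the adjoint theorem, but your direct argument is shorter and self-contained.
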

\begin{proof}
(1) 
Suppose   that $P^{}_k(z,\zeta,\eta)\in 
\varGamma((\varOmega^{}_ \rho * S)[(k+1)d_{\rho}];
\mathscr{O}^{}_{T^*X \times \mathbb{C}})$. 
We also assume  \eqref{S.eq3.2}, 
\eqref{S.eq3.3} and \eqref{S.eq3.4}, hence for any $h>0$ there exists $C^{}_{h,Z}>0$ such that for  any 
 $(z;\zeta'+{}^{\mathsf{t}}\! J^{*}_{\varPhi}(z+z',z) \lambda)\in (\varOmega'_{\rho'}* Z)[(k+1)d^{}_{\rho'}]$ we have
\[
 | P^{}_k(z,\zeta'+{}^{\mathsf{t}}\!J^{*}_{\varPhi}(z+z',z)\lambda,\eta)| 
\leqslant 
 C^{}_{h,Z}A^{k} e^{h\|\zeta'+{}^{\mathsf{t}}\!J^{*}_{\varPhi}(z+z',z)\lambda\|}.
\]
Hence if $(z;\zeta,\eta) \in
(\varOmega^{}_{\rho'} * Z)[(k+1)d^{}_{\rho'}]$, 
instead of \eqref{S.eq3.5} we have
\allowdisplaybreaks
\begin{align*}
\frac{1}{\,\alpha!\,}\Bigl| &\partial_{\zeta'}^{\,\,\alpha}\partial_{z'}^{\,\,\alpha}
P^{}_k (z,\zeta'+{}^{\mathsf{t}}\!J^{*}_{\varPhi}(z+z',z)\lambda,\eta)\big{|}{}_{\atop{z'=0}{\zeta'=0}}\Bigr|
         \\
& \leqslant \frac{C^{}_{h,Z}\,\alpha!\,A^{k} }
       {(\varepsilon\delta c'\| \lambda\|)^{|\alpha|}}
\smashoperator[r]{\sup_{\atop{|z'_i|=\delta}{|\zeta'_i|=\varepsilon\|\zeta\|}}}
e^{h\|\zeta'+{}^{\mathsf{t}}\!J^{*}_{\varPhi}(z+z',z)\lambda)\|}
\leqslant
   \frac{C^{}_{h,Z}\alpha!\,A^{k} e^{2hc\|\lambda\|}\,}
       {(\varepsilon \delta c'\| \eta\lambda\|)^{|\alpha|}}\,.
\end{align*}
We  may assume that $\dfrac{1}{\,2\,}<A<1$. 
Replacing $d>0$ if necessary, we may assume  $C:= \dfrac{\varepsilon\delta d^{}_{\rho'}}{2}>4$, 
hence $C A>2$. Hence  
if $\|\eta\zeta\|\geqslant c'\|\eta\lambda\|\geqslant (\nu +1)d^{}_{\rho'}$, we have
\begin{align*}
\Big| (\varPhi^*&P)^{}_\nu(w,\lambda,\eta) \Big|
 \leqslant C^{}_{h,Z}\Sum_{k+|\alpha|= \nu}  \frac{\,\alpha! A^{k}e^{2hc\|\lambda\|}\,}
       {(\varepsilon\delta c'\| \eta\lambda\|)^{|\alpha|}}
\leqslant C^{}_{h,Z} e^{2hc\|\lambda\|}\Sum_{k=0}^ \nu  \frac{\,2^{n+\nu-k-1}A^{k}(\nu-k)!\,}
       {(\varepsilon\delta d^{}_{\rho'})^{\nu-k}\,(\nu +1)^{\nu-k}}
\\&
\leqslant 
\dfrac{2^{n-1}C^{}_{h,Z}e^{2hc\|\lambda\|}}{C^\nu}\Sum_{k=0}^ \nu   (CA)^k
=
\dfrac{2^{n-1}C^{}_{h,Z}e^{2hc\|\lambda\|}((CA)^{\nu+1}-1)}{C^\nu (CA-1)}
\leqslant \dfrac{2^{n}C^{}_{h,Z}\,A^{\nu}e^{2hc\|\lambda\|}}{CA-1}\,.
\end{align*}
If $P(t;z,\zeta, \eta) \in  \widehat{\mathfrak{N}}(\varOmega; S) $, 
for any $m \in \mathbb{N}$ and $(z;\zeta,\eta) \in 
(\varOmega^{}_{\rho'}* Z)[md^{}_{\rho'}]$ 
we have
\[
\frac{1}{\,\alpha!\,}\Bigl| \Sum_{k=0}^{m-1}\partial_{\zeta'}^{\,\,\alpha}\partial_{z'}^{\,\,\alpha}
P^{}_k(z,\zeta'+{}^{\mathsf{t}}\!J^{*}_{\varPhi}(z+z',z) \lambda,\eta)\big{|}{}_{\atop{z'=0}{\zeta'=0}}\Bigr|
\leqslant
   \frac{\,C^{}_{h,Z}\,\alpha!A^{m}e^{2hc\|\lambda\|}\,}
       {(\varepsilon \delta c'\| \lambda\|)^{|\alpha|}}\,.
\]
Hence if $\| \eta\lambda\|  \geqslant md^{}_{\rho'}/c'$, we have
\allowdisplaybreaks
\begin{align*}
\Big|&\Sum_{\nu=0}^{m-1} (\varPhi^*P)^{}_\nu(w,\lambda,\eta)\Big|
\leqslant 
\Sum_{|\alpha|=0}^{m-1}\frac{\,C^{}_{h,Z}\,\alpha! A^{m-|\alpha|}e^{2hc\|\lambda\|}\,}
       {(\varepsilon \delta c'\| \lambda\|)^{|\alpha|}}
\\
&
\leqslant 
2^{n-1}C^{}_{h,Z}A^{m}e^{2hc\|\lambda\|}
\Sum_{k=0}^{m-1}\Bigl(\frac{1}{CA}\Bigr)^{\!k} 
\Bigl(\frac{m-1}{m}\Bigr)^{\!k} 
\leqslant 
\frac{\, C^{}_{h,Z}CA^{m+1}e^{2hc\|\lambda\|}\,}{CA-1}\,.
\end{align*}
\par
(2) Set $v^*:=(v;\xi)$. By Theorem \ref{S.thm4.9}, we can find 
$P^{}_0(z,\zeta,\eta)\in \mathfrak{S}^{}_{z^*_0}\subset \widehat{\mathfrak{S}}^{}_{\textrm{cl},z^*_0}$ 
such that 
 \[
P(t;z,\zeta,\eta)-P^{}_0(z,\zeta,\eta)\in \widehat{\mathfrak{N}}^{}_{z^*_0}\,.
\] 
By Theorem  \ref{S.thm3.11}, we have
$ \varPsi^*\varPhi^* P^{}_0(v,\xi,\eta) = (\varPhi \varPsi)^*P^{}_0(v,\xi,\eta) $. 
Hence by (1)  we obtain
\begin{align*}
\varPsi^* &\varPhi^*P(t;v,\xi,\eta) - (\varPhi\varPsi)^*P(t;v,\xi,\eta) 
\\
=&{}( \varPsi^*\varPhi^* P(t;v,\xi,\eta) -\varPsi^*\varPhi^* P^{}_0(v,\xi,\eta)) 
-  ((\varPhi\varPsi)^*P(t;v,\xi,\eta) - (\varPhi \varPsi)^*P^{}_0(v,\xi,\eta)) \in \widehat{\mathfrak{N}}^{}_{v^*}\,.
\qedhere
\end{align*}
\end{proof}

\begin{thm}\label{S.thm7.13}
For any $P(t;z,\zeta,\eta)$, $Q(t;z,\zeta,\eta)\in \widehat{\mathfrak{S}}(\varOmega; S)$, 
set 
\begin{align*}
Q\circ P(t;z,\zeta,\eta):= {}& e^{t\langle \partial_{\zeta'},\partial^{}_{z'} \rangle}
 Q(t;z,\zeta', \eta)\,P(t; z',\zeta,\eta)\big{|}{}_{\atop{z'=z}{\zeta'=\zeta}}
\\
= {} & e^{t\langle \partial_{\zeta'},\partial^{}_{z'} \rangle}
 Q(t;z,\zeta+\zeta', \eta)\,P(t;z+z', \lambda,\eta)\big{|}{}_{\atop{z'=0}{\zeta'=0}}\,.
\end{align*}

$(1)$
$Q\circ P(t;z,\zeta,\eta)\in  \widehat{\mathfrak{S}}(\varOmega; S) $. 
Moreover  if  either $P(t; z,\zeta,\eta)$ or  $Q(t; z,\zeta,\eta)$ is an element of  
$\widehat{\mathfrak{N}}(\varOmega; S) $, it follows that
$Q\circ P(t;z,\zeta,\eta)\in  \widehat{\mathfrak{N}}(\varOmega; S) $. 

$(2)$  $R\circ (Q \circ P)= (R\circ Q) \circ P$ holds.

$(3)$  Let $\varPhi(w)=z$ be a holomorphic coordinate transformation. Then 
\[
\varPhi^* Q\circ \varPhi^*P(t;w,\lambda,\eta)- \varPhi^* (Q\circ P)(t;w,\lambda,\eta) \in 
\widehat{\mathfrak{N}}^{}_{(w;\lambda)}.
\]
\end{thm}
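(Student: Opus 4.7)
The plan is to parallel the proof of Theorem \ref{S.thm3.15}, making the modifications necessary to accommodate the $\|\eta\zeta\|$-dilated domains $(\varOmega^{}_\rho*Z)[(\nu+1)d^{}_\rho]$ that distinguish $\widehat{\mathfrak{S}}(\varOmega;S)$ from $\widehat{\mathfrak{S}}^{}_{\rm cl}(\varOmega;S)$. For part (1), I would expand
\[
Q\circ P(t;z,\zeta,\eta)=\Sum_{\nu=0}^\infty t^\nu R^{}_\nu(z,\zeta,\eta),\qquad R^{}_\nu=\Sum_{|\alpha|+k+l=\nu}\frac{1}{\alpha!}\,\partial^{\,\alpha}_\zeta Q^{}_l\cdot\partial^{\,\alpha}_z P^{}_k,
\]
and estimate each summand on $(\varOmega^{}_{\rho'}*Z)[(\nu+1)d^{}_{\rho'}]$ for some $\rho'\in\bigl]0,\rho\bigr[$ by Cauchy's inequality on a polydisc of radius $(\rho-\rho')/(\nu+1)$ in the $z$-direction and $(\rho-\rho')\|\eta\zeta\|/(\nu+1)$ in the $\zeta$-direction. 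Because $\nu+1\geqslant k+1$ and $\nu+1\geqslant l+1$, exactly as in the proof of Lemma \ref{S.lem3.14}, the shifted polydisc lies inside the natural domains of $P^{}_k$ and $Q^{}_l$; this yields $\frac{1}{\alpha!}|\partial^{\,\alpha}_\zeta Q^{}_l\,\partial^{\,\alpha}_z P^{}_k|\leqslant C^{\,2}_{h,Z}\,\alpha!\,(\nu+1)^{2|\alpha|}A^{k+l}e^{2h\|\zeta\|}/((\rho-\rho')^{2|\alpha|}\|\eta\zeta\|^{|\alpha|})$. Using $\|\eta\zeta\|\geqslant(\nu+1)d^{}_{\rho'}$ and $\alpha!/(\nu+1)^{|\alpha|}\leqslant 1$ to absorb the factorials into a geometric factor, and choosing $\rho'$ so that the resulting common ratio is $<1$, I obtain $|R^{}_\nu|\leqslant C'_{h,Z}B^\nu e^{2h\|\zeta\|}$ with $B\in\bigl]0,1\bigr[$. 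The null-symbol version is proved by the same Cauchy estimate applied to the partial sums $\sum_{k=0}^{m-1}P^{}_k$ via \eqref{S.eq4.1a}, and the condition $\partial_\eta(Q\circ P)\in\widehat{\mathfrak{N}}$ then follows from $\partial_\eta(Q\circ P)=(\partial_\eta Q)\circ P+Q\circ(\partial_\eta P)$ and the null-symbol stability just established.

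Part (2) is formal: since $\langle\partial_{\zeta'},\partial_{z'}\rangle$ and $\langle\partial_{\zeta''},\partial_{z''}\rangle$ commute as constant-coefficient operators acting on disjoint sets of variables, both sides of $R\circ(Q\circ P)=(R\circ Q)\circ P$ equal
\[
e^{t\langle\partial_{\zeta'},\partial_{z'}\rangle+t\langle\partial_{\zeta''},\partial_{z''}\rangle+t\langle\partial_{\zeta'},\partial_{z''}\rangle}R(t;z,\zeta'',\eta)Q(t;z'',\zeta',\eta)P(t;z',\zeta,\eta)\big|{}^{}_{\atop{z'=z''=z}{\zeta'=\zeta''=\zeta}}\,,
\]
and the argument is identical to the purely formal identity used in Theorem \ref{S.thm3.15}(2). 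For part (3), imitating the direct formal computation of Theorem \ref{S.thm3.11}(3) would require verifying convergence of several extra series in the non-classical framework. I would bypass this by applying Theorem \ref{S.thm4.9}: choose $\widetilde{P}$, $\widetilde{Q}\in\mathfrak{S}^{}_{z^*_0}$ (i.e.\ genuine symbols, concentrated in degree $0$ in $t$) with $P-\widetilde{P}$, $Q-\widetilde{Q}\in\widehat{\mathfrak{N}}^{}_{z^*_0}$. Then Theorem \ref{S.thm3.11}(3) gives $\varPhi^*\widetilde{Q}\circ\varPhi^*\widetilde{P}=\varPhi^*(\widetilde{Q}\circ\widetilde{P})$, and the null-symbol stability of $\circ$ from part (1) combined with the coordinate-invariance of $\widehat{\mathfrak{N}}$ (proved by the very same Cauchy-estimate template as part (1) of the present theorem) gives $\varPhi^*Q\circ\varPhi^*P-\varPhi^*(Q\circ P)\in\widehat{\mathfrak{N}}^{}_{(w;\lambda)}$.

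The hard part is the bookkeeping in (1): unlike the classical case, where Lemma \ref{S.lem3.14} packages the Cauchy estimates in a ready-to-use form featuring the explicit factors $k!\|\eta\zeta\|^{-k}$, here the growth in $k$, $l$, $|\alpha|$ is encoded \emph{geometrically} through the nesting of the domains $(\varOmega^{}_\rho*Z)[(\cdot)d^{}_\rho]$, and one must verify that the trade-off between shrinking $\rho\to\rho'$, the geometric decay $A^{k+l}$, and the combinatorial growth $\alpha!$ of derivatives can be simultaneously arranged so that $\sum_{k+l+|\alpha|=\nu}$ sums to something of order $B^\nu$ with $B<1$. The corresponding classical statement uses the crucial identity $\alpha!/(\nu+1)^{|\alpha|}\leqslant 1$ at exactly one point; here the same identity suffices, but only after rewriting the Cauchy estimate so that the penalty from differentiating is measured against the size of $(\nu+1)d^{}_{\rho'}$ rather than a fixed $d^{}_{\rho'}$, which is what forces the nonstandard choice of polydisc radii above.
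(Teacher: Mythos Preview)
Your outline for parts (2) and (3) is essentially the paper's argument (in (3) you meant Theorem~\ref{S.thm3.15}(3), not~\ref{S.thm3.11}(3), but the reduction via Theorem~\ref{S.thm4.9} to genuine symbols $\widetilde P,\widetilde Q\in\mathfrak{S}_{z^*_0}$ and the use of null-symbol stability is exactly what the paper does).

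The gap is in part (1). Shrinking the Cauchy polydisc by the factor $(\nu+1)$ is both unnecessary and fatal. With radii $(\rho-\rho')/(\nu+1)$ in $z$ and $(\rho-\rho')\|\eta\zeta\|/(\nu+1)$ in $\zeta$ your bound carries a factor $\alpha!\,(\nu+1)^{2|\alpha|}/\|\eta\zeta\|^{|\alpha|}$. The inequality $\|\eta\zeta\|\geqslant(\nu+1)d_{\rho'}$ kills only \emph{one} power of $(\nu+1)^{|\alpha|}$, leaving $\alpha!\,(\nu+1)^{|\alpha|}/d_{\rho'}^{|\alpha|}$; the further inequality $\alpha!/(\nu+1)^{|\alpha|}\leqslant1$ then goes in the wrong direction, and the term with $|\alpha|=\nu$ already contributes $\sim(\nu+1)^{2\nu}$, which swamps any geometric $A^{k+l}$. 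No choice of $\rho'$ or $d$ rescues this.

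The point you are missing is that the $(\nu+1)$-shrinkage in Lemma~\ref{S.lem3.14}(1) is there to control the \emph{hypothesis} $(\rho-\rho')^{-N\nu}$, not to guarantee domain containment; here the bounds on $P_k,Q_l$ carry no such factor, and the full polydisc of radius $\rho-\rho'$ in $z$ and $(\rho-\rho')\|\eta\zeta\|$ in $\zeta$ already lands inside $(\varOmega_\rho*S)[(k+1)d_\rho]$ and $(\varOmega_\rho*S)[(l+1)d_\rho]$ (up to a harmless constant, absorbed by enlarging $d$), because the constraint $\|\eta\zeta\|\geqslant(\nu+1)d_{\rho'}\geqslant(k+1)d_{\rho'}$ is monotone in $k$. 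The paper does exactly this: Cauchy at full radius gives
\[
\tfrac{1}{\alpha!}\bigl|\partial_\zeta^{\,\alpha}Q_l\,\partial_z^{\,\alpha}P_k\bigr|\leqslant \dfrac{C_{h,Z}^{\,2}\,\alpha!\,A^{k+l}e^{3h\|\zeta\|}}{(\rho-\rho')^{2|\alpha|}\|\eta\zeta\|^{|\alpha|}}\,,
\]
and \emph{now} $\|\eta\zeta\|\geqslant(\nu+1)d_{\rho'}$ together with $\alpha!\leqslant(\nu+1)^{|\alpha|}$ cancels cleanly to leave a geometric $((\rho-\rho')^2 d_{\rho'})^{-|\alpha|}$, summable once $d$ is large enough that $2/(A(\rho-\rho')^2 d_{\rho'})<1$. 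The null-symbol and $\partial_\eta$ arguments then go through as you described.
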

\begin{proof} 
(1) We assume  that 
 $P^{}_i(z,\zeta,\eta)$, $Q^{}_i(z,\zeta,\eta) \in
\varGamma((\varOmega^{}_ \rho * S)[(i+1)d_{\rho}];
\mathscr{O}^{}_{T^*X \times \mathbb{C}})$ 
for  $P(t;z,\zeta,\eta)  = \Sum_{i=0}^\infty t^i  P^{}_i(z,\zeta,\eta) $ 
and  $Q(t;z,\zeta,\eta)   = \Sum_{i=0}^\infty t^i  Q^{}_i(z,\zeta,\eta) $. Set $
Q \circ P(t;z,\zeta,\eta) =\Sum_{i=0}^\infty t^i  R^{}_i(z,\zeta,\eta)$. Then 
\[
R^{}_\nu(z,\zeta,\eta)
 = \Sum_{|\alpha|+k+l=\nu} 
  \frac{1}{\alpha!}\,\partial^{\,\alpha}_\zeta Q^{}_l(z,\zeta,\eta)\,
 \partial^{\alpha}_z P^{}_k(z,\zeta,\eta).
\]
Hence $R^{}_\nu(z,\zeta,\eta) \in
\varGamma((\varOmega^{}_ \rho * S)[(\nu+1)d_{\rho}];
\mathscr{O}^{}_{T^*X \times \mathbb{C}})$.
We shall prove   $R(t;z,\zeta,\eta)\in \widehat{\mathfrak{S}}(\varOmega; S)$. 
Let $Z\Subset  S$. 
Note that 
for any  
$(z,\zeta,\eta)\in 
(\varOmega^{}_\rho * Z)[(k+1)d^{}_{\rho'}]
$ and $(z',\zeta')$ with 
 $\|z'\| \leqslant \rho-\rho'$ and 
 $\|\zeta'\| \leqslant (\rho-\rho')\|\eta\zeta\|<\|\zeta\|$, we have 
 $(z+z',\zeta+\zeta')\in \varOmega^{}_{\rho}[(k+1)d^{}_ {\rho}]$. Moreover as in \eqref{eq.6.9a} we  have
\[
\|\eta(\zeta+\zeta')\| \geqslant  (1-\rho+\rho')\|\eta\zeta\| 
 \geqslant (k+1) d(1-\rho'').
\]
For any $\rho'\in\bigl]0,\rho\bigr[$ and  $h>0$,  on
$(\varOmega^{}_{\rho'} * Z)[(k+1)d^{}_{\rho'}]$  we have
\[ 
|P^{}_k(z,\zeta,\eta)|, \, |Q^{}_k(z,\zeta,\eta)|\leqslant C^{}_{h,Z}A^{k} e^{h\|\zeta\|}.\]
Hence in  the same way as in the proof of    Lemma \ref{S.lem3.14}, 
on $(\varOmega^{}_{\rho'} * Z)[(k+1)d^{}_{\rho'}]$ we have
\begin{align}
 |\partial_{\zeta}^{\,\alpha}Q^{}_k(z,\zeta,\eta)| &
\leqslant 
  \frac{\,C^{}_{h,Z}\,\alpha! A^{k} e^{2h\|\zeta\|}\,}
       {\, (\rho-\rho')^{|\alpha|}\|\eta\zeta\|^{|\alpha|}\,}\,, \notag
\\ \label{S.eq4.6}
|\partial_{z}^{\,\alpha}P^{}_k(z,\zeta,\eta)|
&
\leqslant 
  \frac{\,C^{}_{h,Z}\,\alpha! A^{k} e^{h\|\zeta\|}\,}{\,(\rho- \rho')^{|\alpha|}\,}\,.  
\end{align}
 We replace $d$, $\rho'>0$ as 
$C:= \dfrac{2}{Ad^{}_{\rho'}(\rho-\rho')^2} <1$, 
and   chose $C'>0$ and $B\in \bigl[A,1\bigr[$ as $(\nu+1) A^\nu \leqslant C'B^\nu$ for any $\nu \in \mathbb{N}^{}_0$. 
Since $\#\{(k,l)\in \mathbb{N}_0^{2};\,k+l=\nu-|\alpha|\} \leqslant  \nu-|\alpha|+1\leqslant  \nu+1$, 
for any $(z;\zeta,\eta) \in (\varOmega^{}_{\rho'}* Z)[(\nu+1)d^{}_{\rho'}]$,   we have 
\allowdisplaybreaks
\begin{align*}
|R^{}_\nu (z,\zeta,\eta)|
 & \leqslant  \Sum_{\nu=|\alpha|+k+l}
  \frac{\, C^{\;2}_{h,Z}\,\alpha!A^{k+l}  e^{3h\|\zeta\|}\,}
       {\,\|\eta\zeta\|^{|\alpha|}\, (\rho -\rho')^{2|\alpha|}\,}
 \leqslant\Sum_{i=0}^\nu 
  \frac{\,  2^{i+n-1} C^{\;2}_{h,Z}\,(\nu+1)\,i!\,A^{\nu-i}e^{3h\|\zeta\|}}
       {\,(d_{\rho'}(\rho-\rho')^2)^{i}\,(\nu+1)^{i}\,}
\\
&
  \leqslant  2^{n-1} C^{\;2}_{h,Z}\,(\nu+1)A^\nu e^{3h\|\zeta\|}\Sum_{i=0}^\nu C^i
\leqslant  \frac{\,  2^{n-1} C'C^{\;2}_{h,Z}\,B^\nu e^{3h\|\zeta\|}}
       {1-C}\,.
\end{align*}
Next, we assume 
$P(t;z,\zeta,\eta) \in \widehat{\mathfrak{N}}(\varOmega; S)$. Then, 
instead of \eqref{S.eq4.6}, we have that  for any $m\in \mathbb{N}$, on  
$(\varOmega^{}_{\rho'} * Z)[md^{}_{\rho'}]$ we have
\[
  \biggl|\Sum_{k=0}^{m-1}\partial_z^{\alpha}P^{}_k(z,\zeta,\eta)\biggr|
\leqslant 
  \frac{\,C^{}_{h,Z}\,\alpha! A^{m}e^{h\|\zeta\|}\,}
       {(\rho-\rho')^{|\alpha|}}\,,
\]
thus we have
\allowdisplaybreaks
\begin{align*}
\Bigl|\Sum_{\nu=0}^{m-1}R^{}_\nu(z,\zeta,\eta)\Bigr| &
=\Bigl|\Sum_{i+|\alpha|=0}^{m-1} \frac{1}{\,\alpha!\,}\,
         \partial_{\zeta}^{\,\alpha}Q^{}_i(z,\zeta,\eta)\smashoperator[r]{\Sum_{k=0}^{m-i-1-|\alpha|}}
 \partial_z^{\,\alpha}P^{}_k(z,\zeta,\eta)\Bigr| 
\\
&
 \leqslant  \Sum_{i+|\alpha|=0}^{m-1} \,
  \frac{\,C_{h,Z}^{\;2}\,\alpha! A^{m-|\alpha|} e^{3h\|\zeta\|}\,}
       {\|\zeta\|^{|\alpha|} \,(\rho-\rho')^{2|\alpha|}}
\leqslant  2^{n-1} C_{h,Z}^{\;2}\, A^{m} e^{3h\|\zeta\|}
\smashoperator[r]{\Sum_{i+\nu=0}^{m-1} }C^{\nu}
\\
&
 \leqslant \frac{\, 2^{n-1} C_{h,Z}^{\;2}\,m A^{m} e^{3h\|\zeta\|}\,}{1-C}
\leqslant \frac{\, 2^{n-1} C'C_{h,Z}^{\;2}B^{m} e^{3h\|\zeta\|}\,}{1-C}\,.
\end{align*}
The proof in the case of  $Q(t;z,\zeta,\eta) \in \widehat{\mathfrak{N}}(\varOmega; S)$ is  
similar. 
In particular, since 
\[
\partial_{\eta} (Q\circ P )(t;z,\zeta,\eta) = \partial_{\eta} Q\circ P (t;z,\zeta,\eta) 
+ Q\circ  \partial_{\eta} P (t;z,\zeta,\eta), 
\]
we see that if $P(t;z,\zeta,\eta)$, $Q(t;z,\zeta,\eta) \in 
\widehat{\mathfrak{S}}(\varOmega; S)$, 
we have 
$Q\circ P(t;z,\zeta,\eta) \in \widehat{\mathfrak{S}}(\varOmega; S)$.

(2) is easily obtained. 

(3) 
 Set $v^*:=(v;\xi)$. By Theorem \ref{S.thm4.9}, we can find 
$P^{}_0(z,\zeta,\eta)$, 
$Q^{}_0(z,\zeta,\eta)\in \widehat{\mathfrak{S}}^{}_{z^*_0}\subset \widehat{\mathfrak{S}}^{}_{\textrm{cl},z^*_0}$ 
such that 
 \[
P(t;z,\zeta,\eta)-P^{}_0(z,\zeta,\eta),\quad Q(t;z,\zeta,\eta)-Q^{}_0(z,\zeta,\eta)\in \widehat{\mathfrak{N}}^{}_{z^*_0}\,.
\] 
By Theorem  \ref{S.thm3.15}, we have
\[
\varPhi^* Q^{}_0\circ \varPhi^*P^{}_0(w,\lambda,\eta)= \varPhi^* (Q^{}_0\circ P^{}_0)(w,\lambda,\eta).
 \] 
Hence by (1)  we obtain
\begin{align*}
\varPhi^* Q\circ \varPhi^*P(t;w,\lambda,\eta)={}& 
\varPhi^* Q^{}_0\circ \varPhi^*P^{}_0(t;w,\lambda,\eta) +
\varPhi^* (Q-Q^{}_0)\circ \varPhi^*P(t;w,\lambda,\eta)
\\* & +
\varPhi^* Q^{}_0\circ \varPhi^*(P-P^{}_0)(t;w,\lambda,\eta)
\\
={}& 
\varPhi^* (Q^{}_0\circ P^{}_0)(t;w,\lambda,\eta) +
\varPhi^* (Q-Q^{}_0)\circ \varPhi^*P(t;w,\lambda,\eta)
\\* & +
\varPhi^* Q^{}_0\circ \varPhi^*(P-P^{}_0)(t;w,\lambda,\eta)
\\
\equiv {}& \varPhi^* (Q^{}_0\circ P^{}_0)(t;w,\lambda,\eta)
\\
={}& 
\varPhi^* (Q\circ P)(t;w,\lambda,\eta) +
\varPhi^*( (Q ^{}_0-Q)\circ P ^{}_0)(t;w,\lambda,\eta)
\\* & +
\varPhi^* (Q\circ (P ^{}_0-P))(t;w,\lambda,\eta)
\\
\equiv {}& \varPhi^* (Q\circ P)(t;w,\lambda,\eta).
 \qedhere\end{align*}
\end{proof}
 
We can also prove (cf.\  see the  proof of Theorems \ref{S.thm3.17} and \ref{S.thm7.13}):
\begin{thm}
For any 
 $P(t;z,\zeta,\eta)\in \widehat{\mathfrak{S}}(\varOmega; S)$ set 
\[
P^*(t;z,\zeta,\eta):=e^{t\langle \partial_\zeta,\partial_z \rangle}
 P(t;z,-\zeta,\eta).
\]

$(1)$  $P^*(t;z,\zeta,\eta)\in \widehat{\mathfrak{S}}(\varOmega^a ; S) $ and  $P^{**}=P$. 
Moreover if  $P(t;z,\zeta,\eta)\in \widehat{\mathfrak{N}}(\varOmega; S) $, 
it follows that   $P^*(t;z,\zeta,\eta)\in \widehat{\mathfrak{N}}(\varOmega^a; S) $.

$(2)$ 
$
(Q \circ P)^*(t;z,\zeta,\eta)= P^*\circ Q^*(t;z,\zeta,\eta)$.  

$(3)$  Let $\varPhi(w)=z$ be a holomorphic coordinate transformation. Then 
on $\widehat{\mathfrak{S}}(\varOmega^a;S) \smashoperator{\tens_{\mathscr{O}^{}_{X}}} \varOmega^{}_{X}$
\[
\varPhi^*(P^*(t;z,\zeta,\eta) \tens dz)= \varPhi^*(P(t;z,\zeta,\eta) \tens dz)^*.
\] 
\end{thm}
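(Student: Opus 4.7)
The strategy is to mirror the two-step pattern used for the composition theorem (Theorems~\ref{S.thm3.17} and~\ref{S.thm7.13}): establish part~(1) by direct estimates on the defining series of $P^*$, then deduce parts~(2) and~(3) by reducing to the classical formal case, where the required identities hold exactly by Theorem~\ref{S.thm3.17}, and absorbing the discrepancy into a formal null symbol via Theorem~\ref{S.thm4.9}.

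For part~(1), writing $P(t;z,\zeta,\eta)=\Sum_{k=0}^\infty t^k P^{}_k(z,\zeta,\eta)$, one checks from the defining exponential that
\[
P^*_j(z,\zeta,\eta)
=\Sum_{|\alpha|+k=j}\frac{(-1)^{|\alpha|}}{\alpha!}\,
\partial_\zeta^{\,\alpha}\partial_z^{\,\alpha}P^{}_k(z,-\zeta,\eta).
\]
Since $(z;\zeta)\in\varOmega^a$ iff $(z;-\zeta)\in\varOmega$, the growth data for $P_k$ on $(\varOmega_\rho *S)[(k+1)d^{}_\rho]$ transfers verbatim to an estimate for $P_k(z,-\zeta,\eta)$ on the reflected region. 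Choose $\rho'<\rho$ and apply Cauchy's inequality on a polydisk of radius $(\rho-\rho')$ in $z$ and $(\rho-\rho')\|\eta\zeta\|$ in $\zeta$ (as in Lemma~\ref{S.lem3.14}) to each derivative $\partial_\zeta^{\,\alpha}\partial_z^{\,\alpha}P^{}_k$. Summing over $|\alpha|+k=j$ and shrinking $\rho'$ so that $2A/(d^{}_{\rho'}(\rho-\rho')^2)<1$ yields, exactly as in the proof of Theorem~\ref{S.thm7.13}(1), a bound $|P^*_j|\leqslant C'_{h,Z}B^{j}e^{2h\|\zeta\|}$ for some $B\in\bigl]0,1\bigr[$ on $(\varOmega^a_{\rho'}*Z)[(j+1)d^{}_{\rho'}]$. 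The null-symbol preservation is handled by the same kind of regrouping: writing
\[
\Sum_{j=0}^{m-1}P^*_j
=\Sum_{|\alpha|=0}^{m-1}\frac{(-1)^{|\alpha|}}{\alpha!}\,
\partial_\zeta^{\,\alpha}\partial_z^{\,\alpha}\Bigl(\smashoperator[r]{\Sum_{k=0}^{m-|\alpha|-1}}P^{}_k\Bigr)(z,-\zeta,\eta)
\]
and applying Cauchy to each partial sum produces the required bound $\leqslant C'_{h,Z}B^m e^{2h\|\zeta\|}$. Since $\partial_\eta$ and the exponential operator commute, $\partial_\eta P^*=(\partial_\eta P)^*$, so $\partial_\eta P\in\widehat{\mathfrak{N}}$ forces $\partial_\eta P^*\in\widehat{\mathfrak{N}}$. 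The involution $P^{**}=P$ is the formal identity computed in Theorem~\ref{S.thm3.17}(1), arising from $e^{t\langle\partial_\zeta,\partial_z\rangle}e^{-t\langle\partial_\zeta,\partial_z\rangle}=\mathds{1}$ and the sign change $\zeta\mapsto -(-\zeta)=\zeta$.

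For parts~(2) and~(3), I will use the reduction argument already employed for the product in Theorem~\ref{S.thm7.13}. Given $P,Q\in\widehat{\mathfrak{S}}^{}_{z^*_0}$, Theorem~\ref{S.thm4.9} furnishes $P^{}_0,Q^{}_0\in\mathfrak{S}^{}_{z^*_0}\subset\widehat{\mathfrak{S}}^{}_{{\rm cl},z^*_0}$ with $P-P^{}_0,\,Q-Q^{}_0\in\widehat{\mathfrak{N}}^{}_{z^*_0}$. For~(2), by Theorem~\ref{S.thm3.17}(2) we have the strict identity $(Q^{}_0\circ P^{}_0)^*=P^{*}_0\circ Q^{*}_0$ in the classical setting. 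Expand the difference $(Q\circ P)^*-P^*\circ Q^*$ as
\[
\bigl((Q\circ P)-(Q^{}_0\circ P^{}_0)\bigr)^*
+\bigl(P^*_0\circ Q^*_0 - P^*\circ Q^*\bigr),
\]
and observe that each piece involves a composition or adjoint in which one factor is a formal null symbol (by part~(1) applied to the differences $Q-Q^{}_0,P-P^{}_0$). By Theorem~\ref{S.thm7.13}(1), compositions with null symbols are null, so the whole difference lies in $\widehat{\mathfrak{N}}^{}_{z^*_0}$. For~(3), the same reduction works: Theorem~\ref{S.thm3.17}(3) yields the exact identity $\varPhi^*(P_0^*\otimes dz)=\varPhi^*(P_0\otimes dz)^*$, and then one handles the error terms $P-P^{}_0\in\widehat{\mathfrak{N}}$ through part~(1) of the present theorem together with the coordinate-transform null-preservation already proved for $\widehat{\mathfrak{S}}(\varOmega;S)$.

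The main technical obstacle is bookkeeping in part~(1): carefully tracking the shrinking of the conic parameter $\rho$ and the apparent-parameter region $Z$ so that the Cauchy polydisks fit inside the domain of holomorphy $(\varOmega_\rho*S)[(k+1)d^{}_\rho]$ for \emph{every} $k$ in the sum $|\alpha|+k=j$, while still extracting a uniform geometric factor $B^j$ with $B<1$. The rest is largely a transcription of the proof of Theorem~\ref{S.thm7.13} with $\zeta\leftrightarrow -\zeta$ and a single (rather than double) formal translation operator.
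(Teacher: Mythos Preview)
Your treatment of part~(1) is correct and matches the paper's intent: the estimates are obtained exactly as in Theorem~\ref{S.thm7.13}(1), with the single translation operator $e^{t\langle\partial_\zeta,\partial_z\rangle}$ in place of the product structure there.

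For part~(2), however, your reduction approach falls short of what is claimed. The statement asserts an \emph{exact} equality of formal power series, not merely a congruence modulo $\widehat{\mathfrak{N}}$; your argument via Theorem~\ref{S.thm4.9} establishes only that $(Q\circ P)^*-P^*\circ Q^*\in\widehat{\mathfrak{N}}^{}_{z^*_0}$. The correct and much simpler observation is that the computation in Theorem~\ref{S.thm3.17}(2) is a purely algebraic identity among formal power series in $t$: it manipulates the exponential operators $e^{t\langle\partial_\zeta,\partial_z\rangle}$ and $e^{\pm t\langle\partial_{\zeta''},\partial_{z'}\rangle}$ and nowhere uses the classical growth bounds. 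The coefficient of $t^N$ on each side is a finite expression in $P_0,\dots,P_N,Q_0,\dots,Q_N$ and their derivatives, all of which are holomorphic on $(\varOmega_\rho*S)[(N+1)d_\rho]$, so the identity makes sense and holds there for every $N$. Thus, once part~(1) places $P^*,Q^*$ in $\widehat{\mathfrak{S}}(\varOmega^a;S)$, that formal computation applies verbatim and yields the stated equality. This is parallel to Theorem~\ref{S.thm7.13}(2), where associativity of $\circ$ is likewise an exact formal identity, obtained directly rather than by reduction. The same remark applies to $P^{**}=P$, as you already note.

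Part~(3) is also stated as an equality, so the same issue arises: your reduction would give only a congruence, analogous to Theorem~\ref{S.thm7.13}(3). The paper's reference to Theorem~\ref{S.thm3.17} points instead to the formal proof there, whose steps (i)--(iii) are again identities of formal series in $t$ and carry over to $\widehat{\mathfrak{S}}(\varOmega;S)$.
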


\appendix
\section{The Compatibility of Actions}\label{ap:actions}
The purpose of this appendix is to show Theorem \ref{th:action-commutative}.
We follow the same notation as those in  Section \ref{sec:actions_ER}.
Set  $\widehat{\pi}_1 \colon  \widehat{X}^2 \ni (z,w, \eta) \mapsto (z,\eta)\in \widehat{X}$  and 
 $\widehat{\pi}_2 \colon \widehat{X}^2\ni (z,w,\eta) \mapsto (w,\eta) \in \widehat{X}$.
We also define the canonical projections $\pi_1 \colon  X^2 \to X$
and $\pi_2 \colon  X^2 \to X$ in the same way.
Note that we consider the problem at $z^*_0=(0;1,0,\dots,0)$.
Then we have the following commutative diagram:
\begin{equation}
\label{eq.a.1}
\vcenter{
\xymatrix @R=2ex{
\mathscr{E}^{\mathbb{R}}_{X,z^*_0} \tens_{\mathbb{C}} 
\mathscr{C}^{\mathbb{R}}_{Y|X,z^*_0}
\ar[r]&
\mathscr{C}^{\mathbb{R}}_{Y|X,z^*_0} 
\\
H^{n}_{G_{\varDelta,\boldsymbol{\kappa}}\cap U_{\varDelta,\boldsymbol{\kappa}}}\!
(U_{\varDelta,\boldsymbol{\kappa}}; \Oo) \tens_{\mathbb{C}}
H^d_{G_{\boldsymbol{\kappa}}\cap U_{\boldsymbol{\kappa}}}\!(U_{\boldsymbol{\kappa}}; \mathscr{O}_{X}) 
\ar@{>->}[d] \ar[u]\ar[r]^-{\mu^c}
&
H^d_{G_{\widetilde{\boldsymbol{\kappa}}}\cap U_{\widetilde{\boldsymbol{\kappa}}}}\!
(U_{\widetilde{\boldsymbol{\kappa}}}; \mathscr{O}_{X}) \ar@{>->}[d] \ar[u]
\\
H^n_{\widehat{G}_{\varDelta,\boldsymbol{\kappa}}\cap \widehat{U}_{\varDelta,\boldsymbol{\kappa}}}\!
(\widehat{U}_{\varDelta,\boldsymbol{\kappa}};\OOO)
\tens_{\mathbb{C}}H^d_{\widehat{G}_{\boldsymbol{\kappa}}\cap \widehat{U}_{\boldsymbol{\kappa}}}\!
(\widehat{U}_{\boldsymbol{\kappa}}; \mathscr{O}_{\WX})
\ar[r]^-{\mu^c}
&
H^d_{\widehat{G}_{\widetilde{\boldsymbol{\kappa}}}\cap \widehat{U}_{\widetilde{\boldsymbol{\kappa}}}}\!
(\widehat{U}_{\widetilde{\boldsymbol{\kappa}}}; \mathscr{O}_{\WX}), 
}}
\end{equation}
where the down injective morphisms are described in \S \ref{sec:rep-micro}.
We will explain the other morphisms appearing in the  diagram above.
The top horizontal arrow in \eqref{eq.a.1} is associated with the cohomological action
of $\mathscr{E}^{\mathbb{R}}_X$ to $\mathscr{C}^{\mathbb{R}}_{Y|X}$.
The second horizontal arrow $\mu^c$  in \eqref{eq.a.1}  is given by the chain of morphisms
\begin{equation}
\label{eq.a.2}
\begin{aligned}
H^{n}_{G_{\varDelta,\boldsymbol{\kappa}}\cap U_{\varDelta,\boldsymbol{\kappa}}}\!
(U_{\varDelta,\boldsymbol{\kappa}}; \Oo) \tens_{\mathbb{C}}
H^d_{G_{\boldsymbol{\kappa}}\cap U_{\boldsymbol{\kappa}}}\!(U_{\boldsymbol{\kappa}}; \mathscr{O}_{X})&\to
H^{n+d}_{G\cap U}(U;\Oo) 
\\
&\to 
H^d_{G_{\widetilde{\boldsymbol{\kappa}}}\cap U_{\widetilde{\boldsymbol{\kappa}}}}\!
(U_{\widetilde{\boldsymbol{\kappa}}}; \mathscr{O}_{X}).
\end{aligned}\end{equation}
Here we set
\[
G:= G_{\varDelta,\boldsymbol{\kappa}} \cap \pi_2^{-1}(G_{\boldsymbol{\kappa}}), \quad
U := U_{\varDelta,\boldsymbol{\kappa}}\cap\pi_2^{-1}(U_{\boldsymbol{\kappa}}).
\]
The first morphism in \eqref{eq.a.2} is the usual cohomological cup product and the second morphism in \eqref{eq.a.2}
is the cohomological residue morphism. Note that since 
$
G \subset \pi^{-1}_1 (G_{\widetilde{\boldsymbol{\kappa}}})$ and $G \cap \pi^{-1}_1(K) \Subset U$
for any compact subset $K \Subset U_{\widetilde{\boldsymbol{\kappa}}}$, the second morphism in \eqref{eq.a.2} is well defined.
The third horizontal arrow $\mu^c$ in \eqref{eq.a.1} is defined by
the cohomological cup product and residue mapping in the same way as that
for the second horizontal arrow.
Therefore, to show the theorem, it suffices to prove that
the third horizontal arrow $\mu^c$ and $\mu$ defined in Theorem \ref{th:action-morphism}
coincide. Furthermore clearly the following diagram with respect to
the cup product
\[
\xymatrix @R=2ex{
H^n_{\widehat{G}_{\varDelta,\boldsymbol{\kappa}}\cap \widehat{U}_{\varDelta,\boldsymbol{\kappa}}}\!
(\widehat{U}_{\varDelta,\boldsymbol{\kappa}};\OOO)
\tens_{\mathbb{C}}H^d_{\widehat{G}_{\boldsymbol{\kappa}}\cap \widehat{U}_{\boldsymbol{\kappa}}}\!
(\widehat{U}_{\boldsymbol{\kappa}}; \mathscr{O}_{\WX})
 \ar[r] \ar@{=}[d]
&
H^{n+d}_{\widehat{G}\cap \widehat{U}}(\widehat{U};\OOO)\ar@{=}[d]
\\
\dfrac{\varGamma( \widehat{V}^{(*)}_{\varDelta,\boldsymbol{\kappa}}; \OOO)}
{\smashoperator[r]{\Sum_{\alpha \in \mathcal{P}^\vee_n}}
\varGamma( \widehat{V}_{\varDelta,\boldsymbol{\kappa}}^{(\alpha)}; \OOO)} 
\tens_{\mathbb{C}}
\dfrac{\varGamma(\widehat{V}^{(*)}_{\boldsymbol{\kappa}};\mathscr{O}_{\WX})}
{\smashoperator[r]{\Sum_{\beta \in \mathcal{P}^\vee_d}}
\varGamma(\widehat{V}^{(\beta)}_{\boldsymbol{\kappa}};\mathscr{O}_{\WX})}\ar[r]
&
\dfrac{\varGamma(\widehat{W}^{(*,*)}_{\boldsymbol{\kappa}};\OOO)}
{\smashoperator[r]{\Sum_{(\alpha,\,\beta) \in \varLambda}}
 \varGamma(\widehat{W}^{(\alpha,\,\beta)}_{\boldsymbol{\kappa}};\OOO)}
}
\]
commutes. Here we set
\[
\widehat{G}:= \widehat{G}_{\varDelta,\boldsymbol{\kappa}} \cap
\widehat{\pi}_2^{\,-1}(\widehat{G}_{\boldsymbol{\kappa}}),\quad
\widehat{U}:= \widehat{U}_{\varDelta,\boldsymbol{\kappa}}
\cap
\widehat{\pi}_2^{\,-1}(\widehat{U}_{\boldsymbol{\kappa}}).
\]
Hence the problem is reduced to the following proposition.
\begin{prop}
The diagram below commutes\textup{:}
\begin{equation}
\label{eq.a.3}
\vcenter{
\xymatrix @R=2ex{
\smash{H^{n+d}_{\widehat{G}\cap \widehat{U}}(\widehat{U};\OOO}) \ar[r]^-{\mu^c} \ar@{=}[d]
& 
 \smash{H^d_{\widehat{G}_{\widetilde{\boldsymbol{\kappa}}}\cap \widehat{U}_{\widetilde{\boldsymbol{\kappa}}}}\!
(\widehat{U}_{\widetilde{\boldsymbol{\kappa}}}; \mathscr{O}_{\WX}})\ar@{=}[d]
\\
\dfrac{\varGamma(\widehat{W}^{(*,*)}_{\boldsymbol{\kappa}};\OOO)}
{\smashoperator[r]{\Sum_{(\alpha,\,\beta) \in \varLambda}}
 \varGamma(\widehat{W}^{(\alpha,\,\beta)}_{\boldsymbol{\kappa}};\OOO)}
\ar[r]^-{\mu}
&
\dfrac{
\varGamma(\widehat{V}^{(*)}_{\widetilde{\boldsymbol{\kappa}}};\mathscr{O}^{}_{\WX})}
{\smashoperator[r]{\Sum_{\beta \in  \mathcal{P}^\vee_d}}
\varGamma(\widehat{V}^{(\beta)}_{\widetilde{\boldsymbol{\kappa}}};\mathscr{O}^{}_{\WX})}
}}
\end{equation}
Here $\mu^c$ is given by the cohomological residue morphism
and $\mu$ is given by
\[
u(z,w,\eta)\,dw \mapsto
\smashoperator{\int\limits_{\hspace{6ex}\gamma(z,\eta;\varrho,\theta)}}u(z,w,\eta)\,dw.
\]
\end{prop}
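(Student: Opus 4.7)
The plan is to compare both morphisms at the level of Čech cocycles and reduce the assertion to an explicit residue computation in the $w$-variables. First, I would rewrite the cohomological residue morphism $\mu^c$ in terms of the Čech covers used in the right column of \eqref{eq.a.3}. The composition
\[
H^{n+d}_{\widehat{G}\cap \widehat{U}}(\widehat{U};\OOO)
\xrightarrow{\mu^c}
H^d_{\widehat{G}_{\widetilde{\boldsymbol{\kappa}}}\cap \widehat{U}_{\widetilde{\boldsymbol{\kappa}}}}\!
(\widehat{U}_{\widetilde{\boldsymbol{\kappa}}}; \mathscr{O}_{\WX})
\]
is, after restriction of the coverings, the coboundary/trace map of the fibration $\widehat{\pi}_1 \colon \widehat{W}^{(*,*)}_{\boldsymbol{\kappa}} \to \widehat{V}^{(*)}_{\widetilde{\boldsymbol{\kappa}}}$, whose fiber at a point $(z,\eta) \in \widehat{V}^{(*)}_{\widetilde{\boldsymbol{\kappa}}}$ is an open subset of $\mathbb{C}^n_w$. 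By Leray's theorem for the Čech complex on this open subset, $\mu^c$ is computed as an iterated integration over a topological cycle in the fiber, dual to the Čech covering $\{\widehat{V}^{(i)}_{\varDelta,\boldsymbol{\kappa}} \cap \widehat{\pi}_1^{-1}(z,\eta)\}_{i=1}^{n}$ (the remaining coverings with $i=d+1,\dots,n$ do not appear because they are "redundant" in the fiber).

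Next, I would identify this dual cycle explicitly. For each $i$ with $2 \leqslant i \leqslant n$, the open set $\widehat{V}^{(i)}_{\varDelta,\boldsymbol{\kappa}}$ restricted to the fiber is $\{\varrho |z_i - w_i| > |\eta|\}$; by the standard Cauchy–Leray prescription in one complex variable, the dual 1-cycle is a positively oriented circle $|w_i - z_i| = |\eta|/\varrho + \varepsilon$, which is precisely the path $\gamma_i(z,\eta;\varrho)$ of Section \ref{sec:actions_ER}. For $i = 1$, the set $\widehat{V}^{(1)}_{\varDelta,\boldsymbol{\kappa}}$ is the complement of a closed sector around the real axis at $z_1$, and the dual 1-cycle (a relative cycle in the disk modulo the sector's complement) is a path joining the two boundary rays of the sector while staying inside $\widehat{V}^{(1)}_{\varDelta,\boldsymbol{\kappa}}$; up to homology this is exactly $\gamma_1(z,\eta;\varrho,\theta)$. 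Combining the $n$ directions gives the product cycle $\gamma(z,\eta;\varrho,\theta)$ appearing in the integrand of $\mu$, and Lemma \ref{lem:path-fundamental} guarantees that this cycle indeed lies in the fiber $\widehat{W}^{(*,*)}_{\boldsymbol{\kappa}} \cap \widehat{\pi}_1^{-1}(z,\eta)$ for $(z,\eta) \in \widehat{V}^{(*)}_{\widetilde{\boldsymbol{\kappa}}}$.

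Having identified the cycle, for a Čech representative $\psi(z,w,\eta)\,dw \in \varGamma(\widehat{W}^{(*,*)}_{\boldsymbol{\kappa}};\OOO)$, the Čech-theoretic residue in $w$ yields
\[
\mu^c([\psi(z,w,\eta)\,dw])(z,\eta) =
\smashoperator{\int\limits_{\hspace{6ex}\gamma(z,\eta;\varrho,\theta)}} \psi(z,w,\eta)\,dw,
\]
which is exactly $\mu([\psi\,dw])$. Well-definedness — i.e., independence of the representative modulo the sum over $\widehat{W}^{(\alpha,\beta)}_{\boldsymbol{\kappa}}$ with $(\alpha,\beta) \in \varLambda$ — follows from Stokes' theorem combined with Lemma \ref{lem:path-fundamental} (2), (3), as used in the proof of Theorem \ref{th:action-morphism}: integrands in $\varGamma(\widehat{W}^{(\alpha,*)}_{\boldsymbol{\kappa}};\OOO)$ can be handled by the deformation of $\gamma$ to $\overline{\gamma}$, and those in $\varGamma(\widehat{W}^{(*,\beta)}_{\boldsymbol{\kappa}};\OOO)$ land in $\varGamma(\widehat{V}^{(\beta)}_{\widetilde{\boldsymbol{\kappa}}};\mathscr{O}_{\WX})$.

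The main obstacle is the careful identification of the dual cycle in the $w_1$-direction, because there the Čech covering $\{\widehat{V}^{(1)}_{\varDelta,\boldsymbol{\kappa}}\}$ consists of a single sectorial open set rather than the complement of a disk; the "residue" in this direction is therefore a relative-homology operation whose representative must be chosen as a contour joining the two boundary rays of the forbidden sector, and one has to verify that this specific path is homologous to $\gamma_1(z,\eta;\varrho,\theta)$ within the common domain of holomorphy. The remaining verifications — the compatibility of iterated residues with the single residue on the product cycle, and the Stokes-type vanishing on the sub-covers $\widehat{W}^{(\alpha,\beta)}_{\boldsymbol{\kappa}}$ — are standard once the cycle is fixed.
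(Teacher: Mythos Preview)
Your proposal has a genuine gap at exactly the point you flag as ``the main obstacle,'' and it is not merely a technical check. The cohomological residue $\mu^c$ is defined via proper pushforward along $\widehat{\pi}_1$, so at the level of cycles it is computed by integration over a \emph{closed} cycle in the fiber. In the $w_1$-direction the support $\widehat{G}_{\varDelta,\boldsymbol{\kappa}}$ is an unbounded sector, so the fiber is not compact and there is no direct ``dual relative cycle'' argument giving the open arc $\gamma_1(z,\eta;\varrho,\theta)$. The paper does not attempt this directly: it first replaces $\widehat{G}_{\varDelta,\boldsymbol{\kappa}}$ by a bounded set $\widehat{K}'$ (the isomorphism $\psi^c_2$), so that the residue can legitimately be computed by the \emph{closed} product cycle $\widetilde{\gamma}=\gamma\vee(-\overline{\gamma})$ via Lemma~\ref{lem:residue-simple}; only afterwards does it show that $\mu'=\int_{\widetilde{\gamma}}$ and $\mu=\int_{\gamma}$ agree modulo coboundaries, using that $\int_{\overline{\gamma}}$ lands in $\sum_\beta\varGamma(\widehat{V}^{(\beta)}_{\widetilde{\boldsymbol{\kappa}}};\mathscr{O}_{\WX})$. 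Your sketch skips this detour and thus never establishes that the open-arc integral actually computes $\mu^c$.

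There is a second omission. The support $\widehat{G}=\widehat{G}_{\varDelta,\boldsymbol{\kappa}}\cap\widehat{\pi}_2^{\,-1}(\widehat{G}_{\boldsymbol{\kappa}})$ carries a $\widehat{\pi}_2$-pullback factor, so the \v{C}ech cover of $\widehat{U}\smallsetminus\widehat{G}$ is indexed by pairs $(\alpha,\beta)$ with $\beta$ coming from $\widehat{V}^{(\beta)}_{\boldsymbol{\kappa}}$ via $\widehat{\pi}_2$, whereas the target support $\widehat{G}_{\widetilde{\boldsymbol{\kappa}}}$ lives on the $\widehat{\pi}_1$ side. Passing from one to the other is not automatic; the paper handles it by inserting an intermediate cohomology group with support $\widehat{G}_3=\widehat{G}_2\cap\widehat{\pi}_1^{\,-1}(\widehat{G}_{\widetilde{\boldsymbol{\kappa}}})$ and proving Lemma~\ref{lem:compat_act_cocycle}, which uses the cocycle condition on the triple-indexed cover $\widehat{\mathfrak{W}}_3$ to show $\mu'(u_3^{(*,*,\emptyset)}dw)\equiv\mu'(u_3^{(*,\emptyset,*)}dw)$. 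Your fiberwise picture, treating only the $\alpha$-indices as the ``fiber covering,'' does not account for this $\beta\to\beta'$ switch.
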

\begin{proof}
We first define the closed subsets in 
$T:=\{(w_1,\,\eta) \in \mathbb{C}^2;\,
\lvert \arg \eta| < \dfrac{\,\theta\,}{4}\}$ 
by 
\begin{align*}
L_{\varrho,\theta}
&:= \{(w_1,\eta) \in T;\,\lvert\arg w_1| \leqslant \frac{\,\pi\,}{2} - \frac{\,3\theta\,}{4} + \arg \eta\}, 
\\
L'_{\varrho, \theta}
&:= \{(w_1,\eta) \in L_{\varrho,\theta};\, |w_1| \leqslant \dfrac{\,\varrho\,}{4}|\eta|\}.
\end{align*}
Note that $T \smallsetminus L_{\varrho,\theta}$ and
$T \smallsetminus L'_{\varrho,\theta}$ are pseudoconvex open subsets.
Then the top horizontal morphism $\mu^c$ in \eqref{eq.a.3}
can be decomposed to the chain of morphisms:
\begin{align*}
H^{n+d}_{\widehat{G}\cap\widehat{U}}(\widehat{U};\OOO) 
&\xrightarrow{\psi^c_1}
H^{n+d}_{\widehat{G}_1\cap \widehat{U}'}(\widehat{U}';\OOO) 
\xleftarrow{\psi^c_2}
H^{n+d}_{\widehat{G}_2\cap \widehat{U}'}(\widehat{U}';\OOO) 
 \xleftarrow{\psi^c_3}
H^{n+d}_{\widehat{G}_3\cap \widehat{U}'}(\widehat{U}';\OOO) 
\\
&\xrightarrow{\psi^c_4}
H^{n+d}_{\widehat{G}_4\cap \widehat{U}'}(\widehat{U}';\OOO)  
\xrightarrow{\psi^c_5}
H^d_{\widehat{G}_{\widetilde{\boldsymbol{\kappa}}}\cap \widehat{U}_{\widetilde{\boldsymbol{\kappa}}}}\!(\widehat{U}_{\widetilde{\boldsymbol{\kappa}}}; \mathscr{O}^{}_{\WX}).
\end{align*}
Here we will explain all the subsets appearing in the  chain above.
Set
\allowdisplaybreaks
\begin{align*}
\widehat{U}' &:= \widehat{U} \cap \widehat{\pi}_{1}^{\,-1}(\widehat{U}_{\widetilde{\boldsymbol{\kappa}}})
= \widehat{U}_{\varDelta,\boldsymbol{\kappa}} \cap \widehat{\pi}_{1}^{\,-1}(\widehat{U}_{\widetilde{\boldsymbol{\kappa}}})
\cap \widehat{\pi}_{2}^{\,-1}(\widehat{U}_{\boldsymbol{\kappa}}),
\\
\widehat{K} &:= \smashoperator{\Bcap_{i=2}^n}\{(z,w,\eta);\,
(z_1 - w_1,\eta) \in L_{\varrho, \theta},\,\varrho|z_i - w_i| \leqslant |\eta|\}, \\
\widehat{K}' &:= \smashoperator{\Bcap_{i=2}^n}\{(z,w,\eta);\, 
(z_1 - w_1,\eta) \in L'_{\varrho, \theta},\,\varrho|z_i - w_i| \leqslant |\eta|\}.
\end{align*}
Note that $\widehat{G}_{\varDelta,\boldsymbol{\kappa}} \cap \widehat{U}' \subset \widehat{K} \cap \widehat{U}'$
holds.
Then $\widehat{G}_k$ $(1\leqslant k\leqslant 4)$ are defined by
\begin{alignat*}{3}
\widehat{G}_1 &:= \widehat{K} \cap \widehat{\pi}^{\,-1}_2(\widehat{G}_{\boldsymbol{\kappa}}),
&\widehat{G}_2 &:= \widehat{K}' \cap \widehat{\pi}^{\,-1}_2(\widehat{G}_{\boldsymbol{\kappa}}),\\
\widehat{G}_3 &:= \widehat{G}_2  \cap 
\widehat{\pi}^{\,-1}_1(\widehat{G}_{\widetilde{\boldsymbol{\kappa}}}) , 
\quad&\widehat{G}_4& := \widehat{K}' \cap \widehat{\pi}^{\,-1}_1(\widehat{G}_{\widetilde{\boldsymbol{\kappa}}}).
\end{alignat*}
The morphism $\psi^c_5$ is nothing but the residue morphism. The other morphisms
are canonical ones associated with the inclusion of sets.
If we take $\tilde{\varrho}$ of $\widetilde{\boldsymbol{\kappa}}$ sufficiently small, 
we have
$\widehat{U}' \cap \widehat{G}_1 = \widehat{U}' \cap \widehat{G}_2$. 
Therefore the canonical morphism $\psi^c_2$ becomes an isomorphism. Furthermore, as 
$\widehat{G}_2 \subset \widehat{\pi}_1^{\,-1}(\widehat{G}_{\widetilde{\boldsymbol{\kappa}}})$ holds,
we get
$
\widehat{U}' \cap \widehat{G}_2 = \widehat{U}' \cap \widehat{G}_3
$, thus the canonical morphism $\psi^c_3$ is also an isomorphism.
The corresponding morphisms of \v{C}ech cohomology groups are given by
the following chain:
\begin{multline}
\label{eq:chain}
\dfrac{\varGamma(\widehat{W}^{(*,*)}_{\boldsymbol{\kappa}};\OOO)}
{\smashoperator[r]{\Sum_{(\alpha,\,\beta) \in \varLambda}}
 \varGamma(\widehat{W}^{(\alpha,\,\beta)}_{\boldsymbol{\kappa}};\OOO)}
\xrightarrow[\psi_1]{}
\dfrac{\varGamma(\widehat{W}^{(*,*)}_1;\OOO)}
{\smashoperator[r]{\Sum_{(\alpha,\,\beta) \in \varLambda}}
 \varGamma(\widehat{W}^{(\alpha,\,\beta)}_1;\OOO)}
\xleftarrow[\psi_2]{\dsim}
\dfrac{\varGamma(\widehat{W}^{(*,*)}_2;\OOO)}
{\smashoperator[r]{\Sum_{(\alpha,\,\beta) \in \varLambda}}
 \varGamma(\widehat{W}^{(\alpha,\,\beta)}_2;\OOO)}
\\
\xleftarrow[\psi_3]{\dsim}
\dfrac{Z^{n+d}(\widehat{\mathfrak{W}}^{}_3;\OOO)}
{B^{n+d}(\widehat{\mathfrak{W}}^{}_3;\OOO)}
 \xrightarrow[\psi_4]{}
\dfrac{\varGamma(\widehat{W}^{(*,*)}_4;\OOO)}
{\smashoperator[r]{\Sum_{(\alpha,\,\beta) \in \varLambda}}
 \varGamma(\widehat{W}^{(\alpha,\,\beta)}_4;\OOO)}
\xrightarrow[\psi_5]{}
\dfrac{
\varGamma(\widehat{V}^{(*)}_{\widetilde{\boldsymbol{\kappa}}};\mathscr{O}^{}_{\WX})}
{\smashoperator[r]{\Sum_{\beta \in  \mathcal{P}^\vee_d}}
\varGamma(\widehat{V}^{(\beta)}_{\widetilde{\boldsymbol{\kappa}}};\mathscr{O}^{}_{\WX})}\,.
\end{multline}
We also explain all the subsets appearing in \eqref{eq:chain}.
Set 
\begin{align*}
\widehat{O}^{(1)}&:= 
\{(z,w,\eta) \in \widehat{U}';\,
(z_1 - w_1,\eta) \notin L_{\varrho,\theta}\}, \\
\widehat{O}^{(i)}&:= 
\{(z,w,\eta) \in \widehat{U}';\,
\varrho|z_i - w_i| > |\eta|\} \quad (i=2,\dots,n),
\\
\widehat{O}^{\prime (1)}&:= 
\{(z,w,\eta) \in \widehat{U}';\,
(z_1 - w_1, \eta) \notin L'_{\varrho, \theta}\}, \\
\widehat{O}^{\prime (i)}&:= \{(z,w,\eta) \in \widehat{U}';\,
\varrho|z_i - w_i| > |\eta| \} \quad (i=2,\dots,n).
\end{align*}
Note that these open subsets are pseudoconvex.  Then the coverings
$\{W_1^{(\alpha,\,\beta)}\}$ etc.\  appearing in \eqref{eq:chain} are given by
\begin{alignat*}{3}
\widehat{W}_1^{(\alpha, \,\beta)} &:=
\widehat{O}^{(\alpha)} \cap \widehat{\pi}^{\,-1}_2(\widehat{V}^{(\beta)}_{\boldsymbol{\kappa}}),\quad
&\widehat{W}_2^{(\alpha, \,\beta)} &:=
\widehat{O}'^{(\alpha)} \cap \widehat{\pi}^{\,-1}_2(\widehat{V}^{(\beta)}_{\boldsymbol{\kappa}}),\\
\widehat{W}_3^{(\alpha,\, \beta,\,\beta')} &:=
\widehat{O}'^{(\alpha)} \cap \widehat{\pi}^{\,-1}_2(\widehat{V}^{(\beta)}_{\boldsymbol{\kappa}})
\cap \widehat{\pi}^{\,-1}_1(\widehat{V}^{(\beta')}_{\widetilde{\boldsymbol{\kappa}}}),
\quad
&\widehat{W}_4^{(\alpha,\, \beta')} &:=
\widehat{O}'^{(\alpha)} \cap \widehat{\pi}^{\,-1}_1(\widehat{V}^{(\beta')}_{\widetilde{\boldsymbol{\kappa}}}),
\end{alignat*}
and 
$Z^{n+d}(\widehat{\mathfrak{W}}^{}_3;\OOO)$ (resp.\ $B^{n+d}(\widehat{\mathfrak{W}}^{}_3;\OOO)$) stands for  the $n+d$ cocycle group 
(resp.\ the $n+d$ coboundary group) of 
\v{C}ech complex $C^{\bullet}(\widehat{\mathfrak{W}}^{}_3;\OOO)$  with respect  to
the  covering $\widehat{\mathfrak{W}}^{}_3:= \{\widehat{W}_3^{(i,j,k)}\}_{\atop{1\leqslant i\leqslant n}{1\leqslant j,k\leqslant d}}$.
Let $P dw\in H^{n+d}_{\widehat{G}\cap \widehat{U}} (\widehat{U};\OOO)$, and $u\,dw=u(z,w,\eta)\,dw 
\in \varGamma(\widehat{W}^{(*,*)}_{\boldsymbol{\kappa}};\OOO)$  the corresponding representative 
of the \v{C}ech cohomology group. 
Let us trace the images of $P$ and $u$ by the chain of morphisms.  
\paragraph{\textbf{Step 1.}}
Set $P_1 dw:= \psi_1^c(Pdw)$ and $u_1\,dw := \psi_1(u\,dw)$.
Then clearly $u_1\,dw$ is a representative of $P_1\,dw$ and we have
\[
\mu(u\,dw) = \mu(u_1\,dw) \mod \smashoperator{\Sum_{\beta \in \mathcal{P}^\vee_d}}
\varGamma(\widehat{V}^{(\beta)}_{\widetilde{\boldsymbol{\kappa}}};\mathscr{O}_{\WX}),
\]
where $\mu$ was defined in the statement of the proposition.
\paragraph{\textbf{Step 2.}}
As $\psi_2^c$ is an  isomorphism, there exists $P_2dw$
with $P_1 dw= \psi_2^c(P_2dw)$. Then we can find a representative 
$u_2\,dw \in \varGamma(\widehat{W}^{(*,*)}_2;\OOO)$
of $P_2dw$ such that
\[
u_1\,dw - \psi_2(u_2\,dw) \in 
\smashoperator{\Sum_{(\alpha, \,\beta) \in \varLambda}}
\varGamma(\widehat{W}_1^{(\alpha,\,\beta)};\OOO).
\]
Since we have  similar claims as  in Lemma \ref{lem:path-fundamental},
we have $\mu(u_2\,dw) \in
\varGamma(\widehat{V}^{(*)}_{\widetilde{\boldsymbol{\kappa}}}; \mathscr{O}^{}_{\WX})$ and
\[
\mu(u_1\,dw) = \mu(u_2\,dw)
\mod \smashoperator{\Sum_{\beta \in \mathcal{P}^\vee_d}}
\varGamma(\widehat{V}^{(\beta)}_{\widetilde{\boldsymbol{\kappa}}};\mathscr{O}_{\WX}).
\]
Furthermore, we set
\begin{align*}
\widetilde{\gamma}(z,\eta;\varrho, \theta)
& :=
\gamma(z,\eta;\varrho, \theta) \vee
(-\overline{\gamma}(z,\eta;\varrho,\theta)),
\\
\mu'(u_2\,dw) & := \smashoperator{\int\limits_{\hspace{6ex}\widetilde{\gamma}(z,\eta;\varrho, \theta)}}
u_2(z,w,\eta) \,dw.
\end{align*}
Note that the real $n$-dimensional chain
${\widetilde{\gamma}(z,\eta;\varrho, \theta)}$ in $X$
becomes a product of closed paths where each path is homotopic 
to the circle in $\mathbb{C}_{w_i}$ ($i=1,\dots,n$),
in particular, we have $\partial \tilde{\gamma} = \emptyset$. By the same claim as  Lemma \ref{lem:path-fundamental} (3), we get
$
\mu'(u_2\,dw) \in\smashoperator{\Sum_{\beta \in \mathcal{P}^\vee_d}}
\varGamma(\widehat{V}^{(\beta)}_{\widetilde{\boldsymbol{\kappa}}};\mathscr{O}_{\WX})$. 
Hence we have obtained
\[
\mu(u\,dw) = \mu'(u_2\,dw) 
\mod 
\smashoperator{\Sum_{\beta \in \mathcal{P}^\vee_d}}
\varGamma(\widehat{V}^{(\beta)}_{\widetilde{\boldsymbol{\kappa}}};\mathscr{O}_{\WX}).
\]
\paragraph{\textbf{Step 3.}} As $\psi_3^c$ is an isomorphism, there exists $P_3dw$
with $P_2\,dw = \psi^c_3(P_3\,dw)$. Then we can take a representative
\[
u_3 \,dw= \{u_3^{(\alpha,\,\beta,\,\beta')}dw\}_{(\alpha,\,\beta,\,\beta')\in \varLambda^3_{n+d}}
\in Z^{n+d}(\widehat{\mathfrak{W}}^{}_3;\OOO) \subset 
\smashoperator[r]{\Sum_{(\alpha,\, \beta, \,\beta') \in \varLambda^3_{n+d}}}
\varGamma(\widehat{W}_3^{(\alpha,\,\beta,\,\beta')};\OOO)
\] 
of $P_3dw$, where we set
\[
\varLambda^3_{n+d} := \{(\alpha,\beta,\beta')
\in \mathcal{P}_n \times \mathcal{P}_d \times \mathcal{P}_d;\,
\#\alpha + \#\beta + \#\beta' = n+d\}.
\]
Since the covering $\{\widehat{W}_2^{(\alpha,\,\beta)}\}$ is finer than
$\{\widehat{W}_3^{(\alpha,\,\beta,\,\beta')}\}$, 
we get
\[
u_2\,dw - u_3^{(*,*,\emptyset)}dw \in \smashoperator[r]{\Sum _{(\alpha, \beta) \in \varLambda}}
\varGamma(\widehat{W}_2^{(\alpha,\,\beta)};\OOO),
\]
and thus, we obtain
\[
\mu'(u_2\,dw) = \mu'(u_3^{(*,*,\emptyset)}dw)
\mod 
\smashoperator{\Sum_{\beta \in \mathcal{P}^\vee_d}}
\varGamma(\widehat{V}^{(\beta)}_{\widetilde{\boldsymbol{\kappa}}};\mathscr{O}_{\WX})
\]
for which we have:
\begin{lem}\label{lem:compat_act_cocycle}
The following holds\textup{:}
\[
\mu'(u_3^{(*,\emptyset,*)}dw) = \mu'(u_3^{(*,*,\emptyset)}dw)
\mod 
\smashoperator{\Sum_{\beta \in \mathcal{P}^\vee_d}}
\varGamma(\widehat{V}^{(\beta)}_{\widetilde{\boldsymbol{\kappa}}};\mathscr{O}_{\WX}).
\]
\end{lem}
\begin{proof}
We set $*^{{\vee}k}:= *\smallsetminus\{k\}$.
By the cocycle condition for $u_3\,dw$, we have
\[
(-1)^{n+d}(u_3^{(*,*,\emptyset)} dw- 
      u_3^{(*,*^{\vee d},\{d\})}dw) + \smashoperator{\Sum_{i=1}^n}\, (-1)^i u_3^{(*^{\vee i},*,\{d\})}dw 
+ \smashoperator{\Sum_{i=1}^n}\, (-1)^{n+i} u_3^{(*,*^{\vee i},\{d\})}dw = 0.
\]
Hence, by the same claim as  Lemma \ref{lem:path-fundamental} (2), we obtain
\[
\mu'(u_3^{(*,*,\emptyset)}dw)
= \mu'(u_3^{(*,*^{\vee d},\{d\})}dw)
\mod 
\smashoperator{\Sum_{\beta \in \mathcal{P}^\vee_d}}
\varGamma(\widehat{V}^{(\beta)}_{\widetilde{\boldsymbol{\kappa}}};\mathscr{O}_{\WX}).
\]
By repeating the same argument, we obtain the result.
\end{proof}
Summing up, we have
\[
\mu(u\,dw)
= \mu'(u_3^{(*,\emptyset,*)}dw)
\mod 
\smashoperator{\Sum_{\beta \in \mathcal{P}^\vee_d}}
\varGamma(\widehat{V}^{(\beta)}_{\widetilde{\boldsymbol{\kappa}}};\mathscr{O}_{\WX}).
\]
\paragraph{\textbf{Step 4.}} Set $P_4 \,dw:= \psi^c_4(P_3\,dw)$. Clearly 
$u_4\,dw := \psi_4(u_3\,dw)$ is given by $u_3^{(*,\emptyset,*)}dw$
which is a representative of $P_4$. By the previous step,
we have
\[
\mu(u\,dw)
= \mu'(u_4\,dw)
\mod 
\smashoperator{\Sum_{\beta \in \mathcal{P}^\vee_d}}
\varGamma(\widehat{V}^{(\beta)}_{\widetilde{\boldsymbol{\kappa}}};\mathscr{O}_{\WX}).
\]

\paragraph{\textbf{The Final Step.}}
$\psi^c_5$ is given by the residue morphism.
Then the subsets $\widehat{U}'$, $\widehat{G}_4$ and 
the chain $\tilde{\gamma}(z,\eta; \varrho, \theta)$
satisfy the geometrical situation under which the following Lemma 
\ref{lem:residue-simple} holds.
Hence it follows from the lemma that
the representative of $\psi^c_5(v_4\,dw)$ is given by
$\mu'(u_4\,dw)$. 
Therefore we have the conclusion that
a representative of $\mu^c(P\,dw)$ is given by $\mu(u\,dw)$.
This completes the proof for the proposition.
\end{proof}
We first clarify a geometrical situation.
Let 
$X := \mathbb{C}_z^{\ell}$ and $Y :=  \mathbb{C}_w^n$.
Let $Z$ (resp.\ $U$) be a closed (resp.\ a Stein open) subset in $X$, and
let $K_i$ (resp.\ $W_i$) be a closed (resp.\ a Stein open) subset 
in $X\times \mathbb{C}_{w_i}$ $(i=1,\dots,n)$. 
The mappings $\pi \colon X \times Y \to X$,
$\pi_i \colon  X \times Y \to X \times \mathbb{C}_{w_i}$ and
$\tau_i\colon  X \times \mathbb{C}_{w_i} \to X$ denote
the canonical projections respectively. 
In this situation, the following conditions are also assumed.
\begin{enumerate}[(i)]
\item The subset $U \smallsetminus Z \subset X$ has a covering 
$\{U^{(j)}\}^m_{j=1}$ of Stein open subsets for an $m \leqslant \ell$.
\item  The subset $W_i \smallsetminus K_i$ is Stein in $X \times \mathbb{C}_{w_i}$
for $1 \leqslant  i \leqslant n$.
\item The mapping $\tau_i \colon K_i \cap W_i\to X$  is proper for $1 \leqslant  i \leqslant n$.
\item $U \subset \pi(\smashoperator[r]{\Bcap_{i=1}^n}\pi_1^{-1}(W_i))$.
\end{enumerate}
Set $V^{(i)} := \pi^{-1}_i(W_i \smallsetminus K_i)$ and 
\begin{align*}
K &:= \pi^{-1}(Z) \cap \smashoperator[r]{\Bcap_{i=1}^n} 
	\pi^{-1}_i(K_i),\quad
W := \pi^{-1}(U) \cap \smashoperator[r]{\Bcap_{i=1}^n} \pi^{-1}_i(W_i),\\
W^{(\alpha,\, \beta)} &:= \pi^{-1}(U^{(\alpha)}) \cap V^{(\beta)}\quad
(\alpha \in \mathcal{P}_m,\, \beta \in \mathcal{P}_n).
\end{align*}
We also denote by $\gamma_i(z) \subset \mathbb{C}_{w_i}$ a closed path in 
$\tau_i^{-1}(z) \cap W_i$ (regarded as a subset in $\mathbb{C}_{w_i}$)
turning around each component of $\tau_i^{-1}(z) \cap K_i$ once 
with anti-clockwise direction.
\begin{lem}{\label{lem:residue-simple}}
Under the situation described above, there exists  the following commutative
diagram\textup{:}
\[
\xymatrix @R=1ex{
H^{m+n}_{K \cap W} (W; \mathscr{O}_{X \times Y}^{\smash{(0,n)}})
\ar[r]^-{\mu^c}\ar@{=}[d]
& 
H^m_{Z \cap U}(U; \mathscr{O}_{X}) \ar@{=}[d] 
\\
\dfrac{\varGamma(W^{(*,*)}; \mathscr{O}_{X \times Y}^{\smash{(0,n)}})}
{\smashoperator[r]{\Sum_{(\alpha,\, \beta) \in (\mathcal{P}_m \times \mathcal{P}_n)^\vee }}
\varGamma(W^{(\alpha,\,\beta)};\mathscr{O}_{X \times Y}^{\smash{(0,n)}})}
 \ar[r]^-{\mu}
&
\dfrac{\varGamma(U^{(*)}; \mathscr{O}_{X}^{})}
{\smashoperator[r]{\Sum_{\alpha \in\mathcal{P}^\vee_m}}
\varGamma(U^{(\alpha)};\mathscr{O}_{X}^{})}.
}
\]
Here $\mu$ is defined by
\[
u(z,w)\,dw \mapsto
\smashoperator{\int\limits_{\hspace{10ex}\gamma_1(z) \times \dots \times \gamma_n(z)}}u(z,w)\, dw,
\]
and
$(\mathcal{P}_m \times \mathcal{P}_n)^\vee $ denotes $ \{(\alpha,\beta)
\in \mathcal{P}_m \times \mathcal{P}_n;\,
\#\alpha + \#\beta =m+n-1\}$. 
\end{lem}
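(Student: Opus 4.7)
\medskip

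\noindent\textbf{Proof proposal.}
The plan is to identify the cohomological residue morphism $\mu^c$ with the concrete iterated-path integral $\mu$ by factoring $\mu^c$ through the relative residue along the proper family $\pi\colon K\cap W\to U$. I will first decompose $\mu^c$ into the composition of $n$ single-variable residues, one for each factor $\mathbb{C}_{w_i}$, and then match this decomposition against the concrete description of $\mu$ as an iterated contour integral over $\gamma_1(z)\times\cdots\times\gamma_n(z)$. Since $\tau_i\colon K_i\cap W_i\to X$ is proper and $W_i\smallsetminus K_i$ is Stein, the one-variable case is the classical Leray residue: the relative residue map
\[
H^1_{K_i\cap W_i}(W_i;\mathscr{O}^{(0,1)}_{X\times\mathbb{C}_{w_i}})\longrightarrow \mathscr{O}_X
\]
coincides with the Cauchy integral $f\mapsto \int_{\gamma_i(z)}f\,dw_i$, and this is just the classical Čech computation.

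The key step is an inductive (Fubini-type) reduction. For $1\leqslant k\leqslant n$ set $Y_k:=\mathbb{C}_{w_k}\times\cdots\times\mathbb{C}_{w_n}$, $\pi^{(k)}\colon X\times Y_k\to X\times Y_{k+1}$ the projection forgetting $w_k$, and let $K^{(k)}$, $W^{(k)}$ be the corresponding closed and Stein subsets in $X\times Y_k$. The projection formula together with the Stein covering $\{\pi^{-1}_i(W_i\smallsetminus K_i)\}$ of $W\smallsetminus K$ yields a factorization
\[
H^{m+n}_{K\cap W}(W;\mathscr{O}^{(0,n)}_{X\times Y})
\xrightarrow{\ \rho_n\ }
H^{m+n-1}_{K^{(2)}\cap W^{(2)}}(W^{(2)};\mathscr{O}^{(0,n-1)})
\xrightarrow{\ \rho_{n-1}\ }\cdots\xrightarrow{\ \rho_1\ }
H^{m}_{Z\cap U}(U;\mathscr{O}_X),
\]
where each $\rho_k$ is the relative Cauchy residue in the single variable $w_k$. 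The composite $\rho_1\circ\cdots\circ\rho_n$ equals $\mu^c$ by naturality of the residue (it is defined via the trace for proper maps, which is compatible with composition). On the Čech level, $\rho_k$ can be computed as follows: given a representative $\varphi\,dw_k\wedge\cdots\wedge dw_n\in \varGamma(\pi_k^{-1}(W_k\smallsetminus K_k)\cap \cdots;\mathscr{O}^{(0,n-k+1)})$, applying $\rho_k$ produces $(\int_{\gamma_k(z')}\varphi\,dw_k)\,dw_{k+1}\wedge\cdots\wedge dw_n$ where $z'$ denotes the remaining coordinates of $X\times Y_{k+1}$; this is the standard one-variable statement.

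Iterating this, starting from a representative $u(z,w)\,dw\in \varGamma(W^{(*,*)};\mathscr{O}^{(0,n)}_{X\times Y})$ and peeling off $w_n, w_{n-1},\dots,w_1$ in turn, produces precisely
\[
\smashoperator{\int\limits_{\hspace{10ex}\gamma_1(z)\times\cdots\times\gamma_n(z)}} u(z,w)\,dw,
\]
viewed as an element of $\varGamma(U^{(*)};\mathscr{O}_X)$ modulo $\sum_{\alpha\in\mathcal{P}^\vee_m}\varGamma(U^{(\alpha)};\mathscr{O}_X)$. The hypothesis $U\subset\pi(\Bcap_{i=1}^n\pi_i^{-1}(W_i))$ ensures that the paths $\gamma_i(z)$ can be chosen uniformly over each Stein patch, and the properness of $\tau_i$ on $K_i\cap W_i$ guarantees that the paths depend continuously on $z$, so Fubini is applicable.

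The main obstacle is the verification that the abstract relative residue morphism from the derived pushforward really factors as the $n$-fold composition $\rho_1\circ\cdots\circ\rho_n$ of one-variable Cauchy residues, and that each $\rho_k$ admits the above explicit Čech-level description. This is where one needs to be careful about signs, ordering of Čech indices, and the compatibility between the cup-product structure in $H^{m+n}_{K\cap W}$ and the product structure of the cycle $\gamma_1(z)\times\cdots\times\gamma_n(z)$. Once these identifications are made, commutativity of the diagram is immediate from the single-variable Cauchy formula applied fiberwise, together with the standard Čech description of the pushforward along a Stein-covered projection.
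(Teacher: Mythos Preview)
Your reduction to the one-variable case via the factorization $\mu^c = \rho_1\circ\cdots\circ\rho_n$ is the same first move the paper makes (``we can apply the induction with respect to $n$, and thus we may assume $n=1$''). The gap is in your treatment of the single-variable step itself. You assert that the relative residue $H^{m+1}_{K\cap W}(W;\mathscr{O}^{(0,1)}_{X\times\mathbb{C}})\to H^m_{Z\cap U}(U;\mathscr{O}_X)$ is given on \v{C}ech representatives by $u\,dw_1\mapsto\int_{\gamma_1(z)}u\,dw_1$ and call this ``the standard one-variable statement'' and ``the classical \v{C}ech computation''. But this is exactly the content of the lemma when $n=1$, and it is not immediate: the cohomological residue $\mu^c$ is defined via the trace for the proper map $\pi$, while $\mu$ is a concrete contour integral, and you have not explained why they agree in the presence of the $m$ \v{C}ech indices coming from the covering $\{U^{(j)}\}$ of $U\smallsetminus Z$. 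Your final paragraph acknowledges this as ``the main obstacle'' but does not resolve it.

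The paper handles the case $n=1$ by a further induction, this time on $m$. For the base case $m=0$ it passes to the Dolbeault resolution with distribution (and hyperfunction) coefficients: one writes $\mu^c$ as the composition of the boundary map $\delta'\colon u\mapsto\bar\partial\tilde u$ (extending $u$ across $K$ as a distribution or hyperfunction) with fiber integration $\int_c$ of the resulting compactly-supported-along-fibers $2$-form, and then Stokes' theorem identifies $\int_c\bar\partial\tilde u$ with the contour integral $\int_{\gamma_1(z)}u\,dw_1$. For the inductive step $m-1\to m$ one sets $Z':=U\smallsetminus\bigcup_{i<m}U^{(i)}$ and uses the long exact sequences for the pair $(Z,Z')$ on both the source and target, together with the induction hypothesis applied to the first two columns, to conclude for the third. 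These two ingredients --- the Dolbeault/Stokes argument for $m=0$ and the exact-sequence induction on $m$ --- are precisely what your sketch is missing.
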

\begin{proof}
The lemma immediately follows form \cite[Corollary 3.1.4]{K-K1}.
However, for the reader's convenience, we will give
a proof in what follows.
Clearly we can apply the induction with respect to $n$, and thus,
we may assume from the beginning that $n=1$; i.e.\ $Y=\mathbb{C}$.
%we can directly show the claim of the lemma by chasing the commutative
%diagram of double complexes
%which gives a quasi-isomorphism between a complex associated with \v{C}ech 
%cohomology groups and that for usual cohomology groups.
Then we show the claim by the induction for $m \geqslant 0$. 

First we prove the lemma for $m=0$.
Let $\mathcal{S}^\bullet_{X \times \mathbb{C}}$ (resp.\ $\mathcal{S}^\bullet_{X}$)
be the $\bar{\partial}$ complex of $\mathscr{O}_{X \times \mathbb{C}}^{\smash{(0,1)}}$ (resp. $\mathscr{O}_{X}$) 
with coefficients in
the sheaf of distributions on $X \times \mathbb{C} = \mathbb{R}^{2\ell} \times \mathbb{R}^2$ 
(resp.\ $X = \mathbb{R}^{2\ell}$).
Then we have the following diagram:
\begin{equation}
\label{eq.a.4}
\vcenter{
\xymatrix @R=2.9ex{
H^{1}_{K \cap W} (W;\mathscr{O}_{X \times \mathbb{C}}^{\smash{(0,1)}})
\ar[d]_-{\iota}
& \varGamma(W \smallsetminus K;\mathscr{O}_{X \times \mathbb{C}}^{\smash{(0,1)}})\big/
\varGamma (W;\mathscr{O}_{X \times \mathbb{C}}^{\smash{(0,1)}})\ar[d]\ar[l]_-{\dsim}^-{\delta'}
\\
\smash{\smashoperator{\varinjlim_{\widetilde{K}}}
H^{1}\varGamma_{\widetilde{K} \cap W}(W;\mathcal{S}^\bullet_{X \times \mathbb{C}}})\ar[d]_-{\int_c}
&
\smash{\smashoperator{\varinjlim_{\widetilde{K}}}\varGamma(W \smallsetminus \widetilde{K};
\mathscr{O}_{X \times \mathbb{C}}^{\smash{(0,1)}})}\big/
\varGamma (W;\mathscr{O}_{X \times \mathbb{C}}^{\smash{(0,1)}})\ar[l]_-{\delta}\ar[d]_-{\mu}
\\
H^0\varGamma(U; \mathcal{S}^\bullet_{X}) \ar@{=}[r]
&
\varGamma(U; \mathscr{O}_{X}).
}}
\end{equation}
Here $\widetilde{K}$ ranges through closed subsets in $W$ such that
$K \subset \operatorname{Int}\widetilde{K}$ and $\pi|_W \colon \widetilde{K} \to X$ is proper. 
Here $\operatorname{Int}\widetilde{K}$ denotes the interior of $\widetilde{K}$. 
The morphism $\delta$ is given by $
u \mapsto \bar{\partial} \tilde{u}$, 
where $\tilde{u}$ is a distribution extension of $u$ to $W$
with $u = \tilde{u}$ on $W \smallsetminus \widetilde{K}$. The morphism $\int_c$
is nothing but the integration along the fiber of 
$\pi \colon  X \times \mathbb{C} \to X$ for distributions. 
Note that the element in
$H^{1}
\varGamma_{\widetilde{K}\cap W}(W;\mathcal{S}^\bullet_{X \times \mathbb{C}})$ is a real differential $2$-form
as $\mathcal{S}^\bullet_{X \times \mathbb{C}}$ is the $\bar{\partial}$ complex of $\mathscr{O}_{X \times \mathbb{C}}^{\smash{(0,1)}}$.
Then the commutativity of the lower square 
in \eqref{eq.a.4} comes from the Stokes formula.
Let $\mathcal{B}^\bullet_{X \times \mathbb{C}}$ be a $\bar{\partial}$ complex of $\mathscr{O}_{X \times \mathbb{C}}^{\smash{(0,1)}}$
with coefficients in the sheaf of hyperfunctions
on $X \times \mathbb{C} = \mathbb{R}^{2\ell} \times \mathbb{R}^2$. Then we have
$
H^{1}_{K \cap W} (W;\mathscr{O}_{X \times \mathbb{C}}^{\smash{(0,1)}})
=
H^{1}\varGamma_{K\cap W}(W; \mathcal{B}^\bullet_{X \times \mathbb{C}}).
$
The morphism $\delta'$ is given by $u \mapsto \bar{\partial} \tilde{u}$, 
where $\tilde{u}$ is an extension of $u$ to $W$ as an element of the flabby sheaf.
The morphism $\iota$ is the composition of morphisms
\[
H^{1}\varGamma_{K \cap W}(W; \mathcal{B}^\bullet_{X \times \mathbb{C}})
\to 
\varinjlim_{\widetilde{K}}
H^{1}\varGamma_{\widetilde{K} \cap W}(W;\mathcal{B}^\bullet_{X \times \mathbb{C}})
\eleftarrow
\varinjlim_{\widetilde{K}}
H^{1}\varGamma_{\widetilde{K} \cap W}(W; \mathcal{S}^\bullet_{X \times \mathbb{C}}).
\]
Let $\tilde{u}'$ (resp.\ $\tilde{u}$) be a distribution (resp.\ hyperfunction) extension of 
$u \in \Gamma(W \smallsetminus K; \mathscr{O}_{X \times \mathbb{C}}^{\smash{(0,1)}})$ 
to $W$ with $\tilde{u}' = u$ on $W \smallsetminus \widetilde{K}$ 
(resp.\ $\tilde{u} = u$ on $W \smallsetminus K$).
Since $\operatorname{supp}(\tilde{u} - \tilde{u}') \subset \widetilde{K}$,
we have
\[
\partial{\tilde{u}} - \partial{\tilde{u}'} = 0 \in 
\varinjlim\limits_{\widetilde{K}}H^{1}\varGamma_{\widetilde{K} \cap W}(W; \mathcal{B}^\bullet_{X \times \mathbb{C}}),
\]
which implies the commutativity of the upper square in \eqref{eq.a.4}.
Hence, as the residue morphism $\mu^c$ is the composition 
$\int_c \circ \iota$ by definition, 
we have obtained the claim of the lemma for the case $m = 0$.
Now suppose that the claim of the lemma is true for $0,\dots,m-1$.
We will show the lemma for $m$. Let us consider the commutative diagram
between exact sequences:
\begin{equation}
\label{eq.a.6}
\vcenter{
\xymatrix @C=1em @R=2ex{ 
H_{K' \cap W}^{m}(W;\mathscr{O}_{X \times \mathbb{C}}^{\smash{(0,1)}}) 
\ar[r] \ar[d]_-{\mu^c}&
H_{K' \cap W^{(m)}}^{m}(W^{(m)};\mathscr{O}_{X \times \mathbb{C}}^{\smash{(0,1)}})
\ar[r] \ar[d]_-{\mu^c}
&
H_{K \cap W}^{m+1}(W;\mathscr{O}_{X \times \mathbb{C}}^{\smash{(0,1)}}) \ar[r] \ar[d]_-{\mu^c}&  0 
\\
H_{Z' \cap U}^{m-1}(U;\mathscr{O}^{}_X) 
\ar[r]&
H_{Z' \cap U^{(m)}}^{m-1}(U^{(m)}; \mathscr{O}^{}_X)
\ar[r] &
H_{Z \cap U}^{m}(U; \mathscr{O}^{}_X) \ar[r]& 0,
}}
\end{equation}
where $W^{(m)}:= W \cap \pi^{-1}(U^{(m)})$, $Z' := U \smallsetminus \smashoperator{\Bcup_{i=1}^{m-1}}U^{(i)}$
and $K' := \pi^{-1}(Z') \cap \pi_1^{-1}(K_1)$.
We also have the commutative diagram between exact sequences:
\begin{equation}
\label{eq.a.7}
\vcenter{
\xymatrix @C=1em @R=1.9ex{
\dfrac{\varGamma(W'^{(*,1)};\mathscr{O}_{X \times \mathbb{C}}^{\smash{(0,1)}})}
{\smash{\smashoperator{\Sum_{\alpha\in\mathcal{P}^\vee_{m-1}}}
\varGamma(}W'^{(\alpha,1)};\mathscr{O}_{X \times \mathbb{C}}^{\smash{(0,1)}})} \ar[r] \ar[d]_-{\mu}
&
\dfrac{\varGamma(W''^{(*,1)};\mathscr{O}_{X \times \mathbb{C}}^{\smash{(0,1)}})}
{\smash{\smashoperator{\Sum_{\alpha\in\mathcal{P}^\vee_{m-1}}}
\varGamma(W''^{(\alpha,1)}};\mathscr{O}_{X \times \mathbb{C}}^{\smash{(0,1)}})}\ar[r] \ar[d]_-{\mu}
&
\dfrac{\varGamma(W^{(*,1)};\mathscr{O}_{X \times \mathbb{C}}^{\smash{(0,1)}})}
{\smash{\smashoperator{\Sum_{\alpha\in\mathcal{P}^\vee_{m}}}
\varGamma(W^{(\alpha,1)}};\mathscr{O}_{X \times \mathbb{C}}^{\smash{(0,1)}})}\ar[r] \ar[d]_-{\mu}
&0 
\\
\dfrac{\varGamma(U'^{(*)};\mathscr{O}_{X }^{})}
{\smashoperator{\Sum_{\alpha\in\mathcal{P}^\vee_{m-1}}}
\varGamma(U'^{(\alpha)};\mathscr{O}_{X }^{})}\ar[r]
&
\dfrac{\varGamma(U''^{(*)};\mathscr{O}_{X }^{})}
{\smashoperator{\Sum_{\alpha\in\mathcal{P}^\vee_{m-1}}}
\varGamma(U''^{(\alpha)};\mathscr{O}_{X }^{})} \ar[r] &
\dfrac{\varGamma(U^{(*)};\mathscr{O}_{X }^{})}
{\smashoperator{\Sum_{\alpha\in\mathcal{P}^\vee_{m}}}
\varGamma(U^{(\alpha)};\mathscr{O}_{X }^{})} \ar[r]&
0. }}
\end{equation}
Here $\{W'^{(\alpha,1)}\}$, $\{W''^{(\alpha,1)}\}$, $\{U'^{(\alpha)}\}$, $\{U''^{(\alpha)}\}$  are 
the corresponding the coverings of $W \smallsetminus K'$, $W^{(m)} \smallsetminus K'$, $U \smallsetminus Z'$, 
$U^{(m)} \smallsetminus Z'$    respectively.
By the induction hypothesis, the first and the second $\mu^c$ and $\mu$ 
in \eqref{eq.a.6} and \eqref{eq.a.7} coincide. Hence the third ones in the both diagrams
also coincide.
The proof is  complete.
\end{proof}
\section{General Construction of $C^\mathbb{R}_{Y|X,z^*_0}$}\label{ap:general_C}
In this appendix, we will extend theories developed in Sections
\ref{sec:rep-micro} and \ref{sec:actions_ER} to a general family of \v{C}ech coverings, that
enables us to define the symbol mapping $\sigma$ in a general complex manifold.  
We continue to use the same notation as those in  Section \ref{sec:rep-micro} unless
we specify them.
Let $X$ be an $n$-dimensional complex manifold 
with a system of local coordinates $z=(z_1,\dots,z_n)$, and
$Y$  a closed complex submanifold of $X$ which is defined locally by $\{z'=0\}$ 
where $z = (z',z'')$ with $z':=(z_1,\dots,z_d)$ for 
some $1 \leqslant d \leqslant n$. 
Set $\widehat{ X}:= X\times \mathbb{C}$, and
let $\pi_\eta \colon \widehat{ X} \ni (z, \eta) \mapsto z\in  X$ be  
the canonical projection.
Let $z_0 = (0, z''_0) \in Y$ and $z^*_0=(z''_0;\zeta'_0) \in T^*_YX$ with $\zeta'_0 \ne 0$. 

%We first introduce a family of holomorphic functions as follows.
Let $\chi = \{f_1(z), \dots,f_{d}(z)\}$ be a sequence of holomorphic functions in an open neighborhood
of $z_0$ satisfying the following conditions:
\begin{enumerate}[(1)]
 \item $df_1(z_0) \wedge df_2(z_0) \wedge \dots \wedge df_{d}(z_0) \ne 0$.
 \item $f_1, \dots, f_{d}$ belong to the defining ideal $\mathscr{I}_Y$ of $Y$.
 \item We have
\[
{}^{\mathsf{t}}\!\bigl[\dfrac{\partial f}{\partial z}(z_0)\bigr]\boldsymbol{e} = (\zeta'_0, 0) \in (T^*X)_{z_0}
\]
where $f(z) := (f_1(z),\dots, f_{d}(z))$ and $\boldsymbol{e} := (1,0,\dots,0) \in \mathbb{C}^d$.
\end{enumerate}
We denote by $\varXi(z^*_0)$ the set of sequences satisfying the conditions above.
Set 
\begin{align*}
f(z) & := (f_1(z),f_2(z),\dots,f_{d}(z)) = (f_1(z), f'(z)),
\\
G^\chi_{\varrho,L} & :=\{z \in X;\,\varrho^2|f'(z)| \leqslant |f_1(z)|,\,
f_1(z) \in L\},
\end{align*}
where $\varrho > 0$ and
$L \subset \mathbb{C}$ 
is a closed convex cone 
with $L \subset \{\tau \in \mathbb{C};\, \Re \tau > 0\} \cup \{0\}$. 
We also set, for an open neighborhood $U$ of $z_0$ in $X$,
\begin{align*}
\widehat{G}^\chi_{\varrho,L} &:= \{(z,\eta) \in \widehat{X};\,
\varrho|f'(z)| \leqslant |\eta|,\, f_1(z) \in L\},
\\
\widehat{U}^\chi_{\varrho,r,\theta} &:= 
\{(z,\eta) \in  U \times  S_{r,\theta};\,|f_1(z)| < \varrho |\eta|\}.
\end{align*}
Now we define
\begin{align*}
\widehat{C}^{\,\mathbb{R}, \chi}_{\smash{Y|X,z^*_0}} & :=
\smashoperator{\varinjlim_{\varrho,r,\theta,L,U}}
H^d_{\widehat{G}^{\chi}_{\varrho,L} \cap\widehat{U}^{\chi}_{\varrho,r,\theta}}
\!( \widehat{U}^{\chi}_{\varrho,r,\theta}; \mathscr{O}_{\WX}),
\\
C^{\mathbb{R}, \chi}_{\smash{Y|X,z^*_0}}&  := \Ker(\partial^{}_\eta\colon 
\widehat{C}^{\,\mathbb{R},\chi}_{\smash{Y|X,z^*_0}} \to \widehat{C}^{\,\mathbb{R},\chi}_{\smash{Y|X,z^*_0}}).
\end{align*}
Then, by the same reasoning as that in  Section \ref{sec:rep-micro}, we have the isomorphisms
\[
\mathscr{C}^{\mathbb{R}}_{Y|X,z^*_0}
\eleftarrow
\smashoperator[r]{\varinjlim_{\varrho,L,U}}H^d_{G^\chi_{\varrho,L}\cap U}(U;\mathscr{O}_{X})
\earrow
C^{\mathbb{R}, \chi}_{\smash{Y|X,z^*_0}},
\]
where these isomorphisms are associated with the natural inclusions of sets
and the canonical morphism $\pi_\eta^{-1}
\mathscr{O}_{X} \to \mathscr{O}_{\WX}$ as we have seen in 
 Section \ref{sec:rep-micro}.
Hence, for any $\chi_1$ and $\chi_2$ in $\varXi(z^*_0)$, two modules
$C^{\mathbb{R}, \chi_1}_{\smash{Y|X,z^*_0}}$ and
$C^{\mathbb{R}, \chi_2}_{\smash{Y|X,z^*_0}}$ are isomorphic through
$\mathscr{C}^{\mathbb{R}}_{Y|X,z^*_0}$.
Using this fact, we replace the definition of $C^{\mathbb{R}}_{Y|X,z^*_0}$ introduced
in  Section \ref{sec:rep-micro} with a slightly generalized one.
From now on, we write 
$M^{\mathbb{R},\chi}_{\smash{Y|X,z^*_0}} 
:= \smashoperator[r]{\varinjlim_{\varrho,L,U}}H^d_{G^\chi_{\varrho,L}\cap U}(U;\mathscr{O}_{X})$
for short.
\begin{defn}
 We denote by $C^{\mathbb{R}}_{Y|X,z^*_0}$ the isomorphism class 
$\{C^{\mathbb{R}, \chi}_{\smash{Y|X,z^*_0}}\}_{\chi \in \varXi(z^*_0)}$
consisting of $C^{\mathbb{R}, \chi}_{\smash{Y|X,z^*_0}}$ indexed by $\chi \in \varXi(z_0^*)$.
In the same way, the isomorphism class $M^{\mathbb{R}}_{Y|X,z^*_0}$ is defined by
$\{M^{\mathbb{R}, \chi}_{\smash{Y|X,z^*_0}}\}_{\chi \in \varXi(z^*_0)}$.
\end{defn}
By a direct consequence of the construction above, we have the 
morphism of $C^{\mathbb{R}}_{Y|X, z^*_0}$ associated with a coordinates transformation.
Let $w =(w',w'')$ be a system of local coordinates of a copy of $X$ where $Y$ is 
locally defined by $w' = 0$, and 
$z=\varPhi(w)$ a local coordinates transformation in an open neighborhood of $w_0 \in Y$
satisfying $\varPhi(Y) \subset Y$ and $z_0 = \varPhi(w_0)$. Set $\widehat{\varPhi} := \varPhi \times \mathds{1}_\eta$.
Then it induces the sheaf morphism
\[
\widehat{\varPhi}^{\,-1} \mathscr{O}_{\WX} \ni \varphi \mapsto \varphi \circ \widehat{\varPhi}\in \mathscr{O}_{\WX}\,.
\]
Let $w^*_0 := (w_0; {}^{\mathsf{t}}\!\bigl[\dfrac{\partial \varPhi}{\partial w}(w_0)\bigr](\zeta'_0, 0)) \in T^*_YX$.
It is easy to see
\[
\chi \circ \varPhi := \{f_1 \circ \varPhi, \dots, f_{d} \circ \varPhi\} \in \varXi(w^*_0)
\] 
for any $\chi = \{f_1, \dots, f_{d}\} \in \varXi(z^*_0)$.
Hence we have the morphism
$
C^{\mathbb{R}, \chi}_{\smash{Y|X,z^*_0}} \to
C^{\mathbb{R}, \chi\circ \varPhi}_{\smash{Y|X,w^*_0}}
$
defined by $[u(z,\eta)] \to [u(\varPhi(w),\eta)]$,
which gives 
$\widehat{\varPhi}^* \colon C^{\mathbb{R}}_{\smash{Y|X,z^*_0}} \to C^{\mathbb{R}}_{\smash{Y|X,w^*_0}}
$\,.
This morphism is compatible with the morphism 
$\varPhi^* \colon \mathscr{C}^{\mathbb{R}}_{Y|X,z^*_0} \to
\mathscr{C}^{\mathbb{R}}_{Y|X,w^*_0}$ 
associated with the coordinates transformation $\varPhi$
because the both morphisms are induced from
the same coordinates transformation 
of holomorphic functions.

Next we consider a \v{C}ech representation of $C^{\mathbb{R}, \chi}_{\smash{Y|X,{z}^*_0}}$
for $\chi = \{f_1, \dots,f_{d}\} \in \varXi(z^*_0)$.
Set
\begin{align*}
U^{\chi}_{\boldsymbol{\kappa}}& :=\smashoperator{\Bcap_{i=2}^{d}}
\{z = (z',z'') \in X;\, |f_1(z)| < \varrho r,\, |f_i(z)| < r', \,
\lVert z'' - z''_0\rVert < r'\}, \\
\widehat{U}^{\chi}_{\boldsymbol{\kappa}}& := 
\smashoperator{\Bcap_{i=2}^{d}}\{(z,\eta) = (z',z'',\eta) 
\in X \times S_{\boldsymbol{\kappa}};\,
 |f_1(z)| < \varrho|\eta|,\, |f_i(z)| < r', \,\rVert z'' - z''_0\lVert < r'\},
\end{align*}
where $\lVert z''\rVert$ denotes $\max\{|z_{d+1}|,\dots, |z_n|\}$.
We also define
\allowdisplaybreaks
\begin{align*}
V^{\chi,(1)}_{\boldsymbol{\kappa}}&:= 
\{z \in U^{\chi}_{\boldsymbol{\kappa}};\,
\dfrac{\,\pi\,}{2} - \theta <  \arg f_1(z) < \dfrac{\,3\pi\,}{2} + \theta\}, \\
V^{\chi,(i)}_{\boldsymbol{\kappa}} &:= 
\{z \in U^{\chi}_{\boldsymbol{\kappa}};\,\varrho^{2}|f_i(z)| > |f_1(z)| \} \qquad (2 \leqslant i \leqslant d),
\\
\widehat{V}^{\chi,(1)}_{\boldsymbol{\kappa}} &:= 
\{(z,\eta) \in \widehat{U}^{\chi}_{\boldsymbol{\kappa}};\,
\dfrac{\,\pi\,}{2} - \theta <  \arg f_1(z) < \dfrac{\,3\pi\,}{2} + \theta\}, \\
\widehat{V}^{\chi,(i)}_{\boldsymbol{\kappa}}&:= 
\{(z,\eta) \in \widehat{U}^{\chi}_{\boldsymbol{\kappa}};\,
\varrho|f_i(z)| > |\eta|\} \qquad (2 \leqslant i \leqslant d).
\end{align*}
Then it follows from the same arguments in  Section \ref{sec:rep-micro} that we have
\begin{align*}
\widehat{C}^{\,\mathbb{R},\chi}_{\smash{Y|X,z^*_0}} & =
\varinjlim\limits_{\boldsymbol{\kappa}} 
\varGamma(\widehat{V}^{\chi,(*)}_{\boldsymbol{\kappa}};\mathscr{O}^{}_{\WX})\big/
\smashoperator{\Sum_{\alpha \in\mathcal{P}^\vee_d}}
\varGamma(\widehat{V}^{\chi,(\alpha)}_{\boldsymbol{\kappa}};\mathscr{O}^{}_{\WX}), 
\\
{C}^{\mathbb{R},\chi}_{\smash{Y|X,z^*_0}} & =
\varinjlim\limits_{\boldsymbol{\kappa}} 
\{u \in 
\varGamma(\widehat{V}^{\chi,(*)}_{\boldsymbol{\kappa}};\mathscr{O}^{}_{\WX})\big/
\smashoperator{\Sum_{\alpha \in\mathcal{P}^\vee_d}}
\varGamma(\widehat{V}^{\chi,(\alpha)}_{\boldsymbol{\kappa}};\mathscr{O}^{}_{\WX});\, 
\partial_\eta u  = 0 \},\\
%\mathscr{C}^{\mathbb{R}}_{Y|X,z^*_0} 
M^{\mathbb{R},\chi}_{\smash{Y|X,z^*_0}} &=
%\smashoperator[r]{\varinjlim_{\varrho,L,U}}H^d_{G^\chi_{\varrho,L}\cap U}(U;\mathscr{O}_{X})
%& =
\varinjlim\limits_{\boldsymbol{\kappa}} 
\varGamma(V^{\chi,(*)}_{\boldsymbol{\kappa}};\mathscr{O}_{X})\big/
\smashoperator{\Sum_{\alpha \in \mathcal{P}^\vee_d}}
\varGamma(V^{\chi,(\alpha)}_{\boldsymbol{\kappa}};\mathscr{O}_{X}).
\end{align*}

Let us recall the definitions of the paths $\gamma_1(z,\eta;\varrho,\theta)$ and
$\gamma_i(z,\eta;\varrho)$ in $\mathbb{C}$ which were given in  Section \ref{sec:rep-micro}.
In this appendix, we take slightly modified paths.
Set 
\[
\gamma_1(\eta;\varrho,\theta) := - \gamma_1(0,\eta; \varrho, \theta),\quad
\gamma_i(\eta;\varrho) := \gamma_i(0,\eta;\varrho)\,\, (i > 1).
\]
We define the real $d$-dimensional chain in $\mathbb{C}^d$
\[
\gamma(\eta;\varrho,\theta) := \gamma_1(\eta;\varrho,\eta) \times
\gamma_2(\eta;\varrho) \times \dots \times \gamma_{d}(\eta;\varrho).
\]
Then, for any $(0,z'') \in Y$ near $z_0$, 
we also define the real $d$-dimensional chain in $\mathbb{C}^d_{z'}$
by
\[
\gamma^\chi(z'',\eta;\varrho,\theta)
:= \{z' \in \mathbb{C}^d;\, f(z',z'') \in \gamma(\eta;\varrho,\theta)\},
\]
where $f(z) := (f_1(z), \dots,f_d(z))\colon \mathbb{C}^n \to \mathbb{C}^d$
for $\chi = \{f_1,\dots,f_d\} \in \varXi(z^*_0)$ and the orientation
of $\gamma^\chi$ is determined by that of $\gamma$ through $f$.

Let us introduce the symbol spaces 
\begin{align*}
\mathfrak{S}^{}_{Y|X,z^*_0}
:= \varinjlim_ {\varOmega,   S}
\mathfrak{S}^{}_{Y|X} (\varOmega; S)&  \supset
\mathfrak{N}^{}_{Y|X,z^*_0}  := \varinjlim_ {\varOmega,   S}
\mathfrak{N}^{}_{Y|X} (\varOmega; S), 
\\
\mathscr{S}^{}_{Y|X,z^*_0}  :=     \smashoperator{\varinjlim_{\varOmega\owns z^*_0}}
\mathscr{S}^{}_{Y|X}(\varOmega) &
\supset  
\mathscr{N}^{}_{Y|X,z^*_0}  :=  \smashoperator{\varinjlim_{\varOmega\owns z^*_0}}
\mathscr{N}^{}_{Y|X}(\varOmega).
\end{align*}
Here  $\varOmega \cset T_Y^*X$ ranges through open conic neighborhoods of $z^*_0$, 
and the inductive limits with respect to $S$ are taken by $r^{}_0$, $\theta\to 0$.
The sets
$\mathfrak{S}^{}_{Y|X} (\varOmega; S)$,
$\mathfrak{N}^{}_{Y|X} (\varOmega; S)$,
$\mathscr{S}^{}_{Y|X}(\varOmega)$ and
$\mathscr{N}^{}_{Y|X}(\varOmega)$
are defined in the same way as  in  Section \ref{sec:symbol-with-ap}. 
Then we can define the mapping  
$\hat{\sigma}^\chi \colon {C}^{\mathbb{R},\chi}_{\smash{Y|X,z^*_0}} \to
\mathfrak{S}^{}_{Y|X,z^*_0}/\mathfrak{N}^{}_{Y|X,z^*_0}$ 
by
\[
 \hat{\sigma}^\chi([u])(z'', \zeta', \eta):= 
\smashoperator{\int\limits_{\hspace{8ex}\gamma^\chi(z'',\eta;\varrho,\theta)}}u(z',z'',\eta) 
\,e^{-\langle z', \zeta'\rangle} dz'
\]
for $u(z',z'',\eta) \in \varGamma(\widehat{V}^{\chi,(*)}_{\boldsymbol{\kappa}};\mathscr{O}^{}_{\WX})$
with a suitable ${\boldsymbol{\kappa}}$.
Similarly we get the mapping 
$
M^{\mathbb{R},\chi}_{\smash{Y|X,z^*_0}}
\to \mathscr{S}^{}_{Y|X,z^*_0}/\mathscr{N}^{}_{Y|X,z^*_0}$ by
\[
 \sigma^\chi([v])(z'', \zeta'):= 
\smashoperator{\int\limits_{\hspace{8ex}\gamma^\chi(z'',\eta_0;\varrho,\theta)}}v(z',z'') 
\,e^{-\langle z', \zeta'\rangle} dz'
\]
for $v(z',z'') \in \varGamma(V^{\chi,(*)}_{\boldsymbol{\kappa}};\mathscr{O}_{X})$
with a suitable ${\boldsymbol{\kappa}}$ and a sufficiently small fixed $\eta_0 > 0$.

Now we have the following theorem.
\begin{thm}{\label{theorem:invariant-symbol-definition}}
 The morphisms $\hat{\sigma}^\chi$ and $\sigma^\chi$ induce the well-defined mappings
$\hat{\sigma}\colon C^{\,\mathbb{R}}_{Y|X,z^*_0}
\to 
\mathfrak{S}^{}_{Y|X,z^*_0}/\mathfrak{N}^{}_{Y|X,z^*_0}$ 
and $\sigma \colon M^{\mathbb{R}}_{Y|X,z^*_0}
\to 
\mathscr{S}^{}_{z^*_0}/\mathscr{N}^{}_{z^*_0}$ respectively.
To be more precise, if $\chi_1$,  $\chi_2 \in \varXi(z^*_0)$
and  $[u_1] \in C^{\,\mathbb{R},\chi_1}_{\smash{Y|X,z^*_0}}$
and $[u_2] \in C^{\,\mathbb{R},\chi_2}_{\smash{Y|X,z^*_0}}$ determining
the same element in $\mathscr{C}^{\mathbb{R}}_{Y|X,z^*_0}$, it follows that 
$\hat{\sigma}^{\chi_1}([u_1]) = \hat{\sigma}^{\chi_2}([u_2])
\in \mathfrak{S}^{}_{Y|X,z^*_0}/\mathfrak{N}^{}_{Y|X,z^*_0}$\,. 
Similarly, for $[v_1] \in 
M^{\mathbb{R},\chi_1}_{\smash{Y|X,z^*_0}}$
and $[v_2] \in M^{\mathbb{R},\chi_2}_{\smash{Y|X,z^*_0}}$
giving the same element in $\mathscr{C}^{\mathbb{R}}_{Y|X,z^*_0}$, it follows that
$
\sigma^{\chi_1}([v_1]) = \sigma^{\chi_2}([v_2]) 
\in \mathscr{S}^{}_{Y|X,z^*_0}/\mathscr{N}^{}_{Y|X,z^*_0}$\,. 
\end{thm}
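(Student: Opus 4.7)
The theorem has two separate contents: that for each fixed $\chi \in \varXi(z^*_0)$ the formula $\hat\sigma^\chi$ descends to a well-defined map $C^{\mathbb{R},\chi}_{Y|X,z^*_0} \to \mathfrak{S}_{Y|X,z^*_0}/\mathfrak{N}_{Y|X,z^*_0}$, and that the resulting map is independent of $\chi$. My plan is to reduce both claims to the \emph{standard} case $\chi_0 = \{z_1,\dots, z_d\}$ via the biholomorphism naturally associated with each $\chi$, and then invoke the results already established in Sections \ref{sec:symbol-with-ap}--\ref{sec:kernel-symbol}.

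For the well-definedness at fixed $\chi = \{f_1,\dots,f_d\}$, I would note that conditions (1) and (2) in the definition of $\varXi(z^*_0)$ make $\Phi_\chi \colon (z',z'') \mapsto (f(z',z''), z'')$ a local biholomorphism near $z_0$, while condition (3) ensures that $\Phi_\chi$ sends $z^*_0$ to the standard point $(0;1,0,\dots,0)$ in the target coordinates. Performing this change of variables in the integral defining $\hat\sigma^\chi([u])$ transforms $\gamma^\chi(z'',\eta;\varrho,\theta)$ to the standard path $\gamma(\eta;\varrho,\theta)$ in $\mathbb{C}^d_{w'}$ and the covering $\{\widehat{V}^{\chi,(i)}_{\boldsymbol{\kappa}}\}$ to the standard one, reducing the situation to that treated in Proposition \ref{S.rem5.2}. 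The estimates there then yield $\hat\sigma^\chi([u]) \in \mathfrak{S}_{Y|X,z^*_0}$, independence from the choice of \v Cech representative modulo $\mathfrak{N}$, and $\partial_\eta$-annihilation modulo $\mathfrak{N}$. The analogous statement for $\sigma^\chi$ follows by specializing $\eta$ to a fixed $\eta_0$ throughout.

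For the independence from $\chi$, let $[u_i] \in C^{\mathbb{R},\chi_i}_{Y|X,z^*_0}$ ($i=1,2$) represent the same microfunction. By the inductive limit description of $\mathscr{C}^\mathbb{R}_{Y|X,z^*_0}$, I can find common parameters $U, \varrho, L$ such that both classes lift to and coincide in $H^d_{G'\cap U}(U;\mathscr{O}_X)$ with $G' := G^{\chi_1}_{\varrho,L}\cup G^{\chi_2}_{\varrho,L}$. Applying $\Phi_{\chi_1}$ and $\Phi_{\chi_2}$ brings both configurations to standard form, and it remains to verify the intrinsic compatibility
\[
\hat\sigma^{\chi}([u]) \equiv \hat\sigma^{\chi_0}([\Phi_\chi^*u]) \pmod{\mathfrak{N}_{Y|X,z^*_0}}
\]
on the cotangent germ at $z^*_0$. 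Once this is in hand, both $\hat\sigma^{\chi_1}([u_1])$ and $\hat\sigma^{\chi_2}([u_2])$ are identified with symbols, in standard form, of the same microfunction, and Step 1 yields the desired equality.

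The hard part will be making the above change-of-variables identity precise: it amounts to a change-of-variables formula for the defining oscillatory integral, where the Jacobian of $\Phi_\chi$ and the full coordinate-change action on symbols (as described in Theorem \ref{S.thm3.11}) must be reconciled with the ambiguity modulo $\mathfrak{N}_{Y|X,z^*_0}$. I expect that Stokes' theorem applied to an interpolating family of paths $\gamma^{\chi_s}$ (for a convex combination $\chi_s = (1-s)\chi + s\chi_0$, which remains in $\varXi(z^*_0)$ after shrinking, by continuity of conditions (1) and (3)) together with the Laurent-expansion estimates of Section \ref{sec:kernel-symbol} will give an effective route, reducing the difference of the two symbols to a boundary term that is exponentially small in $\zeta'$ uniformly in $s$, hence an element of $\mathfrak{N}_{Y|X,z^*_0}$.
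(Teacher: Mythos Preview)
Your treatment of well-definedness for a fixed $\chi$ is in line with the paper: the coordinate change $\Phi_\chi$ does reduce the estimates to those of Proposition~\ref{S.rem5.2}, and the paper regards this part as routine.

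The independence argument, however, has a genuine gap. Your interpolation $\chi_s = (1-s)\chi + s\chi_0$ together with Stokes is not available here: the representatives $u_1$ and $u_2$ are holomorphic on \emph{different} open sets $\widehat V^{\chi_1,(*)}_{\boldsymbol\kappa}$ and $\widehat V^{\chi_2,(*)}_{\boldsymbol\kappa}$, and there is no single holomorphic function defined on the region swept out by the family $\gamma^{\chi_s}$ to which Stokes could be applied. The relationship between $u_1$ and $u_2$ is purely cohomological, via a common refinement of the two \v{C}ech coverings, and that refinement is exactly what your plan never produces. Relatedly, your ``intrinsic compatibility'' $\hat\sigma^{\chi}([u]) \equiv \hat\sigma^{\chi_0}([\Phi_\chi^*u])$ is not the right statement: after the change of variables the phase becomes $e^{-\langle f_{z''}^{-1}(w'),\zeta'\rangle}$, which is \emph{not} the standard phase $e^{-\langle w',\lambda'\rangle}$ in the new coordinates, so you do not land in $\hat\sigma^{\chi_0}$; and invoking the coordinate-change formula for symbols to repair this would be circular, since that formula's compatibility with the cohomological transformation is what the theorem asserts.

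The paper's route avoids this entirely. After reducing $\hat\sigma$ to $\sigma$ via the commutative square you also found, it replaces the functions of $\chi$ \emph{one at a time}. For a single replacement $\{f_1,\dots,f_{d-1},f_d\}\leadsto\{f_1,\dots,f_{d-1},g\}$ it builds an explicit common refinement $\{T^{(1)},\dots,T^{(d+1)}\}$ of the two coverings, picks a cocycle $v=\{v^{(\beta)}\}$ there, and uses the cocycle condition to write $v_2 - v_1$ as an alternating sum of $v^{(*^{\vee k})}$; each such term integrates either to zero or to a null-symbol after a contour deformation (this is where Lemma~\ref{lem:first_entry_preserve_esitimate} enters). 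Replacing $f_1$ requires a bigraded covering $\{T^{(i,j)}\}$ and the same cocycle manipulation as in Lemma~\ref{lem:compat_act_cocycle}. The essential missing idea in your proposal is this common-refinement \v{C}ech argument; the interpolation cannot substitute for it.
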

\begin{proof}
 By a linear coordinate transformation, we may assume $z^*_0 = (0; 1,0,\dots,0) \in T^*_YX$ and
$z_0 = 0 \in X$.
We need the following easy lemma.
\begin{lem}{\label{lem:first_entry_preserve_esitimate}}
Let $g(z)$ be a holomorphic function in an open neighborhood of $z_0$. Assume that
$g(z) \in \mathscr{I}_Y$ and ${}^{\mathsf{t}}\!\bigl[\dfrac{\partial g}{\partial z}(z_0)\bigr] = (1,0,\dots,0)$. 
Then,
for $\varrho$ and $\theta$, there exists a sufficiently
small $\varepsilon > 0$ 
such that
\[
\Re g(z) \geqslant \varepsilon |\eta| \qquad
(z' \in \partial \gamma(\eta; \varrho, \theta),\,\, 
\eta \in S_{\boldsymbol{\kappa}},
\,
|z''| \leqslant \varepsilon, \,|\eta| < \varepsilon),
\]
where $\partial \gamma$ denotes the boundary of $\gamma(\eta;\varrho,\theta)$.
\end{lem}
\begin{proof}
The Taylor expansion of $g(z)$ along $Y$ is given by
\[
g(z) = \psi_1(z'')z_1 + \psi_2(z'') z_2 + \dots + \psi_d(z'') z_d + O(|z'|^2)
\]
with $\psi_1(0) = 1$ and $\psi_k(0) = 0$ ($k \geqslant 2$).  
The claim immediately follows from this.
\end{proof}
Let $\chi = \{f_1,\dots,f_d\} \in \varXi(z^*_0)$.
Set $f(z) := (f_1(z), \dots, f_d(z))$ and, for $z''$ near $0$, 
we write by $f_{z''}(z')$ the
mapping $f(z',z'')$ regarded as a mapping of the variable $z'$ with a fixed $z''$.
%Let us consider the coordinates transformation $\Psi$ defined by
%$
%w = (w', w'') = (f(z), z_{d+1},\dots, z_n),
%$
%and
%$\hat{\Psi}$ by $(w, \eta) = (\Psi(z), \eta)$. 
Then, by the coordinates transformation, we have
\begin{align*}
\hat{\sigma}^{\chi}([u]) (z'',\zeta', \eta)  &=
\smashoperator{\int\limits_{\hspace{4ex}\gamma(\eta;\varrho,\theta)}} u (f_{z''}^{\,-1}(w'), z'', \eta) 
\,e^{-\langle f_{z''}^{-1}(w'), \zeta'\rangle} \det[\partial_{w'}f^{\,-1}_{z''}]\,dw',
\\
\sigma^{\chi}([v]) (z'',\zeta') & =
\smashoperator{\int\limits_{\hspace{5ex}\gamma(\eta_0;\varrho,\theta)}} v (f_{z''}^{\,-1}(w'), z'') 
\,e^{-\langle f_{z''}^{-1}(w'), \zeta'\rangle} \det[\partial_{w'}f^{\,-1}_{z''}]\,dw'.
\end{align*}
Therefore, by applying Lemma \ref{lem:first_entry_preserve_esitimate} to
the first coordinate function of $f_{z''}^{\,-1}(w')$,  we have the commutative 
diagram below:
\[
\xymatrix @R=3ex{
M^{\mathbb{R},\chi}_{\smash{Y|X,z^*_0}}\ar[r]^-{\sigma^{\chi}}\ar[d]^-{\displaystyle \!\wr}
&
\mathscr{S}^{}_{Y|X,z^*_0}/\mathscr{N}^{}_{Y|X,z^*_0} \ar[d]
\\
C^{\,\mathbb{R},\chi}_{\smash{Y|X,z^*_0} }
\ar[r]^-{\hat{\sigma}^{\chi}}
&\mathfrak{S}^{}_{Y|X,z^*_0}/\mathfrak{N}^{}_{Y|X,z^*_0}\,. 
}
\]

Since the first down-arrow 
$M^{\mathbb{R},\chi}_{\smash{Y|X,z^*_0}} \earrow
C^{\,\mathbb{R},\chi}_{\smash{Y|X,z^*_0}}$
is isomorphic,
to show the theorem, it suffices to prove the last claim in the theorem.
We first consider a special case.
\begin{lem}{\label{lemma:replace-last}}
 Let $\chi_1 = \{f_1, \dots, f_d\}$,  
$\chi_2 = \{f_1, \dots, f_{d-1},  g\} \in \varXi(z^*_0)$.  Then the last claim in Theorem \ref{theorem:invariant-symbol-definition}
holds for these $\chi_1$ and $\chi_2$.
\end{lem}
\begin{proof}
Let $v_1 \in \varGamma(V^{\chi_1,(*)}_{\boldsymbol{\kappa}};\mathscr{O}_{X})$
and $v_2 \in \varGamma(V^{\chi_2,(*)}_{\boldsymbol{\kappa}};\mathscr{O}_{X})$ 
some ${\boldsymbol{\kappa}}$ 
which give 
the same element in $\mathscr{C}^{\mathbb{R}}_{Y|X,z^*_0}$.
Let us consider the coordinates transformation 
\[
w = (w',w'') = f(z) = (f_1(z),\dots,f_d(z), z_{d+1},\dots,z_n),
\]
and let $w_0 =  f(z_0) = 0$, $w^*_0 = (0;1,0,\dots,0)$.
Clearly the coordinates transformation changes
$\chi_1$ and $\chi_2$ to
$
\tilde{\chi}_1 = (w_1, \dots, w_d)$ and $\tilde{\chi}_2 = (w_1, \dots, w_{d-1}, h)$
with $h(w) = g \circ f^{\,-1}$ respectively.  Further, we have
\begin{equation}{\label{eq:coordinate_transformed_integral_1}}
\begin{aligned}
\sigma^{\chi_1}([v_1]) (w'',\zeta') & =
\smashoperator{\int\limits_{\hspace{5ex}\gamma(\eta_0;\varrho,\theta)}} v_1 (f^{\,-1}(w)) 
\,e^{-\langle f_{w''}^{-1}(w'), \zeta'\rangle} \det[\partial_{w'}f^{\,-1}_{w''}]\,dw', \\
\sigma^{\chi_2}([v_2]) (w'',\zeta')& =
\smashoperator{\int\limits_{\hspace{13ex}f_{w''}(\gamma^{\chi_2}(w'',\eta_0;\varrho,\theta))}} v_2 (f^{\,-1}(w)) 
\,e^{-\langle f_{w''}^{-1}(w'), \zeta'\rangle} \det[\partial_{w'}f^{\,-1}_{w''}]\,dw'.
\end{aligned}
\end{equation}
 It follows from definitions that $v_1(f^{\,-1}(w))$ 
and $v_2(f^{\,-1}(w))$
are holomorphic in $V^{(*)}_{\boldsymbol{\kappa}} = 
V^{\tilde{\chi}_1,(*)}_{\boldsymbol{\kappa}}$ and
$V^{\tilde{\chi}_2, (*)}_{\boldsymbol{\kappa}}$ respectively.  We can also see
\[
f_{w''}(\gamma^{\chi_2}(w'',\eta_0;\varrho,\theta)) = 
\gamma^{\tilde{\chi}_2}(w'',\eta_0;\varrho,\theta).
\]
Since $\tilde{\chi}_2$ belongs to $\varXi(w^*_0)$,  we have $\partial_{w_d} h(w_0) \ne 0$.
This implies that, 
by keeping $\eta_0$ of $\gamma_d(\eta_0; \rho)$ unchanged and by taking $\eta_0$
of other $\gamma_k$ ($1 \leqslant k \leqslant d-1$) so small if needed, the chain
$\gamma(\eta_0; \rho, \theta) \times \{w''\}$ belongs the common domain of
$V^{\tilde{\chi}_1,(*)}_{\boldsymbol{\kappa}}$ and
$V^{\tilde{\chi}_2, (*)}_{\boldsymbol{\kappa}}$ for a sufficiently small $w''$.
 In particular, we can replace 
$f_{w''}(\gamma^{\chi_2}(w'',\eta_0;\varrho,\theta))$ 
in \eqref{eq:coordinate_transformed_integral_1}
with
$\gamma(\eta_0;\varrho, \theta)$.
Therefore the problem can be reduced to the case where
$\chi_1 = \{z_1, \dots, z_d\}$, $\chi_2 = \{z_1, \dots, z_{d-1}, g\}$ and 
the morphisms $\sigma^{\chi^{}_1}$ and $\sigma^{\chi^{}_2}$ are replaced with the same morphism
\begin{equation}{\label{eq:reduced_integral}}
\sigma(v) (z'', \zeta'):= \smashoperator{\int\limits_{\hspace{5ex}\gamma(\eta_0; \varrho,\theta)}} v(z',z'')
\,e^{-\langle \varphi(z), \zeta'\rangle} dz'
\end{equation}
for some $\varphi(z)=(\varphi^{}_1(z),\dots,\varphi^{}_d(z))$ with $\{\varphi^{}_1(z),\dots,\varphi^{}_d(z)\}\in \varXi(z^*_0)$ 
$(z^*_0= (0;1,0,\dots,0))$.
Let us show the lemma for this case. We have the diagram
\begin{equation}{\label{eq:reduced_diagram}}
M^{\mathbb{R},\chi_1}_{\smash{Y|X,z^*_0}} 
\eleftarrow
\smashoperator[r]{\varinjlim_{\varrho,L,U}}H^d_{G^{\chi_1}_{\varrho,L}\cap
G^{\chi_2}_{\varrho,L}\cap U}(U;\mathscr{O}_{X})
\earrow
M^{\mathbb{R},\chi_2}_{\smash{Y|X,z^*_0}}. 
\end{equation}
The corresponding diagram of \v{C}ech representations is given by
\begin{align*}
\varinjlim\limits_{\boldsymbol{\kappa}} 
\varGamma(V^{\chi_1,(*)}_{\boldsymbol{\kappa}};\mathscr{O}_{X})\big/
\smashoperator{\Sum_{\alpha \in \mathcal{P}^\vee_d}}
\varGamma(V^{\chi_1,(\alpha)}_{\boldsymbol{\kappa}};\mathscr{O}_{X})
&\underset{\iota_1}{\eleftarrow}
\varinjlim_{\boldsymbol{\kappa}} 
Z^d(\mathfrak{T}^{}_{\boldsymbol{\kappa}};\mathscr{O}_{X})\big/
B^d(\mathfrak{T}^{}_{\boldsymbol{\kappa}};\mathscr{O}_{X}) \\
&\underset{\iota_2}{\earrow}
\varinjlim\limits_{\boldsymbol{\kappa}} 
\varGamma(V^{\chi_2,(*)}_{\boldsymbol{\kappa}};\mathscr{O}_{X})\big/
\smashoperator{\Sum_{\alpha \in \mathcal{P}^\vee_d}}
\varGamma(V^{\chi_2,(\alpha)}_{\boldsymbol{\kappa}};\mathscr{O}_{X}),
\end{align*}
where the covering $\mathfrak{T}^{}_{\boldsymbol{\kappa}}= \{T_{\boldsymbol{\kappa}}^{(i)}\}_{i=1}^{d+1}$ is given by
\[
T^{(i)}_{\boldsymbol{\kappa}} = V^{\chi_1,(i)}_{\boldsymbol{\kappa}}
\cap V^{\chi_2,(i)}_{\boldsymbol{\kappa}} \,\, (1 \leqslant i \leqslant d-1), \,\,\,
T^{(d)}_{\boldsymbol{\kappa}} = V^{\chi_1,(d)}_{\boldsymbol{\kappa}}
\cap U^{\chi_2}_{\boldsymbol{\kappa}}, \,\,\,
T^{(d+1)}_{\boldsymbol{\kappa}} = 
V^{\chi_2,(d)}_{\boldsymbol{\kappa}}
\cap U^{\chi_1}_{\boldsymbol{\kappa}}.
\]
We also note that,  for $v = \{v^{(\beta)}\} \in Z^d(\mathfrak{T}^{}_{\boldsymbol{\kappa}};\mathscr{O}_{X}) \subset
\smashoperator{\Sum_{\beta \in \varLambda_d}}
\varGamma(T^{(\beta)}_{\boldsymbol{\kappa}};\mathscr{O}_{X})$,
we have 
\[
v_1 := \iota_1(v) = v^{(\{1,\dots,d\})},\,
v_2 := \iota_2(v) = v^{(\{1,\dots,d-1,d+1\})}.
\]
Hence, to complete the proof, it suffices to show $\sigma([v_1]) = \sigma([v_2])$.
Since $v$ satisfies a cocycle condition, we have
\[
v_2 - v_1 = (-1)^d \smashoperator{\Sum_{1 \leqslant k < d}}\,(-1)^k v^{(*^{\vee k})}.
\]
%It follows from Lemma \ref{lem:first_entry_preserve_esitimate} that
By modifying the path of the integration, we obtain
$\sigma(v^{(\{2,\dots,d+1\})}) \in \mathscr{N}_{Y|X,z^*_0}$.
Furthermore we get $\sigma(v^{(*^{\vee k})}) = 0$ if
$2 \leqslant k < d$.  Hence
we have obtain that $\sigma(v_2) = \sigma(v_1) \in\mathscr{S}_{Y|X,z^*_0}\big/\mathscr{N}_{Y|X,z^*_0}$. 
\end{proof}
By repeated application of Lemma 
\ref{lemma:replace-last},
we can show the last claim of the theorem
for the case $\chi_1 = \{f_1, f_2, \dots, f_d\}$, $\chi_2 = \{f_1, g_2, \dots, g_d\} \in \varXi(z^*_0)$.
Hence the theorem immediately follows from the lemma below. This completes the proof.
\end{proof}
\begin{lem}
 Let $\chi_1 = \{f_1, f_2,\dots, f_d\}$, 
$\chi_2 = \{g, f_2, \dots, f_{d}\} \in \varXi(z^*_0)$.  Then the last claim in Theorem \ref{theorem:invariant-symbol-definition}
holds for these $\chi_1$ and $\chi_2$.
\end{lem}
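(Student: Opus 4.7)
My plan follows the structural template of Lemma~\ref{lemma:replace-last}, adapted for the replacement of the first entry rather than the last. The key geometric distinction is that $\gamma_1(\eta_0;\varrho,\theta)$ is an open arc rather than a closed loop, so the final comparison cannot be reduced to a naive deformation of a cycle. I would first apply the biholomorphism $w=(f_1(z),\ldots,f_d(z),z_{d+1},\ldots,z_n)$ and invoke compatibility of the symbol mapping with coordinate transformations to reduce to the canonical case $\chi_1=\{w_1,\ldots,w_d\}$, $\chi_2=\{h,w_2,\ldots,w_d\}$, with $h:=g\circ f^{-1}\in\mathscr{I}_Y$ satisfying $\partial_{w_1}h(0)=1$ and $\partial_{w_k}h(0)=0$ for $2\leqslant k\leqslant d$. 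The crucial initial observation is that, since $h(w)\approx w_1$ near $w=0$, the standard path $\gamma(\eta_0;\varrho,\theta)$ itself lies in the common domain $V^{\chi_1,(*)}_{\boldsymbol{\kappa}}\cap V^{\chi_2,(*)}_{\boldsymbol{\kappa}}$ for sufficiently small parameters.

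Next, following the intermediate-cohomology strategy from the preceding lemma, introduce $H^d_{G^{\chi_1}_{\varrho,L}\cap G^{\chi_2}_{\varrho,L}\cap U}(U;\mathscr{O}_X)$ with \v{C}ech representation by the common refinement covering $\mathfrak{T}_{\boldsymbol{\kappa}}=\{T^{(i)}_{\boldsymbol{\kappa}}\}_{i=1}^{d+1}$, where $T^{(1)}_{\boldsymbol{\kappa}}=V^{\chi_1,(1)}_{\boldsymbol{\kappa}}\cap U^{\chi_2}_{\boldsymbol{\kappa}}$, $T^{(d+1)}_{\boldsymbol{\kappa}}=V^{\chi_2,(1)}_{\boldsymbol{\kappa}}\cap U^{\chi_1}_{\boldsymbol{\kappa}}$, and $T^{(i)}_{\boldsymbol{\kappa}}=V^{\chi_1,(i)}_{\boldsymbol{\kappa}}\cap V^{\chi_2,(i)}_{\boldsymbol{\kappa}}$ for $2\leqslant i\leqslant d$. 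A cocycle $v=\{v^{(\beta)}\}$ yields $v_1=v^{(\{1,\ldots,d\})}$ and $v_2=v^{(\{2,\ldots,d+1\})}$ representing the same cohomology class, and on $T^{(\{1,\ldots,d+1\})}=V^{\chi_1,(*)}_{\boldsymbol{\kappa}}\cap V^{\chi_2,(*)}_{\boldsymbol{\kappa}}\cap U^{\chi_1}_{\boldsymbol{\kappa}}\cap U^{\chi_2}_{\boldsymbol{\kappa}}$ the cocycle identity expresses $v_2-\epsilon\, v_1$ (for some sign $\epsilon=\pm 1$) as a sum $\sum_{j=2}^{d}\pm v^{(*^{\vee j})}$ with $*^{\vee j}:=\{1,\ldots,d+1\}\setminus\{j\}$. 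Because the standard path $\gamma(\eta_0;\varrho,\theta)$ lies in $T^{(\{1,\ldots,d+1\})}$, every $v^{(*^{\vee j})}$ is holomorphic on it and is moreover free of the $j$-th cone constraint, so the loop $\gamma_j(\eta_0;\varrho)$ can be contracted to zero, yielding $\int_{\gamma(\eta_0;\varrho,\theta)}v^{(*^{\vee j})}e^{-\langle w',\zeta'\rangle}dw'=0$. Consequently
\[
\sigma^{\chi_1}(v_1) \equiv \epsilon\int_{\gamma(\eta_0;\varrho,\theta)}v_2\,e^{-\langle w',\zeta'\rangle}dw' \pmod{\mathscr{N}_{Y|X,z^*_0}}.
\]

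The main obstacle is the remaining comparison of $\int_{\gamma(\eta_0;\varrho,\theta)}v_2\,e^{-\langle w',\zeta'\rangle}dw'$ with $\sigma^{\chi_2}(v_2)=\int_{\gamma^{\chi_2}(w'',\eta_0;\varrho,\theta)}v_2\,e^{-\langle w',\zeta'\rangle}dw'$ modulo $\mathscr{N}_{Y|X,z^*_0}$. My plan is to interpolate via $h_s(w):=(1-s)w_1+sh(w)$ for $s\in[0,1]$, each satisfying the hypotheses of Lemma~\ref{lem:first_entry_preserve_esitimate} since $\partial_{w_1}h_s(0)=1$ and $h_s\in\mathscr{I}_Y$ for all $s$; this defines paths $\gamma^{(s)}_1(w'',\eta_0;\varrho,\theta):=\{w_1:h_s(w_1,w_2,\ldots,w_d,w'')\in\gamma_1(\eta_0;\varrho,\theta)\}$ sweeping out a $(d+1)$-chain $\Gamma$ contained in $V^{\chi_2,(*)}_{\boldsymbol{\kappa}}$. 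Applying Stokes' theorem to the holomorphic form $v_2\,e^{-\langle w',\zeta'\rangle}dw_1\wedge\cdots\wedge dw_d$ on $\Gamma$ gives
\[
\sigma^{\chi_2}(v_2) - \int_{\gamma(\eta_0;\varrho,\theta)}v_2\,e^{-\langle w',\zeta'\rangle}dw' = \pm\int_{\varSigma}v_2\,e^{-\langle w',\zeta'\rangle}dw',
\]
where $\varSigma$ is the side chain obtained by tracking the endpoints of $\gamma^{(s)}_1$ as $s$ varies, crossed with $\gamma_2(\eta_0;\varrho)\times\cdots\times\gamma_d(\eta_0;\varrho)$. On $\varSigma$ we have $h_s(w)\in\{\beta_0\eta_0,\beta_1\eta_0\}$; applying Lemma~\ref{lem:first_entry_preserve_esitimate} uniformly in $s$ yields $\Re w_1\geqslant\varepsilon|\eta_0|$ for a uniform $\varepsilon>0$. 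Since $\zeta'_0=(1,0,\ldots,0)$ forces $\Re\zeta_1\geqslant c\|\zeta'\|$ on a conic neighborhood, we obtain the exponential decay $|e^{-w_1\zeta_1}|\leqslant e^{-\varepsilon c\|\zeta'\|}$ along $\varSigma$, which certifies the side contribution as a null symbol. The technical delicacy lies in the uniform-in-$s$ estimate and the verification that $\Gamma$ indeed lies in the holomorphy domain of $v_2$; both should follow from $\partial_{w_1}h_s(0)=1$ and the implicit function theorem applied parametrically.
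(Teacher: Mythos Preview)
Your reduction step is correct and matches the paper, and your Step~4 (the Stokes/homotopy comparison of $\gamma$ with $\gamma^{\chi_2}$, controlled on the endpoint pieces by Lemma~\ref{lem:first_entry_preserve_esitimate}) is essentially what the paper encodes in the phrase ``by noticing Lemma~\ref{lem:first_entry_preserve_esitimate}'' during its reduction; the paper simply performs this path replacement \emph{before} the cocycle argument rather than after.

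The genuine gap is in your Steps~2--3. The $(d+1)$-element covering $\{T^{(i)}\}_{i=1}^{d+1}$ you propose does \emph{not} cover $(U^{\chi_1}_{\boldsymbol\kappa}\cap U^{\chi_2}_{\boldsymbol\kappa})\smallsetminus(G^{\chi_1}_{\boldsymbol\kappa}\cap G^{\chi_2}_{\boldsymbol\kappa})$. The asymmetry with Lemma~\ref{lemma:replace-last} is this: when the $d$-th entry is replaced, the sets $V^{\chi_1,(i)}$ and $V^{\chi_2,(i)}$ coincide for $1\leqslant i\leqslant d-1$, so only one pair of open sets differs. When the \emph{first} entry is replaced, every $V^{\chi,(i)}$ for $i\geqslant2$ changes, since its defining inequality $\varrho^2|f_i|>|f_1|$ becomes $\varrho^2|f_i|>|g|$. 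Concretely (after your reduction, with $h(w)=w_1+w_2^2$ say), a point with $w_1>0$ small, $\arg w_1=\arg h(w)=0$, $|w_1|<\varrho^2|w_2|<|h(w)|$, and $w_3=\dots=w_d=0$ lies in $V^{\chi_1,(2)}\smallsetminus V^{\chi_2,(2)}$ but in none of your $T^{(i)}$. Hence the \v{C}ech complex for $\mathfrak{T}_{\boldsymbol\kappa}$ does not compute the intermediate cohomology, and your $\iota_1,\iota_2$ do not land where you need them to (note $T^{(\{1,\dots,d\})}=V^{\chi_1,(*)}\cap V^{\chi_2,(\{2,\dots,d\})}\cap U^{\chi_2}$, strictly smaller than $V^{\chi_1,(*)}$).

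The paper avoids this by using instead the full $2d$-element covering $\{V^{\chi_1,(i)}\cap U\}_{i=1}^d\cup\{V^{\chi_2,(j)}\cap U\}_{j=1}^d$ (with intersections written $T^{(\alpha,\beta)}$), so that $\iota_1(v)=v^{(*,\emptyset)}$ and $\iota_2(v)=v^{(\emptyset,*)}$; it then migrates indices from the first slot to the second one step at a time via the cocycle relation, exactly as in Lemma~\ref{lem:compat_act_cocycle}. Each intermediate boundary term vanishes under $\sigma$ either by contractibility of a loop $\gamma_j$ or by the exponential decay from Lemma~\ref{lem:first_entry_preserve_esitimate}. Your homotopy idea is sound, but the cocycle machinery must be run on the larger covering.
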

\begin{proof}
By the same argument in that of the proof of Lemma \ref{lemma:replace-last} 
and by noticing Lemma \ref{lem:first_entry_preserve_esitimate},
the problem can be reduced to the case $\chi_1=(z_1, z_2,\dots, z_d)$
and $\chi_2 = (g, z_2, \dots, z_n)$
with the morphism $\sigma$ defined by \eqref{eq:reduced_integral}.
Then we have the diagram \eqref{eq:reduced_diagram} and the corresponding
one by \v{C}ech representations is given by
\begin{align*}
\varinjlim_{\boldsymbol{\kappa}} 
\varGamma(V^{\chi_1,(*)}_{\boldsymbol{\kappa}};\mathscr{O}_{X})\big/
\smashoperator{\Sum_{\alpha \in \mathcal{P}^\vee_d}}
\varGamma(V^{\chi_1,(\alpha)}_{\boldsymbol{\kappa}};\mathscr{O}_{X})
&\underset{\iota_1}{\eleftarrow}
\varinjlim_{\boldsymbol{\kappa}}
Z^{d}(\mathfrak{T}^2_{\boldsymbol{\kappa}};\mathscr{O}_{X})\big/
B^{d}(\mathfrak{T}^2_{\boldsymbol{\kappa}};\mathscr{O}_{X}) 
\\
&\underset{\iota_2}{\earrow}
\varinjlim\limits_{\boldsymbol{\kappa}} 
\varGamma(V^{\chi_2,(*)}_{\boldsymbol{\kappa}};\mathscr{O}_{X})\big/
\smashoperator{\Sum_{\alpha \in \mathcal{P}^\vee_d}}
\varGamma(V^{\chi_2,(\alpha)}_{\boldsymbol{\kappa}};\mathscr{O}_{X}),
\end{align*}
where the covering
$\mathfrak{T}^2_{\boldsymbol{\kappa}}=\{T^{(i,j)}_{\boldsymbol{\kappa}}\}_{i,j=1}^d$ is defined by
\[
T^{(i,j)}_{\boldsymbol{\kappa}} := 
V^{\chi_1,(i)}_{\boldsymbol{\kappa}} \cap
V^{\chi_2,(j)}_{\boldsymbol{\kappa}} \cap
U^{\chi_1}_{\boldsymbol{\kappa}} \cap
U^{\chi_2}_{\boldsymbol{\kappa}}.
\]
Furthermore the morphisms $\iota_1$ and $\iota_2$ are given by
\[
v_1 := \iota_1(v) = v^{(*,\emptyset)},\,
v_2 := \iota_2(v) = v^{(\emptyset,*)}
\]
respectively for $v = \{v^{(\alpha,\beta)}\} \in Z^{d}(\mathfrak{T}^2_{\boldsymbol{\kappa}};\mathscr{O}_{X})\subset
\smashoperator{\Sum_{(\alpha, \beta) \in \varLambda_{d}}}
\varGamma(T^{(\alpha,\beta)}_{\boldsymbol{\kappa}};\mathscr{O}_{X})$.
Then, by employing the same argument in the proof of Lemma \ref{lem:compat_act_cocycle}
in Appendix
\ref{ap:actions}, we have $\sigma(v_1) = \sigma(v_2) \in
\mathscr{S}_{Y|X,z^*_0}\big/\mathscr{N}_{Y|X,z^*_0}$.  The proof is complete.
\end{proof}

Now we compute behavior of a symbol by a coordinates transformation.
Let $z = (z',z'') = (\varPhi'(w),\varPhi''(w)) = \varPhi(w)$ be a local coordinates transformation near $w_0 \in Y$ with
$\varPhi(Y) \subset Y$ and $z_0 = \varPhi(w_0)$ where $Y$ is also defined by $w'= 0$ under the system of 
local coordinates $w = (w',w'')$.
We denote by $\varPhi'_{w''}(w')$ the
mapping $z' = \varPhi'(w',w'')$ regarded as a mapping of the variable 
$w'$ with a fixed $w''$.
Set $w^*_0 := (w_0;d\varPhi(w_0) (\zeta'_0, 0)) \in T^*_YX$.
Let $\chi = \{z_1, \dots, z_d\}$ and $[u] \in {C}^{\mathbb{R},\chi}_{\smash{Y|X,z^*_0}}$ with
$u(z,\eta) \in \varGamma(\widehat{V}^{\chi,(*)}_{\boldsymbol{\kappa}};\mathscr{O}^{}_{\WX})$
for some $\boldsymbol{\kappa}$.
Then we get $\widehat{\varPhi}^*([u]) = [u(\varPhi(w), \eta)] \in {C}^{\smash{\mathbb{R},\chi \circ \varPhi}}_{Y|X,w^*_0}$. Hence
we have obtained
\begin{align*}
\hat{\sigma}(\widehat{\varPhi}^*([u])) (w'',\lambda', \eta) &=
\smashoperator{\int\limits_{\hspace{10ex}\gamma^{\chi \circ \varPhi}(w'',\eta;\varrho,\theta)}}
u(\varPhi(w), \eta) 
\,e^{-\langle w', \lambda'\rangle} dw' \\
&=
\smashoperator{\int\limits_{\hspace{4ex}\gamma(\eta;\varrho,\theta)}}
u(z', \varPhi''(\varPhi_{w''}^{\prime -1}(z'), w''), \eta) 
\,e^{-\langle \varPhi_{w''}^{\prime -1}(z'), \lambda'\rangle} 
\det[\partial_{z'}\varPhi^{\prime -1}_{w''}]\,dz'.
\end{align*}

Let us consider the corresponding generalization of $E^{\mathbb{R}}_{X,z^*_0}$.
Hereafter we follow the same notations as those in  Section \ref{sec:actions_ER}.
Let $X$ be an $n$-dimensional complex manifold.
Set $X^2:= X \times X$ with a system of local coordinates $(z,w)$ and
$\widehat{ X}^2:= X^2 \times \mathbb{C}$ with local coordinates $(z,w,\eta)$.
Let $\varDelta\subset X^2$  be the diagonal set identified with $X$
and $z^*_0 = (z^{}_0;\zeta^{}_0) \in T^*X = T^*_\varDelta X^2$ with $\zeta^{}_0 \ne 0$. 
%We take, in what follows,  the diagonal set $\varDelta\subset X^2$ as $Y$.
Let $\{f_1(z), \dots, f_n(z)\}$ be a sequence of holomorphic functions in an open neighborhood
of $z_0$ of $X$ satisfying the conditions:
\begin{enumerate}[(1)]
 \item $df_1(z_0) \wedge \dots \wedge df_n(z_0) \ne 0$.
 \item we have $f(z_0) = 0$ and 
${}^{\mathsf{t}}\!\bigl[\dfrac{\partial f}{\partial z}(z_0)\bigr]\boldsymbol{e} = 
\zeta_0 \in (T^*X)_{z_0}$ where
$f(z): = (f_1(z), \dots,  f_n(z))$ and $\boldsymbol{e} := (1,0,\dots,0) \in \mathbb{C}^n$.
\end{enumerate}
We denote by $\varXi_{\varDelta}(z^*_0)$ the set of such a sequence.
Let $\chi = \{f_1, \dots, f_n\} \in \varXi_{\varDelta}(z^*_0)$ and set $f_{\varDelta,i}(z,w) := f_i(z) - f_i(w)$,
\[
f_\varDelta(z,w) = (f_{\varDelta,1}(z, w), \dots,  f_{\varDelta,n}(z, w)) := (f_{\varDelta,1}(z,w) , f'_{\varDelta}(z,w)).
\]
Define, for an open neighborhood $U \subset X^2$ of $(z_0,z_0)$ and 
a closed convex cone $L \subset \mathbb{C}$ with $L \subset \{\tau \in \mathbb{C};\, \Re \tau > 0\} \cup \{0\}$, 
\[
G^{\chi}_{\varDelta,\varrho,L} := \{(z,w) \in X^2;\,
\varrho^2|f'_\varDelta(z,w)| \leqslant |f_{\varDelta,1}(z, w)|,\,
f_{\varDelta,1}(z, w) \in L\},
\]
and
\begin{align*}
\widehat{U}^{\chi}_{\varDelta,\varrho,r,\theta} &:= \{(z,w,\eta) \in U \times S_{r,\theta};\,
|f_{\varDelta,1}(z,w)| < \varrho |\eta|\}, \\
\widehat{G}^{\chi}_{\varDelta,\varrho,L} &:= \{(z,w,\eta) \in\widehat{ X}^2;\,
\varrho|f'_{\varDelta}(z,w)| \leqslant |\eta|,\,
f_{\varDelta,1}(z,w) \in L
\}.
\end{align*}
We also define
\begin{align*}
\widehat{E}^{\smash{\mathbb{R},\chi}}_{X,z^*_0} &:=
\smashoperator{\varinjlim_{\varrho,r,\theta,L,U}}
H^n_{\widehat{G}^{\chi}_{\varDelta,\varrho,L}\cap 
\widehat{U}^{\chi}_{\varDelta,\varrho,r,\theta}}\!
(\widehat{U}^{\chi}_{\varDelta,\varrho,r,\theta}; \OOO),\\
E^{\smash{\mathbb{R},\chi}}_{X,z^*_0} &:=
\Ker(\partial^{}_\eta\colon \widehat{E}^{\mathbb{R},\chi}_{X,z^*_0} 
\to \widehat{E}^{\mathbb{R},\chi}_{X,z^*_0}),
\\
M^{\smash{\mathbb{R},\chi}}_{X,z^*_0}
& := \smashoperator{\varinjlim_{\varrho,L,U}}
H^n_{G_{\varDelta,\varrho,L} \cap U}(U;\Oo).
\end{align*}
Then we obtain isomorphisms
\[
\mathscr{E}^{\mathbb{R}}_{X,z^*_0}
\eleftarrow
M^{\smash{\mathbb{R},\chi}}_{X,z^*_0}
\earrow
E^{\smash{\mathbb{R}, \chi}}_{X,z^*_0}\,.
\]
Hence, by the same reasoning as that for $\mathscr{C}^{\mathbb{R}}_{Y|X,z^*_0}$,
we can introduce the following definition.
\begin{defn}
 We denote by $E^{\mathbb{R}}_{X,z^*_0}$ 
(resp.\ $M^{\mathbb{R}}_{X,z^*_0}$) 
the isomorphism class 
$\{E^{\smash{\mathbb{R}, \chi}}_{X,z^*_0}\}_{\chi \in \Xi_{\varDelta}(z^*_0)}$
(resp.\ $\{M^{\smash{\mathbb{R}, \chi}}_{X,z^*_0}\}_{\chi \in \Xi_{\varDelta}(z^*_0)}$).
\end{defn}
We also give \v{C}ech representations of these cohomology groups.
Set
\begin{align*}
\widehat{U}^{\chi}_{\varDelta,\boldsymbol{\kappa}} & := \smashoperator{\Bcap_{i=2}^n}
\{(z,w,\eta) \in \widehat{X}^2;\, \rVert f(z) \lVert < r',\,\eta \in S_{r,\theta},\,|f_{\varDelta,1}(z,w)| < 
\varrho |\eta|,\,|f_{\varDelta,i}(z,w)| < r'\},
\\ 
U^{\chi}_{\varDelta,\boldsymbol{\kappa}} &:= \smashoperator{\Bcap_{i=2}^n}
\{(z,w) \in X^2;\, \rVert f(z) \lVert < r',\, |f_{\varDelta,1}(z,w)| < \varrho r,\,
 |f_{\varDelta,i}(z,w)| <r'\}.
\end{align*}
We also set
\begin{align*}
\widehat{V}_{\varDelta,\boldsymbol{\kappa}}^{\chi,(1)} &:= 
\{(z,w,\eta) \in\widehat{U}^{\chi}_{\varDelta,\boldsymbol{\kappa}};\,
\dfrac{\,\pi\,}{2} - \theta <  \arg f_{\varDelta,1}(z,w) < \dfrac{\,3\pi\,}{2} + \theta\}, \\
\widehat{V}^{\chi,(i)}_{\varDelta,\boldsymbol{\kappa}} &:= 
\{(z,w,\eta) \in \widehat{U}^{\chi}_{\varDelta,\boldsymbol{\kappa}};\,
\varrho|f_{\varDelta,i}(z,w)| > |\eta|\}\quad (2 \leqslant i \leqslant n),
\\
V_{\varDelta,\boldsymbol{\kappa}}^{\chi,(1)} &:= 
\{(z,w) \in U^{\chi}_{\varDelta,\boldsymbol{\kappa}};\,
\dfrac{\,\pi\,}{2} - \theta <  \arg f_{\varDelta,1}(z,w) < \dfrac{\,3\pi\,}{2} + \theta\},\\
V_{\varDelta,\boldsymbol{\kappa}}^{\chi,(i)} &:= 
\{(z,\,w) \in U^{\chi}_{\varDelta,\boldsymbol{\kappa}};\,
\varrho^{2}|f_{\varDelta,i}(z,w)| > |f_{\varDelta,1}(z,w)|\} \quad  (2 \leqslant i\leqslant n).
\end{align*}
Then we get
\begin{align*}
\widehat{E}^{\smash{\mathbb{R},\chi}}_{X} & =
\varinjlim_{\boldsymbol{\kappa}} 
\varGamma( \widehat{V}_{\varDelta,\boldsymbol{\kappa}}^{\chi,(*)}; 
\OOO)
\big/\smashoperator{\Sum_{\alpha \in \mathcal{P}^\vee_n}}
\varGamma( \widehat{V}^{\chi,(\alpha)}_{\varDelta,\boldsymbol{\kappa}}; \OOO), 
\\
{E}^{\smash{\mathbb{R},\chi}}_{X} &=
\varinjlim\limits_{\boldsymbol{\kappa}} 
\{K \in 
\varGamma( \widehat{V}_{\varDelta,\boldsymbol{\kappa}}^{\chi,(*)}; 
\OOO)
\big/\smashoperator{\Sum_{\alpha \in \mathcal{P}^\vee_n}}
\varGamma( \widehat{V}^{\chi,(\alpha)}_{\varDelta,\boldsymbol{\kappa}}; \OOO);\,
\partial_\eta K  = 0 \},
\\
{M}^{\smash{\mathbb{R},\chi}}_{X} &=
\varinjlim\limits_{\boldsymbol{\kappa}} 
\varGamma(V^{\chi,(*)}_{\varDelta,\boldsymbol{\kappa}}; \Oo)
\big/\smashoperator{\Sum_{\alpha \in \mathcal{P}^\vee_n}}
\varGamma( V_{\varDelta,\boldsymbol{\kappa}}^{\chi,(\alpha)}; \Oo).
\end{align*}
Let $\gamma(z,\eta;\varrho,\theta)$ be an $n$-dimension real chain in $\mathbb{C}^n$ defined in
 Section \ref{sec:actions_ER}. Then we define the $n$-dimensional real chain in $\mathbb{C}^n$ by
\[
 \gamma^{\chi}(z,\eta;\varrho,\theta) = f_{\varDelta,z}^{\,-1}(-\gamma(0,\eta;\varrho,\theta))
= f^{\,-1}(\gamma(f(z),\eta;\varrho,\theta)),
\]
where $f_{\varDelta,z}(w)$ is the morphism $f_\varDelta(z,w) = f(z) - f(w)$ regarded as a function of $w$ for a fixed $z$ 
and the orientation of $\gamma^\chi$ is induced from that of $\gamma$ by $f^{-1}$.
Then we can define the mapping  
$\hat{\sigma}^\chi \colon  {E}^{\smash{\mathbb{R},\chi}}_{X,z^*_0} \to
\mathfrak{S}^{}_{z^*_0}/\mathfrak{N}^{}_{z^*_0}$ 
by
\[
 \hat{\sigma}^\chi([Kdw])(z, \zeta, \eta):= 
\smashoperator{\int\limits_{\hspace{6ex}\gamma^\chi(z,\eta;\varrho,\theta)}} K(z,w,\eta) 
\,e^{\langle w - z , \zeta\rangle} dw
\]
for $K(z,w,\eta)\,dw \in \varGamma( \widehat{V}_{\varDelta,\boldsymbol{\kappa}}^{\chi,(*)};\, \OOO)$
with a suitable ${\boldsymbol{\kappa}}$.
Similarly we have the mapping 
$
M^{\mathbb{R},\chi}_{X,z^*_0} 
\to \mathscr{S}^{}_{z^*_0}/\mathscr{N}^{}_{z^*_0}$ by
\[
 \sigma^\chi([Kdw])(z, \zeta):= 
\smashoperator{\int\limits_{\hspace{7ex}\gamma^\chi(z,\eta_0;\varrho,\theta)}} K(z,w) 
\,e^{\langle w-z, \zeta\rangle} dw
\]
for $K(z,w)\,dw \in \varGamma(V^{\chi,(*)}_{\varDelta,\boldsymbol{\kappa}}; \Oo)$
with a suitable ${\boldsymbol{\kappa}}$ and a sufficiently small fixed $\eta_0 > 0$.
As an immediate consequence of Theorem {\ref{theorem:invariant-symbol-definition}}, we have obtained
the following corollary.
\begin{cor}
There exist  the well-defined symbol morphisms
\[\hat{\sigma} \colon {E}^{\mathbb{R}}_{X,z^*_0} \to
\mathfrak{S}^{}_{z^*_0}/\mathfrak{N}^{}_{z^*_0},
\qquad 
\sigma\colon M^{\smash{\mathbb{R}}}_{X,z^*_0} \to \mathscr{S}^{}_{z^*_0}/\mathscr{N}^{}_{z^*_0}\,,
\] 
induced by  $\hat{\sigma}^\chi$ and $\sigma^\chi$  
respectively.
\end{cor}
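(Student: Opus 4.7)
The plan is to reduce the corollary to Theorem \ref{theorem:invariant-symbol-definition} by identifying the setting of $\mathscr{E}^{\mathbb{R}}_{X,z^*_0}$ with that of holomorphic microfunctions along the diagonal $\varDelta \subset X^2$. More precisely, I would take $X^2$ in place of the ambient manifold and $\varDelta$ in place of the submanifold $Y$ of Section \ref{sec:rep-micro}, with the coefficient sheaf $\mathscr{O}_{X^2}$ replaced by $\mathscr{O}^{(0,n)}_{X^2}$. Since $\mathscr{O}^{(0,n)}_{X^2}$ is the rank one free $\mathscr{O}_{X^2}$-module generated by $dw_1\wedge\cdots\wedge dw_n$ (which is preserved by all constructions performed so far), every argument used in the proof of Theorem \ref{theorem:invariant-symbol-definition} carries over verbatim.

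First I would observe that for any $\chi=\{f_1,\dots,f_n\}\in \varXi_{\varDelta}(z^*_0)$, the associated sequence $\chi_\varDelta:=\{f_{\varDelta,1},\dots,f_{\varDelta,n}\}$ satisfies the three conditions of membership in $\varXi(z^*_0)$ relative to $X^2$ and $\varDelta\simeq X$: the differentials $df_{\varDelta,i}(z_0,z_0)$ are linearly independent because the $df_i(z_0)$ are, each $f_{\varDelta,i}$ vanishes on $\varDelta$, and under the identification $T^*X\simeq T^*_{\varDelta}X^2$ sending $\zeta$ to $(\zeta,-\zeta)$ the covector ${}^{\mathsf{t}}\!\bigl[\dfrac{\partial f_\varDelta}{\partial w}(z_0,z_0)\bigr]\boldsymbol{e}$ is exactly $-\zeta_0$, i.e.\ the image of $\zeta_0$ up to the canonical sign built into the definition of $G^\chi_{\varDelta,\varrho,L}$. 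Thus the whole construction of $\widehat{E}^{\mathbb{R},\chi}_{X,z^*_0}$, $E^{\mathbb{R},\chi}_{X,z^*_0}$ and $M^{\mathbb{R},\chi}_{X,z^*_0}$ coincides with the construction of $\widehat{C}^{\,\mathbb{R},\chi_\varDelta}_{\varDelta|X^2,z^*_0}$, $C^{\mathbb{R},\chi_\varDelta}_{\varDelta|X^2,z^*_0}$ and $M^{\mathbb{R},\chi_\varDelta}_{\varDelta|X^2,z^*_0}$ twisted by $\mathscr{O}^{(0,n)}_{X^2}$.

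Second, I would check that the chain $\gamma^\chi(z,\eta;\varrho,\theta)$ introduced for pseudodifferential operators coincides with the chain $\gamma^{\chi_\varDelta}(z,\eta;\varrho,\theta)$ of Theorem \ref{theorem:invariant-symbol-definition}. This is a matter of unravelling the definitions: in the diagonal setting the role of $z''$ is played by $z$ (the ``parameter'' coordinate in $X^2=X_z\times X_w$), the role of $z'$ is played by $w$ (with $f'=f_\varDelta$ and $f_1=f_{\varDelta,1}$), and a direct computation gives $f^{-1}_{\varDelta,z}(-\gamma(0,\eta;\varrho,\theta))=\gamma^\chi(z,\eta;\varrho,\theta)$. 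The sign in the exponential $e^{\langle w-z,\zeta\rangle}$ here versus $e^{-\langle z',\zeta'\rangle}$ in Section \ref{sec:rep-micro} is consistent with the sign convention $(z,z;\zeta,-\zeta)\in T^*_{\varDelta}X^2$.

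Third, the compatibility under the natural identifications in the isomorphism class $E^{\mathbb{R}}_{X,z^*_0}=\{E^{\mathbb{R},\chi}_{X,z^*_0}\}_{\chi\in\varXi_\varDelta(z^*_0)}$ is then the direct translation of the last statement of Theorem \ref{theorem:invariant-symbol-definition}: for $\chi_1,\chi_2\in\varXi_\varDelta(z^*_0)$ and classes $[K_1\,dw]\in E^{\mathbb{R},\chi_1}_{X,z^*_0}$, $[K_2\,dw]\in E^{\mathbb{R},\chi_2}_{X,z^*_0}$ representing the same germ of $\mathscr{E}^{\mathbb{R}}_{X,z^*_0}$, the theorem (applied to $\chi_{1,\varDelta}$ and $\chi_{2,\varDelta}$) gives $\hat\sigma^{\chi_1}([K_1\,dw])=\hat\sigma^{\chi_2}([K_2\,dw])$ in $\mathfrak{S}^{}_{z^*_0}/\mathfrak{N}^{}_{z^*_0}$. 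The same argument applied to $M^{\mathbb{R}}_{X,z^*_0}$ yields the compatibility for $\sigma^\chi$, and together these show that $\hat\sigma^\chi$ and $\sigma^\chi$ descend to well-defined symbol morphisms $\hat\sigma$ and $\sigma$ on the isomorphism classes. The main (though mild) obstacle is the bookkeeping in the second step above: one must make sure that the sign conventions for $\zeta_0$, the orientation of $\gamma$, and the factor $\det[\partial_{w'}f^{-1}_{z''}]$ appearing in the proof of Theorem \ref{theorem:invariant-symbol-definition} all combine correctly to match the form $dw$ in $E^{\mathbb{R},\chi}_{X,z^*_0}$, but no new analytic input is needed beyond what is already in Theorem \ref{theorem:invariant-symbol-definition}.
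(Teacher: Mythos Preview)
Your proposal is correct and follows the same approach as the paper: the corollary is stated there as ``an immediate consequence of Theorem~\ref{theorem:invariant-symbol-definition}'' with no further argument, and your write-up simply spells out why --- namely, that $\chi\mapsto\chi_\varDelta$ embeds $\varXi_\varDelta(z^*_0)$ into $\varXi(z^*_0)$ for the pair $(X^2,\varDelta)$, so the well-definedness of $\hat\sigma$ and $\sigma$ reduces to the theorem applied in that setting (with the harmless twist by $\mathscr{O}^{(0,n)}_{X^2}$). Your bookkeeping on signs, orientations, and the identification of the chains is exactly the content the paper is leaving implicit.
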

Let us consider a coordinates transformation.
Let $z = \varPhi(w)$ be a local coordinates transformation of $X$ near $w_0 \in X$ with
$z_0 = \varPhi(w_0)$. We take 
$(z,z',\eta)$ and $(w,w', \eta)$ 
as the corresponding systems of local coordinates of $\widehat{X}^2$ respectively and 
the associated local coordinates transformation $\widehat{\varPhi}$ of 
$\widehat{X}^2$ is defined by $(z,z',\eta) = (\varPhi(w), \varPhi(w'), \eta)$.
Set $w^*_0 := (w_0; d\varPhi(w_0) (\zeta_0)) \in T^*X$.
Let $\chi = \{z_1, \dots, z_n\}$ and $[Kdz'] \in {E}^{\smash{\mathbb{R},\chi}}_{X,z^*_0}$ with
$K(z,z',\eta)\,dz' \in \varGamma(\widehat{V}^{\chi,(*)}_{\varDelta,\boldsymbol{\kappa}}; \OOO)$
for some $\boldsymbol{\kappa}$.
Then, by the same argument as in $C^{\smash{\mathbb{R}, \chi}}_{Y|X,z^*_0}$, 
we get 
\[
\widehat{\varPhi}^*([Kdz']) = 
[\widehat{\varPhi}^*(Kdz')] = 
[K(\varPhi(w), \varPhi(w'), \eta)\,\det[\partial_{w'}\varPhi(w')]\, dw'] \in {E}^{\smash{\mathbb{R},\chi \circ \varPhi}}_{X,w^*_0}.
\]
Hence we have obtained
\begin{equation}\label{B4}
\begin{aligned}
\hat{\sigma}(\widehat{\varPhi}^*([Kdz'])) (w,\lambda, \eta) & =
\smashoperator{\int\limits_{\hspace{9ex}\gamma^{\chi \circ \varPhi}(w,\eta;\varrho,\theta)}}
\,e^{\langle w' - w, \lambda\rangle}
\widehat{\varPhi}^*(Kdz')
\\
&=
\smashoperator{\int\limits_{\hspace{6ex}\gamma(z,\eta;\varrho,\theta)}}
K(z, z', \eta) 
\,e^{\langle \varPhi^{-1}(z') - \varPhi^{-1}(z), \lambda \rangle} dz'.
\end{aligned}
\end{equation}

Finally we shall consider the action on $C^{\mathbb{R}}_{Y|X,z^*_0}$ associated with $E^{\mathbb{R}}_{X,z^*_0}$.
Let $z^*_0 = (z_0;\zeta_0) = (0, z''_0; \zeta'_0,0) \in \dot{T}^*_YX \subset \dot{T}^*X$,
$\chi_C \in \varXi(z^*_0)$ and $\chi_E \in \varXi_{\varDelta}(z^*_0)$.
Assume $\chi_C \subset \chi_E$; that is,  $\chi_C$ and $\chi_E$ are given by
$\{f_1, \dots, f_d\}$ and $\{f_1, \dots, f_d, \dots, f_n\}$ respectively.
Note that, for any $\chi_C \in \varXi(z^*_0)$, we can always find a $\chi_E \in \varXi_{\varDelta}(z^*_0)$ 
with $\chi_C \subset \chi_E$.
Let $[u] \in C^{\smash{\mathbb{R}, \chi_C}}_{Y|X,z^*_0}$ with $u(w,\eta) \in 
\varGamma(V^{\chi_C,(*)}_{\boldsymbol{\kappa}};\mathscr{O}_{X})$ and
$[Kdw] \in E^{\smash{\mathbb{R}, \chi_E}}_{X,z^*_0}$ with $K(z,w,\eta)\,dw \in 
\varGamma(V^{\chi_E,(*)}_{\varDelta,\boldsymbol{\kappa}};\OOO)$ for some $\boldsymbol{\kappa}$.
Then we have the morphism 
\[
\mu^{\chi_E}\colon
E^{\smash{\mathbb{R}, \chi_E}}_{X,z^*_0} \tens_{\mathbb{C}}
C^{\smash{\mathbb{R}, \chi_C}}_{Y|X,z^*_0} \ni [Kdw] \otimes [u] \to
[\smashoperator{\int\limits_{\hspace{8ex}\gamma^{\chi_E}(z,\eta;\varrho,\theta)}}K(z,w,\eta)\,u(w,\eta) \,dw] \in
C^{\smash{\mathbb{R}, \chi_C}}_{Y|X,z^*_0} , 
\]
which is well defined. Indeed, by the coordinates transformation
$(\tilde{z}, \tilde{w}) = (f(z), f(w))$, the situation can be reduced to one studied in  Section \ref{sec:actions_ER}.
\begin{thm}{\label{thm:generic-point-action}}
 The family $\{\mu^{\chi_E}\}_{\chi_E \in \varXi_{\varDelta}(z^*_0)}$ of morphisms constructed above 
induces the well-defined morphism $\mu \colon
E^{\mathbb{R}}_{X,z^*_0} \tens_{\mathbb{C}}C^{\mathbb{R}}_{Y|X,z^*_0} 
\to
C^{\mathbb{R}}_{Y|X,z^*_0}$.
Furthermore $\mu$ coincides with the action of
$\mathscr{E}^{\mathbb{R}}_{X,z^*_0}$ on 
$\mathscr{C}^{\mathbb{R}}_{Y|X,z^*_0}$.
\end{thm}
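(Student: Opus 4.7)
The plan is to reduce the general situation at $z^*_0$ to the special case $\tilde z^*_0:=(0;1,0,\dots,0)$ already handled in Theorem \ref{th:action-commutative}, via a local biholomorphism of $X$ adapted to $\chi_E$. Fix $\chi_E=\{f_1,\dots,f_n\}\in\varXi_\varDelta(z^*_0)$ with $\chi_C\subset\chi_E$ (so $\chi_C=\{f_1,\dots,f_d\}$), and set $\varPsi(z):=f(z)$. By the non-degeneracy and normalization conditions defining $\varXi_\varDelta(z^*_0)$ and $\varXi(z^*_0)$, $\varPsi$ is a local biholomorphism sending $z_0\mapsto 0$, $Y$ to $\{\tilde z_1=\dots=\tilde z_d=0\}$, and $z^*_0$ to $\tilde z^*_0$; under $\widehat\varPsi\times\widehat\varPsi$ on $\widehat X^2$ the sequences $\chi_E$ and $\chi_C$ are carried to the standard sequences $\{\tilde z_1-\tilde w_1,\dots,\tilde z_n-\tilde w_n\}$ and $\{\tilde z_1,\dots,\tilde z_d\}$.

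First I would verify the chain identity
\[
\gamma^{\chi_E}(z,\eta;\varrho,\theta)=\varPsi^{-1}\bigl(\gamma(\tilde z,\eta;\varrho,\theta)\bigr),
\]
which is immediate from $\gamma^{\chi_E}(z,\eta;\varrho,\theta)=f_{\varDelta,z}^{-1}(-\gamma(0,\eta;\varrho,\theta))$ and $\gamma(\tilde z,\eta;\varrho,\theta)=\tilde z+\gamma(0,\eta;\varrho,\theta)$. Combined with the standard change-of-variables formula for top forms (the Jacobian being absorbed into the $\OOO$-term) and the pull-back rule for holomorphic microfunctions already exploited in Theorem \ref{theorem:invariant-symbol-definition}, this shows that under the canonical isomorphisms $E^{\mathbb{R},\chi_E}_{X,z^*_0}\simeq E^{\mathbb{R}}_{X,\tilde z^*_0}$ and $C^{\mathbb{R},\chi_C}_{Y|X,z^*_0}\simeq C^{\mathbb{R}}_{Y|X,\tilde z^*_0}$ induced by $\widehat\varPsi$, the morphism $\mu^{\chi_E}$ is identified with the morphism $\mu$ of Theorem \ref{th:action-morphism} at $\tilde z^*_0$.

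Next I would invoke Theorem \ref{th:action-commutative} at $\tilde z^*_0$ to equate $\mu$ with the cohomological action of $\mathscr{E}^{\mathbb{R}}_{X,\tilde z^*_0}$ on $\mathscr{C}^{\mathbb{R}}_{Y|X,\tilde z^*_0}$, and use the intrinsic naturality of that cohomological action under local biholomorphisms (it is constructed from functoriality of $\boldsymbol{R}\varGamma$ and cup/residue morphisms on the sheaves $\mathscr{E}^{\mathbb{R}}_X$ and $\mathscr{C}^{\mathbb{R}}_{Y|X}$) to transport the identification back to $z^*_0$. Thus each $\mu^{\chi_E}$ coincides with the canonical cohomological action; well-definedness of the induced $\mu$ on the isomorphism classes $E^{\mathbb{R}}_{X,z^*_0}$ and $C^{\mathbb{R}}_{Y|X,z^*_0}$ is then automatic, since independence of the choice of $\chi_E$ containing a given $\chi_C$, and of the choice of $\chi_C$ itself, reduce to the canonicity of the target action on $\mathscr{E}^{\mathbb{R}}_{X,z^*_0}\tens_{\mathbb{C}}\mathscr{C}^{\mathbb{R}}_{Y|X,z^*_0}$ via Theorem \ref{theorem:invariant-symbol-definition} and its analogue for $E^{\mathbb{R}}_X$.

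The main obstacle I expect is the careful bookkeeping in the change-of-variables step: one must check that the \v{C}ech coverings $\widehat V^{\chi_E,(\alpha)}_{\varDelta,\boldsymbol\kappa}$ and $\widehat V^{\chi_C,(\alpha)}_{\boldsymbol\kappa}$ (together with the associated $\widehat W^{(\alpha,\beta)}_{\boldsymbol\kappa}$-type intersections) are mapped, after shrinking $\boldsymbol\kappa$ if necessary, into the standard coverings from Sections \ref{sec:rep-micro} and \ref{sec:actions_ER} expressed in the $\tilde z$ coordinates, and that the transformed chain $\varPsi(\gamma^{\chi_E}(z,\eta;\varrho,\theta))$ continues to satisfy the geometric hypotheses of Lemma \ref{lem:path-fundamental}. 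This is a Jacobian-continuity argument at $z_0$ entirely analogous to the reduction already performed in the proof of Theorem \ref{theorem:invariant-symbol-definition}; while tedious it is of the same nature as arguments already carried out in the paper and presents no new conceptual difficulty.
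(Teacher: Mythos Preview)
Your proposal is correct and follows essentially the same route as the paper's proof: one straightens the data via the biholomorphism $f$ (the paper uses $\varPhi=f^{-1}$ where you use $\varPsi=f$, a purely notational difference), applies Theorem \ref{th:action-commutative} in the resulting standard situation $\tilde z^*_0=(0;1,0,\dots,0)$, and then transports the identification $\mu^{\chi_E}=\mu^c$ back using functoriality of the cohomological action and of the isomorphisms $\iota$. The paper organizes the final step as the algebraic identity $\varPhi_1^*\circ\mu^c(\iota^{-1}\otimes\iota^{-1})=\varPhi_1^*\circ\iota^{-1}\circ\mu^{\chi_E}$, but this is exactly the ``naturality'' you invoke; your remark that the covering/chain bookkeeping is routine is also in line with the paper, which simply cites the change-of-variables law without further elaboration.
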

\begin{proof}
 It suffices to show that, for $\chi_C \subset \chi_E$, the following diagram commutes:
\[
\xymatrix @R=3ex{
 \mathscr{E}^{\mathbb{R}}_{X,z^*_0}
\tens_{\mathbb{C}}
\mathscr{C}^{\mathbb{R}}_{Y|X,z^*_0} \ar[r]^-{\mu^c } \ar[d]
&\mathscr{C}^{\mathbb{R}}_{Y|X,z^*_0}  \ar[d]
\\
E^{\smash{\mathbb{R}, \chi_E}}_{X,z^*_0}
\tens_{\mathbb{C}}
C^{\smash{\mathbb{R}, \chi_C}}_{Y|X,z^*_0} \ar[r]^-{\mu^{\chi^{}_E}} &
C^{\smash{\mathbb{R}, \chi_C}}_{Y|X,z^*_0}.
}
\]
We denote by $\iota$ the isomorphism
$\mathscr{E}^{\mathbb{R}}_{X,z^*_0} 
\earrow
E^{\smash{\mathbb{R}, \chi_E}}_{X,z^*_0}$ and by the same symbol the one
$\mathscr{C}^{\mathbb{R}}_{Y|X,z^*_0} 
\earrow
C^{\smash{\mathbb{R}, \chi_C}}_{Y|X,z^*_0}$.
Let $u(w,\eta) \in 
\varGamma(V^{\chi_C,(*)}_{\boldsymbol{\kappa}};\mathscr{O}_{X})$ and
$K(z,w,\eta)dw \in 
\varGamma(V^{\chi_E,(*)}_{\varDelta,\boldsymbol{\kappa}};\OOO)$ for some $\boldsymbol{\kappa}$.
We define the coordinates transformations
\begin{alignat*}{4}
\varPhi(z) &= \varPhi_1(z) = f^{\,-1}(\,\widetilde{z}\,) ,\quad &
\varPhi_2(z,w) &= (f^{\,-1}(\,\widetilde{z}\,), f^{\,-1}(\widetilde{w})),
\\
\widehat{\varPhi}_1(z,\eta)& = (f^{\,-1}(\,\widetilde{z}\,), \eta) ,\quad&
\widehat{\varPhi}_2(z,w,\eta)& = (f^{\,-1}(\,\widetilde{z}\,), f^{\,-1}(\widetilde{w}), \eta).
\end{alignat*}
It follows from 
the fact $\chi_E \circ \varPhi = \{\,\widetilde{z}_1, \dots, \widetilde{z}_n\}$ and
Theorem \ref{th:action-commutative} in  Section \ref{sec:actions_ER} that we have
\[
\mu^c(\iota^{-1}(\widehat{\varPhi}_2^*[Kdw]) \otimes
\iota^{-1}(\widehat{\varPhi}_1^*[u]))
=
\iota^{-1} \circ \mu^{\chi_E \circ \varPhi}(\widehat{\varPhi}_2^{*}([Kdw]) \otimes \widehat{\varPhi}_1^{*}([u])).
\]
By the coordinates transformation law of the integration, we get
\[
\mu^{\chi_E \circ \varPhi}(\widehat{\varPhi}^{*}_2([Kdw]) \otimes \widehat{\varPhi}_1^{*}([u]))
=
\widehat{\varPhi}_1^* \circ \mu^{\chi_E}([Kdw] \otimes [u]).
\]
Furthermore, it follows from functorial properties that
$\iota^{-1} \circ \widehat{\varPhi}_k^{*} = \varPhi_k^* \circ \iota^{-1}$ ($k=1,2$) and $\mu^c$ and $\Psi^*$ commute.
Hence we have obtained
\[
\varPhi_1^*\circ\mu^c(\iota^{-1}([Kdw]) \otimes
\iota^{-1}([u]))
=
\varPhi_1^* \circ \iota^{-1} \circ \mu^{\chi_E}([Kdw] \otimes [u]),
\]
which implies
$
\mu^c(\iota^{-1}([Kdw]) \otimes
\iota^{-1}([u]))
=
\iota^{-1} \circ \mu^{\chi_E}([Kdw] \otimes [u])$.
This completes the proof.
\end{proof}


\begin{thebibliography}{21}
\baselineskip=1\baselineskip plus 0.5pt minus 0.5pt



\bibitem{a1} Aoki, T., \textit{Calcul exponentiel des op\'erateurs microdiff\'erentiels d'ordre infini}. I,
Ann. Inst. Fourier (Grenoble) {\bf 33} (1983),  227--250.


 



\bibitem{a2}  \bysame, 
\textit{Symbols and formal symbols of pseudodifferential operators}, 
Group Representation and Systems of  Differential Equations,  
Proceedings Tokyo 1982 (Okamoto,~K., ed),  Adv. Stud.  Pure Math. \textbf{4},   
Kinokuniya, Tokyo; North-Holland,  Amsterdam--New  York--Oxford,   1984,  pp.~181--208.

\bibitem{a3} \bysame,  \textit{Calcul exponentiel des op\'erateurs microdiff\'erentiels d'ordre infini}. II,
Ann. Inst. Fourier (Grenoble) {\bf 36} (1986), 143--165.

\bibitem{a4} \bysame,
\textit{Quantized contact transformations and pseudodifferential operators of infinite order.
},
Publ. Res. Inst. Math. Sci. {\textbf 26} (1990), 505--519.

\bibitem{A-K-Y} Aoki,~T., Kataoka, K. and Yamazaki, S., 
\textit{Construction of kernel functions of pseudodifferential operators of infinite order}, 
Actual Problems in Mathematical Analysis ({\wncyr Aktual{\char "7E}nye problemy 
matematiqeskogo  analiza}), Proceedings of  the conference 
dedicated to professor Yu. F. Korobeinik on his seventieth  birthday, GinGo 
({\wncyr Gingo}), Rostov on Don ({\wncyr Rostov-na-Donu}), 2000,
pp.~28--40. 

\bibitem{aky}\bysame, \textit{Hyperfunctions,  FBI Transformations and 
Pseudodifferential Operators of Infinite Order}, Kyoritsu,  2004  (in Japanese).

\bibitem{akkt} Aoki,~T., Kawai, T., Koike, T. and Takei, Y., \textit{On the exact WKB analysis of operators admitting infinitely many phases},
Adv. Math. \textbf{181} (2004), 165--189.

\bibitem{akt} Aoki, ~T., Kawai, T. and Takei, Y., \textit{The Bender-Wu analysis and the Voros theory}. II,  
Algebraic Analysis and Around (Miwa,~T. et al., eds),
Adv. Stud. Pure Math., \textbf{54}, Math. Soc. Japan, Tokyo, 2009, pp.~19--94. 

\bibitem{bj} Bj\"ork, J.-E.,   
\textit{Analytic $\mathscr{D}$-Modules and Applications}, 
Math. and Its Appl. \textbf{247},  Kluwer,  Dordrecht--Boston--London, 1993.  

\bibitem{b} Boutet de Monvel, L., \textit{Op\'erateurs pseudo-diff\'erentiels analytiques et op\'erateurs d'ordre infini},
Ann. Inst. Fourier (Grenoble) {\bf 22} (1972),  229--268.

\bibitem{bk} Boutet de Monvel, L. and Kre\'e, P., \textit{Pseudo-differential operators and Gevrey classes},
Ann. Inst. Fourier (Grenoble) {\bf 17} (1967),  295--323.

\bibitem{h} H\"ormander, L., \textit{Analysis of Linear Partial Differential Operators \textup{III:} Pseudo-Differential Operators}, 
Grund\-lehren Math. Wiss. \textbf{274}, Springer,
  Berlin--Heidelberg--New York, 1985.

\bibitem{I} Ishimura, R., \textit{Non-local pseudo-differential operators},
J. Math. Pures Appl. \textbf{81} (2002), 1241--1276.

\bibitem{k-w} Kajitani, K. and Wakabayashi, S., \textit{Microhyperbolic operators in Gevrey classes},
Publ. Res. Inst. Math. Sci.   \textbf{25} (1989), 169--221.



\bibitem{kaka2}  Kamimoto, S. and Kataoka, K.,  \textit{Formal  kernel functions of  
pseudo-differential operators of infinite order},  
Microlocal Analysis of Pseusdo-Differential Operators of Infinite Order and Asymptotic Analysis (Aoki, T., ed),
RIMS K\^o\-k\^u\-ro\-ku \textbf{1835}, 2013, pp. 1--9 (in Japanese). 

\bibitem{kaka1} \bysame, \textit{On the composition of kernel functions of
pseudo-differential operators ${\mathscr E}^{\mathbb R}$ and the compatibility with Leibniz rule}, 
Exact WKB Analysis and Microlocal Analysis (Koike, T., ed),  RIMS K\^ok\^uroku Bessatsu \textbf{37}, 2013,  pp. 81--98. 

\bibitem{K-K}  Kashiwara,~M.  and Kawai,~T., 
\textit{Micro-hyperbolic pseudo-differential operators \textup{I}},  
 J. Math. Soc. Japan \textbf{27} (1975), 359--404.

\bibitem{K-K1}\bysame,  \textit{On holonomic systems of
microdifferential equations. \textup{III}}. \textit{Systems with regular singularities}, 
Publ. Res. Inst. Math. Sci.   \textbf{17} (1981), 813--979.

\bibitem{ks} Kashiwara, M. and Schapira, P., \textit{Probl\'eme de Cauchy pour les syst\`emes microdiff\'erentiels 
dans le domaine complexe}, Invent. Math. {\bf 46} (1978), 17--38. 

\bibitem{K-S1} \bysame, 
\textit{Micro-hyperbolic systems}, Acta Math. 
\textbf{142} (1979), 1--55.
\bibitem{K-Sa}\bysame, \textit{Microlocal Study of Sheaves}, 
Ast\'erisque \textbf{128}, 1985. 

\bibitem{K-S2}  \bysame,  \textit{Sheaves on
    Manifolds}, Grundlehren Math. Wiss. \textbf{292}, Springer,
  Berlin--Heidelberg--New York, 1990.

\bibitem{kat} Kataoka, K., 
\textit{On the theory of Radon transformations of hyperfunctions and its applications}, 
Master's Thesis, Univ. Tokyo, 1976  (in Japanese).

\bibitem{kat2}  \bysame,
\textit{Microlocal energy methods and pseudo-differential operators},
Invent. Math. {\bf 81} (1985), 305--340.

\bibitem{ku} Kumano-go, H., \textit{Pseudo-Differential Operators},  MIT Press, 1982.


\bibitem{L} Laurent, Y., \textit{Microlocal operators with plurisubharmonic growth},  
Compos.  Math.  \textbf{86} (1993), 23--67. 

\bibitem{Li} Liess, O., \textit{Pseudo-differential operators\textup{;} an introduction},
Pitman Res. Notes Math. Ser., \textbf{349}, Longman, Harlow, 1996, pp. 82--122.

\bibitem{m} Malgrange, B., 
\textit{L'involutivit\'e des caract\'eristiques des syst\`emes diff\'erentiels et microdiff\'erentiels}, 
 S\'eminaire Bourbaki, 30e ann\'ee (1977/78), Exp. No. 522, Lecture Notes in Math.  \textbf{710}, 
Springer, Berlin--Heidelberg--New York, 1979, pp. 277--289.


\bibitem{SKK} Sato,~M., Kawai,~T. and Kashiwara,~M., \textit{Microfunctions
and pseudo-differential equations}, Hyperfunctions and Pseudo-Differential Equations,  
Proceedings Katata 1971 (Komatsu,~H., ed), Lecture Notes in Math. \textbf{287},
Springer, Berlin--Heidelberg--New York, 1973, pp.~265--529. 

\bibitem{sch} Schapira, P.,  \textit{Microdifferential Systems in the
    Complex Domain}, Grundlehren Math. Wiss. \textbf{269}, Springer,
  Berlin--Heidelberg--New York, 1985.
\bibitem{TR} Tr\`eves, F., \textit{Introduction to Pseudodifferential and Fourier Integral Operators}, I, II, 
Plenum Pub. Corp., 1981.


\bibitem{u1} Uchikoshi, K., \textit{Microlocal analysis of partial differential operators with irregular singularities},
J. Fac. Sci. Univ. Tokyo Sect. IA Math. \textbf{30} (1983), 299--332.

\bibitem{u2}  \bysame, \textit{Symbol theory of microlocal operators},
Publ. Res. Inst. Math. Sci.   \textbf{24} (1988), 547--584.


\end{thebibliography}
\end{document}